\theoremstyle{plain}
\newtheorem{xx}{xx}[section]
\newtheorem{thm}[xx]{Theorem}
\newtheorem*{thm*}{Theorem}
\newtheorem{prop}[xx]{Proposition}
\newtheorem{cor}[xx]{Corollary}
\newtheorem{lem}[xx]{Lemma}
\newtheorem*{lem*}{Lemma}
\theoremstyle{definition}
\newtheorem{defn}[xx]{Definition}
\newtheorem{ex}[xx]{Example}
\theoremstyle{remark}
\newtheorem{rem}[xx]{Remark}
\numberwithin{equation}{xx}
\DeclareMathOperator{\Add}{Add}
\DeclareMathOperator{\at}{at}
\DeclareMathOperator{\Arr}{Arr}
\DeclareMathOperator{\coker}{coker}
\DeclareMathOperator{\Def}{Def}
\DeclareMathOperator{\depth}{depth}
\DeclareMathOperator{\Der}{Der}
\DeclareMathOperator{\cE}{E}
\DeclareMathOperator{\End}{End}
\DeclareMathOperator{\ev}{ev}
\DeclareMathOperator{\Ext}{Ext}
\DeclareMathOperator{\uExt}{\ul{\Ext}}
\DeclareMathOperator{\grade}{grade}
\DeclareMathOperator{\cH}{H}
\DeclareMathOperator{\Hom}{Hom}
\DeclareMathOperator{\uHom}{\ul{\Hom}}
\DeclareMathOperator{\sHom}{\mathcal{H}\!\mathit{om}}
\DeclareMathOperator{\id}{id}
\DeclareMathOperator{\im}{im}
\DeclareMathOperator{\Injdim}{inj{.}dim}
\DeclareMathOperator{\mSpec}{\fr{m}-Spec}
\DeclareMathOperator{\ob}{ob}
\DeclareMathOperator{\pdim}{pdim}
\DeclareMathOperator{\Quot}{Quot}
\DeclareMathOperator{\Resdim}{res{.}dim}
\DeclareMathOperator{\Rk}{rk}
\DeclareMathOperator{\Sets}{\cat{Sets}}
\DeclareMathOperator{\Spec}{Spec}
\DeclareMathOperator{\Supp}{Supp}
\DeclareMathOperator{\Syz}{Syz}
\DeclareMathOperator{\Tor}{Tor}
\DeclareMathOperator{\To}{\mathit{T}^{\mspace{-0.5mu}\mathrm{o}}}
\DeclareMathOperator{\bTo}{\mathit{\bar{T}}^{\mspace{-0.5mu}\mathrm{o}}}
\newcommand{\ra}{\rightarrow}
\newcommand{\la}{\leftarrow}
\newcommand{\lra}{\longrightarrow}
\newcommand{\thr}{\twoheadrightarrow}
\newcommand{\hra}{\hookrightarrow}
\newcommand{\co}{\colon}
\newcommand{\ot}{{\otimes}}
\newcommand{\vare}{\varepsilon}
\newcommand{\vG}{\varGamma}
\newcommand{\vL}{\varLambda}
\newcommand{\La}{\Leftarrow}
\newcommand{\Ra}{\Rightarrow}
\newcommand{\Llra}{\Longleftrightarrow}
\newcommand{\sbeq}{\subseteq}
\newcommand{\limproj}{\varprojlim}
\newcommand{\liminj}{\varinjlim}
\newcommand{\QH}{{{}_{\vL}\cat{H}}}
\newcommand{\QR}{{{}_{\vL}\cat{R}}}
\newcommand{\QA}{{{}_{\vL}\cat{A}}}
\newcommand{\kH}{{{}_{k}\cat{H}}}
\newcommand{\Df}{\hat{\cat{D}}{}^{\textnormal{fl}}}
\newcommand{\cDf}{\check{\cat{D}}{}^{\textnormal{fl}}}
\newcommand{\uDf}{\underline{\hat{\cat{D}}}{}^{\textnormal{fl}}}
\newcommand{\Pf}{\hat{\cat{P}}{}^{\textnormal{fl}}}
\newcommand{\MCMf}{\widehat{\cat{MCM}}{}^{\textnormal{fl}}}
\newcommand{\modf}{\cat{mod}{}^{\textnormal{fl}}}
\newcommand{\umod}{\underline{\cat{mod}}}
\newcommand{\umodf}{\underline{\cat{mod}}{}^{\textnormal{fl}}}
\newcommand{\Xf}{\hat{\cat{X}}{}^{\textnormal{fl}}}
\newcommand{\hot}{{\tilde{\otimes}}}
\newcommand{\resdim}[2]{{#1}\textnormal{-}\Resdim{#2}}
\newcommand{\injdim}[2]{{#1}\textnormal{-}\Injdim{#2}}
\newcommand{\rk}[2]{{#1}\textnormal{-}\Rk({#2})}
\renewcommand{\phi}{\varphi}
\renewcommand{\geq}{\geqslant}
\renewcommand{\leq}{\leqslant}
\newcommand{\BB}[1]{\mathbb{{#1}}}
\newcommand{\bs}[1]{\boldsymbol{{#1}}}
\newcommand{\cat}[1]{\mathsf{{#1}}}
\newcommand{\gDer}[1]{{}_{#1}{\Der}}
\newcommand{\df}[2]{{\Def}_{#2}^{#1}}
\newcommand{\fr}[1]{\mathfrak{{#1}}}
\newcommand{\gH}[1]{{}_{#1}{\cH}}
\newcommand{\hm}[4]{{\Hom}_{#2}^{#1}({#3},{#4})}
\newcommand{\uhm}[4]{{\uHom}_{#2}^{#1}({#3},{#4})}
\newcommand{\mc}[1]{\mathcal{#1}}
\newcommand{\mr}[1]{\mathrm{{#1}}}
\newcommand{\nd}[3]{{\End} _{#2}^{#1}({#3})}
\newcommand{\ol}[1]{\overline{{#1}}}
\newcommand{\Q}{\mathcal{O}}
\newcommand{\quot}[2]{\Quot^{#1}_{#2}}
\newcommand{\shm}[4]{{\sHom} _{#2}^{#1}({#3},{#4})}
\newcommand{\tn}{\textnormal}
\newcommand{\tor}[4]{{\Tor}_{#2}^{#1}({#3},{#4})}
\newcommand{\ul}[1]{\underline{{#1}}}
\newcommand{\xla}[1]{\xleftarrow{{#1}}}
\newcommand{\xra}[1]{\xrightarrow{{#1}}}
\newcommand{\xt}[4]{{\Ext} _{#2}^{#1}({#3},{#4})}
\newcommand{\uxt}[4]{{\uExt} _{#2}^{#1}({#3},{#4})}
\newcommand{\syz}[2]{{\Syz}_{#2}^{#1}}
\begin{document}
\title[CM approximation in fibred categories]
{Cohen-Macaulay approximation\\ in fibred categories}
\author{Runar Ile}
\address{Department of 
Mathematics \\University of Bergen \\
5008 Bergen \\ NORWAY}
\email{runar.ile@math.uib.no} 
\thanks{\textit{Acknowledgement}. An essential part of this work was done during a visit to Institut Mittag-Leffler for which the author is most grateful.} 
\keywords{Cohen-Macaulay map, quotient category, algebraic ring, versal deformation, Andr{\'e}-Quillen cohomology, Artin's Approximation Theorem, Kodaira-Spencer class} 
\subjclass[2010]
{Primary 13C60, 14B12; Secondary 13D02, 18D30}
\begin{abstract}
We extend the Auslander-Buchweitz axioms and prove Cohen-Macaulay approximation results for fibred categories. We show that these axioms apply for the fibred category of pairs consisting of a finite type flat family of Cohen-Macaulay rings and modules. In particular such a pair admits an approximation with a flat family of maximal Cohen-Macaulay modules and a hull with a flat family of modules with finite injective dimension. 
The existence of minimal approximations and hulls in the local, flat case implies extension of upper semi-continuous invariants. As an example of MCM approximation we define a relative version of Auslander's fundamental module.

In the second part we study the induced maps of deformation functors and deduce properties like smoothness and injectivity under general, mainly cohomological conditions on the module. We also provide deformation theory for pairs (algebra, module), e.g.\ a cohomology for such pairs, a long exact sequence linking this cohomology to the Andr{\'e}-Quillen cohomology of the algebra and the \(\Ext\) cohomology of the module, Kodaira-Spencer classes and maps including a secondary Kodaira-Spencer class, and existence of a versal family for pairs with isolated singularity. 
\end{abstract}
\maketitle
\tableofcontents
\section{Introduction}\label{sec.intro}
Axiomatic Cohen-Macaulay approximation was introduced by M. Auslander and R.-O.\ Buchweitz in \cite{aus/buc:89}. We define this theory in terms of fibred categories and obtain approximation results for various classes of flat families of modules.

Let \(A\) be a Cohen-Macaulay ring of finite Krull dimension with a canonical module \(\omega_{A}\). Let \(\cat{MCM}_{A}\) and \(\cat{FID}_{A}\) denote the categories of maximal Cohen-Macaulay modules and of finite modules with finite injective dimension, respectively. M.\ Auslander and R.-O.\ Buchweitz proved in \cite{aus/buc:89} that for any finite \(A\)-module \(N\) there exist short exact sequences
\begin{equation}\label{eq.MCMseq}
0\ra L\lra M\lra N\ra 0\qquad\text{and}\qquad 0\ra N\lra L'\lra M'\ra 0
\end{equation}
with \(M\) and \(M'\) in \(\cat{MCM}_{A}\) and \(L\) and \(L'\) in \(\cat{FID}_{A}\). The maps \(M\ra N\) and \(N\ra L'\) in \eqref{eq.MCMseq} are called a maximal Cohen-Macaulay approximation and a hull of finite injective dimension, respectively, of the module \(N\).
The association \(N\mapsto X\) for \(X\) equal to \(M,M',L\) and \(L'\) defines functors of corresponding stable categories. 
In this article we study the continuous properties of these functors.

Linear representations provided by (sheaves) of modules and the associated homological algebra play an important role in algebra and algebraic geometry, e.g.\ as a means for classification by providing invariants. 
Finite complexes have particular properties as seen in the Buchsbaum-Eisenbud acyclicity criterion and the intersection theorems of Peskine, Szpiro and Roberts. However, for a non-regular local ring \(A\), the standard homological invariants are given by the (generally) infinite minimal \(A\)-free resolutions, of which very little is known. To stay within finite complexes one can enlarge or change the category of resolving objects and Cohen-Macaulay approximation is a structured way of doing this. 

Let \(\cat{D}_{A}\) denote the subcategory \(\Add\{\omega_{A}\}\) of modules \(D\) isomorphic to direct summands of the \(\omega_{A}^{\oplus r}\). A part of the approximation result says that all the modules in \(\cat{FID}_{A}\) have finite resolutions by objects in \(\cat{D}_{A}\). In particular the MCM approximation in \eqref{eq.MCMseq} can be extended to a finite resolution
\begin{equation}\label{eq.Dres}
0\ra D^{-n}\lra D^{-n+1}\lra\dots \lra D^{-1}\lra M\lra N\ra 0
\end{equation}
with the \(D^{i}\) in \(\cat{D}_{A}\). In the case \(A\) is Gorenstein, \(\cat{D}_{A}\) equals the category of finite projective modules \(\cat{P}_{\!A}\). This generalises: By a result of R.\ Y.\ Sharp \cite{sha:75b} the functor \(\hm{}{A}{\omega_{A}}{-}\) gives an exact equivalence \(\cat{D}_{A}\simeq\cat{P}_{\!A}\), hence a finite projective resolution is associated to \(N\). In the case \(A\) is local, the  approximations and the complex can be chosen to be minimal and unique (with \(D^{i}\cong \omega_{A}^{\oplus d^{i}}\)) and in particular the \(d^{i}\) are invariants of \(N\). 

The developments since Auslander and Buchweitz' fundamental work \cite{aus/buc:89} have included studies of invariants defined by Cohen-Macaulay approximation; \cite{din:92, aus/din/sol:93, has/shi:97} among several, `injectivity' and `surjectivity' properties of the approximation maps; \cite{kat:99, yos/iso:00, kat:07}, and characterisations of quasi-homogeneous isolated singularities; cf.\ \cite{her/mar:93, mar:00b}, all exclusively in the Gorenstein case. Noteworthy is \cite{sim/str:02} where A.-M.\ Simon and J.\ R.\ Strooker related some of these invariants with Hochster's Canonical Element Conjecture and the Monomial Conjecture. In particular these conjectures are equivalent to the vanishing of the \(\delta\)-invariant of certain cyclic modules over all Gorenstein rings.
S.\ P.\ Dutta applied the existence of a FID hull to prove a relationship between two of the Serre conjectures on intersection numbers: Failure of vanishing implies failure of higher non-negativity in the Gorenstein case under certain conditions; see \cite{dut:04}.

Buchweitz' unpublished manuscript \cite{buc:86}, a precursor to \cite{aus/buc:89}, contains homological ideas which have influenced subsequent developments (e.g.\ \cite{kra:05}). Auslander and I.\ Reiten elaborated in \cite{aus/rei:91} on \cite{aus/buc:89}, mainly with a view towards artin algebras, instigating several generalisations and analogies to Cohen-Macaulay approximation.

However, the `relative' and continuous aspects have received surprisingly little attention.
M.\ Hashimoto has given several new examples of Cohen-Macaulay approximation \cite{has:00}. 
In \cite[IV 1.4.12]{has:00} an affine algebraic group \(G\) acts on a positively graded Cohen-Macaulay ring \(T\) which is flat over a regular base ring \(R\). Hashimoto considers graded maximal Cohen-Macaulay \(T\)-modules (which automatically are \(R\)-flat) and graded modules locally of finite injective dimension (not \(R\)-flat in general), all with \(G\)-action. His result (with trivial group) is hence different from our Theorem \ref{thm.flatCMapprox}. We also note some explicit \(1\)-parameter families of indecomposable finite length modules \(N_{t}\) (for many Gorenstein rings) such that the minimal MCM approximation module \(M_{t}\) is without free summands; see \cite{yos:99}.

A central part of the classification problem is to prove the existence of objects with certain properties and to estimate `how many' such objects there are.  
A natural question is thus whether there is Cohen-Macaulay approximation for flat families of modules. In Theorem \ref{thm.flatCMapprox} we give a positive answer to this question. For a Cohen-Macaulay (CM) map \(h:S\ra T\) and an \(S\)-flat and finite \(T\)-module \(\mc{N}\) there are short exact sequences of \(S\)-flat and finite \(T\)-modules
\begin{equation}\label{eq.flatMCMseq}
0\ra \mc{L}\lra \mc{M}\lra \mc{N}\ra 0\qquad\text{and}\qquad 0\ra \mc{N}\lra \mc{L}'\lra \mc{M}'\ra 0
\end{equation}
such that the fibres of these sequences give `absolute'  approximations and hulls as in the two sequences \eqref{eq.MCMseq}. Note that \(T\) in general is \emph{not} a Cohen-Macaulay ring although the fibres of \(h\) are. 
We consider a category \(\modf\) of pairs \(\xi=(h:S\ra T,\mc{N})\) and subcategories \(\cat{MCM}\), \(\cat{FID}\)  and \(\cat{D}\).  
They are fibred over the category \(\cat{CM}\) of CM maps and also fibred over the base category of noetherian rings. The approximation and the hull \eqref{eq.flatMCMseq} induce functors of certain quotient categories fibred in additive categories over \(\cat{CM}\)
\begin{equation} 
\modf/\cat{D}\ra \cat{MCM}/\cat{D}\quad \text{and}\quad\modf/\cat{D}\ra \cat{FID}/\cat{D}
\end{equation}
with analogous properties to the absolute case. If \(h:S\ra T\) is a \emph{local} CM map, there is an approximation result with minimal (and hence unique) choices of the two sequences in \eqref{eq.flatMCMseq}; see Corollaries \ref{cor.locCMapprox} and \ref{cor.minapprox}. 

A major consequence of these results is that any numerical and additive upper semi-continuous invariant of MCM or FID modules by the minimal approximations and hulls induces upper semi-continuous invariants for all finite modules; see Theorem \ref{thm.semicont}. Examples of such invariants are given by the \(\omega_{A}\)-ranks in the minimal \emph{representing complex} \(D^{*}(N)\) which is an (infinite) extension to the right of the \(\cat{D}_{A}\)-complex in \eqref{eq.Dres}.

Auslander's fundamental module \(E_{A}\) for a normal \(2\)-dimensional singularity \(\Spec A\) is given by the MCM approximation of the maximal ideal;
\begin{equation}
0\ra \omega_{A}\lra E_{A}\lra \fr{m}_{A}\ra 0
\end{equation}
which in a certain sense generates all almost split sequences for \(A\); see \cite{aus:86}. 
As a general example of flat Cohen-Macaulay approximation we define the fundamental module for any finite type CM map of pure relative dimension \(\geq 2\); see Corollary \ref{cor.Fmod}, and more generally a `fundamental' functor of projective modules in Proposition \ref{prop.Fmod}. 

An attractive feature of Auslander and Buchweitz' theory is its axiomatic formulation with several applications besides the classical case described in the first paragraph, e.g.\ coherent rings with a cotilting module, the graded case, approximation with modules of Gorenstein dimension \(0\), and coherent sheaves on a projectively embedded Cohen-Macaulay scheme. See \cite{aus/buc:89} and \cite{has:00} for more examples. We formulate a relative Cohen-Macaulay approximation theory axiomatically in terms of categories \(\cat{D}\sbeq\cat{X}\sbeq\cat{A}\) fibred in abelian and additive subcategories over a base category \(\cat{C}\). In addition to the Auslander-Buchweitz axioms (AB1-AB4) for the fibre categories we formulate two axioms (BC1-BC2) regarding base change properties of the fibred categories. AB1-AB2 and BC1-BC2 imply the existence of an approximation and a hull which are preserved by any base change; see Theorem \ref{thm.cofapprox}. If AB3 holds too, we get functoriality and adjointness properties in suitable stable categories fibred in additive categories; see Theorem \ref{thm.cofmain}. In the case described above \(\cat{C}=\cat{CM}\), \(\cat{A}\) is the category \(\cat{mod}\) of pairs \((h:S\ra T,\mc{N})\) where \(\mc{N}\) is a finite \(T\)-module (no \(S\)-flatness) and \(\cat{X}=\cat{MCM}\). Another application of this theory is given in \cite{ile:11xb}.

In the second half of the article we proceed to study properties of continuous families of MCM approximations and FID hulls by homological methods.
As a consequence of the existence of minimal approximations and hulls of local flat families there are induced natural maps of deformation functors of pairs (algebra, module): 
\begin{equation}
\df{}{(A,N)}\lra\df{}{(A,X)}\quad\text{for}\quad X=M, M', L\,\,\text{and}\,\, L'
\end{equation}
There are corresponding maps \(\df{A}{N}\ra\df{A}{X}\) of deformation functors of the modules where \(A\) only deforms trivially. Rather weak conditions on \(N\), e.g.\ \(\grade N\geq 1\), respectively \(\grade N\geq 2\), imply the injectivity and formal smoothness of these maps for \(X=L'\). If, in addition, there is a versal family in \(\df{}{(A,N)}\) (or \(\df{A}{N}\)) then the maps are smooth for the appropriate category of henselian rings; see Theorem \ref{thm.defgrade} and Corollary \ref{cor.defgrade}. As a consequence each CM algebraic \(k\)-algebra \(A\) with \(A/\fr{m}_{A}\cong k\) and \(\dim A\geq 2\) has a finite \(A\)-module \(Q'\) of finite \emph{projective} dimension with a \emph{universal} deformation in \(\df{A}{Q'}(A)\); see Corollary \ref{cor.defapprox}. There are analogous general results for \(X=M\); see Theorem \ref{thm.defgrade2} and Corollary \ref{cor.defgrade2}, with applications in Corollary \ref{cor.depth} and \ref{cor.2dim}. E.g.\ if there is a closed subscheme \(Z\) in \(\Spec A\) containing the singular locus and with complement \(U\) such that \(\tilde{N}_{\vert U}=0\) and \(\depth_{Z}N\geq 2\) then \(\sigma_{M}:\df{}{(A,N)}\ra\df{}{(A,M)}\) is formally smooth.
Or if \(\Spec A\) is a \(2\)-dimensional normal Gorenstein singularity and \(N\) is torsion-free then the map \(\sigma_{M}\) is smooth. In this case both functors have versal elements by Theorem \ref{thm.ExVers}.

Consider a quotient ring \(B=A/I\) defined by a regular sequence \(I=(f_{1},\dots,f_{n})\) and an MCM \(B\)-module \(N\). Then \(N\) is also an \(A\)-module with an MCM approximation \(M\ra N\). If \(N\) has a \emph{lifting} to \(A/I^{2}\), then the composition of natural maps \(\df{B}{N}\ra \df{A}{N}\ra\df{A}{M}\) is injective; see Theorem \ref{thm.defMCM}. It turns out that the lifting condition is equivalent to the splitting of \(B\ot_{A}M\ra N\) (this generalises \cite[4.5]{aus/din/sol:93}).

The second part of the article also contains some general deformation theory of a pair \((h:S\ra T,\mc{N})\) of an algebra and a \(T\)-module. We define the graded algebra \(\vG:=T\oplus \mc{N}\) and consider the graded Andr{\'e}-Quillen cohomology \(\gH{0}^{*}(S,\vG, J)\) which govern the obstruction theory of the pair. In the case the graded \(\vG\)-module \(J\) is concentrated in degree \(0\) and \(1\) there is by Proposition \ref{prop.lang} a natural long-exact sequence which in the case \(J=\vG\) (with \(\gH{0}^{*}(S,\vG)=\gH{0}^{*}(S,\vG,\vG)\)) gives the suggestive
\begin{equation}
\begin{split}
0 &{} \ra \nd{}{T}{\mc{N}}\ra\gDer{0}_{S}(\vG)\ra\Der_{S}(T)\ra\xt{1}{T}{\mc{N}}{\mc{N}}\ra\gH{0}^{1}(S,\vG)\ra \\
 &{}\cH^{1}(S,T) \ra\xt{2}{T}{\mc{N}}{\mc{N}}\ra\gH{0}^{2}(S,\vG)\ra\cH^{2}(S,T)\ra\dots
\end{split}
\end{equation}
It relates the cohomology of the pair with the cohomology groups governing the obstruction theory of the algebra \(T\) and of the module \(\mc{N}\).
The sequence is used in the proof of the existence of a versal element in \(\df{}{(A,N)}\) where \(\Spec A\) is an isolated equidimensional singularity and \(N\) is locally free on the smooth locus; see Theorem \ref{thm.ExVers}. It is also used to define and study the Kodaira-Spencer class \(\kappa(\vG/S/\vL)\) in \(\gH{0}^{1}(S,\vG,\varOmega_{S/\vL}\ot\vG)\) (where \(\vL\ra S\) is a another ring homomorphism) which maps to the ungraded Kodaira-Spencer class \(\kappa(T/S/\vL)\). In the case the latter is zero we define a `secondary' Kodaira-Spencer class \(\kappa(\sigma,\mc{N})\) in \(\xt{1}{T}{\mc{N}}{\varOmega_{S/\vL}\ot \mc{N}}\) which depends on a choice of an \(S\)-algebra splitting \(\sigma\). This enables us to define `global' Kodaira-Spencer maps
\begin{equation}
g^{\vG}:\Der_{\vL}(S)\ra\gH{0}^{1}(S,\vG,\vG)\quad \text{and}\quad g^{(\sigma,\,\mc{N})}:\Der_{\vL}(S)\ra\xt{1}{T}{\mc{N}}{\mc{N}}\,.
\end{equation}
We also describe how classes and maps are related to the Atiyah class \(\at_{T/\vL}(\mc{N})\) in \(\xt{1}{T}{\mc{N}}{\varOmega_{T/\vL}\ot\mc{N}}\); see Proposition \ref{prop.at}. These results might have a certain independent interest. The arguments should be extendable to the setting of L.\ Illusie's \cite{ill:71}.

Injectivity of the corresponding \emph{local} Kodaira-Spencer maps gives a criterion for a global flat family to be non-trivial. The Kodaira-Spencer maps commute with Cohen-Macaulay approximation and this is applied to show that injectivity of the Kodaira-Spencer map is preserved by Cohen-Macaulay approximation under conditions as in Theorems \ref{thm.defgrade}, \ref{thm.defgrade2} and \ref{thm.defMCM}.

To make the text more reader friendly we have included some background material, e.g.\ on Cohen-Macaulay approximation and Kodaira-Spencer maps, and some of the central technical tools such as a general `cohomology and base change' result and some language of fibred categories. Many results have analogous parts with similar arguments and the policy has been to give a fairly detailed proof of one case and leave the other cases to the reader.
\section{Preliminaries}\label{sec.CMapprox}
All rings are commutative. If \(A\) is a ring, \(\cat{Mod}_{A}\) denotes the category of \(A\)-modules and \(\cat{mod}_{A}\) denotes the full subcategory of finite \(A\)-modules. If \(A\) is local then \(\fr{m}_{A}\) denotes the maximal ideal. Subcategories are usually full and essential.
\subsection{Axiomatic Cohen-Macaulay approximation}\label{subs.CCM}
We briefly recall some of the main features of Cohen-Macaulay approximation as introduced by Auslander and Buchweitz in \cite{aus/buc:89}. 
In this section let \(\cat{A}\) be an abelian category and \(\cat{D}\sbeq\cat{X}\sbeq\cat{A}\) additive subcategories.
Let \(\hat{\cat{X}}\) denote the subcategory of \(\cat{A}\) of objects \(N\) which have finite resolutions \(0\ra M_{n}\ra\dots\ra M_{0}\ra N\ra 0\) with the \(M_{i}\) in \(\cat{X}\). If \(n\) is the smallest such number, then \(\resdim{\cat{X}}{N} = n\). Let \(\injdim{\cat{X}}{N}\) be the minimal \(n\) (possibly \(\infty\)) such that \(\xt{i}{\cat{A}}{M}{N}= 0\) for all \(i>n\) and all \(M\) in \(\cat{X}\). Let \(\cat{X}^{\perp}\) denote the subcategory of objects \(L\) in \(\cat{A}\) with \(\injdim{\cat{X}}{L}=0\); the right complement of \(\cat{X}\). The left complement \({}^{\perp}\cat{X}\) is defined analogously. 

Let \(N\) be an object in \(\cat{A}\). An \emph{\(\cat{X}\)-approximation} and a \emph{\(\hat{\cat{D}}\)-hull}
of \(N\) are exact sequences as in \eqref{eq.MCMseq} with \(L\), \(L'\) in \(\hat{\cat{D}}\) and \(M\), \(M'\) in \(\cat{X}\). 

In general any \(f:M\ra N\) in \(\cat{A}\) is called a \emph{right \(\cat{X}\)-approximation of \(N\)} if \(M\) is in \(\cat{X}\) and any \(f':M'\ra N\) with \(M'\) in \(\cat{X}\)  factorises through \(f\). Dually, \(g:N\ra L\) is called a \emph{left \(\cat{X}\)-approximation of \(N\)} if \(L\) is in \(\cat{X}\) and any \(g':N\ra L'\) with \(L'\) in \(\cat{X}\) factorises through \(g\).

Consider the following conditions on the triple of categories \((\cat{A},\cat{X},\cat{D})\).
\begin{enumerate}
\item[(AB1)] \(\cat{X}\) is exact in \(\cat{A}\) (\(\cat{X}\) is closed under direct summands and extensions).
\item[(AB2)] \(\cat{D}\) is a \emph{cogenerator} for \(\cat{X}\), i.e.\ for each object \(M\) in \(\cat{X}\) there is an object
\(D\) in \(\cat{D}\) and a short exact sequence \(M\ra D\ra M'\) with \(M'\) in \(\cat{X}\).
\item[(AB3)] \(\cat{D}\) is \(\cat{X}\)-\emph{injective}, i.e.\ \(\cat{D}\sbeq\cat{X}^{\perp}\).
\item[(AB4)] \(\cat{A}\)-epimorphisms in \(\cat{X}\) are admissible (i.e.\ their kernels are contained in \(\cat{X}\)).
\end{enumerate}
If AB1 and AB2, there exist \(\cat{X}\)-approximations and \(\hat{\cat{D}}\)-hulls for all objects in \(\hat{\cat{X}}\) \cite[1.1]{aus/buc:89}. Assume AB1-AB3. Then any \(\cat{X}\)-approximation is a right \(\cat{X}\)-approximation and any \(\hat{\cat{D}}\)-hull is a left \(\hat{\cat{D}}\)-approximation. An \(\cat{X}\)-approximation determines a \(\hat{\cat{D}}\)-hull and vice versa through the following diagram of short exact sequences; the upper horizontal and right vertical being an \(\cat{X}\)-approximation and a \(\hat{\cat{D}}\)-hull of \(N\), \(D\) is in \(\cat{D}\). The boxed square is (co)cartesian (see \cite[1.4]{aus/buc:89}):
\begin{equation}\label{eq.pullpush}
\xymatrix@C-0pt@R-12pt@H-30pt{
L \ar[r]\ar@{=}[d] & M \ar[r]\ar[d]\ar@{}[dr]|{\Box} & N \ar[d]  \\   
L \ar[r] & D \ar[r]\ar[d] & L' \ar[d]  \\
& M' \ar@{=}[r] & M'
}
\end{equation}
Moreover, the category \(\cat{D}\) is determined by \(\cat{X}\subset\cat{A}\). Indeed \(\cat{D}=\cat{X}\cap\cat{X}^{\perp}\). By \cite[3.9]{aus/buc:89} monomorphisms in \(\hat{\cat{D}}\) are admissible and \(\hat{\cat{D}}=\hat{\cat{X}}\cap\cat{X}^{\perp}\). Also \(\cat{X}={}^{\perp}\hat{\cat{D}}\cap\hat{\cat{X}}={}^{\perp}\cat{D}\cap\hat{\cat{X}}\). If \(\cat{X}/\cat{D}\) denotes the quotient category, the \(\cat{X}\)-approximation induces a right adjoint to the inclusion functor \(\cat{X}/\cat{D}\sbeq \hat{\cat{X}}/\cat{D}\) and the \(\hat{\cat{D}}\)-hull induces a left adjoint to the inclusion functor \(\hat{\cat{D}}/\cat{D}\sbeq\hat{\cat{X}}/\cat{D}\); see \cite[2.8]{aus/buc:89}.

A morphism \(f: M\ra N\) in \(\cat{A}\) is called \emph{right minimal} if for any \(g:M\ra M\) with \(fg=f\) it follows that \(g\) is an automorphism. Dually, \(f\) is called \emph{left minimal} if for any \(h:N\ra N\) with \(hf=f\) it follows that \(h\) is an automorphism. Note that if \(f:M\ra N\) and \(f: M'\ra N\) both are right minimal then there exists an isomorphism \(g:M\ra M'\) with \(f=f'g\), and similarly for left minimal morphisms.

We will simply call an \(\cat{X}\)-approximation (a \(\hat{\cat{D}}\)-hull) for minimal if it is right (left) minimal. 
\begin{ex}\label{ex.MCMapprox}
Suppose \(A\) is a Cohen-Macaulay ring which possesses a canonical module \(\omega_{A}\) in the sense that any localisation in a maximal ideal gives a maximal Cohen-Macaulay module of finite injective dimension and Cohen-Macaulay type \(1\); cf.\ \cite[3.3.16]{bru/her:98}. Let \(\cat{MCM}_{A}\) denote the category of maximal Cohen-Macaulay (MCM) \(A\)-modules and put \(\cat{D}_{A}:=\Add\{\omega_{A}\}\). Then the triple \((\cat{A},\cat{X},\cat{D})=(\cat{mod}_{A},\cat{MCM}_{A},\cat{D}_{A})\) satisfies properties AB1-AB4; cf.\ \cite[I 4.10.11]{has:00} and \(\hat{\cat{X}}=\cat{mod}_{A}\). If \(A\) in addition is a local ring, then the \(\cat{MCM}_{A}\)-approximation and the \(\hat{\cat{D}}_{A}\)-hull can be chosen to be minimal; cf.\ \cite[Section 3]{sim/str:02} or Corollary \ref{cor.minapprox}. 

Let \(\cat{FID}_{A}^{l}\) denote the subcategory of finite \(A\)-modules \(E\) which have locally finite injective dimension, i.e.\ \(\Injdim_{A_{\fr{p}}} E_{\fr{p}}<\infty\) for all \(\fr{p}\in\Spec A\). The approximation result implies that \(\cat{FID}_{A}^{l}=\hat{\cat{D}}_{A}\): Let \(L\) be in \(\hat{\cat{D}}_{A}\). By induction on \(\resdim{\cat{D}_{A}}{L}\) \(L\) is in \(\cat{FID}_{A}^{l}\). Conversely let \(E\) be in \(\cat{FID}_{A}^{l}\). If \(L\ra M\ra E\) is an \(\cat{MCM}_{A}\)-approximation of \(E\) then \(M\) also has locally finite injective dimension. Let \(M^{\vee}\) denote \(\hm{}{A}{M}{\omega_{A}}\) and choose a surjection \(A^{{\oplus}n}\ra M^{\vee}\). Both \(M^{\vee}\) and the kernel \(M_{1}\) are MCM. Applying \(\hm{}{A}{-}{\omega_{A}}\) gives (by duality theory) the short exact sequence \(M\xra{i} \omega_{A}^{{\oplus}n}\ra M_{1}^{\vee}\). But \(i\) splits since \(\xt{1}{A}{M_{1}^{\vee}}{M}=0\) by \cite[3.3.3]{bru/her:98} and so \(M\) is in \(\cat{D}_{A}\) and \(E\) is in \(\hat{\cat{D}}_{A}\). 
\end{ex}
\subsection{The representing complex}\label{subsec.cplx}
Consider an abelian category \(\cat{A}\) and additive subcategories \(\cat{D}\sbeq\cat{X}\sbeq \cat{A}\). A \emph{\(\cat{DX}\)-resolution} of an object \(N\) in \(\cat{A}\) is a finite resolution \({}^{-}C^{*}\twoheadrightarrow N\) with \({}^{-}C^{i}\in\cat{D}\) for \(i<0\) and \({}^{-}C^{0}\in \cat{X}\). If \(L:=\coker (d^{-2}:{}^{-}C^{-2}\ra {}^{-}C^{-1})\), then the short exact sequence \(L\ra {}^{-}C^{0}\ra N\) is an \(\cat{X}\)-approximation. A \emph{\(\hat{\cat{D}}\cat{D}\)-coresolution} of \(N\) is a coresolution \(N\rightarrowtail {}^{+}C^{*}(N)\) such that \({}^{+}C^{0}\in\hat{\cat{D}}\), \({}^{+}C^{i}\in\cat{D}\) and \(\ker d^{i}\in \cat{X}\) for \(i>0\). If \(M':=\ker d^{1}\) then the short exact sequence \(N\ra {}^{+}C^{0}\ra M'\) is a \(\hat{\cat{D}}\)-hull. 
Given AB1 and AB2,  each \(N\) in \(\hat{\cat{X}}\) has a \(\cat{DX}\)-resolution and a \(\hat{\cat{D}}\cat{D}\)-coresolution.
Finally, a bounded below \(\cat{D}\)-complex \(D^{*}(N):\dots\ra D^{-1}\ra D^{0}\ra D^{1}\ra\dots\) with \(\ker d^{i}\in\cat{X}\) for all \(i\geq 0\) and its only non-trivial cohomology in degree zero with \(\cH^{0}(D^{*})\cong N\) is called a \emph{\(\cat{D}\)-complex representing \(N\)}. A representing complex splits into (and is (re)constructed from) a \(\cat{DX}\)-resolution given by \(\dots\ra D^{-1}\ra \ker d^{0}\thr \cH^{0}(D^{*})=N\) and a \(\hat{\cat{D}}\cat{D}\)-coresolution \(N\rightarrowtail \coker d^{-1}\ra D^{1}\ra\dots\) where \(N\rightarrowtail \coker d^{-1}\) is induced by \(\ker d^{0}\rightarrowtail D^{0}\).
\begin{lem}\label{lem.res}
Assume \(\xt{1}{\cat{A}}{\cat{X}}{\hat{\cat{D}}}=0\).
Suppose \(f:N_{1}\ra N_{2}\) is in \(\hat{\cat{X}}\)\textup{.} Assume \(F^{*}(N_{i})\) exists for \(i=1,2\) where \(F^{*}(N_{i})\) denotes one of the complexes \({}^{-}C^{*}(N_{i})\), \({}^{+}C^{*}(N_{i})\) or \(D^{*}(N_{i})\)\textup{.} Then \(f\) can be extended to an arrow of chain complexes \(f^{*}:F^{*}(N_{1})\ra F^{*}(N_{2})\) which is uniquely defined up to homotopy\textup{.}

Assume \textup{AB1-AB3} for the triple of categories \((\cat{A},\cat{X},\cat{D})\)\textup{.} Then \(N\mapsto {}^{-}C^{*}(N)\)\textup{,} \(N\mapsto {}^{+}C^{*}(N)\) and \(N\mapsto D^{*}(N)\) induce functors to the homotopy categories of chain complexes as follows\textup{:}
\begin{equation*}
{}^{-}C^{*}:\hat{\cat{X}}\ra \cat{K}^{\textnormal{b}}(\cat{X})\quad\quad
{}^{+}C^{*}:\hat{\cat{X}}\ra \cat{K}^{+}(\hat{\cat{D}})\quad\quad
D^{*}:\hat{\cat{X}}/\cat{D}\ra \cat{K}^{+}(\cat{D})
\end{equation*}
\end{lem}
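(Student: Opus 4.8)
The plan is to treat the three complexes uniformly wherever possible, relying on the same two elementary lemmas that underlie Auslander–Buchweitz's comparison results: a lifting property for maps into (co)resolving objects, and a homotopy-uniqueness statement. For the first part I would argue by a standard dimension-shifting induction. Consider first the case $F^*={}^-C^*$, i.e. $\cat{DX}$-resolutions. Given $f:N_1\ra N_2$, I want to lift it to a chain map ${}^-C^*(N_1)\ra{}^-C^*(N_2)$. The surjection ${}^-C^0(N_2)\thr N_2$ together with the fact that ${}^-C^0(N_1)\in\cat{X}$ and $\xt{1}{\cat{A}}{\cat{X}}{\hat{\cat{D}}}=0$ lets me lift $f\circ(\,{}^-C^0(N_1)\thr N_1)$ through ${}^-C^0(N_2)\thr N_2$: the obstruction to lifting lives in $\xt{1}{\cat{A}}{{}^-C^0(N_1)}{\ker({}^-C^0(N_2)\thr N_2)}$, and by definition of a $\cat{DX}$-resolution that kernel is $\coker d^{-2}$, which sits in $\hat{\cat{D}}$ (it is the first syzygy, resolved by the $\cat{D}$-objects ${}^-C^i$, $i\le-1$) — hence the obstruction vanishes. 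Passing to kernels and shifting, all subsequent steps are genuine lifting problems of maps between objects of $\cat{D}$ through $\cat{D}$-epimorphisms whose kernels are again in $\hat{\cat{D}}$, governed by the same vanishing; so the induction goes through. The homotopy-uniqueness of the lift is the dual bookkeeping: the difference of two lifts is null-homotopic by the same $\Ext^1$-vanishing, built up one degree at a time. For ${}^+C^*$ one runs the formally dual argument using that monomorphisms in $\hat{\cat{D}}$ are admissible (cited from \cite[3.9]{aus/buc:89}) and that $\ker d^i\in\cat{X}$, $\coker$-type objects lie in $\hat{\cat{D}}$; the relevant $\Ext^1$ groups are again the ones assumed to vanish, now read in the other variable via the identification $\hat{\cat{D}}=\hat{\cat{X}}\cap\cat{X}^\perp$. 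For $D^*(N)$, I would simply observe that a representing complex is glued from a $\cat{DX}$-resolution and a $\hat{\cat{D}}\cat{D}$-coresolution along $N$, apply the two previous cases to the two halves, and check the lifts agree on the common object $N$ up to the already-established homotopy.

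For the second part one must upgrade "lift exists and is unique up to homotopy" to an honest functor into a homotopy category. This is formal once one has functoriality of the choice up to homotopy: for ${}^-C^*$ and ${}^+C^*$ the assignment $N\mapsto F^*(N)$ on objects is fixed by choosing, for each $N\in\hat{\cat{X}}$, one $\cat{DX}$-resolution (resp. one $\hat{\cat{D}}\cat{D}$-coresolution), which exists by AB1–AB2 as recalled before the lemma; on morphisms one takes the homotopy class of any lift from part one. Composability and identity are immediate from uniqueness up to homotopy. For $D^*$ the source is the quotient category $\hat{\cat{X}}/\cat{D}$: I must check that a morphism $f:N_1\ra N_2$ that factors through an object of $\cat{D}$ induces a null-homotopic chain map $D^*(f)$. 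This is where AB3 enters essentially — it forces $\cat{D}\sbeq\cat{X}^\perp$, so $\xt{i}{\cat{A}}{\cat{X}}{D}=0$ for $i>0$ and $D\in\cat{D}$, which is exactly what kills the chain map through a $\cat{D}$-object degree by degree (a map $\cat{X}\ni\ker d^0\ra D$ extends over $D^0$, etc.). Hence $D^*$ descends to $\hat{\cat{X}}/\cat{D}\ra\cat{K}^+(\cat{D})$.

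The step I expect to be the real obstacle is not any single $\Ext$-vanishing but the careful identification, in each of the three cases, of exactly which object the lifting obstruction lives on and the verification that it lies in $\hat{\cat{D}}$ (or that the relevant $\cat{X}$-object pairs against a $\cat{D}$-object). The subtlety is that for ${}^-C^*$ the objects being resolved drift from $\cat{X}$ (in degree $0$) into $\hat{\cat{D}}$ (in negative degrees), and one needs the hypothesis $\xt{1}{\cat{A}}{\cat{X}}{\hat{\cat{D}}}=0$ — rather than the weaker AB3 vanishing $\xt{1}{\cat{A}}{\cat{X}}{\cat{D}}=0$ — precisely to handle the top degree; in the negative degrees one is already comparing $\cat{D}$ against $\hat{\cat{D}}$, where vanishing follows from the definitions together with $\hat{\cat{D}}=\hat{\cat{X}}\cap\cat{X}^\perp$. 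Once this is pinned down, everything else is the routine "comparison theorem" induction, essentially the argument of \cite[1.8, 2.8]{aus/buc:89} transposed to the three flavours of complex, so I would present one case (say ${}^-C^*$) in full and indicate the dual modifications for the other two.
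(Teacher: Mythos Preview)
Your proposal is correct and follows essentially the same approach as the paper: the paper also says the \({}^{-}C^{*}\) and \({}^{+}C^{*}\) cases ``follow standard lines'' using the vanishing \(\xt{1}{\cat{A}}{\cat{X}}{\hat{\cat{D}}}=0\), and then treats \(D^{*}\) in more detail, including the null-homotopy for maps factoring through \(\cat{D}\). One small point: your ``gluing'' for \(D^{*}\) is slightly loose, since the two halves \({}^{-}C^{*}\) and \({}^{+}C^{*}\) have \(M=\ker d^{0}\) and \(L'=\coker d^{-1}\) in degree zero respectively, not \(D^{0}\) itself---so after lifting \(f\) to \(M_{1}\ra M_{2}\) you still need to extend across the short exact sequence \(M_{i}\ra D^{0}_{i}\ra M'_{i}\) (with \(M'_{i}=\ker d^{1}\in\cat{X}\)) to get \(D^{0}_{1}\ra D^{0}_{2}\), exactly as the paper does; but this is the same \(\Ext^{1}\)-vanishing you are already invoking.
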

\begin{proof}
The proof for \({}^{-}C^{*}(N)\) and \({}^{+}C^{*}(N)\) follows standard lines for constructing chain maps and homotopies. The assumption \(\xt{1}{\cat{A}}{\cat{X}}{\hat{\cat{D}}}=0\) is used every time a lifting or extension of an arrow is required.

Let \((D^{*}_{i}, d_{i}^{*})=D^{*}(N_{i})\) and let \(M_{i}=\ker d_{i}^{0}\) and \(L_{i}=\im d_{i}^{-1}\). Then there are short exact sequences \(L_{i}\ra M_{i}\ra N_{i}\) which by assumption are \(\cat{X}\)-approximations. Since \(\xt{1}{\cat{A}}{M_{1}}{L_{2}}=0\), the arrow \(N_{1}\ra N_{2}\) extends to the \(\cat{X}\)-approximation and further on to the negative part of the complexes. If \(M'_{i}=\ker d^{1}_{i}\) then the \(M'_{i}\) are in \(\cat{X}\) by assumption and there are short exact sequences \(M_{i}\ra D^{0}_{i}\ra M'_{i}\). There is an extension of \(M_{1}\ra D^{0}_{2}\) to \(D^{0}_{1}\ra D^{0}_{2}\) and an induced arrow \(M'_{1}\ra M'_{2}\) which again extends and so on to a chain map \(f^{*}:D^{*}_{1}\ra D^{*}_{2}\). 

Let \(g^{*}:D^{*}_{1}\ra D^{*}_{2}\) be a chain map, put \(g=\cH^{0}(g^{*})\), \(s=f{-}g\) and \(s^{*}=f^{*}{-}g^{*}\). Suppose \(s\) factors through \(D\) in \(\cat{D}\); \(s=ab\) with \(a:D\ra N_{2}\). Since \(\xt{1}{\cat{A}}{D}{L_{2}}=0\) there exists a lifting \(\tilde{a}:D\ra M_{2}\) of \(a\). Put \(h_{N}=\tilde{a}b\) and continue similarly to construct a homotopy \(h\) for the extended negative part:
\begin{equation*}
\xymatrix@C+6pt@R-0pt@H+6pt{
& \dots \ar[r] & D_{1}^{-1} \ar[dl]_{h^{-1}}\ar[r]\ar[d]|-{s^{-1}} & M_{1} \ar[dl]|-{h_{M}}\ar[r]\ar[d]|-{\mr{Z}^{0}\!(s^{*})} & N_{1} \ar[dl]|-{h_{N}}\ar[d]^{s}\ar[r] & 0 \\   
\dots \ar[r] & D_{2}^{-2}\ar[r] & D_{2}^{-1} \ar[r] & M_{2} \ar[r] & N_{2} \ar[r] & 0 
}
\end{equation*}
In particular \(h_{M}:M_{1}\ra D^{-1}_{2}\) can be extended to an \(h^{0}:D^{0}_{1}\ra D^{-1}_{2}\) with \(s^{-1}=h^{0}d^{-1}_{1}{+}d^{-2}_{2}h^{-1}\). The construction of the \(h^{i}\) for \(i>0\) is standard.
\end{proof}
\begin{lem}\label{lem.xres}
Assume \textup{AB1-AB3} for the triple of categories \((\cat{A},\cat{X},\cat{D})\)\textup{.} Given an exact sequence \(\vare:0\ra N_{1}\ra N_{2}\ra N_{3}\ra 0\) with objects in \(\hat{\cat{X}}\)\textup{.} Then there are exact sequences of complexes where \(\vare\) equals the cohomology\textup{:}
\begin{enumerate}
\item[(i)] \(0\ra {}^{-}C^{*}(N_{1})\lra {}^{-}C^{*}(N_{2})\lra {}^{-}C^{*}(N_{3})\ra 0\) 
\item[(ii)] \(0\ra {}^{+}C^{*}(N_{1})\lra {}^{+}C^{*}(N_{2})\lra {}^{+}C^{*}(N_{3})\ra 0\)
\item[(iii)] \(0\ra D^{*}(N_{1})\lra D^{*}(N_{2})\lra D^{*}(N_{3})\ra 0\) \textup{(}termwise split exact\textup{)}
\end{enumerate}
\end{lem}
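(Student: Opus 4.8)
The plan is a relative Horseshoe Lemma, organised in three zones: the approximation (resp.\ hull) level in degree \(0\), a ``resolution'' part in negative degrees, and a ``coresolution'' part in positive degrees. Parts (i)--(iii) are obtained by splicing, and in each case the long exact cohomology sequence of the resulting exact sequence of complexes recovers \(\varepsilon\), because \({}^{-}C^{*}\), \({}^{+}C^{*}\) and \(D^{*}\) are exact off degree \(0\) with \(\cH^{0}\) the module being resolved. Throughout I use the consequences of AB1--3 collected in \S\ref{subs.CCM}: \(\cat{D}=\cat{X}\cap\cat{X}^{\perp}\) and \(\hat{\cat{D}}=\hat{\cat{X}}\cap\cat{X}^{\perp}\), so \(\xt{>0}{\cat{A}}{\cat{X}}{\hat{\cat{D}}}=0\) and, by dimension shift along a finite \(\cat{X}\)-resolution, \(\xt{>0}{\cat{A}}{\hat{\cat{X}}}{\hat{\cat{D}}}=0\); that \(\xt{>0}{\cat{A}}{\cat{X}}{\cat{D}}=0\) (AB3); and that \(\cat{X}\) (AB1) and \(\hat{\cat{D}}\) are closed under extensions.

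\emph{Degree \(0\) step.} Choose \(\cat{X}\)-approximations \(0\ra L_{i}\ra M_{i}\xra{\pi_{i}}N_{i}\ra 0\) for \(i=1,3\), and let \(\eta\in\xt{1}{\cat{A}}{N_{3}}{N_{1}}\) be the class of \(\varepsilon\). Since \(\xt{2}{\cat{A}}{N_{3}}{L_{1}}=0\), the boundary map of \(\hm{}{\cat{A}}{N_{3}}{-}\) applied to \(0\ra L_{1}\ra M_{1}\ra N_{1}\ra 0\) kills \(\eta\), so \(\eta=(\pi_{1})_{*}\tilde{\eta}\) for some \(\tilde{\eta}\in\xt{1}{\cat{A}}{N_{3}}{M_{1}}\); set \(\zeta=(\pi_{3})^{*}\tilde{\eta}\in\xt{1}{\cat{A}}{M_{3}}{M_{1}}\). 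Represent \(\zeta\) by \(0\ra M_{1}\ra M_{2}\ra M_{3}\ra 0\); then \(M_{2}\in\cat{X}\) by AB1, and the identity \((\pi_{1})_{*}\zeta=(\pi_{3})^{*}\eta\) exhibits a morphism of short exact sequences from \((M_{\bullet})\) to \(\varepsilon\) with outer components \(\pi_{1},\pi_{3}\); call the middle one \(\pi_{2}\colon M_{2}\ra N_{2}\). The snake lemma makes \(0\ra L_{2}\ra M_{2}\xra{\pi_{2}}N_{2}\ra 0\) exact with \(0\ra L_{1}\ra L_{2}\ra L_{3}\ra 0\), whence \(L_{2}\in\hat{\cat{D}}\); so we have a short exact sequence of \(\cat{X}\)-approximations lying over \(\varepsilon\). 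Dually, starting from \(\hat{\cat{D}}\)-hulls \(0\ra N_{i}\ra L'_{i}\ra M'_{i}\ra 0\) and using \(\xt{2}{\cat{A}}{M'_{3}}{L'_{1}}=0\), one produces a short exact sequence of \(\hat{\cat{D}}\)-hulls over \(\varepsilon\), with \(0\ra L'_{1}\ra L'_{2}\ra L'_{3}\ra 0\) in \(\hat{\cat{D}}\) and \(0\ra M'_{1}\ra M'_{2}\ra M'_{3}\ra 0\) in \(\cat{X}\).

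\emph{Propagation.} For (i), apply the usual Horseshoe Lemma to \(0\ra L_{1}\ra L_{2}\ra L_{3}\ra 0\) with \(\cat{D}\)-resolutions: at each step the required lift exists because \(\xt{1}{\cat{A}}{\cat{D}}{\hat{\cat{D}}}=0\) (as \(\cat{D}\sbeq\cat{X}\)), one takes the middle term to be the direct sum of the outer two, and the process stops after \(\max\{\resdim{\cat{D}}{L_{1}},\resdim{\cat{D}}{L_{3}}\}\) steps; splicing this termwise-split exact sequence of \(\cat{D}\)-resolutions onto the exact sequence of \(\cat{X}\)-approximations yields (i), split in negative degrees and possibly not split at degree \(0\). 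Part (ii) is dual: embed successive \(\cat{X}\)-syzygies of \(M'_{1},M'_{2},M'_{3}\) into objects of \(\cat{D}\) by AB2, extend the embeddings across \(0\ra M'_{1}\ra M'_{2}\ra M'_{3}\ra 0\) using \(\xt{1}{\cat{A}}{\cat{X}}{\cat{D}}=0\) (AB3), take direct sums in the middle, and splice the resulting exact sequence of \(\cat{D}\)-coresolutions onto the exact sequence of \(\hat{\cat{D}}\)-hulls. For (iii), recall from \S\ref{subsec.cplx} that \(D^{*}(N_{i})\) is the splice of the two halves along its degree-\(0\) term \(D^{0}_{i}\in\cat{D}\), the object \(D\) of diagram \eqref{eq.pullpush}, i.e.\ a cogenerator embedding \(M_{i}\hra D^{0}_{i}\thr M'_{i}\); choosing \(M_{1}\hra D^{0}_{1}\), \(M_{3}\hra D^{0}_{3}\) and extending \(M_{1}\hra D^{0}_{1}\) across \(M_{1}\hra M_{2}\) (obstruction in \(\xt{1}{\cat{A}}{M_{3}}{D^{0}_{1}}=0\)) gives a monomorphism \(M_{2}\hra D^{0}_{1}\oplus D^{0}_{3}\) with cokernel \(M'_{2}\in\cat{X}\). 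Hence every term of \(D^{*}(N_{2})\) is the direct sum of the corresponding terms of \(D^{*}(N_{1})\) and \(D^{*}(N_{3})\), and the exact sequence of complexes is termwise split.

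\emph{Main obstacle.} The only nonformal point is the degree-\(0\) step: the naive direct-sum Horseshoe fails there because \(\xt{1}{\cat{A}}{\cat{X}}{\hat{\cat{X}}}\) need not vanish, so one cannot in general lift \(M_{3}\ra N_{3}\) through \(N_{2}\thr N_{3}\). The remedy is to transport the extension class \(\eta\) itself and use that the obstruction to pulling it back to \(\xt{1}{\cat{A}}{M_{3}}{M_{1}}\) lies in \(\xt{2}{\cat{A}}{N_{3}}{L_{1}}=0\). The remaining work — assembling the block-triangular differentials of the spliced complexes and checking that the gluing maps respect the short exact sequences — is routine, along the lines of Lemma \ref{lem.res}.
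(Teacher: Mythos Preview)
Your claim that \(\xt{>0}{\cat{A}}{\hat{\cat{X}}}{\hat{\cat{D}}}=0\) is false, and with it the degree-\(0\) step for the \(\cat{X}\)-approximation. Dimension shift along an \(\cat{X}\)-resolution of length \(r\) only gives \(\xt{i}{\cat{A}}{N}{L}=0\) for \(i>r\), not for all \(i>0\). Concretely, in Example~\ref{ex.MCMapprox} with \(A\) Cohen--Macaulay local of dimension \(2\) and \(L_{1}=\omega_{A}\in\cat{D}\), \(N_{3}=k\in\hat{\cat{X}}\), one has \(\xt{2}{A}{k}{\omega_{A}}\cong k\neq 0\). So your lift \(\eta\mapsto\tilde{\eta}\in\xt{1}{\cat{A}}{N_{3}}{M_{1}}\) via \((\pi_{1})_{*}\) is not justified.

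The fix is to swap the order of the two operations: first pull back, then lift. Set \(\eta'=\pi_{3}^{*}\eta\in\xt{1}{\cat{A}}{M_{3}}{N_{1}}\) (no obstruction). Now apply \(\hm{}{\cat{A}}{M_{3}}{-}\) to \(0\ra L_{1}\ra M_{1}\ra N_{1}\ra 0\): since \(M_{3}\in\cat{X}\) and \(L_{1}\in\hat{\cat{D}}\sbeq\cat{X}^{\perp}\), one has \(\xt{2}{\cat{A}}{M_{3}}{L_{1}}=0\), so \((\pi_{1})_{*}\colon\xt{1}{\cat{A}}{M_{3}}{M_{1}}\ra\xt{1}{\cat{A}}{M_{3}}{N_{1}}\) is surjective and \(\eta'\) lifts to \(\zeta\). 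From here your argument goes through verbatim. This is exactly the route the paper takes (the \(3{\times}3\) ``horseshoe'' it cites amounts to forming the pullback of \(\varepsilon\) along \(\pi_{3}\) and then lifting along \(\pi_{1}\)). Your hull-side step already uses the correct vanishing \(\xt{2}{\cat{A}}{M'_{3}}{L'_{1}}=0\), and the propagation in negative and positive degrees, as well as the termwise-split construction for (iii), agree with the paper.
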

\begin{proof}
Choose \(\cat{X}\)-approximations \(L_{i}\ra M_{i}\ra N_{i}\) for \(i=1,3\). There is a \(3{\times}3\) commutative diagram of \(6\) short exact sequences which extends the ``horseshoe'' diagram; cf.\ \cite[1.12.11]{has:00}. One obtains an \(\cat{X}\)-approximation of \(N_{2}\) and short exact sequences \(m:M_{1}\ra M_{2}\ra M_{3}\) and \(L_{1}\ra L_{2}\ra L_{3}\) in \(\cat{X}\) and \(\hat{\cat{D}}\) respectively since both categories are closed by extensions (by AB1 and \cite[3.8]{aus/buc:89}). If \(D^{*}_{i}[1]\xra{\eta_{i}} L_{i}\) are finite \(\cat{D}\)-resolutions then since \(\xt{1}{\cat{A}}{D^{-1}_{3}}{L_{1}}=0\) there is a lifting \(\tilde{\eta}_{3}:D^{-1}_{3}\ra L_{2}\) of \(\eta_{3}\) which combined with \(\eta_{1}\) gives \(\eta_{2}:D^{-1}_{1}\amalg D^{-1}_{3}\ra L_{2}\). The kernels of the resulting arrows between short exact sequences give a short exact sequence of objects in \(\hat{\cat{D}}(S)\). The argument is repeated. Splicing with \(m\) in degree zero the short exact sequence of \({}^{-}C^{*}\)-resolutions in (i) is obtained. 

Choose short exact sequences \(M_{i}\ra D^{0}_{i}\ra M_{i}'\) for \(i=1,3\) as in AB2. Since \(\xt{1}{\cat{A}}{M_{3}}{D^{0}_{1}}=0\) there is an extension to an arrow of short exact sequences from \(m\) to \(D^{0}_{1}\ra D^{0}_{2}\ra D^{0}_{3}\) with \(D^{0}_{2}=D^{0}_{1}\amalg D^{0}_{3}\) and \(M_{2}':=\coker(M_{2}\ra D^{0}_{2})\in \cat{X}\) by AB1. Repeated application of this argument gives a short exact sequence of \(\cat{D}\)-coresolutions and splicing with the sequences in (i) gives (iii). Pushout of \(M_{i}\ra D^{0}_{i}\ra M_{i}'\) along \(M_{i}\ra N_{i}\) gives a short exact sequence of \(\hat{\cat{D}}\)-hulls and splicing with \(D^{1}_{i}\ra D^{2}_{i}\ra\dots\) gives (ii).
\end{proof}
\subsection{Base change}
The main tool for reducing properties to the fibres in a flat family will be the base change theorem.
We follow the quite elementary and general approach of A.\ Ogus and G.\ Bergman \cite{ogu/ber:72}.
\begin{defn}
Let \(h:S\ra T\) be a ring homomorphism and \(I\) an \(S\)-module.
Let \(F\) be an \(S\)-linear functor of some additive subcategory of \(\cat{Mod}_{S}\) to \(\cat{Mod}_{T}\). Then the \emph{exchange map \(e_{I}\) for \(F\)} is defined as the \(T\)-linear map \(e_{I}:F(S)\ot_{S}I\ra F(I)\) given by \(\xi\ot u\mapsto F(u)(\xi)\) where we consider \(u\) as the multiplication map \(u:S\ra I\). Let \(\mSpec T\) denote the set of closed points in \(\Spec T\).
\end{defn}
\begin{prop}\label{prop.nakayama}
Let \(h:S\ra T\) be a ring homomorphism with \(S\) noetherian\textup{.} Suppose \(\{F^{q}:\cat{mod}_{S}\ra \cat{mod}_{T}\}_{q\geq 0}\) is an \(h\)-linear cohomological \(\delta\)-functor\textup{.} 
\begin{enumerate}
\item[(i)] If the exchange map \(e_{S/\fr{n}}^{q}:F^{q}(S)\ot_{S}S/\fr{n}\ra F^{q}(S/\fr{n})\) is surjective for all \(\fr{n}\) in \(Z=\im\{\mSpec T\ra\Spec S\}\)\textup{,} then \(e_{I}^{q}:F^{q}(S)\ot_{S}I\ra F^{q}(I)\) is an isomorphism for all \(I\) in \(\cat{mod}_{S}\)\textup{.} 
\item[(ii)] If \(e_{S/\fr{n}}^{q}\) is surjective for all \(\fr{n}\) in \(Z\)\textup{,} then \(e_{I}^{q-1}\) is an isomorphism for all \(I\) in \(\cat{mod}_{S}\) if and only if \(F^{q}(S)\) is \(S\)-flat\textup{.}
\end{enumerate}
\end{prop}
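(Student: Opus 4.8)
The plan is to compare $F^q$ with the right exact $S$-linear functor $G^q:=F^q(S)\otimes_S(-)$ on $\cat{mod}_S$, the exchange maps assembling into a natural transformation $e^q\colon G^q\to F^q$ whose value on $S$ is the canonical isomorphism and which, by additivity, is an isomorphism on every finite free module. I would first isolate two elementary diagram chases that use nothing beyond right exactness of $G^q$ and half exactness of each $F^q$: \emph{(a)} for a short exact sequence $0\to M'\to M\to M''\to 0$ in $\cat{mod}_S$, surjectivity of $e^q_{M'}$ and of $e^q_{M''}$ forces surjectivity of $e^q_M$; and \emph{(b)} for an exact sequence $0\to K\to S^{\oplus n}\to I\to 0$, surjectivity of $e^q_K$ forces injectivity of $e^q_I$. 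Since $S$ is noetherian, syzygies of finite modules stay finite, so (a) and (b) reduce part (i) to showing that $e^q_I$ is \emph{surjective} for every finite $I$; and as both chases survive localisation, it is enough to prove this after localising the finite $T$-module $\coker(e^q_I)$ at an arbitrary point $\fr{M}$ of $\mSpec T$.

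For part (i) I would argue by contradiction using a noetherian (maximal-annihilator) induction. If some $e^q_{I_0}$ is not surjective, then $\coker(e^q_{I_0})$ is a nonzero finite $T$-module, so it has nonzero stalk at some $\fr{M}\in\mSpec T$; put $\fr{n}=h^{-1}(\fr{M})\in Z$. Among finite $S$-modules $M$ with $\coker(e^q_M)_{\fr{M}}\neq 0$ choose one with $\fr{p}:=\ann_S M$ maximal. Using the sequence $0\to sM\to M\to M/sM\to 0$ together with (a) one checks that $\fr{p}$ is prime; then, writing $M$ as an extension of a torsion $S/\fr{p}$-module by a finite free $S/\fr{p}$-module and invoking (a) and additivity again, one reduces to $M=S/\fr{p}$. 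Now $F^q(S/\fr{p})$ and $F^q(S)\otimes_S S/\fr{p}$ are modules over $T/\fr{p}T$, so the fact that $\fr{M}$ lies in the support of $\coker(e^q_{S/\fr{p}})$ forces $\fr{p}T\subseteq\fr{M}$, i.e.\ $\fr{p}\subseteq\fr{n}$; the case $\fr{p}=\fr{n}$ contradicts the hypothesis. If $\fr{p}\subsetneq\fr{n}$, pick $t\in\fr{n}\setminus\fr{p}$ and apply $F^q$ and $G^q$ to $0\to S/\fr{p}\xra{\,t\,}S/\fr{p}\to S/(\fr{p}+(t))\to 0$: since $\ann_S(S/(\fr{p}+(t)))\supsetneq\fr{p}$, maximality of $\fr{p}$ makes $e^q_{S/(\fr{p}+(t))}$ surjective at $\fr{M}$, and a short chase then shows $\coker(e^q_{S/\fr{p}})_{\fr{M}}=h(t)\cdot\coker(e^q_{S/\fr{p}})_{\fr{M}}$. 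As $h(t)$ lies in the maximal ideal of the local ring $T_{\fr{M}}$ and $\coker(e^q_{S/\fr{p}})_{\fr{M}}$ is finite over $T_{\fr{M}}$, Nakayama forces its vanishing, a contradiction. Hence $e^q_I$ is surjective, and then an isomorphism, for every finite $I$.

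Part (ii) I would deduce from part (i). Feeding the hypothesis of (ii) into part (i) in degree $q+1$ shows that $e^{q+1}$ is a natural isomorphism, so $F^{q+1}\cong F^{q+1}(S)\otimes_S(-)$ and every $F^{q+1}(\varphi)$ is conjugate to $F^{q+1}(S)\otimes_S\varphi$. For the ``if'' direction: flatness of $F^{q+1}(S)$ makes $F^{q+1}(K\hookrightarrow S^{\oplus n})$ injective for $0\to K\to S^{\oplus n}\to I\to 0$, so the long exact sequence forces the connecting map $F^q(I)\to F^{q+1}(K)$ to vanish, whence $F^q(S^{\oplus n})\twoheadrightarrow F^q(I)$ and $e^q_I$ is surjective (and likewise for $K$), hence an isomorphism by (a) and (b). For the ``only if'' direction: assuming all $e^q_I$ are isomorphisms and fixing an ideal $\fr{a}$, the map $F^q(S)\to F^q(S/\fr{a})$ in the long exact sequence of $0\to\fr{a}\to S\to S/\fr{a}\to 0$ becomes, via $e^q_S$ and $e^q_{S/\fr{a}}$, the canonical surjection $F^q(S)\to F^q(S)\otimes_S S/\fr{a}$; hence the next connecting map is zero and $F^{q+1}(\fr{a})\to F^{q+1}(S)$ is injective. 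Identifying this with $F^{q+1}(S)\otimes_S(\fr{a}\hookrightarrow S)$ gives $\Tor_{1}^{S}(F^{q+1}(S),S/\fr{a})=0$ for all $\fr{a}$, i.e.\ $F^{q+1}(S)$ is flat.

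The step I expect to be the main obstacle is the descent inside part (i): upgrading surjectivity of $e^q_{S/\fr{n}}$ at the maximal ideals $\fr{n}\in Z$ to surjectivity of $e^q_{S/\fr{p}}$ for the possibly much smaller primes $\fr{p}\subseteq\fr{n}$. What makes it go through, and what explains the precise role of $Z=\im\{\mSpec T\to\Spec S\}$ rather than all of $\Spec S$, is that $F^q$ carries $\cat{mod}_{S/\fr{p}}$ into $T/\fr{p}T$-modules — so a nonzero cokernel is detected at a closed point of $T$ sitting over a point of $Z$, and $F^q(S/\fr{p})$ vanishes as soon as $\fr{p}T=T$ — combined with the multiplication-by-$t$ sequence and Nakayama over the local ring $T_{\fr{M}}$.
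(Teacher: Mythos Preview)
The paper does not supply its own proof of this proposition: it is stated as a consequence of the approach of Ogus and Bergman \cite{ogu/ber:72}, and the text moves directly to the remark and examples following it. Your argument is a correct and complete rendition of precisely that Ogus--Bergman proof (``Nakayama's lemma for half-exact functors''): the two diagram chases (a) and (b), the maximal-annihilator induction reducing to $S/\fr{p}$ with $\fr{p}$ prime, the observation that $F^{q}(S/\fr{p})$ is a $T/\fr{p}T$-module so that a nonvanishing cokernel at $\fr{M}\in\mSpec T$ forces $\fr{p}\subseteq\fr{n}$, the multiplication-by-$t$ sequence for $t\in\fr{n}\setminus\fr{p}$, and the Nakayama conclusion over $T_{\fr{M}}$. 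Your derivation of (ii) from (i) via the identification $F^{q+1}\cong F^{q+1}(S)\ot_{S}(-)$ and the $\Tor_{1}$ criterion for flatness is likewise the standard one. There is nothing to correct.
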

Note that if the \(F^{q}\) in addition extend to functors of all \(S\)-modules \(F^{q}:\cat{Mod}_{S}\ra\cat{Mod}_{T}\) which commute with direct limits, then the conclusions are valied for all \(I\) in \(\cat{Mod}_{S}\).
\begin{ex}\label{ex.nakcplx}
Suppose \(S\) and \(T\) are noetherian. Let \(K^{*}:\, K^{0}\ra K^{1}\ra\dots\) be a complex of \(S\)-flat and finite \(T\)-modules. Define \(F^{q}:\cat{mod}_{S}\ra\cat{mod}_{T}\) by \(F^{q}(I)=\cH^{q}(K^{*}\ot_{S}I)\). Then \(\{F^{q}\}_{q\geq 0}\) is an \(h\)-linear cohomological \(\delta\)-functor which extends to all \(S\)-modules and commutes with direct limits.
\end{ex}
\begin{ex}\label{ex.nakayama}
Suppose \(S\) and \(T\) are noetherian. Let \(M\) and \(N\) be finite \(T\)-modules with \(N\) \(S\)-flat. Then the functors \(F^{q}:\cat{mod}_{S}\ra\cat{mod}_{T}\) defined by \(F^{q}(I)=\xt{q}{T}{M}{N\ot_{S}I}\) for \(q\geq 0\) give an \(h\)-linear cohomological \(\delta\)-functor which extends to all \(S\)-modules and commutes with direct limits.
\end{ex}
Let \(S\ra T\) and \(S\ra S'\) be ring homomorphisms, \(M\) a \(T\)-module, \(T'=T\ot_{S}S'\) and \(N'\) a \(T'\)-module. Then there is a change of rings spectral sequence 
\begin{equation}\label{eq.ss}
\cE_{2}^{p,q}=\xt{q}{T'}{\tor{S}{p}{M}{S'}}{N'}\,\Ra\,\xt{p+q}{T}{M}{N'}
\end{equation} 
which, in addition to the isomorphism \(\hm{}{T'}{M\ot_{S}S'}{N'}\cong\hm{}{T}{M}{N'}\), gives edge maps \(\xt{q}{T'}{M\ot_{S}S'}{N'}\ra\xt{q}{T}{M}{N'}\) for \(q>0\) which are isomorphisms too if \(M\) (or \(S'\)) is \(S\)-flat. If \(I'\) is an \(S'\)-module we can compose the exchange map \(e^{q}_{I'}\) (regarding \(I'\) as \(S\)-module) with the inverse of this edge map for \(N'=N\ot_{S}I'\) and obtain a map \(c^{q}_{I'}\) of \(T'\)-modules 
\begin{equation}\label{eq.basechange}
c^{q}_{I'}:\xt{q}{T}{M}{N}\ot_{S}I'\ra\xt{q}{T'}{M\ot_{S}S'}{N\ot_{S}I'}\,.
\end{equation}
\begin{rem}
This is the base change map (in the affine case) considered by A.\ Altman and S.\ Kleiman, their conditions are slightly different; see \cite[1.9]{alt/kle:80}.
\end{rem}
We will use the following geometric notation. Suppose \(h:S\ra T\) is a ring homomorphism, \(M\) is a \(T\)-module and \(s\) is a point in \(\Spec S\) with residue field \(k(s)\). Then \(M_{s}\) denotes the fibre \(M\ot_{S}k(s)\) of \(M\) at \(s\) with its natural \(T_{s}=T\ot_{S}k(s)\)-module structure. Now Proposition \ref{prop.nakayama} implies the following:
\begin{cor}\label{cor.xtdef}
Suppose \(S\ra T\) and \(S\ra S'\) are homomorphisms of noetherian rings\textup{,} \(M\) and \(N\) are finite \(T\)-modules\textup{,} \(Z=\im\{\mSpec T\ra\Spec S\}\) and \(q\) is an integer\textup{.} Assume that \(M\) and \(N\) are \(S\)-flat\textup{.}
\begin{enumerate}
\item[(i)] If\, \(\xt{q+1}{T_{s}}{M_{s}}{N_{s}}=0\) for all \(s\) in \(Z\)\textup{,} then \(c^{q}_{I'}\) in \eqref{eq.basechange} is an isomorphism for all \(S'\)-modules \(I'\)\textup{.}
\item[(ii)] If in addition \(\xt{q-1}{T_{s}}{M_{s}}{N_{s}}=0\) for all \(s\in Z\)\textup{,} then \(\xt{q}{T}{M}{N}\) is \(S\)-flat\textup{.}
\end{enumerate}
\end{cor}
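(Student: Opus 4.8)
The plan is to apply Proposition \ref{prop.nakayama} to the $\delta$-functor from Example \ref{ex.nakayama}, but with a twist: we must pass through the base change $S\to S'$ to reduce the $\Ext$ over $T'$ to $\Ext$ over $T$, and then feed the resulting statement over $T$ into Proposition \ref{prop.nakayama}. First I would set $F^{q}(I)=\xt{q}{T}{M}{N\ot_{S}I}$ for $I$ in $\cat{mod}_{S}$; by Example \ref{ex.nakayama} this is an $h$-linear cohomological $\delta$-functor extending to all $S$-modules and commuting with direct limits, so by the note after Proposition \ref{prop.nakayama} its conclusions will apply to arbitrary $S$-modules $I$, in particular to $S'$ and to any $S'$-module $I'$ viewed as an $S$-module. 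The key point to check is that the hypothesis $\xt{q+1}{T_{s}}{M_{s}}{N_{s}}=0$ for all $s\in Z$ forces the exchange map $e^{q}_{S/\fr{n}}\colon F^{q}(S)\ot_{S}S/\fr{n}\to F^{q}(S/\fr{n})$ to be surjective for every $\fr{n}$ in $Z=\im(\mSpec T\to\Spec S)$, since that is exactly the hypothesis of Proposition \ref{prop.nakayama}(i), respectively (ii).

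For that verification I would argue as follows. Fix $\fr{n}\in Z$ and let $s$ be the corresponding point of $\Spec S$, so $k(s)=S/\fr{n}$ (up to the appropriate residue field; one reduces to the residue field by a further flat base change $S/\fr{n}\to k(s)$, or simply works with $I=S/\fr{n}$ and uses that $F^{q+1}(S/\fr{n})$ computes $\xt{q+1}{T\ot_{S}S/\fr{n}}{M\ot_{S}S/\fr{n}}{N\ot_{S}S/\fr{n}}$ via the edge-map isomorphism of \eqref{eq.ss}, valid because $M$ is $S$-flat when $q>0$ and for $q=0$ the $\Hom$ comparison is unconditional). The cokernel of $e^{q}_{S/\fr{n}}$ injects into $F^{q+1}(S)\ot_{S}(\text{something})$-type terms, but more directly: by Proposition \ref{prop.nakayama} applied to the truncated situation, surjectivity of $e^{q}_{S/\fr{n}}$ is implied once we know $F^{q+1}(S/\fr{n})$ is controlled — and by \eqref{eq.ss} and $S$-flatness of $M$ (for $q+1>0$, which holds) together with $S$-flatness of $N$, we have $F^{q+1}(S/\fr{n})\cong\xt{q+1}{T_{s}}{M_{s}}{N_{s}}$, which vanishes by hypothesis. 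Hence $e^{q}_{S/\fr{n}}$ is surjective for all $\fr{n}\in Z$, and Proposition \ref{prop.nakayama}(i) gives that $e^{q}_{I'}\colon F^{q}(S)\ot_{S}I'\to F^{q}(I')$ is an isomorphism for every $S$-module $I'$.

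It remains to identify this with the base change map $c^{q}_{I'}$ of \eqref{eq.basechange} and to deduce the two conclusions. For an $S'$-module $I'$, the target $F^{q}(I')=\xt{q}{T}{M}{N\ot_{S}I'}$ receives the inverse edge map from $\xt{q}{T'}{M\ot_{S}S'}{N\ot_{S}I'}$, which is an isomorphism because $M$ is $S$-flat (for $q>0$; for $q=0$ it is the unconditional $\Hom$ comparison). Composing, $e^{q}_{I'}$ becomes precisely $c^{q}_{I'}$ by the very definition of $c^{q}_{I'}$ preceding \eqref{eq.basechange}, proving (i). For (ii), we additionally assume $\xt{q-1}{T_{s}}{M_{s}}{N_{s}}=0$ for all $s\in Z$; the same argument as above with $q$ replaced by $q-1$ shows $e^{q}_{S/\fr{n}}$ (the one for level $q$, i.e.\ involving $F^{q}$) — wait, more precisely it shows $e^{q-1}_{S/\fr{n}}$ is surjective, hence by Proposition \ref{prop.nakayama}(i) already an isomorphism, and then Proposition \ref{prop.nakayama}(ii) applied with the roles $q\rightsquigarrow q-1$, $q+1\rightsquigarrow q$ says: since $e^{q-1}_{I}$ is an isomorphism for all $I$ and $e^{q}_{S/\fr{n}}$ is surjective for all $\fr{n}\in Z$ (already established from the hypothesis of (i)), it follows that $F^{q}(S)=\xt{q}{T}{M}{N}$ is $S$-flat.

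The main obstacle I anticipate is bookkeeping around the change-of-rings spectral sequence \eqref{eq.ss}: one must be careful that the edge-map isomorphism $\xt{q}{T'}{M\ot_{S}S'}{N'}\xra{\ \sim\ }\xt{q}{T}{M}{N'}$ genuinely holds in all the degrees used (it does, given $S$-flatness of $M$, which is assumed precisely when $q>0$), and that the identification of the composite $e^{q}_{I'}$ followed by the inverse edge map with $c^{q}_{I'}$ is the one used to define $c^{q}_{I'}$ in the first place — so this is really a matter of unwinding definitions rather than a substantive difficulty. The only genuinely delicate point is handling the residue-field-versus-$S/\fr{n}$ discrepancy, but since $F^{q+1}$ commutes with the flat base change $S/\fr{n}\to k(s)$ and $\Spec$ of a field has a single point, the vanishing of $\xt{q+1}{T_{s}}{M_{s}}{N_{s}}$ over $k(s)$ transfers to vanishing of $F^{q+1}(S/\fr{n})$ by faithful flatness (or one simply invokes Proposition \ref{prop.nakayama} with $I=S/\fr{n}$ directly, as its statement already quantifies over $\fr{n}\in Z$ rather than over residue fields).
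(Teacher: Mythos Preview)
Your approach is the intended one and matches what the paper leaves implicit: feed the $\delta$-functor of Example~\ref{ex.nakayama} into Proposition~\ref{prop.nakayama}, use the edge map of \eqref{eq.ss} to identify $e^{q}_{I'}$ with $c^{q}_{I'}$, and read off both conclusions.

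There is one real slip in the execution. To invoke Proposition~\ref{prop.nakayama} you need surjectivity of $e^{q}_{S/\fr{n}}$ (or of $e^{q+1}_{S/\fr{n}}$) for every $\fr{n}\in Z$, and you try to get this from $F^{q+1}(S/\fr{n})=0$. But the spectral sequence only gives $F^{q+1}(k(s))\cong\xt{q+1}{T_{s}}{M_{s}}{N_{s}}=0$, and your passage from $k(s)$ back to $S/\fr{n}$ by ``faithful flatness'' fails: $S/\fr{n}\to k(s)$ is the inclusion of a domain into its fraction field, which is faithfully flat only when $\fr{n}$ is already maximal in $S$, and contractions of maximal ideals of $T$ need not be maximal in $S$. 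The fix is to localise first. For a maximal ideal $\fr{m}\subset T$ with $\fr{n}=\fr{m}\cap S$, pass to $S_{\fr{n}}\to T_{\fr{m}}$ and the induced $\delta$-functor $G^{*}$; now $Z$ collapses to $\{\fr{n}S_{\fr{n}}\}$ and $S_{\fr{n}}/\fr{n}S_{\fr{n}}=k(s)$, so $G^{q+1}(k(s))=0$ makes $e^{q+1}_{k(s)}$ trivially surjective, Proposition~\ref{prop.nakayama}(i) gives $e^{q+1}_{J}$ iso for all $J$, Nakayama over $T_{\fr{m}}$ gives $G^{q+1}(S_{\fr{n}})=0$ (hence flat), and Proposition~\ref{prop.nakayama}(ii) gives $e^{q}_{J}$ iso for all $J$. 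Since $(e^{q}_{I})_{\fr{m}}$ is the exchange map for $G$ at $I_{\fr{n}}$, varying $\fr{m}$ shows $e^{q}_{I}$ is an isomorphism. With this correction your argument for (ii) goes through as written.
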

\section{Categories fibred in additive categories}\label{sec.cof}
We will phrase our results in the language of fibred categories\footnote{We have chosen to work with rings instead of (affine) schemes. Our definition of a fibred category \(p:\cat{F}\ra\cat{C}\) reflects this choice and is equivalent to the functor of opposite categories \(p^{\text{op}}:\cat{F}^{\text{op}}\ra\cat{C}^{\text{op}}\) being a fibred category as defined in \cite{FAG}.}. We therefore briefly recall some of the basic notions, taken mainly from A.\ Vistoli's article in \cite{FAG}. Then we define quotients of categories fibred in additive categories.

Consider a category \(\cat{C}\). Given a category over \(\cat{C}\), i.e.\ a functor \(p:\cat{F}\ra\cat{C}\). 
To an object \(T\) in \(\cat{C}\), let \(\cat{F}(T)\); the \emph{fiber of \(\cat{F}\) over \(T\)}, denote the subcategory of arrows \(\phi\) in \(\cat{F}\) such that \(p(\phi)=\id_{T}\).
An arrow \(\phi_{1}:\xi\ra\xi_{1}\) in \(\cat{F}\) is \emph{cocartesian} if for any arrow \(\phi_{2}:\xi\ra\xi_{2}\) in \(\cat{F}\) and any arrow \(f_{21}:p(\xi_{1})\ra p(\xi_{2})\) in \(\cat{C}\) with \(f_{21} p(\phi_{1})=p(\phi_{2})\) there exists a unique arrow \(\phi_{21}:\xi_{1}\ra\xi_{2}\) with \(p(\phi_{21})=f_{21}\) and \(\phi_{21}\phi_{1}=\phi_{2}\). If for any arrow \(f:T\ra T'\) in \(\cat{C}\) and any object \(\xi\) in \(\cat{F}\) with \(p(\xi)=T\) there exists a cocartesian arrow \(\phi:\xi\ra \xi'\) for some \(\xi'\) with \(p(\phi)=f\), then \(\cat{F}\) (or rather \(p:\cat{F}\ra\cat{C}\)) is a \emph{fibred category}. Moreover, \(\xi'\) will be called a \emph{base change} of \(\xi\) by \(f\). If \(\xi''\) is another base change of \(\xi\) by \(f\) then \(\xi'\) and \(\xi''\) are isomorphic over \(T'\) by a unique isomorphism. We shall also say that a property \(P\) of objects in the fibres of \(\cat{F}\) is \emph{preserved by base change} if \(P(\xi)\) implies \(P(\xi')\) for any base change \(\xi'\) of \(\xi\).
A morphism of fibred categories is a functor \(F:\cat{F}_{1}\ra\cat{F}_{2}\) with \(p_{2}F=p_{1}\) such that \(\phi\) cocartesian implies \(F(\phi)\) cocartesian. If \(F\) in addition is an inclusion of categories, \(\cat{F}_{1}\) is a fibred subcategory of \(\cat{F}_{2}\).
A category with all arrows being isomorphisms is a groupoid. A fibred category \(\cat{F}\) over \(\cat{C}\) is called a \emph{category fibred in groupoids} (often abbreviated to groupoid) if all fibres \(\cat{F}(T)\) are groupoids. Then all arrows in \(\cat{F}\) are cocartesian. If all fibres \(\cat{F}(T)\) only contain identities, then \(\cat{F}\) is called a \emph{category fibred in sets}.
\begin{lem}\label{lem.gpoid}
Given functors \(F:\cat{F}\ra\cat{G}\) and \(q:\cat{G}\ra\cat{C}\) and suppose \(q\) is fibred in sets\textup{.} Then \(F\) is fibred \textup{(}in groupoids\textup{/}sets\textup{)} if and only if \(qF\) is fibred \textup{(}in groupoids\textup{/}sets\textup{).}
\end{lem}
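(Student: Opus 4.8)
The plan is to unwind the definitions and exploit the rigidity of categories fibred in sets: since $q$ is fibred in sets, for every arrow $g:S\to S'$ in $\cat{C}$ and every object $\eta$ of $\cat{G}$ with $q(\eta)=S$ there is a \emph{unique} object $g_{*}\eta$ over $S'$ with a (necessarily unique) arrow $\eta\to g_{*}\eta$ lying over $g$; moreover every arrow of $\cat{G}$ is cocartesian over $q$, and two arrows of $\cat{G}$ with the same source and the same image in $\cat{C}$ are equal. The key technical observation, which I would isolate first, is this: an arrow $\psi:\xi\to\xi_{1}$ of $\cat{F}$ is cocartesian over $qF$ if and only if it is cocartesian over $F$ and $F(\psi)$ is cocartesian over $q$ --- but the latter is automatic since $q$ is fibred in sets. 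Hence \emph{$\psi$ is cocartesian over $qF$ iff $\psi$ is cocartesian over $F$}. This equivalence is the heart of the matter; everything else is bookkeeping.

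To prove that observation, suppose first $\psi$ is cocartesian over $F$. Given $\psi':\xi\to\xi_{2}$ in $\cat{F}$ and an arrow $h:qF(\xi_{1})\to qF(\xi_{2})$ in $\cat{C}$ with $h\,qF(\psi)=qF(\psi')$, I need a unique $\chi:\xi_{1}\to\xi_{2}$ over $h$ with $\chi\psi=\psi'$. Apply $F$: in $\cat{G}$ we have $F(\psi):F(\xi)\to F(\xi_{1})$ and $F(\psi'):F(\xi)\to F(\xi_{2})$, and $q(F(\xi_{1}))\to q(F(\xi_{2}))$ is the arrow $h$. Because $q$ is fibred in sets, there is a unique arrow $g:F(\xi_{1})\to F(\xi_{2})$ in $\cat{G}$ over $h$ with $g\,F(\psi)=F(\psi')$ (existence: compose $F(\psi'):F(\xi)\to F(\xi_{2})$ factored through the unique lift of $h\,qF(\psi)$; uniqueness: fibred-in-sets forces it). Now $\psi$ cocartesian over $F$ gives a unique $\chi:\xi_{1}\to\xi_{2}$ in $\cat{F}$ with $F(\chi)=g$ and $\chi\psi=\psi'$; and $qF(\chi)=q(g)=h$, as required. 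Uniqueness of $\chi$ among arrows over $h$ with $\chi\psi=\psi'$ follows because any such $\chi'$ has $F(\chi')$ an arrow over $h$ with $F(\chi')F(\psi)=F(\psi')$, hence $F(\chi')=g=F(\chi)$ by fibred-in-sets, hence $\chi'=\chi$ by the cocartesian property over $F$. Conversely, if $\psi$ is cocartesian over $qF$ then it is cocartesian over $F$: given the lifting problem for $F$ with target arrow $g:F(\xi_{1})\to F(\xi_{2})$, set $h=q(g)$, apply the $qF$-cocartesian property to get $\chi$ over $h$ with $\chi\psi=\psi'$, and check $F(\chi)=g$ using that $q$ separates arrows out of $F(\xi_{1})$ — indeed $F(\chi)$ and $g$ both lie over $h$ and satisfy $(-)F(\psi)=F(\psi')$, so they coincide.

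With the equivalence in hand the lemma is immediate. If $qF$ is fibred: given $f:T\to T'$ in $\cat{C}$ and $\xi\in\cat{F}(T)$, a $qF$-cocartesian lift $\psi:\xi\to\xi'$ over $f$ exists, and by the equivalence $\psi$ is $F$-cocartesian over the arrow $F(\psi)$ of $\cat{G}$; so $F$ is fibred. Conversely if $F$ is fibred: given $f:T\to T'$ and $\xi$ over $T$ (via $qF$), first take a $q$-cocartesian lift $\eta:F(\xi)\to\eta'$ in $\cat{G}$ over $f$ (it exists and is unique since $q$ is fibred in sets), then an $F$-cocartesian lift $\psi:\xi\to\xi'$ of $\eta$; then $\psi$ is $qF$-cocartesian by the equivalence, so $qF$ is fibred. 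For the parenthetical refinements: $\cat{F}$ is fibred in groupoids over $qF$ iff every arrow of $\cat{F}$ is $qF$-cocartesian iff (by the equivalence) every arrow of $\cat{F}$ is $F$-cocartesian iff $\cat{F}$ is fibred in groupoids over $F$; and $\cat{F}$ is fibred in sets over $qF$ iff additionally each fibre $(qF)^{-1}(T)$ has only identities, but $(qF)^{-1}(T)=\bigsqcup_{S\in q^{-1}(T)}F^{-1}(S)$ and $q^{-1}(T)$ is discrete, so this is exactly the condition that each $F$-fibre is a one-object (resp.\ trivial) category --- i.e.\ that $F$ is fibred in sets. The only subtlety to watch, which I'd flag as the main obstacle, is keeping the two levels of cocartesian-ness straight and using at the right moments that arrows of $\cat{G}$ lying over a fixed arrow of $\cat{C}$ and sharing a source are \emph{equal}; once that is used systematically the argument is formal.
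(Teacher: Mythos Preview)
Your argument is correct in substance, and since the paper states this lemma without proof you have supplied what was omitted. The key observation---that an arrow of \(\cat{F}\) is cocartesian over \(qF\) if and only if it is cocartesian over \(F\)---is exactly the right thing to isolate, and your verification of it is clean; the essential ingredient, which you use correctly throughout, is that in a category fibred in sets any two arrows with the same source lying over the same base arrow are equal.

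There is one expositional slip you should fix. In the direction ``\(qF\) fibred \(\Rightarrow\) \(F\) fibred'' you begin with an arrow \(f:T\to T'\) in \(\cat{C}\), but to show \(F\) is fibred you must begin with an arbitrary arrow \(g:S\to S'\) in \(\cat{G}\) and an object \(\xi\) with \(F(\xi)=S\), and produce an \(F\)-cocartesian lift of \(g\) itself. Your construction produces an \(F\)-cocartesian \(\psi\) lying over \emph{some} arrow \(F(\psi)\) of \(\cat{G}\), not a priori the given \(g\). The repair is immediate: set \(f=q(g)\), take your \(qF\)-cocartesian lift \(\psi\) over \(f\), and then observe that \(F(\psi)\) and \(g\) share the source \(S\) and both lie over \(f\), hence coincide since \(q\) is fibred in sets. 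With that one sentence added the proof is complete.
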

If \(T\) is an object in a category \(\cat{C}\) let \(\cat{C}/T\) denote the comma category of arrows to \(T\). Then the forgetful functor \(\cat{C}/T\ra\cat{C}\) is fibred in sets. If \(p:\cat{F}\ra\cat{C}\) is fibred (in groupoids/sets), \(\xi\) is an object in \(\cat{F}\) and \(T=p(\xi)\), then there is a natural functor \(p_{\xi}:\cat{F}/\xi\ra\cat{C}/T\). The composition \(\cat{F}/\xi\ra\cat{F}\ra\cat{C}\) is clearly fibred (in groupoids/sets) and hence \(\cat{F}/\xi\ra \cat{C}/T\) is fibred (in groupoids/sets) by Lemma \ref{lem.gpoid}.
If \(p:\cat{F}\ra\cat{C}\) is a functor and \(\cat{C}'\) is a subcategory of \(\cat{C}\) we can define the \emph{restriction} \(p':\cat{F}_{\vert \cat{C}'}\ra\cat{C}'\) of \(\cat{F}\) to \(\cat{C}'\) by picking for \(\cat{F}_{\vert \cat{C}'}\) the objects and morphisms in \(\cat{F}\) that \(p\) takes into \(\cat{C}'\). It follows that \(\cat{F}_{\vert \cat{C}'}\) is fibred (in groupoids/sets) if \(\cat{F}\) is.

The composition of two cocartesian arrows is cocartesian and isomorphisms are cocartesian. Hence the subcategory \(\cat{F}_{\text{coca}}\) of cocartesian arrows in a fibred category \(\cat{F}\) over \(\cat{C}\) is fibred in groupoids. If \(\cat{F}\) is fibred in groupoids there is an associated category fibred in sets \(\bar{\cat{F}}\ra\cat{C}\) defined by identifying all isomorphic objects in all fibres \(\cat{F}(T)\) and identifying arrows accordingly. If \(\cat{F}\) is fibred in sets one defines a functor \(F:\cat{C}\ra\Sets\) by \(F(T):=\cat{F}(T)\) and \(F(f):F(T)\ra F(T')\) is defined by \(F(f)(\xi):=\eta_{\xi,f}\) where \(\phi_{\xi,f}:\xi\ra \eta_{\xi,f}\) is the (in this case) unique cocartesian lifting of \(f\). From a functor \(G:\cat{C}\ra\Sets\) one defines a category fibred in sets, and these two operations are inverse up to natural equivalences. 
\begin{defn}\label{defn.addkof}
An \emph{additive \textup{(}abelian\textup{)} category \(\cat{F}\) over \(\cat{C}\)} is a functor \(p:\cat{F}\ra\cat{C}\) such that:
\begin{enumerate}
\item[(i)] The fibre \(\cat{F}(T)\) is an additive (abelian) category for all objects \(T\) in \(\cat{C}\).
\item[(ii)] For all objects \(\xi_{1}\) and \(\xi_{2}\) in \(\cat{F}\) and arrows \(f:p(\xi_{1})\ra p(\xi_{2})\) in \(\cat{C}\), 
\begin{equation*}
\hm{}{f}{\xi_{1}}{\xi_{2}}:=\{\phi\in\hm{}{\cat{F}}{\xi_{1}}{\xi_{2}}\,\vert\,p(\phi)=f\}
\end{equation*}
is an abelian group, and composition of arrows 
\begin{equation*}
\hm{}{f_{2}}{\xi_{2}}{\xi_{3}}\times\hm{}{f_{1}}{\xi_{1}}{\xi_{2}}\ra\hm{}{f_{2}f_{1}}{\xi_{1}}{\xi_{3}} 
\end{equation*}
is bilinear.
\end{enumerate}
A morphism \(F:\cat{F}_{1}\ra\cat{F}_{2}\) of additive (abelian) categories over \(\cat{C}\) is a \emph{linear} functor \(F\) over \(\cat{C}\), i.e.\ which gives linear maps of \(\Hom\)-groups. If in addition \(F\) is an inclusion of categories then \(\cat{F}_{1}\) is an additive (abelian) subcategory of \(\cat{F}_{2}\) over \(\cat{C}\). A category \(\cat{F}\) over \(\cat{C}\) is \emph{fibred in additive \textup{(}abelian\textup{)} categories}, abbreviated by FAd (FAb), if \(\cat{F}\) is both fibred and additive (abelian) over \(\cat{C}\). Morphisms should be linear and preserve cocartesian arrows. A FAd subcategory is a morphism of FAds which is an inclusion of categories. For \(i=1,2\) let \(\cat{A}_{i}\) be a FAb over \(\cat{C}\) and \(\cat{X}_{i}\sbeq\cat{A}_{i}\) a FAd subcategory such that the fibre categories \(\cat{X}_{i}(T)\) are exact. Then a morphism of FAds \(F:\cat{X}_{1}\ra\cat{X}_{2}\) is \emph{exact} if \(F\) preserves short exact sequences for all the fibre categories.
\end{defn}
Note that in a FAd finite (co)products in the fibres are preserved by base change.
Given a FAd subcategory \(\cat{D}\sbeq\cat{F}\). Two arrows \(\phi_{1}\) and \(\phi_{2}\) in \(\cat{F}\) are \(\cat{D}\)-equivalent if \(p(\phi_{1})=p(\phi_{2})\) and \(\phi_{1}-\phi_{2}\) factors through an object in \(\cat{D}\). Write \(\phi_{1}\sim\phi_{2}\).
Define the quotient category \(\cat{F}/\cat{D}\) over \(\cat{C}\) to have the same objects as \(\cat{F}\) and \(\hm{}{\cat{F}/\cat{D}}{\xi_{1}}{\xi_{2}}:=\hm{}{\cat{F}}{\xi_{1}}{\xi_{2}}/\sim\). The natural map to \(\cat{C}\) makes \(\cat{F}/\cat{D}\) an additive category over \(\cat{C}\) and the natural functor \(\cat{F}\ra\cat{F}/\cat{D}\) is linear over \(\cat{C}\).
\begin{lem}\label{lem.keystable}
If \(\phi_{1}:\xi\ra\xi_{1}\) is cocartesian in \(\cat{F}\) and \(\phi:\xi_{1}\ra\xi_{2}\) is any arrow such that \(\phi\phi_{1}\sim 0\) then \(\phi\sim 0\)\textup{.}
\end{lem}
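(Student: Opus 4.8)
The plan is to unwind the definition of $\cat{D}$-equivalence and then exhibit $\phi$ itself as a composite through an object of $\cat{D}$ lying over $p(\xi_2)$, obtained by base-changing along $\phi_1$ the data witnessing $\phi\phi_1\sim 0$. Write $T_i=p(\xi_i)$ (with $\xi=\xi_0$), $f=p(\phi_1)\colon T_0\ra T_1$ and $g=p(\phi)\colon T_1\ra T_2$. By definition $\phi\phi_1\sim 0$ means that $\phi\phi_1=\beta\alpha$ for some object $D$ of $\cat{D}$ and arrows $\alpha\colon\xi\ra D$, $\beta\colon D\ra\xi_2$ in $\cat{F}$; put $a=p(\alpha)$, $b=p(\beta)$ and $T_D=p(D)$, so that $ba=gf$ in $\cat{C}$.

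First I would move $D$ over $T_2$. Since $\cat{D}$ is a FAd subcategory it is fibred over $\cat{C}$ and the inclusion $\cat{D}\hra\cat{F}$ preserves cocartesian arrows; hence there is an arrow $\theta\colon D\ra\wt D$ with $p(\theta)=b$, cocartesian both in $\cat{D}$ and in $\cat{F}$, and with $\wt D$ an object of the fibre $\cat{D}(T_2)$. Applying the universal property of the cocartesian arrow $\theta$ to $\beta\colon D\ra\xi_2$ (which lies over $b=\id_{T_2}\circ b$) yields a factorization $\beta=\beta_1\theta$ with $\beta_1\colon\wt D\ra\xi_2$ an arrow of the fibre $\cat{F}(T_2)$.

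Next I would use $\phi_1$ itself. The composite $\theta\alpha\colon\xi\ra\wt D$ lies over $ba=gf=g\circ f$, so the universal property of the cocartesian arrow $\phi_1$ produces a unique $\gamma\colon\xi_1\ra\wt D$ over $g$ with $\gamma\phi_1=\theta\alpha$. Then $\beta_1\gamma\colon\xi_1\ra\xi_2$ lies over $g$ and satisfies $(\beta_1\gamma)\phi_1=\beta_1\theta\alpha=\beta\alpha=\phi\phi_1$; since $\phi$ also lies over $g$ and satisfies $\phi\phi_1=\phi\phi_1$, the uniqueness clause of the cocartesian property of $\phi_1$ (applied to the arrow $\phi\phi_1\colon\xi\ra\xi_2$ and the map $g\colon T_1\ra T_2$) forces $\phi=\beta_1\gamma$. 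Thus $\phi$ factors through $\wt D\in\cat{D}$, and since trivially $p(\phi)=p(0)$ we conclude $\phi\sim 0$.

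The argument is essentially formal, but the one point that genuinely uses the hypotheses — and which I regard as the crux — is the passage from ``$\phi\phi_1$ factors through some $D\in\cat{D}$ over an arbitrary base $T_D$'' to ``$\phi\phi_1$ factors through some $\wt D\in\cat{D}$ over $T_2$''. This is precisely where one needs $\cat{D}$ to be closed under base change, i.e.\ to be a FAd subcategory rather than merely an additive subcategory of the fibres. Once $\wt D$ sits over $T_2$, the single cocartesian arrow $\phi_1$ suffices to absorb the remaining factorization, so no choice of cleavage and no reduction to the fibre of $\phi$ is needed: both factorizations come directly from universal properties, and the uniqueness part of the cocartesian property of $\phi_1$ does the final bookkeeping.
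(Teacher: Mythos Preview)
Your proof is correct and follows essentially the same approach as the paper's: base-change the witnessing object of \(\cat{D}\) to lie over \(T_2\), then use the cocartesian property of \(\phi_1\) twice (once to produce the factor \(\gamma\), once for uniqueness to identify \(\beta_1\gamma\) with \(\phi\)). The paper compresses your steps \(\theta,\beta_1,\gamma\) into the phrase ``replacing \(\delta\) with \(\delta_2\) we assume \(p(\delta)=T_2\)'' and then calls your \(\gamma\) by \(\tau\), but the argument is the same.
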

\begin{proof}
Suppose \(\phi\phi_{1}=\beta\alpha\) with \(\alpha:\xi\ra\delta\) and with \(\delta\) in \(\cat{D}\). If \(p(\beta):T'\ra T_{2}\) then since \(\cat{D}\) is a fibred subcategory there exists an arrow \(\delta\ra\delta_{2}\) which is cocartesian in \(\cat{F}\) and with \(p(\delta_{2})=p(\xi_{2})=T_{2}\). Replacing \(\delta\) with \(\delta_{2}\) we assume \(p(\delta)=T_{2}\). Since \(\phi_{1}\) is cocartesian there exists a unique arrow \(\tau:\xi_{1}\ra\delta\) with \(\tau\phi_{1}=\alpha\). Since \(\phi_{1}\) is cocartesian uniqueness implies that \(\beta\tau=\phi\).
\end{proof}
\begin{lem}\label{lem.stable}
Given a FAd subcategory \(\cat{D}\sbeq\cat{F}\) over \(\cat{C}\)\textup{,} then the quotient category \(\cat{F}/\cat{D}\) is FAd over \(\cat{C}\) and the quotient morphism \(\cat{F}\ra\cat{F}/\cat{D}\) is a morphism of FAds\textup{.}
\end{lem}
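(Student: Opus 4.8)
The plan is to unwind ``$\cat{F}/\cat{D}$ is FAd over $\cat{C}$ and $\cat{F}\ra\cat{F}/\cat{D}$ is a morphism of FAds'' into four things to verify: that $\cat{F}/\cat{D}$ is additive over $\cat{C}$, that it is fibred over $\cat{C}$, that the projection is linear over $\cat{C}$, and that the projection carries cocartesian arrows to cocartesian arrows. The first and third were recorded in the paragraph preceding the statement --- in particular, by the definition of ``additive over $\cat{C}$'', each fibre $(\cat{F}/\cat{D})(T)$ is then already an additive category with $\Hom$-groups and bilinear composition --- so the whole content is the remaining two points, and both reduce to a single claim: a cocartesian arrow $\phi_{1}:\xi\ra\xi_{1}$ of $\cat{F}$ stays cocartesian in $\cat{F}/\cat{D}$.

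First I would prove that claim. Let $\bar\phi_{2}:\xi\ra\xi_{2}$ be an arrow of $\cat{F}/\cat{D}$ and $f_{21}:p(\xi_{1})\ra p(\xi_{2})$ an arrow of $\cat{C}$ with $f_{21}\,p(\phi_{1})=p(\phi_{2})$. Choosing a representative $\phi_{2}\in\hm{}{\cat{F}}{\xi}{\xi_{2}}$, cocartesianness of $\phi_{1}$ in $\cat{F}$ gives a unique $\phi_{21}:\xi_{1}\ra\xi_{2}$ over $f_{21}$ with $\phi_{21}\phi_{1}=\phi_{2}$, and then $\bar\phi_{21}$ is a factorisation of $\bar\phi_{2}$ through $\bar\phi_{1}$ lying over $f_{21}$. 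The hard part --- and the only real obstacle of the proof --- is to check that $\bar\phi_{21}$ is independent of the choice of $\phi_{2}$ and that it is the unique such factorisation in $\cat{F}/\cat{D}$; this is precisely the role of Lemma \ref{lem.keystable}. Indeed, if $\phi_{2}'\sim\phi_{2}$ and $\phi_{21}'$ is the corresponding lift, then $\phi_{21}-\phi_{21}'$ lies over $f_{21}$ and $(\phi_{21}-\phi_{21}')\phi_{1}=\phi_{2}-\phi_{2}'\sim 0$, so $\phi_{21}\sim\phi_{21}'$ by Lemma \ref{lem.keystable}; likewise, if $\bar\psi$ lies over $f_{21}$ with $\bar\psi\bar\phi_{1}=\bar\phi_{2}$, then $(\psi-\phi_{21})\phi_{1}\sim 0$, whence $\psi\sim\phi_{21}$ again by Lemma \ref{lem.keystable}.

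Granting the claim, $\cat{F}/\cat{D}$ is fibred over $\cat{C}$: for $f:T\ra T'$ in $\cat{C}$ and $\xi$ over $T$, a cocartesian lift of $f$ at $\xi$ in $\cat{F}$ (which exists because $\cat{F}$ is fibred) projects to a cocartesian lift in $\cat{F}/\cat{D}$; and the claim is exactly the statement that the projection preserves cocartesian arrows. Together with the additive structure over $\cat{C}$ and the linearity of the projection, already in hand, this completes the verification that $\cat{F}/\cat{D}$ is FAd over $\cat{C}$ and that $\cat{F}\ra\cat{F}/\cat{D}$ is a morphism of FAds. (If one prefers to re-derive fibre additivity rather than cite it, note that the projection restricts to a linear, identity-on-objects functor $\cat{F}(T)\ra(\cat{F}/\cat{D})(T)$, which therefore preserves the zero object and finite biproducts of the additive category $\cat{F}(T)$; and any factorisation through an object of $\cat{D}$ may be rearranged, using that $\cat{D}$ is a fibred subcategory, into one through an object of $\cat{D}(T)$.)
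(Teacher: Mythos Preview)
Your proof is correct and follows essentially the same approach as the paper's: both reduce everything to the claim that a cocartesian arrow of \(\cat{F}\) remains cocartesian in \(\cat{F}/\cat{D}\), and both dispatch that claim by two applications of Lemma \ref{lem.keystable} (one for well-definedness of the induced factorisation, one for its uniqueness). The paper additionally checks independence under a change of representative of \(\phi_{1}\) as well, but this is not strictly needed once one fixes the specific cocartesian representative, as you do.
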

\begin{proof}
We first show that if \(\phi_{1}:\xi\ra\xi_{1}\) is cocartesian in \(\cat{F}\) then its image \([\phi_{1}]\) in \(\cat{F}/\cat{D}\) is cocartesian. Given \(\phi_{2}:\xi\ra\xi_{2}\) and \(\theta:\xi_{1}\ra\xi_{2}\) with \(\theta\phi_{1}=\phi_{2}\). Suppose \(\theta':\xi_{1}\ra\xi_{2}\) with \(p(\theta')=p(\theta)\) satisfies \(\theta'\phi_{1}\sim\phi_{2}\). If \(\phi=\theta'-\theta\) then \(\phi\phi_{1}\sim 0\) so by Lemma \ref{lem.keystable} \(\phi\sim 0\). Now we show that \([\theta]\) is independent of the representations of the other maps. Let \(\phi_{i}':\xi\ra\xi_{i}\) with \(\phi_{i}'\sim\phi_{i}\) and suppose (as we may) that \(\theta'\) satisfies \(\theta'\phi_{1}'=\phi_{2}'\) with \(p(\theta')=p(\theta)\). Again let \(\phi=\theta'-\theta\). Then \(0\sim\phi'_{2}-\phi_{2}=\theta'\phi'_{1}-\theta\phi_{1}=\theta'(\phi_{1}'-\phi_{1})+\phi\phi_{1}\sim\phi\phi_{1}\). By Lemma \ref{lem.keystable} \(\phi\sim 0\). Given \(f:T\ra T_{1}\) and \(\xi\) in \(\cat{F}/\cat{D}\) with \(p(\xi)=T\) there exists a cocartesian \(\phi_{1}:\xi\ra \xi_{1}\) in \(\cat{F}\) with \(p(\phi_{1})=f\) and by what we have done \([\phi_{1}]\) is cocartesian in \(\cat{F}/\cat{D}\).
\end{proof}
Note that there are in general more cocartesian arrows in \(\cat{F}/\cat{D}\) than those in the image of cocartesian arrows in \(\cat{F}\). The following lemma characterises the cocartesian arrows in the quotient category:
\begin{lem}\label{lem.stablecc}
If \(\rho\) and \(\theta\) are composable arrows in \(\cat{F}\)  with \(\rho\) cocartesian and \(\theta\) inducing an isomorphism in \(\cat{F}/\cat{D}\)\textup{,} then \([\theta\rho]\) is cocartesian in \(\cat{F}/\cat{D}\)\textup{.}
Conversely\textup{,} suppose \([\phi]:\xi_{1}\ra\xi_{2}\) is cocartesian in \(\cat{F}/\cat{D}\) over \(f:T_{1}\ra T_{2}\)\textup{.} Then for any base change
\(\rho:\xi_{1}\ra \xi_{1}^{\#}\) of \(\xi_{1}\) over \(f\) in \(\cat{F}\)\textup{,} the induced arrow \(\phi^{\#}:\xi_{1}^{\#}\ra\xi_{2}\) gives an isomorphism in \(\cat{F}/\cat{D}(T_{2})\)\textup{.} 
\end{lem}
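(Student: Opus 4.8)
The plan is to reduce both halves of the statement to three formal facts that are already on the table: in any category over \(\cat{C}\), isomorphisms are cocartesian and a composite of cocartesian arrows is cocartesian (recalled above); and, by the proof of Lemma~\ref{lem.stable}, the class \([\rho]\) in \(\cat{F}/\cat{D}\) of a cocartesian arrow \(\rho\) of \(\cat{F}\) is again cocartesian. Granting these, the first assertion is immediate: \([\rho]\) is cocartesian in \(\cat{F}/\cat{D}\) by Lemma~\ref{lem.stable}, \([\theta]\) is cocartesian there because it is an isomorphism, and since \(\cat{F}\ra\cat{F}/\cat{D}\) is a functor over \(\cat{C}\) we have \([\theta\rho]=[\theta]\circ[\rho]\), a composite of cocartesian arrows, hence cocartesian.

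For the converse I would proceed as follows. Let \([\phi]:\xi_{1}\ra\xi_{2}\) be cocartesian in \(\cat{F}/\cat{D}\) over \(f:T_{1}\ra T_{2}\) and let \(\rho:\xi_{1}\ra\xi_{1}^{\#}\) be a cocartesian arrow of \(\cat{F}\) over \(f\). First apply the cocartesian property of \(\rho\) in \(\cat{F}\) to the arrow \(\phi:\xi_{1}\ra\xi_{2}\) and to \(\id_{T_{2}}\) — legitimate since \(\id_{T_{2}}\circ f=f=p(\phi)\) — to obtain the unique \(\phi^{\#}:\xi_{1}^{\#}\ra\xi_{2}\) with \(p(\phi^{\#})=\id_{T_{2}}\) and \(\phi^{\#}\rho=\phi\); this is the arrow named in the statement, and it lies in the fibre \(\cat{F}(T_{2})\). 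By Lemma~\ref{lem.stable}, \([\rho]\) is cocartesian in \(\cat{F}/\cat{D}\), so \([\rho]:\xi_{1}\ra\xi_{1}^{\#}\) and \([\phi]:\xi_{1}\ra\xi_{2}\) are two cocartesian liftings of \(f\) out of \(\xi_{1}\) in \(\cat{F}/\cat{D}\). The usual comparison of two such liftings then yields mutually inverse arrows over \(\id_{T_{2}}\): cocartesianness of \([\rho]\) gives a unique \([\psi]:\xi_{1}^{\#}\ra\xi_{2}\) over \(\id_{T_{2}}\) with \([\psi][\rho]=[\phi]\); cocartesianness of \([\phi]\) gives a unique \([\psi']:\xi_{2}\ra\xi_{1}^{\#}\) over \(\id_{T_{2}}\) with \([\psi'][\phi]=[\rho]\); and comparing \([\psi'][\psi][\rho]=[\rho]\) with \([\id_{\xi_{1}^{\#}}][\rho]=[\rho]\) (and symmetrically for \([\psi][\psi']\)) the uniqueness clauses force \([\psi'][\psi]=[\id_{\xi_{1}^{\#}}]\) and \([\psi][\psi']=[\id_{\xi_{2}}]\). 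Finally, since \([\phi^{\#}][\rho]=[\phi^{\#}\rho]=[\phi]\) and \(p([\phi^{\#}])=\id_{T_{2}}\), uniqueness of \([\psi]\) gives \([\phi^{\#}]=[\psi]\), so \([\phi^{\#}]\) is an isomorphism in \(\cat{F}/\cat{D}(T_{2})\).

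The whole argument is formal; the one point to watch is that each auxiliary arrow (\(\phi^{\#}\), \([\psi]\), \([\psi']\), and the comparison of \([\psi'][\psi]\) and \([\psi][\psi']\) with the relevant identities) is produced over \(\id_{T_{2}}\), which is exactly what lets the uniqueness halves of the cocartesian universal properties be invoked. I expect this tracking of the base arrows to be the only real obstacle — once it is in place, no computation is needed.
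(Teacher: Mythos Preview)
Your proof is correct and follows essentially the same route as the paper's. For the first half you invoke the general facts that isomorphisms are cocartesian and composites of cocartesian arrows are cocartesian, which is slightly cleaner than the paper's direct verification of the universal property; for the converse you spell out the standard ``two cocartesian lifts of the same arrow are uniquely isomorphic'' argument in full, whereas the paper produces only the arrow \([\psi]:\xi_{2}\ra\xi_{1}^{\#}\) from cocartesianness of \([\phi]\) and then asserts \([\phi^{\#}]=[\psi]^{-1}\) (implicitly using cocartesianness of \([\rho]\) and \([\phi]\) exactly as you do).
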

\begin{proof}
If \(\rho:\xi_{1}\ra\xi_{2}\) is cocartesian and \([\theta]:\xi_{2}\ra \xi_{3}\) is an isomorphism, let \(\phi=\theta\rho\). If \(\tau:\xi_{1}\ra \xi_{4}\), \(p(\xi_{i})=T_{i}\) and there is a map \(f:T_{3}\ra T_{4}\) with \(p(\tau)=f p(\theta) p(\rho)\), then there is a unique arrow \(\mu:\xi_{2}\ra\xi_{4}\) above \(f p(\theta)\) with \(\mu\rho=\tau\). This gives the arrow \([\mu][\theta]^{-1}:\xi_{3}\ra\xi_{4}\). If \(\mu_{i}:\xi_{3}\ra\xi_{4}\) for \(i=1,2\) are two arrows with \([\mu_{i}][\phi]=[\tau]\), then \([\mu_{1}][\theta]=[\mu_{2}][\theta]\) since \([\rho]\) is cocartesian in \(\cat{F}/\cat{D}\) by Lemma \ref{lem.stable}. Since \([\theta]\) is an isomorphism, \([\mu_{1}]=[\mu_{2}]\).

Conversely, since \([\phi]\) is cocartesian there is a unique arrow \([\psi]:\xi_{2}\ra\xi_{1}^{\#}\) in \(\cat{F}/\cat{D}(T_{2})\) with \([\psi\phi]=[\rho]\). By Lemma \ref{lem.stable} \([\rho]\) is cocartesian. It follows that \([\phi^{\#}]=[\psi]^{-1}\). 
\end{proof}
\section{Cohen-Macaulay approximation in fibred categories}
Given a category \(\cat{C}\) and a category \(\cat{A}\) fibred in abelian categories over \(\cat{C}\). Base change by an \(f:T\ra T'\) in \(\cat{C}\) applied to the objects in a complex \(\dots\ra N_{d}\ra N_{d-1}\ra\dots\) in \(\cat{A}(T)\) can by Lemma \ref{lem.keystable} be uniquely extended to a complex and yield a commutative diagram where the vertical arrows are the cocartesian base change arrows:
\begin{equation*}
\xymatrix@C-0pt@R-12pt@H0pt{
\dots \ar[r] & N_{d+1} \ar[r]\ar[d] & N_{d} \ar[r]\ar[d] & N_{d-1} \ar[d]\ar[r] & \dots \\   
\dots \ar[r] & N_{d+1}^{\#} \ar[r] & N_{d}^{\#} \ar[r] & N_{d-1}^{\#} \ar[r] & \dots 
}
\end{equation*}
Similarly base change of a commutative diagram \(\Delta\) in \(\cat{A}(T)\) gives a commutative diagram \(\Delta^{\#}\) and the base change arrows give an arrow of diagrams \(\Delta\ra\Delta^{\#}\).

Let \(\cat{X}\sbeq \cat{A}\) be a FAd subcategory.
Consider the following two conditions on the pair \((\cat{A},\cat{X})\) and an object \(T\) in \(\cat{C}\).
\begin{enumerate}
\item[(BC1)] If \(\alpha:A_{1}\ra A_{2}\) is an epimorphism in \(\cat{A}(T)\) and \(f:T\ra T'\) is an arrow in \(\cat{C}\) then any base change of \(\alpha\) by \(f\) is an epimorphism in \(\cat{A}(T')\).
\item[(BC2)] Let \(\xi:0\ra A\ra B\ra M\ra 0\) be an exact sequence in \(\cat{A}(T)\) with \(M\) in \(\cat{X}(T)\) and \(f:T\ra T'\) is an arrow in \(\cat{C}\). Then any base change of \(\xi\) by \(f\) is an exact sequence in \(\cat{A}(T')\).
\end{enumerate}
The first condition would be satisfied if base change had a right adjoint. The second condition mimics flatness for all objects in \(\cat{X}(T)\). 

The following is an elementary, but essential technical consequence of BC1.
\begin{lem}\label{lem.bs}
Let \(\cat{A}\) be a category fibred in abelian categories over \(\cat{C}\) which satisfies \textup{BC1} for \(T\) in \(\cat{C}\)\textup{.} Let \(c:\dots\ra L_{n}\ra L_{n-1}\ra\dots\) be an acyclic complex in \(\cat{A}(T)\) which remains exact after a base change \(\dots\ra L_{n}^{\#}\ra L_{n-1}^{\#}\ra\dots\) of \(c\) by \(f:T\ra T'\)\textup{.} Then  base change of \(K_{n}:=\ker\{d_{n-1}:L_{n-1}\ra L_{n-2}\}\) by \(f\) is isomorphic to \(\ker d_{n-1}^{\#}\) for all \(n\)\textup{.}
\end{lem}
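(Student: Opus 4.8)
The plan is to reduce everything to the behaviour of base change on short exact sequences, using only that base change preserves epimorphisms (BC1) and that base change of a commutative diagram is again a commutative diagram (as recorded just before the statement). Write $d_{n}\colon L_{n}\ra L_{n-1}$ for the differentials of $c$, so that by definition $K_{n}=\ker d_{n-1}$. Since $c$ is acyclic, for every $n$ there is a short exact sequence $0\ra K_{n+1}\xra{\iota_{n+1}} L_{n}\xra{\pi_{n}} K_{n}\ra 0$ in $\cat{A}(T)$, where $\iota_{n}\colon K_{n}\hra L_{n-1}$ is the inclusion, $\pi_{n}\colon L_{n}\thr K_{n}$ is the canonical epimorphism onto $\im d_{n}=K_{n}$, and $d_{n}=\iota_{n}\pi_{n}$ (note $\ker\pi_{n}=\ker d_{n}=K_{n+1}$, so $\pi_{n}\iota_{n+1}=0$). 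First I would base change by $f$: the complex $c$ becomes $c^{\#}$, which is exact by hypothesis, and each of the above short exact sequences becomes a commutative diagram, hence a complex $K_{n+1}^{\#}\xra{\iota_{n+1}^{\#}} L_{n}^{\#}\xra{\pi_{n}^{\#}} K_{n}^{\#}$ in $\cat{A}(T')$ in which $\pi_{n}^{\#}$ is still an epimorphism by BC1; moreover, since base change respects composition and the zero map, $d_{n}^{\#}=\iota_{n}^{\#}\pi_{n}^{\#}$ and $\pi_{n}^{\#}\iota_{n+1}^{\#}=0$ in $\cat{A}(T')$.

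The heart of the matter is the identity $\ker d_{n}^{\#}=\ker\pi_{n}^{\#}$ in $\cat{A}(T')$. The inclusion $\ker\pi_{n}^{\#}\sbeq\ker d_{n}^{\#}$ is immediate from $d_{n}^{\#}=\iota_{n}^{\#}\pi_{n}^{\#}$. For the reverse, exactness of $c^{\#}$ at $L_{n}^{\#}$ gives $\ker d_{n}^{\#}=\im d_{n+1}^{\#}=\im(\iota_{n+1}^{\#}\pi_{n+1}^{\#})=\im\iota_{n+1}^{\#}$, the last equality because $\pi_{n+1}^{\#}$ is an epimorphism, and $\im\iota_{n+1}^{\#}\sbeq\ker\pi_{n}^{\#}$ because $\pi_{n}^{\#}\iota_{n+1}^{\#}=0$. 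Granting this, $\pi_{n}^{\#}$ is an epimorphism whose kernel is exactly $\ker d_{n}^{\#}$, so it identifies $K_{n}^{\#}$ with $L_{n}^{\#}/\ker d_{n}^{\#}$, and the canonical factorisation of $d_{n}^{\#}$ then identifies this with $\im d_{n}^{\#}$; finally $\im d_{n}^{\#}=\ker d_{n-1}^{\#}$ by exactness of $c^{\#}$ at $L_{n-1}^{\#}$. Unwinding the chain of identifications shows that the resulting isomorphism $K_{n}^{\#}\xra{\ \sim\ }\ker d_{n-1}^{\#}$ is compatible with the maps into $L_{n-1}^{\#}$ (i.e.\ $\iota_{n}^{\#}$ becomes the inclusion $\ker d_{n-1}^{\#}\hra L_{n-1}^{\#}$), so it is the natural one.

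I do not expect a genuine obstacle; the points that need a little care are bookkeeping ones. One must check that the short exact sequences above, and the factorisations $d_{n}=\iota_{n}\pi_{n}$, really do base change as claimed — this is precisely the principle that base change carries a commutative diagram in $\cat{A}(T)$ to a commutative diagram in $\cat{A}(T')$, together with the uniqueness clause in the definition of a cocartesian arrow, which forces base change to be compatible with composition and with zero maps. One should also watch the index range: the argument at a given $n$ uses acyclicity of $c$ at $L_{n-1}$ and $L_{n}$ together with the epimorphism $\pi_{n+1}^{\#}$, so it applies wherever $c$ is exact in the relevant range, in particular throughout for a doubly infinite acyclic complex. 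Everything else is a formal manipulation of kernels and images inside the abelian category $\cat{A}(T')$.
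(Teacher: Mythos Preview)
Your proposal is correct and follows essentially the same approach as the paper: both use BC1 to get that $\pi_{n}^{\#}$ is epi, use the cocartesian uniqueness (the paper cites Lemma~\ref{lem.keystable}) to transport the factorisation $d_{n}=\iota_{n}\pi_{n}$ and the relevant zero compositions to $\cat{A}(T')$, and then combine this with exactness of $c^{\#}$. The only cosmetic difference is that the paper packages the conclusion as constructing a map $\rho\colon K_{n}^{\#}\to Q_{n}:=\ker d_{n-1}^{\#}$ together with a section $Q_{n}\cong\coker d_{n+1}^{\#}\to K_{n}^{\#}$ and then invokes the epimorphism $\pi_{n}^{\#}$ to conclude these are mutually inverse, whereas you compute $\ker\pi_{n}^{\#}=\ker d_{n}^{\#}$ directly and apply the first isomorphism theorem; these are equivalent rearrangements of the same ingredients.
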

\begin{proof}
Let \(Q_{n}=\ker d_{n-1}^{\#}\). Since the composition \(K_{n}^{\#}\ra L_{n-1}^{\#}\ra L_{n-2}^{\#}\) by Lemma \ref{lem.keystable} is zero (as \(K_{n}\ra K_{n}^{\#}\) is cocartesian), there is a factorisation \(\rho: K_{n}^{\#}\ra Q_{n}\) of \(K_{n}^{\#}\ra L_{n-1}^{\#}\). On the other hand the composition \(L_{n+1}^{\#}\ra L_{n}^{\#}\ra K_{n}^{\#}\) is zero too, hence there is an arrow from \(\coker d_{n+1}^{\#}\cong Q_{n}\) to \(K_{n}^{\#}\) which is a section of \(\rho\). By assumption \(L_{n}^{\#}\ra K_{n}^{\#}\) is an epimorphism. It follows that \(Q_{n}\cong K_{n}^{\#}\).
\end{proof}
\begin{defn}\label{defn.f}
Given FAd subcategories \(\cat{D}\sbeq\cat{X}\sbeq\cat{A}\).
Let \(\Xf(T)\) denote the additive subcategory of \(\cat{A}(T)\) with objects \(N\) which have a finite \(\cat{X}\)-resolution \(M_{*}\ra N\) which is preserved as resolution by any base change. Let \(\Xf\sbeq \cat{A}\) denote the resulting FAd subcategory.
Let \(\Df(T)\) denote the additive subcategory of \(\cat{A}(T)\) with objects \(L\) which have a \(\cat{D}(T)\)-resolution \(D^{*}\ra L\) which is preserved as resolution by any base change. Let \(\Df\sbeq\cat{A}\) denote the resulting FAd subcategory.
\end{defn}
The reasoning in the beginning of this section combined with Lemma \ref{lem.res} gives the following.
\begin{lem}\label{lem.forall}
Let \(\eta:\dots \ra E_{n}\ra E_{n-1}\ra\dots\) and \(\lambda:\dots\ra F_{n}\ra F_{n-1}\ra\dots\) be complexes in \(\cat{A}(T)\) and \(\eta^{\#}\) and \(\lambda^{\#}\) the complexes resulting from base change over \(f:T\ra T'\)\textup{.} If \(\eta\) is homotopic to \(\lambda\) then \(\eta^{\#}\) is homotopic to \(\lambda^{\#}\)\textup{.}

In particular\textup{;} if \(N\) in \(\cat{A}(T)\) has one \(\cat{DX}\)-resolution \textup{(}\(\cat{\hat{D}D}\)-coresolution\textup{)} which is preserved by base change then all \(\cat{DX}\)-resolutions \textup{(}\(\cat{\hat{D}D}\)-coresolutions\textup{)} are preserved by base change\textup{.}
\end{lem}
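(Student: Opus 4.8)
The plan is to reduce everything to one structural fact: after fixing, once and for all, a choice of cocartesian liftings over $f\colon T\to T'$, the base-change operation $\xi\mapsto\xi^{\#}$ is an \emph{additive functor} $\cat{A}(T)\to\cat{A}(T')$. I would first record this: by Lemma~\ref{lem.keystable} each morphism $\phi$ over $\id_{T}$ has a unique lift $\phi^{\#}$ over $\id_{T'}$ compatible with the chosen liftings, whence uniqueness gives $(\phi\psi)^{\#}=\phi^{\#}\psi^{\#}$ and $\id^{\#}=\id$, and bilinearity of composition (Definition~\ref{defn.addkof}(ii)) gives $(\phi+\psi)^{\#}=\phi^{\#}+\psi^{\#}$. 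As recalled at the start of this section, this functor sends a complex $\eta$ to the complex $\eta^{\#}$ whose differentials are the $(-)^{\#}$ of those of $\eta$, and sends commutative diagrams to commutative diagrams.

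\textbf{First assertion.} This is then immediate. Read ``$\eta$ is homotopic to $\lambda$'' as a chain homotopy equivalence: chain maps $u\colon\eta\to\lambda$ and $v\colon\lambda\to\eta$ in $\cat{A}(T)$ together with homotopies witnessing $vu\simeq\id_{\eta}$ and $uv\simeq\id_{\lambda}$. An additive functor carries chain maps to chain maps, respects composition and identities, and turns an equality $g=d\,h+h\,d$ into $g^{\#}=d^{\#}h^{\#}+h^{\#}d^{\#}$; hence $u^{\#}$, $v^{\#}$ and the $(-)^{\#}$ of those homotopies exhibit $\eta^{\#}\simeq\lambda^{\#}$. (If ``homotopic'' is meant at the level of chain \emph{maps} $\eta\to\lambda$, the same two lines apply.)

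\textbf{The ``in particular''.} I would deduce it as follows. Let $F^{*}$ and $G^{*}$ be two $\cat{DX}$-resolutions of $N$ in $\cat{A}(T)$ and let $\widetilde{F}^{*}$ be the augmented complex $\dots\to F^{-1}\to F^{0}\to N\to 0$, and similarly for $G^{*}$. Because $\cat{D}$ and $\cat{X}$ are FAd subcategories, base change automatically keeps the terms of $\widetilde{F}^{*}$ in $\cat{D}$, resp.\ $\cat{X}$, so the entire content of ``$F^{*}$ is preserved by base change along $f$'' is the exactness of $\widetilde{F}^{*\#}$, and likewise for $G^{*}$. Next, applying Lemma~\ref{lem.res} to the abelian category $\cat{A}(T)$ with $\cat{D}(T)\sbeq\cat{X}(T)$ --- for which that lemma requires the vanishing $\xt{1}{\cat{A}(T)}{\cat{X}(T)}{\hat{\cat{D}}}=0$ in the fibre, a consequence e.g.\ of AB1--3 there --- the identity of $N$ extends to chain maps $\phi\colon F^{*}\to G^{*}$ and $\psi\colon G^{*}\to F^{*}$ over $\id_{N}$, unique up to homotopy; since $\psi\phi$ and $\id_{F^{*}}$ both extend $\id_{N}$ they are homotopic, and likewise $\phi\psi\simeq\id_{G^{*}}$, so $F^{*}\simeq G^{*}$. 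Because $\phi$ and $\psi$ lie over $\id_{N}$ they prolong to chain maps $\widetilde{F}^{*}\to\widetilde{G}^{*}$ and back (the identity in the degree occupied by $N$), and the homotopies prolong by declaring them $0$ in that degree; there the homotopy identity reads $0=0$, in the remaining degrees it is exactly the one for $F^{*}$ (resp.\ $G^{*}$). Hence $\widetilde{F}^{*}\simeq\widetilde{G}^{*}$, so by the first assertion $\widetilde{F}^{*\#}\simeq\widetilde{G}^{*\#}$; as homotopy equivalent complexes have the same cohomology in every degree, $\widetilde{F}^{*\#}$ is exact iff $\widetilde{G}^{*\#}$ is, which is the claim. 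The \(\hat{\cat{D}}\cat{D}\)-coresolution case is entirely dual, using that membership in $\cat{D}$ and in $\Df$ is stable under base change by construction.

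\textbf{Main obstacle.} None of this is deep. The one place that needs genuine care is the bookkeeping just indicated: arranging the homotopy equivalence of two $\cat{DX}$-resolutions of $N$ to be compatible with their augmentation maps, so that homotopy-invariance of cohomology (the first assertion) may be applied to the \emph{augmented} complexes and not merely to the resolutions. Everything else rests on ``a fixed base change is an additive functor'', which the framework set up in this section already provides.
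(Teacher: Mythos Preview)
Your proof is correct and follows essentially the same approach as the paper, whose entire proof is the sentence ``The reasoning in the beginning of this section combined with Lemma~\ref{lem.res} gives the following.'' You have simply filled in what that sentence points to: base change is an additive functor on the fibre (by Lemma~\ref{lem.keystable} and Definition~\ref{defn.addkof}) and hence preserves homotopies, while Lemma~\ref{lem.res} provides the homotopy equivalence between any two $\cat{DX}$-resolutions (resp.\ $\hat{\cat D}\cat{D}$-coresolutions) of the same object, extended to the augmented complexes.
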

\begin{thm}\label{thm.cofapprox}
Let \(\cat{A}\) be a category fibred in abelian categories over \(\cat{C}\) and let \(\cat{D}\sbeq\cat{X}\sbeq\cat{A}\) be inclusion morphisms of categories fibred in additive categories\textup{.} Fix an object \(T\) in \(\cat{C}\)\textup{.} Assume \textup{BC1-BC2} for \((\cat{A},\cat{X})\) and \(T\)\textup{,} and \textup{AB1-AB2} for the triple of categories \((\cat{A}(T),\cat{X}(T),\cat{D}(T))\)\textup{.}
Then any object \(N\) in \(\Xf(T)\) admits an \(\cat{X}(T)\)-approximation and a \(\hat{\cat{D}}(T)\)-hull\textup{;}
\begin{equation*}\label{eq.cofapprox}
0\ra L\lra M\lra N\ra 0   \quad \text{and}  \quad 0\ra N\lra L'\lra M'\ra 0
\end{equation*}
with \(M\) and \(M'\) in \(\cat{X}(T)\) and \(L\) and \(L'\) in \(\Df(T)\)\textup{,} which are preserved by any base change\textup{.}
\end{thm}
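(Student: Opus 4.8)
\emph{Strategy.} The plan is to redo the Auslander--Buchweitz construction of approximations and hulls (\cite[1.1]{aus/buc:89}, recalled in Section \ref{subsec.cplx}) \emph{inside} \(\cat{A}(T)\) and to check, step by step, that every sequence produced stays exact under any base change, the tools for this being \textup{BC1--2} together with Lemma \ref{lem.bs}. Concretely, I would prove by induction on \(n\) the statement \((\star_{n})\): \emph{if \(N\) in \(\cat{A}(T)\) admits an \(\cat{X}(T)\)-resolution \(0\ra M_{n}\ra\dots\ra M_{0}\ra N\ra 0\) which remains exact after every base change, then \(N\) admits an \(\cat{X}(T)\)-approximation \(0\ra L\ra M\ra N\ra 0\) and a \(\hat{\cat{D}}(T)\)-hull \(0\ra N\ra L'\ra M'\ra 0\) with \(L,L'\in\Df(T)\), both preserved by any base change.} Since membership of \(N\) in \(\Xf(T)\) is exactly the existence of such a finite \(\cat{X}(T)\)-resolution, \((\star_{n})\) for the relevant \(n\) is the theorem.

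\emph{The inductive construction.} For \(n=0\) we have \(N\in\cat{X}(T)\): the approximation is \(0\ra 0\ra N\ra N\ra 0\) (with \(0\in\Df(T)\)), and \textup{AB2} applied to \(N\) gives a hull \(0\ra N\ra D\ra M'\ra 0\) with \(D\in\cat{D}(T)\sbeq\Df(T)\) and \(M'\in\cat{X}(T)\), preserved by \textup{BC2} because \(M'\in\cat{X}(T)\). For \(n\geq 1\), set \(K=\ker(M_{0}\ra N)\). By Lemma \ref{lem.bs} (and \textup{BC1} for surjectivity of \(M_{0}\ra N\)) the sequence \(0\ra K\ra M_{0}\ra N\ra 0\) stays exact after base change, and \(0\ra M_{n}\ra\dots\ra M_{1}\ra K\ra 0\) is a preserved \(\cat{X}(T)\)-resolution of \(K\) of length \(n-1\); by \((\star_{n-1})\), \(K\) has a preserved hull \(0\ra K\ra Y\ra X\ra 0\) with \(Y\in\Df(T)\), \(X\in\cat{X}(T)\). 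Let \(Z\) be the pushout of \(Y\la K\ra M_{0}\). Since \(K\ra M_{0}\) is monic, \(0\ra Y\ra Z\ra N\ra 0\) and \(0\ra M_{0}\ra Z\ra X\ra 0\) are exact; the latter and \textup{AB1} force \(Z\in\cat{X}(T)\), so \(0\ra Y\ra Z\ra N\ra 0\) is an \(\cat{X}(T)\)-approximation. Applying \textup{AB2} to \(Z\) and pushing out as in \eqref{eq.pullpush} produces a hull \(0\ra N\ra L'\ra M'\ra 0\) with \(M'\in\cat{X}(T)\), together with a short exact sequence \(0\ra Y\ra D^{0}\ra L'\ra 0\), \(D^{0}\in\cat{D}(T)\).

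\emph{Base-change bookkeeping.} It remains to see that the approximation, the hull and the sequence \(0\ra Y\ra D^{0}\ra L'\ra 0\) are preserved and that \(Y,L'\in\Df(T)\). The recurring device is that every pushout above is encoded by a short exact sequence whose right-hand term lies in \(\cat{X}(T)\): e.g.\ \(Z\) sits in \(0\ra K\ra Y\oplus M_{0}\ra Z\ra 0\) with \(Z\in\cat{X}(T)\), so \textup{BC2} preserves this sequence, hence the pushout square; combined with the preservation of \(0\ra K\ra M_{0}\ra N\ra 0\) and the fact that \(K\ra M_{0}\) stays monic, a short chase (using \textup{BC1} to keep \(Z\ra N\) epic) shows \(0\ra Y\ra Z\ra N\ra 0\) is preserved. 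The columns \(0\ra Z\ra D^{0}\ra M'\ra 0\) and \(0\ra N\ra L'\ra M'\ra 0\) of \eqref{eq.pullpush} are preserved by \textup{BC2} (\(M'\in\cat{X}(T)\)), and a five-lemma chase in the base-changed diagram \eqref{eq.pullpush} then forces \(0\ra Y\ra D^{0}\ra L'\ra 0\) to be preserved as well. Since \(Y\in\Df(T)\) by induction, it has a \(\cat{D}(T)\)-resolution preserved by base change; splicing it with the preserved sequences \(0\ra Y\ra Z\ra N\ra 0\), respectively \(0\ra Y\ra D^{0}\ra L'\ra 0\), yields a \(\cat{DX}\)-resolution of \(N\) and a \(\cat{D}(T)\)-resolution of \(L'\) assembled from pieces exact after any base change, with the splicing maps behaving well by \textup{BC1}; hence both are preserved, so \(L=Y\) and \(L'\) lie in \(\Df(T)\), and Lemma \ref{lem.bs} applied to the \(\cat{DX}\)-resolution re-derives the preservation of the approximation.

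\emph{Main obstacle.} The hard part is precisely this bookkeeping: in a general category fibred in abelian categories, base change is neither left nor right exact, so one may not assume that kernels, cokernels or pushouts are formed fibrewise. The proof works because the Auslander--Buchweitz construction can be run so that, at every stage, the relevant (co)kernels either land in \(\cat{X}(T)\) — where \textup{BC2} applies — or live inside a complex already known to stay exact — where Lemma \ref{lem.bs} applies; verifying that this can always be arranged, and performing the small diagram chases that transfer exactness to the remaining sequences, is the technical core of the argument.
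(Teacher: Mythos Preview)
Your proposal is correct and follows essentially the same route as the paper: induction on the length of a base-change-preserved \(\cat{X}(T)\)-resolution, applying the inductive hull to the kernel \(K\) (the paper's \(N_{1}\)), pushing out to get the approximation, then invoking AB2 and the diagram \eqref{eq.pullpush} to produce the hull, with BC2 and Lemma \ref{lem.bs} handling all the base-change checks. The only cosmetic difference is that you encode the preservation of the pushout via the sequence \(0\ra K\ra Y\oplus M_{0}\ra Z\ra 0\), whereas the paper reads it directly off the \(3\times 3\) diagram; both arguments are equivalent.
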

\begin{proof}
The proof is a variation of the original proof of \cite[1.1]{aus/buc:89}.
For every \(N\) in \(\Xf(T)\) let \(r(N)\) denote the minimal length of an \(\cat{X}(T)\)-resolution \(M_{*}\thr N\) which is preserved by base change. The proof is by induction on \(r(N)\). If \(r(N)=0\) then \(N\) is in \(\cat{X}\) and so is its own \(\cat{X}\)-approximation, while AB2 provides a short exact sequence \(N\ra D\ra M'\) which is a \(\hat{\cat{D}}(T)\)-hull with \(D\) in \(\cat{D}(T)\sbeq\Df(T)\). The approximation is trivially preserved by base change, the hull because of BC2. Assume \(r=r(N)>0\) and let \(0\ra M_{r}\ra\dots\ra M_{0}\thr N\) be an \(\cat{X}(T)\)-resolution of minimal length preserved by base change. Then \(N_{1}=\ker(M_{0}\thr N)\) is in \(\Xf(T)\) by Lemma \ref{lem.bs} and \(r(N_{1})=r-1\). By induction there is a \(\hat{\cat{D}}(T)\)-hull \(N_{1}\ra L\ra M'_{1}\) with \(L\) in \(\Df\) which is preserved by base change. Pushout of \(e:N_{1}\ra M_{0}\ra N\) along \(N_{1}\ra L\) gives an \(\cat{X}(T)\)-approximation \(L\ra M\ra N\) by AB1. In the commutative diagram obtained by a base change;
\begin{equation}
\xymatrix@C-0pt@R-12pt@H0pt{
N_{1}^{\#} \ar[r]\ar[d] & M_{0}^{\#} \ar[r]\ar[d] & N^{\#} \ar@{=}[d]  \\   
L^{\#} \ar[r]\ar[d] & M^{\#} \ar[r]\ar[d] & N^{\#} \\
(M'_{1})^{\#} \ar@{=}[r] & (M'_{1})^{\#}
}
\end{equation}
the upper row (by Lemma \ref{lem.bs}) and the columns (by BC2) are short exact sequences.
It follows that the middle row is a short exact sequence.

By AB2 there is a short exact sequence \(M\ra D\ra M'\) with \(D\) in \(\cat{D}(T)\) and \(M'\) in \(\cat{X}(T)\). Pushout of \(M\ra D\ra M'\) along \(M\ra N\) gives a short exact sequence \(h:N\ra L'\ra M'\). Since the induced sequence \(L\ra D\ra L'\) is short exact, \(L'\) is contained in \(\hat{\cat{D}}(T)\). Applying a base change we obtain the following commutative diagram:
\begin{equation}
\xymatrix@C-0pt@R-12pt@H0pt{
L^{\#} \ar[r]\ar@{=}[d] & M^{\#} \ar[r]\ar[d] & N^{\#} \ar[d]  \\   
L^{\#} \ar[r] & D^{\#} \ar[r]\ar[d] & (L')^{\#} \ar[d] \\
& (M')^{\#} \ar@{=}[r] & (M')^{\#}
}
\end{equation}
The upper row and (by BC2) the two columns are short exact sequences. It follows that the middle row is a short exact sequence and hence that \(L'\) is contained in \(\Df\).
\end{proof}
Sequences as in Theorem \ref{thm.cofapprox} preserved by any base change will be called an \emph{\(\cat{X}\)-approximation} and a \emph{\(\Df\)-hull} of \(N\) respectively.

Lemma \ref{lem.stable} makes the following definition reasonable. Three categories fibred in additive categories (FAds) \(\cat{A}_{i}\), \(i=1,2,3\), an inclusion of FAds \(\cat{A}_{1}\sbeq \cat{A}_{2}\), and a morphism of FAds \(F:\cat{A}_{2}\ra \cat{A}_{3}\) equivalent to the quotient morphism \(\cat{A}_{2}\ra\cat{A}_{2}/\cat{A}_{1}\) is called a \emph{short exact sequence of categories fibred in additive categories} and is denoted by \(0\ra \cat{A}_{1}\ra\cat{A}_{2}\ra\cat{A}_{3}\ra 0\).
\begin{thm}\label{thm.cofmain}
Let \(\cat{A}\) be a category fibred in abelian categories over \(\cat{C}\) and let \(\cat{D}\sbeq\cat{X}\sbeq\cat{A}\) be inclusion morphisms of categories fibred in additive categories\textup{.} Assume \textup{BC1-BC2} for the pair \((\cat{A},\cat{X})\) and \textup{AB1-AB3} for the triple of categories \((\cat{A}(T),\cat{X}(T),\cat{D}(T))\)\textup{,} for all objects \(T\) in \(\cat{C}\)\textup{.} 
Then\textup{:} 
\begin{enumerate}
\item[(i)] The \(\cat{X}\)-approximation induces a morphism of categories fibred in additive categories \(j^{!}:\Xf/\cat{D}\ra\cat{X}/\cat{D}\) which is a right adjoint to the full and faithful inclusion morphism \(j_{!}:\cat{X}/\cat{D}\ra\Xf/\cat{D}\)\textup{.}
\item[(ii)] The \(\Df\)-hull induces a morphism of categories fibred in additive categories \(i^{*}:\Xf/\cat{D}\ra\Df/\cat{D}\) which is a left adjoint to the full and faithful inclusion morphism \(i_{*}:\Df/\cat{D}\ra \Xf/\cat{D}\)\textup{.}
\item[(iii)] Together these maps give the following commutative diagram of short exact sequences of categories fibred in additive categories\textup{:}
\begin{equation*}
\xymatrix@C-0pt@R-8pt@H-30pt{
0\ar[r] & \, \Df/\cat{D} \ar[r]^{i_{*}}\ar[d]_{\id} & \Xf/\cat{D}\ar[r]^{j^{!}}\ar@{=}[d] & \cat{X}/\cat{D} \ar[r] & 0 \\
0 & \Df/\cat{D}\ar[l] & \Xf/\cat{D}\ar[l]_{i^{*}} & \,  \cat{X}/\cat{D}  \ar[l]_{j_{!}}\ar[u]_{\id} & 0 \ar[l]
}
\end{equation*}
\end{enumerate}
\end{thm}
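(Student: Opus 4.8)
The plan is to construct the two functors explicitly from Theorem \ref{thm.cofapprox} and then to extract the required structure — linearity, preservation of cocartesian arrows, and compatibility of the adjunctions with base change — from the absolute Auslander--Buchweitz theory applied in each fibre (cf.\ \cite[1.4, 2.8]{aus/buc:89}) together with the fact that approximations and hulls are preserved by any base change. First I would define $j^{!}$ on objects by $j^{!}(N):=M$, where $0\ra L\ra M\ra N\ra 0$ is the $\cat{X}(T)$-approximation of $N\in\Xf(T)$ provided by Theorem \ref{thm.cofapprox}; note $M\in\cat{X}(T)$, and for $M$ already in $\cat{X}(T)$ its approximation is trivial, so $j^{!}j_{!}=\id$. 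For an arrow $\phi$ in $\Xf/\cat{D}$ over $f:T\ra T'$ I factor it as $N_{1}\xra{\rho}N_{1}^{\#}\xra{\bar\phi}N_{2}$ with $\rho$ cocartesian over $f$ and $\bar\phi$ over $\id_{T'}$; by Theorem \ref{thm.cofapprox} the $\cat{X}(T')$-approximation of $N_{1}^{\#}$ is the base change $M_{1}\ra M_{1}^{\#}$ of $M_{1}=j^{!}(N_{1})$, and since under AB1--3 an $\cat{X}$-approximation is a right $\cat{X}$-approximation with kernel in $\hat{\cat{D}}(T')$, the arrow $\bar\phi$ lifts through $M_{2}\ra N_{2}$ to a map $M_{1}^{\#}\ra M_{2}$, unique modulo maps factoring through $\cat{D}(T')$ (lift such a map along a finite $\cat{D}(T')$-resolution of $L_{2}$, using $\xt{i}{\cat{A}(T')}{\cat{X}(T')}{\cat{D}(T')}=0$ for $i\geq 1$); this is exactly the well-definedness argument of \cite[2.8]{aus/buc:89} in the fibre $\cat{A}(T')$. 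I set $j^{!}(\phi)$ to be the class of the composite $M_{1}\ra M_{1}^{\#}\ra M_{2}$. Functoriality, independence of the choices, and additivity on the groups $\hm{}{f}{-}{-}$ follow from the same uniqueness statement together with Lemmas \ref{lem.res} and \ref{lem.xres} applied in $\cat{A}(T')$, so $j^{!}$ is linear over $\cat{C}$.

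The main point is that $j^{!}$ preserves cocartesian arrows. If $\rho:N_{1}\ra N_{1}^{\#}$ is cocartesian in $\Xf$ over $f$, then by the construction $j^{!}(\rho)$ is the class of the base-change arrow $M_{1}\ra M_{1}^{\#}$, cocartesian in $\cat{X}$ and hence in $\cat{X}/\cat{D}$ by Lemma \ref{lem.stable}. A general cocartesian arrow in $\Xf/\cat{D}$ over $f$ need not arise this way, but by Lemma \ref{lem.stablecc} it is of the form $[\theta][\rho]$ with $\rho$ as above and $[\theta]$ an isomorphism in $\Xf/\cat{D}(T')$; applying $j^{!}$, using functoriality and the first part of Lemma \ref{lem.stablecc}, shows the image is again cocartesian. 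Thus $j^{!}$ is a morphism of FAds. The adjunction isomorphism $\hm{}{\Xf/\cat{D}}{j_{!}X}{N}\cong\hm{}{\cat{X}/\cat{D}}{X}{j^{!}N}$ holds in every fibre by \cite[2.8]{aus/buc:89} ($\Xf(T)$ being a full subcategory of the $\hat{\cat{X}}$ of the triple $(\cat{A}(T),\cat{X}(T),\cat{D}(T))$), is natural, and commutes with base change because approximations do (Theorem \ref{thm.cofapprox}); hence it is an adjunction over $\cat{C}$, with counit $j_{!}j^{!}\Ra\id$ induced by the approximation maps $M\ra N$, and $j_{!}$ is full and faithful since $\cat{X}$ and $\Xf$ are full FAd subcategories of $\cat{A}$ carrying the same $\cat{D}$-equivalence relation. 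This proves (i), and (ii) is the order-reversed argument: put $i^{*}(N):=L'$ for the $\Df$-hull $0\ra N\ra L'\ra M'\ra 0$ of Theorem \ref{thm.cofapprox}, treat morphisms with the dual halves of Lemmas \ref{lem.res}, \ref{lem.xres}, \ref{lem.stablecc} and the base-change invariance of hulls, obtaining a morphism of FAds $i^{*}$ left adjoint to the full and faithful $i_{*}$, with $i^{*}i_{*}=\id$ and unit induced by $N\ra L'$.

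For (iii), given (i) and (ii) it remains to identify the kernels and check the square commutes. If $N\in\Df(T)$ then from $0\ra L\ra M\ra N\ra 0$ and the closure of $\hat{\cat{D}}$ under extensions (\cite[3.8]{aus/buc:89}) one gets $M\in\cat{X}(T)\cap\hat{\cat{D}}(T)=\cat{X}(T)\cap\cat{X}(T)^{\perp}=\cat{D}(T)$, so $j^{!}$ carries $\Df/\cat{D}$ into zero objects; conversely if $j^{!}(N)\in\cat{D}(T)$ then splicing $M$ with a base-change-stable $\cat{D}(T)$-resolution of $L$ puts $N$ in $\Df(T)$, whence $\ker j^{!}=\Df/\cat{D}$. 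Dually, for $M\in\cat{X}(T)$ the $\Df$-hull gives $L'\in\cat{X}(T)\cap\hat{\cat{D}}(T)=\cat{D}(T)$, so $\ker i^{*}=\cat{X}/\cat{D}$. Since $j^{!}$ is a functor sending objects of $\Df$ to zero objects of $\cat{X}/\cat{D}$, it kills all maps factoring through $\Df/\cat{D}$ and so factors as $\Xf/\cat{D}\ra(\Xf/\cat{D})/(\Df/\cat{D})\ra\cat{X}/\cat{D}$; that the second morphism is an equivalence is — fibre-wise — the short exact sequence of additive categories furnished by the reflective--coreflective pair of \cite[2.8]{aus/buc:89} and the complement identities recalled in \S\ref{subs.CCM}, and it is an equivalence of FAds because every ingredient is preserved by base change by (i)--(ii) and Theorem \ref{thm.cofapprox}. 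The same applied to $i^{*}$ gives the bottom row. Finally, commutativity of the square is exactly the identities $j^{!}j_{!}=\id$ and $i^{*}i_{*}=\id$, while $j^{!}i_{*}=0$ and $i^{*}j_{!}=0$ (verified above) ensure the rows are complexes.

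I expect the main obstacle to be the verification in the second paragraph that $j^{!}$ and $i^{*}$ send the cocartesian arrows of the \emph{quotient} fibred categories — which by Lemma \ref{lem.stablecc} are strictly more than the images of cocartesian arrows of $\cat{A}$ — to cocartesian arrows, and, to a lesser extent, checking that the fibre-wise short exact sequences in (iii) genuinely flow from the right/left approximation property rather than merely from the existence of the exact sequences \eqref{eq.MCMseq}; the remaining steps are either the absolute Auslander--Buchweitz theory read off in a single fibre or bookkeeping with the base-change statements of Theorem \ref{thm.cofapprox} and Lemmas \ref{lem.stable}--\ref{lem.stablecc}.
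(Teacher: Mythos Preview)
Your approach is essentially the same as the paper's: define the functors from the approximation and hull supplied by Theorem~\ref{thm.cofapprox}, reduce the well-definedness and adjointness to the absolute theory \cite[2.8]{aus/buc:89} in a single fibre by first base-changing along the underlying arrow of \(\cat{C}\), and handle cocartesian arrows in the quotients via Lemma~\ref{lem.stablecc}. The paper carries out the construction for \(i^{*}\) in detail and sketches \(j^{!}\); you do the opposite, but the arguments are symmetric.

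The one place where the paper is more explicit than your proposal is the exactness in (iii). You identify \(\ker j^{!}\) on objects and then invoke the fibre-wise reflective/coreflective picture; the paper instead proves directly that if \(j^{!}[\phi]=0\) for a morphism \(\phi:N_{1}\ra N_{2}\) (i.e.\ the induced \(M_{1}\ra M_{2}\) factors through \(\cat{D}\)), then \(\phi\) itself factors through an object of \(\Df\), by pushing the obstruction into \(\xt{1}{\cat{A}(T)}{M_{1}'}{N_{2}}\) via the \(\Df\)-hull \(N_{1}\ra L_{1}'\ra M_{1}'\) and observing it vanishes. Your appeal to \cite[2.8]{aus/buc:89} is legitimate here---the faithfulness of the induced functor \((\Xf/\cat{D})/(\Df/\cat{D})\ra\cat{X}/\cat{D}\) does follow formally from the adjunction once you know the object-level kernel---but be aware that \cite[2.8]{aus/buc:89} itself only states the adjunction, not the exact-sequence statement, so you are tacitly using a standard consequence rather than citing it directly.
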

\begin{proof}
In each fibre most of these statements are true by the arguments in the proof of \cite[2.8]{aus/buc:89} since we have Theorem \ref{thm.cofapprox}. The general cases are reduced to fibre cases by applying base change. First we have to establish the functors. Note that the quotient categories involved are FAds over \(\cat{C}\) by Lemma \ref{lem.stable}. Let \(p:\cat{A}\ra\cat{C}\) denote the fibration. For each \(N_{i}\) in \(\Xf\) put \(T_{i}=p(N_{i})\) and choose a \(\Df\)-hull \(\iota_{i}:N_{i}\ra L_{i}\ra M_{i}\) which exists by Theorem \ref{thm.cofapprox} and such that \(\iota_{i}=\id\) if \(N_{i}\) is in \(\Df\). For each arrow \(\psi:N_{1}\ra N_{2}\) choose an arrow \(\lambda_{21}:L_{1}\ra L_{2}\) commuting with \(\psi\). This arrow is obtained as a composition of a base change \(L_{1}\ra L_{1}^{\#}\) over \(p(\psi):T_{1}\ra T_{2}\) with an extension \(L_{1}^{\#}\ra L_{2}\) of \(N_{1}^{\#}\ra L_{2}\) obtained since \(\xt{1}{\cat{A}(T_{2})}{M_{1}^{\#}}{L_{2}}=0\) by \cite[2.5]{aus/buc:89}. If composable it follows from \cite[2.8]{aus/buc:89} that \(\lambda_{32}\lambda_{21}\sim\lambda_{31}\).

There is a unique arrow \(\phi:N_{1}^{\#}\ra N_{2}\) induced by \(\psi\).
If \(\lambda_{21}':L_{1}\ra L_{2}\) is an extension of \(\psi':N_{1}\ra N_{2}\) with \(p(\psi')=p(\psi)\) such that \(\delta_{1}:=\psi-\psi'\) is equivalent to \(0\), we have by Lemma \ref{lem.keystable} that \(\delta:N_{1}^{\#}\ra N_{2}\) induced from \(\delta_{1}\) by base change factors through an object \(D\) in \(\cat{D}(T_{2})\). It follows that \(\delta\) factors through \(N_{1}^{\#}\ra L_{1}^{\#}\). Let \(\tau\) denote the composition \(L_{1}^{\#}\ra N_{2}\ra L_{2}\) (so \(\tau\sim 0\)).
Let \(\eta\) be a base change over \(p(\psi)\) of the difference of the two extensions; \(\eta=(\lambda_{21}-\lambda_{21}')^{\#}\). One calculates that \((\eta-\tau)\iota_{1}^{\#}=0\), hence \(\eta-\tau\) is induced by an arrow \(M_{1}^{\#}\ra L_{2}\) which lifts to an arrow \(M_{1}^{\#}\ra D^{0}\) where \(D^{*}\thr L_{2}\) is a finite \(\cat{D}\)-resolution of \(L_{2}\) (since \(\xt{1}{\cat{A}(T_{2})}{M_{1}^{\#}}{\hat{\cat{D}}(T_{2})}=0\)). 
Hence \(\eta-\tau\sim 0\), so \(\eta\sim 0\) and \(\lambda_{21}\sim\lambda_{21}'\). We have shown that \(i^{*}:\Xf/\cat{D}\ra \Df/\cat{D}\) is a well-defined functor. To show that \(i^{*}\) preserves cocartesian arrows we apply Lemma \ref{lem.stablecc}: If \([\psi]:N_{1}\ra N_{2}\) is cocartesian in \(\Xf/\cat{D}\) then the induced map \([\phi]:N_{1}^{\#}\ra N_{2}\) is an isomorphism and by \cite[2.8]{aus/buc:89} so is any extension \(L_{1}^{\#}\ra L_{2}\) of \([\phi]\). Composed with the base change \(L_{1}\ra L_{1}^{\#}\) we get a cocartesian arrow in \(\Df/\cat{D}\) by Lemma \ref{lem.stablecc}.

A similar argument gives that the morphism \(j^{!}: \Xf/\cat{D}\ra \cat{X}/\cat{D}\) induced by (choices of) \(\cat{X}\)-approximation also is well-defined as a map of fibred categories.

To prove adjointness for the pair \((j_{!},j^{!})\) consider the chosen \(\cat{X}\)-approximation \(L\ra M\xra{\pi} N\) of \(N\) in \(\Xf(T)\). Given \(\phi_{1}:M_{1}\ra N\) with \(M_{1}\) in \(\cat{X}(T_{1})\) and \(f=p(\phi_{1})\). Let \(\phi: M_{1}^{\#}\ra N\) be induced by a base change of \(M_{1}\) by \(f\). Since \(\xt{1}{\cat{A}(T)}{M_{1}^{\#}}{L}=0\), \(\phi\) can be lifted to an arrow \(\psi:M_{1}^{\#}\ra M\). Composing \(\psi\) with the base change \(M_{1}\ra M_{1}^{\#}\) gives a lifting of \(\phi_{1}\) which shows surjectivity of the adjointness map \([\pi\circ -]\). To prove injectivity consider for \(i=2,3\) arrows \(\psi_{i}:M_{1}\ra M\) in \(\cat{X}\) with \(\pi\psi_{2}=\pi\psi_{3}\). Since \(p(\pi)=\id\) we have \(p(\psi_{2})=p(\psi_{3})=f\) and we can define \(\psi_{1}=\psi_{2}-\psi_{3}\) with \(\pi\psi_{1}\sim 0\). Base change by \(f\) induces a \(\psi:M_{1}^{\#}\ra M\) from \(\psi_{1}\). Lemma \ref{lem.keystable} gives \(\pi\psi\sim 0\). The argument in \cite[2.8]{aus/buc:89} implies that \(\psi\) and hence \(\psi_{1}\) factors through an object in \(\cat{D}(T)\). Analogous arguing gives the adjointness of the pair \((i^{*},i_{*})\).

The commutativity of the diagram in \textup{(iii)} follows by definition. For \(i^{*}j_{!}=0=j^{!}i_{*}\) see \cite[2.8]{aus/buc:89}. We prove exactness in the upper row. Given \(\phi:N_{1}\ra N_{2}\) in \(\Xf\) with \(f=p(\phi):T_{1}\ra T_{2}\) such that \(j^{!}[\phi]=0\). If \(\pi_{i}:M_{i}\ra N_{i}\) are the chosen \(\cat{X}\)-approximations, \(j^{!}[\phi]\) is represented by a lifting \(\psi:M_{1}\ra M_{2}\) and the assumption is that \(\psi\) factors through an object \(D\) of \(\cat{D}\). We claim that \(\phi\) factors through an object in \(\Df\). By base change it's sufficient to prove the special case \(f=\id_{T_{2}}\). If \(M\) is any object in \(\cat{X}(T)\) we have that the composition \(\xt{1}{\cat{A}(T)}{M}{N_{1}}\cong \xt{1}{\cat{A}(T)}{M}{M_{1}} \xra{\psi_{*}}\xt{1}{\cat{A}(T)}{M}{M_{2}}\cong \xt{1}{\cat{A}(T)}{M}{N_{2}}\) is \(\phi_{*}\) which hence equals \(0\). If \(e:N_{1}\ra L_{1}'\ra M_{1}'\) is a \(\Df\)-hull of \(N_{1}\), the connecting takes \(\phi\) to \(\phi_{*}e\in \xt{1}{\cat{A}(T)}{M_{1}'}{N_{2}}\), i.e.\ to \(0\), and so there exists a \(\delta:L_{1}'\ra N_{2}\) which induces \(\phi\). Exactness in the lower row is analogous.
\end{proof}
\begin{prop}\label{prop.cof}
Let \(\cat{A}\) be a category fibred in abelian categories over \(\cat{C}\) and let \(\cat{D}\sbeq\cat{X}\sbeq\cat{A}\) be inclusion morphisms of categories fibred in additive categories\textup{.} Fix an object \(T\) in \(\cat{C}\)\textup{.} Assume \textup{BC1-BC2} for \((\cat{A},\cat{X})\) and \(T\), and \textup{AB1-AB3} for the triple of categories \((\cat{A}(T),\cat{X}(T),\cat{D}(T))\)\textup{.} Then\textup{:}
\begin{enumerate}
\item[(i)] \(\Df(T)=\Xf(T)\cap \cat{X}(T)^{\perp}\) and \(\cat{D}(T)=\cat{X}(T)\cap \Df(T)\)\textup{.}
\item[(ii)] \(\Xf(T)\) and \(\Df(T)\) are closed under extensions\textup{.}
\item[(iii)] Exact sequences \(\dots\ra N_{n}\xra{d_{n}} N_{n-1}\ra \dots\) with objects \(N_{i}\) and kernels \(\ker d_{i}\) in \(\Xf(T)\) remain exact after base change\textup{.}
\end{enumerate}
If in addition \textup{AB4}\textup{,} then\textup{:}
\begin{enumerate}
\item[(iv)] Epimorphisms in \(\Xf(T)\) are admissible\textup{.}
\item[(v)] \(\Add \Xf(T) =\Xf(T)\) and \(\Add \Df(T) = \Df(T)\)\textup{.}
\end{enumerate}
\end{prop}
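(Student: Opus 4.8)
Each of the five items is proved by reducing to the fibre \(\cat{A}(T)\), where the classical Auslander--Buchweitz identities of \cite{aus/buc:89} apply, and then controlling base change through BC1, BC2, Lemma~\ref{lem.bs} and Lemma~\ref{lem.forall}. The recurring mechanism is that a short exact sequence whose right-hand term lies in \(\cat{X}(T)\) is preserved by base change (BC2), a split short exact sequence trivially is, and a complex assembled from such pieces is then shown to stay a resolution after base change by chasing long exact cohomology sequences, using BC1 to identify the augmentation.

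\textit{Proof of (i).} A \(\cat{D}(T)\)-resolution is a finite \(\cat{X}(T)\)-resolution preserved by base change, and \(\Df(T)\subseteq\hat{\cat{D}}(T)\subseteq\cat{X}(T)^{\perp}\) by AB3 and the classical identity \(\hat{\cat{D}}(T)=\hat{\cat{X}}(T)\cap\cat{X}(T)^{\perp}\); hence \(\Df(T)\subseteq\Xf(T)\cap\cat{X}(T)^{\perp}\). Conversely, for \(N\in\Xf(T)\cap\cat{X}(T)^{\perp}\) take its base-change preserved \(\cat{X}(T)\)-approximation \(0\to L\to M\to N\to 0\) from Theorem~\ref{thm.cofapprox} (so \(L\in\Df(T)\subseteq\cat{X}(T)^{\perp}\)); the long exact \(\Ext_{\cat{A}(T)}(M',-)\)-sequence with \(M'\in\cat{X}(T)\) and \(L,N\in\cat{X}(T)^{\perp}\) forces \(\xt{1}{\cat{A}(T)}{M'}{M}=0\), so \(M\in\cat{X}(T)\cap\cat{X}(T)^{\perp}=\cat{D}(T)\); splicing a base-change preserved \(\cat{D}(T)\)-resolution of \(L\) onto \(M\to N\) yields a \(\cat{D}(T)\)-resolution of \(N\) that stays a resolution after base change because its two pieces do and by BC1, so \(N\in\Df(T)\). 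Finally \(\cat{D}(T)=\cat{X}(T)\cap\cat{X}(T)^{\perp}=\cat{X}(T)\cap\bigl(\Xf(T)\cap\cat{X}(T)^{\perp}\bigr)=\cat{X}(T)\cap\Df(T)\), using \(\cat{X}(T)\subseteq\Xf(T)\).

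\textit{Proof of (ii) and (iii).} For \(\Xf(T)\) closed under extensions, given exact \(0\to N_{1}\to N_{2}\to N_{3}\to 0\) with \(N_{1},N_{3}\in\Xf(T)\), choose base-change preserved \(\cat{DX}\)-resolutions of \(N_{1},N_{3}\) and run the Auslander--Buchweitz horseshoe (Lemma~\ref{lem.xres}(i), i.e.\ \cite[1.12.11]{has:00}) in the fibre; this gives a \(\cat{DX}\)-resolution \({}^{-}C^{*}(N_{2})\) fitting into a short exact sequence of complexes \(0\to {}^{-}C^{*}(N_{1})\to {}^{-}C^{*}(N_{2})\to {}^{-}C^{*}(N_{3})\to 0\) whose degree-\(0\) term is \(0\to M_{1}\to M_{2}\to M_{3}\to 0\) with \(M_{3}\in\cat{X}(T)\) and whose negative-degree terms are split sums of the two \(\cat{D}(T)\)-resolutions. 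By BC2 the degree-\(0\) part is preserved by base change, the split parts trivially, so the sequence of complexes is termwise preserved; comparing long exact cohomology sequences and invoking BC1 then identifies the base change of \({}^{-}C^{*}(N_{2})\) as a resolution of \(N_{2}^{\#}\), so \(N_{2}\in\Xf(T)\). That \(\Df(T)\) is closed under extensions is now formal: if \(0\to L_{1}\to L_{2}\to L_{3}\to 0\) with \(L_{1},L_{3}\in\Df(T)\subseteq\Xf(T)\) then \(L_{2}\in\Xf(T)\) by the above, while \(L_{2}\in\cat{X}(T)^{\perp}\) because \(\cat{X}(T)^{\perp}\) is closed under extensions; so \(L_{2}\in\Df(T)\) by (i). For (iii) it suffices to treat one short exact sequence \(0\to N_{1}\to N_{2}\to N_{3}\to 0\) with all three terms in \(\Xf(T)\) and reassemble; the horseshoe gives the same termwise base-change preserved sequence of \(\cat{DX}\)-resolutions, but now \(N_{2}\in\Xf(T)\) is assumed, so by Lemma~\ref{lem.forall} that \(\cat{DX}\)-resolution of \(N_{2}\) is itself base-change preserved, and the cohomology long exact sequence directly yields exactness of \(0\to N_{1}^{\#}\to N_{2}^{\#}\to N_{3}^{\#}\to 0\).

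\textit{Proof of (iv) and (v).} Assume AB4, so that in the fibre epimorphisms in \(\cat{X}(T)\) and (classically) in \(\hat{\cat{X}}(T)\) are admissible. For an epimorphism \(\alpha:N\to N'\) in \(\cat{A}(T)\) with \(N,N'\in\Xf(T)\), pull back a base-change preserved \(\cat{X}(T)\)-approximation \(0\to L'\to M'\to N'\to 0\) along \(\alpha\): then \(P=N\times_{N'}M'\) sits in \(0\to\Ker\alpha\to P\to M'\to 0\) (preserved by base change by BC2, as \(M'\in\cat{X}(T)\)) and in \(0\to L'\to P\to N\to 0\) with \(L'\in\Df(T)\), \(N\in\Xf(T)\), so \(P\in\Xf(T)\) by (ii). The kernel of the chain map from a base-change preserved \(\cat{DX}\)-resolution of \(P\) to the one-term complex \(M'\) induced by \(P\to M'\) is a finite \(\cat{X}(T)\)-resolution of \(\Ker\alpha\) — its degree-\(0\) term is the kernel of an epimorphism in \(\cat{X}(T)\), hence in \(\cat{X}(T)\) by AB4 — and it remains a resolution after base change by BC2 and the cohomology argument (Lemma~\ref{lem.forall} giving the base-change preservation of the \(\cat{DX}\)-resolution of \(P\)); thus \(\Ker\alpha\in\Xf(T)\). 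For (v), \(\Xf(T)\) and \(\Df(T)\) are additive, hence closed under finite direct sums, so only closure under summands is at issue; a base-change preserved \(\cat{X}(T)\)- (resp.\ \(\cat{D}(T)\)-) resolution of \(N\oplus N''\) splits off one for \(N\) by lifting the idempotent as in \cite[1.1]{aus/buc:89} and splitting it, using that \(\cat{X}(T),\cat{D}(T)\) are closed under summands (AB1) and that direct-sum decompositions in a FAd are preserved by base change.

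\textbf{Main obstacle.} The technical heart is in (ii) (and derivatively (iv)): one must show that the resolution of \(N_{2}\) (resp.\ \(\Ker\alpha\)) produced by the fibre-level construction is genuinely preserved by base change. The \(\cat{D}\)-part is harmless since it splits off, and the single degree-\(0\) step is controlled by BC2 because the relevant cokernel lies in \(\cat{X}(T)\); the subtle point is matching the degree-\(0\) cohomology after base change with the correct object, and this is exactly what forces the joint use of BC1, Lemma~\ref{lem.bs} and Lemma~\ref{lem.forall}.
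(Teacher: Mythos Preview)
Your approach to (i)--(iii) is essentially the paper's: for (i) you spell out the fl-version of \cite[3.6--7]{aus/buc:89} via Theorem~\ref{thm.cofapprox}, and for (ii)--(iii) you run the horseshoe of Lemma~\ref{lem.xres}(i), use BC2 on the degree-$0$ row (since \(M_{3}\in\cat{X}(T)\)) and splitness below, then read off the conclusion from the long exact cohomology sequence. This matches the paper.

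For (iv) you take a genuinely different route. The paper argues exactly as in (ii): once \(N_{1}\in\hat{\cat{X}}(T)\) by \cite[3.5]{aus/buc:89}, it feeds a \(\cat{DX}\)-resolution of \(N_{1}\) and one of \(N_{3}\) into the horseshoe, obtains a resolution of \(N_{2}\), invokes Lemma~\ref{lem.forall} (now \(N_{2}\in\Xf(T)\) by hypothesis) to say that resolution is base-change preserved, and then the termwise short exact sequence of complexes forces the \(N_{1}\)-resolution to be preserved as well. Your pullback construction \(P=N\times_{N'}M'\) is a valid alternative: the short exact sequence \(0\to\Ker\alpha\to P\to M'\to 0\) is preserved by BC2, the map of complexes \({}^{-}C^{*}(P)\to M'[0]\) is a genuine chain map (the composite \(D^{-1}\to M_{P}\to P\to M'\) vanishes since \(D^{-1}\) lands in \(L_{P}=\ker(M_{P}\to P)\)), and the kernel complex is a \(\cat{DX}\)-resolution with degree-\(0\) term in \(\cat{X}(T)\) by AB4. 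What your approach buys is that the identification \(H^{0}(K^{*\#})\cong(\Ker\alpha)^{\#}\) is immediate from BC2 applied to \(0\to\Ker\alpha\to P\to M'\to 0\), whereas the paper's horseshoe argument leaves that identification implicit.

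For (v) there is a gap in your argument. Lifting the idempotent \(e\) on \(N\oplus N''\) to the resolution via Lemma~\ref{lem.res} only gives a chain endomorphism idempotent \emph{up to homotopy}; splitting a homotopy idempotent in \(\cat{K}^{\mathrm b}(\cat{X}(T))\) is not automatic, and the reference \cite[1.1]{aus/buc:89} does not address this. The paper sidesteps the issue: since \(N,N''\in\hat{\cat{X}}(T)\) by \cite[3.4]{aus/buc:89}, choose \(\cat{DX}\)-resolutions \({}^{-}C^{*}(N)\) and \({}^{-}C^{*}(N'')\); their direct sum is a \(\cat{DX}\)-resolution of \(N\oplus N''\in\Xf(T)\), hence preserved by base change by Lemma~\ref{lem.forall}; a direct summand of an acyclic complex is acyclic, so each summand resolution is preserved. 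Replace your idempotent-lifting with this direct-sum argument and (v) goes through; the \(\Df\) case then follows from (i).
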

\begin{proof}
For (i) the proofs of \cite[3.6-7]{aus/buc:89} work with the fl-s too by Theorem \ref{thm.cofapprox}. By (i) it's sufficient to prove (ii) for \(\Xf(T)\). Let \(e: N_{1}\ra N_{2}\ra N_{3}\) be a short exact sequence in \(\hat{\cat{X}}(T)\).
If \(N_{1}\) and \(N_{3}\) are in \(\Xf(T)\), Theorem \ref{thm.cofapprox} and Lemma \ref{lem.forall} imply that \({}^{-}C^{*}(N_{1})\) and \({}^{-}C^{*}(N_{3})\) in Lemma \ref{lem.forall} are preserved as resolutions by base change. Together with BC2 this implies that base change of the short exact sequence of resolutions in Lemma \ref{lem.xres} (i) gives a short exact sequence of \(\cat{DX}\)-resolutions. 
For (iii) the long exact sequence is broken into short exact sequences \(\xi_{i}\) with objects in \(\Xf(T)\). By Lemma \ref{lem.bs} it is sufficient to prove the claim for short exact sequences. By Lemma \ref{lem.xres} the short exact sequence of \(\cat{DX}\)-resolutions in Lemma \ref{lem.xres} (i) is preserved by base change. It follows that the short exact sequences \(\xi_{i}\) remain exact after base change.

For (iv); if \(N_{2}\) and \(N_{3}\) in \(e\) are in \(\Xf(T)\) then \(N_{1}\) is in \(\hat{\cat{X}}(T)\) by AB4; see \cite[3.5]{aus/buc:89}. The argument proceeds as for extensions. 

By (i) it's sufficient to prove (v) for \(\Xf\). If \(N_{1}\amalg  N_{2}\) is an object in \(\Xf(T)\), then \(N_{i}\) is in \(\hat{\cat{X}}(T)\) for \(i=1,2\) by \cite[3.4]{aus/buc:89}. By Lemma \ref{lem.forall} \({}^{-}C^{*}(N_{1})\amalg  {}^{-}C^{*}(N_{2})\) is preserved by base change as resolution of \(N_{1}\amalg  N_{2}\). It follows that the resolution \({}^{-}C^{*}(N_{i})\) is preserved by base change for \(i=1,2\). 
\end{proof}
\begin{cor}\label{cor.Cbs}
Assume \textup{BC1-BC2} and \textup{AB1-AB3} as in \textup{Proposition \ref{prop.cof}.}
If \(N\) is in \(\Xf(T)\) then any \(\cat{DX}\)-resolution \({}^{-}C^{*}(N)\thr N\) and any \(\hat{\cat{D}}\cat{D}\)-coresolution \(N\rightarrowtail {}^{+}C^{*}(N)\) as in \textup{Section \ref{subsec.cplx}} is preserved by base change\textup{.}
\end{cor}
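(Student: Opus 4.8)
The plan is to reduce the statement to Theorem~\ref{thm.cofapprox} by means of Lemma~\ref{lem.forall}. Since AB1--3 hold in each fibre \(\cat{A}(T)\), the second assertion of that lemma says that, once \(N\) has \emph{one} \(\cat{DX}\)-resolution (resp.\ \emph{one} \(\hat{\cat{D}}\cat{D}\)-coresolution) preserved by base change, \emph{every} \(\cat{DX}\)-resolution (resp.\ \(\hat{\cat{D}}\cat{D}\)-coresolution) of \(N\) is preserved by base change. So it is enough to exhibit one of each.

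First I would treat the \(\cat{DX}\)-resolution. Theorem~\ref{thm.cofapprox} gives an \(\cat{X}(T)\)-approximation \(0\ra L\ra M\ra N\ra 0\) preserved by base change, with \(L\) in \(\Df(T)\). By Proposition~\ref{prop.cof}(i) and the relation \(\hat{\cat{D}}(T)=\hat{\cat{X}}(T)\cap\cat{X}(T)^{\perp}\) we get \(\Df(T)=\Xf(T)\cap\cat{X}(T)^{\perp}\sbeq\hat{\cat{D}}(T)\), so \(L\) has a \emph{finite} \(\cat{D}(T)\)-resolution. Because AB3 forces \(\xt{i}{\cat{A}(T)}{D}{D'}=0\) for \(D,D'\) in \(\cat{D}(T)\) and \(i>0\), all \(\cat{D}(T)\)-resolutions of \(L\) are homotopy equivalent; as, by definition of \(\Df(T)\), \(L\) has \emph{some} \(\cat{D}(T)\)-resolution preserved by base change, the first assertion of Lemma~\ref{lem.forall} shows the finite one is preserved too. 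Splicing this finite \(\cat{D}(T)\)-resolution with the monomorphism \(L\hra M\) produces a \(\cat{DX}\)-resolution \(\dots\ra D^{-1}\ra M\ra N\ra 0\) of \(N\). Its objects lie in \(\Xf(T)\) (as \(\cat{D}(T)\sbeq\cat{X}(T)\sbeq\Xf(T)\) and \(N\in\Xf(T)\)), and its kernels are \(L\in\Df(T)\) together with the syzygies of \(L\), which by Lemma~\ref{lem.bs} applied to the now base-change stable \(\cat{D}(T)\)-resolution of \(L\) again lie in \(\Df(T)\sbeq\Xf(T)\). Hence Proposition~\ref{prop.cof}(iii) shows this \(\cat{DX}\)-resolution is preserved by base change. (Directly: after base change along \(f\colon T\ra T'\), the pieces \(0\ra L^{\#}\ra M^{\#}\ra N^{\#}\ra 0\) and \((D^{-n})^{\#}\ra\dots\ra(D^{-1})^{\#}\ra L^{\#}\ra 0\) remain exact, and, base change being compatible with composition of morphisms, the spliced base-changed differential \((D^{-1})^{\#}\ra M^{\#}\) factors as an epimorphism onto \(L^{\#}\) followed by \(L^{\#}\hra M^{\#}\), so the splice stays exact.)

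Next I would treat the \(\hat{\cat{D}}\cat{D}\)-coresolution. Theorem~\ref{thm.cofapprox} gives a \(\Df\)-hull \(0\ra N\ra L'\ra M'\ra 0\) preserved by base change, with \(L'\) in \(\Df(T)\sbeq\hat{\cat{D}}(T)\) and \(M'\) in \(\cat{X}(T)\). Applying AB2 repeatedly to \(M'\) yields short exact sequences \(0\ra M'_{k-1}\ra D^{k}\ra M'_{k}\ra 0\) (with \(M'_{0}=M'\)), \(D^{k}\) in \(\cat{D}(T)\) and \(M'_{k}\) in \(\cat{X}(T)\); splicing these and then splicing the result with the hull gives a \(\hat{\cat{D}}\cat{D}\)-coresolution \(N\rightarrowtail L'\ra D^{1}\ra D^{2}\ra\dots\) of \(N\). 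Every object occurring (\(N\), \(L'\), the \(D^{k}\)) lies in \(\Xf(T)\) --- using \(\cat{D}(T)\sbeq\cat{X}(T)\sbeq\Xf(T)\), \(\Df(T)\sbeq\Xf(T)\), and \(N\in\Xf(T)\) --- and so does every kernel of a differential, these being \(0\), \(N\), \(M'\), \(M'_{1}\), \(M'_{2},\dots\) in turn. Hence Proposition~\ref{prop.cof}(iii) shows this coresolution is preserved by base change, and Lemma~\ref{lem.forall} then covers all the others.

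I expect the only step that is not formal bookkeeping to be the one buried in the \(\cat{DX}\)-case: that the finite \(\cat{D}(T)\)-resolution of the approximation kernel \(L\) is preserved by base change (equivalently, that the syzygies of \(L\) return to \(\Df(T)\sbeq\Xf(T)\)). This is exactly where \(L\in\Df(T)\), the homotopy-uniqueness of \(\cat{D}(T)\)-resolutions (AB3), and the base-change stability of homotopies (Lemma~\ref{lem.forall}) are needed; the remaining exactness is then delivered by Proposition~\ref{prop.cof}(iii) once the objects and kernels of the two explicit (co)resolutions have been located in \(\Xf(T)\).
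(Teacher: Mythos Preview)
Your proof is correct and follows essentially the same route as the paper's own (very terse) argument: construct one \(\cat{DX}\)-resolution and one \(\hat{\cat{D}}\cat{D}\)-coresolution from Theorem~\ref{thm.cofapprox}, check their objects and kernels lie in \(\Xf(T)\) so that Proposition~\ref{prop.cof}(iii) applies, and then invoke Lemma~\ref{lem.forall} to pass to arbitrary (co)resolutions. The paper compresses all of this into two sentences; you have simply unpacked the details, including the point about the \(\cat{D}(T)\)-resolution of \(L\) being preserved by base change, which is indeed the only step requiring care.
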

\begin{proof}
By Theorem \ref{thm.cofapprox} there exist a \(\cat{DX}\)-resolution and a \(\cat{\hat{D}D}\)-coresolution in \(\Xf\). 
The result follows from Proposition \ref{prop.cof} (iii) and Lemma \ref{lem.forall}. 
\end{proof}
\begin{defn}\label{defn.cof}
Let \(\cat{A}\) be a category fibred in abelian categories over \(\cat{C}\) and let \(\cat{D}\sbeq\cat{A}\) be an inclusion morphism of a category fibred in additive categories. For \(T\) in \(\cat{C}\) let \(\cDf(T)\) denote the full subcategory of \(\cat{A}(T)\) of objects \(K\) with a finite coresolution \(K\ra L^{*}\) with objects \(L^{i}\) in \(\Df(T)\) for \(i\geq 0\).
\end{defn}
\begin{lem}\label{lem.cof2} 
With these notions we have\textup{:}
\begin{enumerate}
\item[(i)] Epimorphisms in \(\Df(T)\) are admissible if and only if\, \(\Df(T)=\cDf(T)\)\textup{.}
\end{enumerate}
Assume \textup{BC1-BC2} and \textup{AB1-AB4} for \((\cat{A}(T),\cat{X}(T),\cat{D}(T))\)\textup{.} Then\textup{:}
\begin{enumerate}
\item[(ii)] \(\Df(T)=\hat{\cat{D}}(T)\cap\cDf(T)\)\textup{.}
\item[(iii)] Epimorphisms in \(\Df(T)\) are admissible if epimorphisms in \(\hat{\cat{D}}(T)\) are admissible\textup{.} 
\end{enumerate}
\end{lem}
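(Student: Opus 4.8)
The plan is to dispose of (i) by an elementary induction that uses nothing but the abelianness of $\cat{A}(T)$, to reduce the content of (ii) to a single short exact sequence handled by a pullback, and to deduce (iii) formally from (i) and (ii). For (i): if $\Df(T)=\cDf(T)$ and $\alpha\colon L_1\thr L_2$ is an epimorphism in $\cat{A}(T)$ with $L_1,L_2$ in $\Df(T)$, then $0\ra\ker\alpha\ra L_1\ra L_2\ra 0$ is a finite $\Df(T)$-coresolution of $\ker\alpha$, so $\ker\alpha\in\cDf(T)=\Df(T)$, i.e.\ epimorphisms in $\Df(T)$ are admissible. Conversely, assume epimorphisms in $\Df(T)$ are admissible. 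Since $\Df(T)\sbeq\cDf(T)$ trivially, it remains to prove $\cDf(T)\sbeq\Df(T)$, which I would do by induction on the length $n$ of a $\Df(T)$-coresolution $0\ra K\ra L^0\ra\dots\ra L^n\ra 0$: for $n=0$ one has $K\cong L^0\in\Df(T)$, and for $n\geq 1$ the object $C=\coker(K\ra L^0)$ carries the $\Df(T)$-coresolution $0\ra C\ra L^1\ra\dots\ra L^n\ra 0$ of length $n-1$, hence $C\in\Df(T)$ by induction, so $K=\ker(L^0\thr C)$ is in $\Df(T)$ because epimorphisms in $\Df(T)$ are admissible.

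For (ii), the inclusion $\Df(T)\sbeq\hat{\cat{D}}(T)\cap\cDf(T)$ is immediate (a base-change-stable $\cat{D}(T)$-resolution is in particular a $\cat{D}(T)$-resolution, and any object is its own length-zero $\Df(T)$-coresolution). For the reverse inclusion I would again induct on the length $n$ of a $\Df(T)$-coresolution of a given $K\in\hat{\cat{D}}(T)\cap\cDf(T)$. The inductive step is formal: with $C=\coker(K\ra L^0)$, the monomorphism $K\ra L^0$ of objects of $\hat{\cat{D}}(T)$ is admissible by \cite[3.9]{aus/buc:89} (valid in the fibre $\cat{A}(T)$ under AB1--3), so $C\in\hat{\cat{D}}(T)$; since $C$ also carries a $\Df(T)$-coresolution of length $n-1$, induction gives $C\in\Df(T)$, and one is reduced to the case $n=1$, namely to the claim: \emph{if $0\ra K\ra L^0\ra L^1\ra 0$ is exact with $L^0,L^1$ in $\Df(T)$ and $K$ in $\hat{\cat{D}}(T)$, then $K\in\Df(T)$.}

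To prove the claim, choose the augmentation $D\thr L^1$ of a base-change-stable finite $\cat{D}(T)$-resolution of $L^1$ and put $B=\ker(D\thr L^1)$; the one-step truncation of that resolution is a finite $\cat{D}(T)$-resolution of $B$ which, by Lemma \ref{lem.bs}, is still preserved by base change, so $B\in\Df(T)$. Forming the pullback $P=L^0\times_{L^1}D$ produces short exact sequences $0\ra B\ra P\ra L^0\ra 0$ and $0\ra K\ra P\ra D\ra 0$. The first, together with closure of $\Df(T)$ under extensions (Proposition \ref{prop.cof}(ii)), gives $P\in\Df(T)$. In the second, $D\in\cat{D}(T)\sbeq\cat{X}(T)$ while $K\in\hat{\cat{D}}(T)$, so $\xt{1}{\cat{A}(T)}{D}{K}=0$ by the standard vanishing holding under AB1--3 (cf.\ \cite[2.5]{aus/buc:89} and Lemma \ref{lem.res}); hence this sequence splits and $K$ is a direct summand of $P\in\Df(T)$. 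By Proposition \ref{prop.cof}(v), $\Add\Df(T)=\Df(T)$, so $K\in\Df(T)$, proving the claim and hence (ii).

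For (iii): assuming epimorphisms in $\hat{\cat{D}}(T)$ are admissible, by (i) it suffices to prove $\Df(T)=\cDf(T)$, and by (ii) it suffices to prove $\cDf(T)\sbeq\hat{\cat{D}}(T)$; this runs through the same length induction as in (i), with $\hat{\cat{D}}(T)$ playing the role of $\Df(T)$, using $\Df(T)\sbeq\hat{\cat{D}}(T)$ in the inductive step and the admissibility hypothesis to pass from $C\in\hat{\cat{D}}(T)$ to $K=\ker(L^0\thr C)\in\hat{\cat{D}}(T)$. Combined with (ii) and $\Df(T)\sbeq\cDf(T)$ this yields $\cDf(T)=\hat{\cat{D}}(T)\cap\cDf(T)=\Df(T)$. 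The only genuinely non-formal step is the claim in (ii); the main obstacle there is to find the right reduction, and the key idea is that pulling $L^1$ back along the augmentation of its own $\cat{D}(T)$-resolution replaces the badly behaved quotient $L^1\in\hat{\cat{D}}(T)$ by $D\in\cat{D}(T)$, forcing the extension of $K$ by $D$ to split, after which one only needs the already established extension- and summand-closure of $\Df(T)$.
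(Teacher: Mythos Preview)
Your proof is correct. Parts (i) and (iii) match the paper's argument essentially verbatim (the paper dismisses (i) as ``trivially true'' and for (iii) appeals to ``(i) without the fl'' from \cite[4.1]{aus/buc:89}, which is exactly the induction you spell out).

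The genuine difference is in the base case $n=1$ of (ii). The paper observes directly that since $L^0,L^1\in\Df(T)\sbeq\Xf(T)$ and epimorphisms in $\Xf(T)$ are admissible by Proposition~\ref{prop.cof}(iv), the kernel $K$ lies in $\Xf(T)$; combined with $K\in\hat{\cat{D}}(T)$ and the identification $\Xf(T)\cap\hat{\cat{D}}(T)=\Df(T)$ from Proposition~\ref{prop.cof}(i), this finishes in one line. Your pullback argument is more hands-on: it trades Proposition~\ref{prop.cof}(iv) for (ii) and (v), replacing the awkward quotient $L^1$ by an object $D\in\cat{D}(T)$ so that the resulting extension of $K$ by $D$ splits. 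Both routes ultimately invoke AB4 through Proposition~\ref{prop.cof}, so neither is strictly more elementary; the paper's is shorter, while yours makes the role of base-change stability of the resolution (via Lemma~\ref{lem.bs}) more explicit and would adapt more readily to settings where one has extension- and summand-closure but not the full admissibility of epimorphisms in $\Xf$.
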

\begin{proof}
(i) is trivially true. In (ii) \(\Df(T)\sbeq\hat{\cat{D}}(T)\cap\cDf(T)\) is obvious. For the other inclusion, suppose \(K\) is an object in \(\hat{\cat{D}}(T)\cap\cDf(T)\setminus \Df(T)\) with a \(\Df(T)\)-coresolution \(K\ra L^{*}\) of length \(n>0\). Since monomorphisms are admissible in \(\hat{\cat{D}}(T)=\hat{\cat{X}}(T)\cap \cat{X}(T)^{\perp}\), all \(K^{i}=\ker(L^{i}\ra L^{i+1})\) are contained in \(\hat{\cat{D}}(T)\), and we can assume \(n=1\). But then \(K\) has to be in \(\Xf\cap\hat{\cat{D}}=\Df(T)\) by Proposition \ref{prop.cof}. Since (i) is true ``without the fl'' by \cite[4.1]{aus/buc:89}, (iii) follows immediately from (i) and (ii).
\end{proof}
\section{Cohen-Macaulay approximation of flat families}\label{sec.ft}
We define fibred categories of Cohen-Macaulay maps with flat modules and show that they allow Cohen-Macaulay approximation in the finite type case and the local, algebraic case.  
\subsection{The finite type case}\label{subsec.ft}
Let \(h:S\ra T\) be a ring homomorphism of noetherian rings. We say that \(h\) is a \emph{Cohen-Macaulay \textup{(}CM\textup{)} map} if it is of finite type, faithfully flat and all fibres are Cohen-Macaulay (cf.\ \cite[6.8.1]{EGAIV2}). In particular \(h\) is equidimensional (\cite[15.4.1]{EGAIV3}). B.\ Conrad has defined the \emph{dualising module} \(\omega_{h}\) for any CM \(h\); see \cite[Section 3.5]{con:00}.
Suppose \(h\) has pure relative dimension \(n\). For some \(N\geq n\) there is a surjective \(S\)-algebra map \(P\ra T\) where \(P=S[t_{1},\dots,t_{N}]\). Let \(\omega_{P/S}:=\bigwedge^{N}\varOmega_{P/S}\). Then there is an isomorphism \(\omega_{h}\cong\xt{N-n}{P}{T}{\omega_{P/S}}\) which is natural in the factorisation \(S\ra P\ra T\); see \cite[3.5.3-6]{con:00}. By (local) duality theory and Corollary \ref{cor.xtdef} \(\omega_{h}\) is \(S\)-flat
(or see \cite[Cor.\ 3.5.2]{con:00}).
If \(S\) is a field, we have that \(\omega_{h}\) is a canonical module of \(T\) as in Example \ref{ex.MCMapprox}; cf.\ \cite[3.3.7 and 16]{bru/her:98}. 

Let \(\cat{CM}\) be the category with objects the CM maps and morphisms \((g,f):h_{1}\ra h_{2}\) pairs of ring homomorphisms \(g:S_{1}\ra S_{2}\) and \(f:T_{1}\ra T_{2}\) such that \(h_{2}g=fh_{1}\) and such that the induced map \(f\ot 1:T_{1}\ot S_{2}\ra T_{2}\) is an isomorphism:
\begin{equation*}
\xymatrix@C-0pt@R-8pt@H-30pt{
T_{1}\ar[r]^{f} & T_{2} & T_{1}\ot_{S_{1}}S_{2} \ar[l]_(0.6){\simeq} \\
S_{1}\ar[u]^{h_{1}}\ar[r]^{g} & S_{2}\ar[u]_{h_{2}}
}
\end{equation*}
Let \(\cat{NR}\) denote the category of noetherian rings. The forgetful functor \(p:\cat{CM}\ra\cat{NR}\); \((g,f)\mapsto g\), makes \(\cat{CM}\) fibred in groupoids over \(\cat{NR}\). The essential part is that \(\cat{CM}\) should allow base change, i.e.\ given \(g:S_{1}\ra S_{2}\) and \(h_{1}:S_{1}\ra T_{1}\) as above there should exist a \(T_{2}\), an \(h_{2}:S_{2}\ra T_{2}\) and an \(f\) such that \((g,f)\) is a morphism \(h_{1}\ra h_{2}\) in \(\cat{CM}\). This follows from \cite[15.4.3]{EGAIV3}.

Let \(\cat{mod}\) be the category of pairs \((h:S\ra T,N)\) with \(h\) in \(\cat{CM}\) and \(N\) a finite \(T\)-module. A morphism \((h_{1},N_{1})\ra (h_{2},N_{2})\) is a morphism \((g,f):h_{1}\ra h_{2}\) in \(\cat{CM}\) and an \(f\)-linear map \(\alpha:N_{1}\ra N_{2}\). Then \(\alpha\) is cocartesian with respect to the forgetful functor \(F:\cat{mod}\ra \cat{CM}\) if \(1\ot \alpha: T_{2}\ot N_{1}\ra N_{2}\) is an isomorphism. It follows that \(\cat{mod}\) is fibred in abelian categories over \(\cat{CM}\). 
Adding the property that \(N\) is \(S\)-flat gives the full subcategory \(\modf\). Moreover, let \(\cat{MCM}\) be the full subcategory of \(\modf\) where the fibre \(N_{s}=N\ot_{S}k(s)\) is a maximal Cohen-Macaulay \(T_{s}\)-module for all \(s\in \Spec S\). The inclusions \(\cat{MCM}\sbeq\modf\sbeq\cat{mod}\) are inclusion morphisms of categories fibred in additive categories (FAds) over \(\cat{CM}\). For \(\cat{MCM}\) this follows from \cite[15.4.3]{EGAIV3}.
If \(h\) is a CM map let \(\cat{mod}_{h}\), \(\cat{MCM}_{h}\), \dots denote the fibre categories of \(\cat{mod}\), \(\cat{MCM}\), \dots over \(h\). An object in \(\cat{MCM}_{h}\) is called an (\(h\)-)family of maximal Cohen-Macaulay modules.

Given a morphism \(h_{1}\ra h_{2}\) in \(\cat{CM}\). By \cite[Thm.\ 3.6.1]{con:00} there is a natural isomorphism with base change \(T_{2}\ot\omega_{h_{1}}\cong\omega_{h_{2}}\) which is compatible with localisation of \(T_{1}\) and is functorial with respect to composition \(h_{1}\ra h_{2}\ra h_{3}\). It follows that \(h\mapsto (h,\omega_{h})\) defines a morphism \(\omega:\cat{CM}\ra\cat{MCM}\) of fibred categories over \(\cat{NR}\) which is a section of the forgetful \(F:\cat{MCM}\ra\cat{CM}\). Let \(\cat{D}\) be the full subcategory of \(\cat{MCM}\) over \(\cat{CM}\) with the objects \((h,D)\) where \(D\) is an object in \(\Add\{\omega_{h}\}\). The inclusion \(\cat{D}\sbeq\cat{MCM}\) is an inclusion of FAds over \(\cat{CM}\).

If \(\cat{U}\) denotes any of these FAds over \(\cat{CM}\), let \(\ul{\cat{U}}\) denote the quotient (`stable') category \(\cat{U}/\cat{D}\). With this notation we have the following.
\begin{thm}\label{thm.flatCMapprox}
The pair \((\cat{mod},\cat{MCM})\) over \(\cat{CM}\) satisfies \textup{BC1-BC2} and the triple of fibre categories \((\cat{mod}_{h},\cat{MCM}_{h},\cat{D}_{h})\) satisfies \textup{AB1-AB4} for all objects \(h\) in \(\cat{CM}\)\textup{.}  
Moreover\textup{:}
\begin{enumerate}
\item[(i)] The fibred categories \(\MCMf\) and \(\Df\) equal \(\modf\) and \(\hat{\cat{D}}\cap\modf\) respectively\textup{.}
\item[(ii)] For any object \((h,N)\) in \(\modf\), \(N\) admits an \(\cat{MCM}\)-approximation and a \(\Df\)-hull which in particular are preserved by any base change\textup{.}
\item[(iii)] The \(\cat{MCM}\)-approximation induces a morphism of categories fibred in additive categories \(j^{!}:\umodf\ra\ul{\cat{MCM}}\) which is a right adjoint to the full and faithful inclusion morphism \(j_{!}:\ul{\cat{MCM}}\ra\umodf\)\textup{.}
\item[(iv)] The \(\Df\)-hull induces a morphism of categories fibred in additive categories \(i^{*}:\umodf\ra{\uDf}\) which is a left adjoint to the full and faithful inclusion morphism \(i_{*}:\uDf\ra \umodf\)\textup{.}
\item[(v)] Together these maps give the following commutative diagram of short exact sequences of categories fibred in additive categories\textup{:}
\begin{equation*}
\xymatrix@C-0pt@R-8pt@H-30pt{
0\ar[r] & \,\uDf \ar[r]^(0.45){i_{*}}\ar[d]_(0.45){\id} & \umodf\ar[r]^{j^{!}}\ar@{=}[d] & \ul{\cat{MCM}}\ar[r] & 0 \\
0 & \,\uDf\ar[l] & \umodf\ar[l]_{i^{*}} & \ul{\cat{MCM}}\ar[l]_(0.45){j_{!}}\ar[u]_{\id} & 0 \ar[l]
}
\end{equation*}
\end{enumerate}
\end{thm}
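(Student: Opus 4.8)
The plan is to verify the two base-change axioms BC1--2 for the pair $(\cat{mod},\cat{MCM})$ over $\cat{CM}$ and the Auslander--Buchweitz axioms AB1--4 for each fibre triple $(\cat{mod}_{h},\cat{MCM}_{h},\cat{D}_{h})$, and then to deduce (i)--(v) by invoking Theorems \ref{thm.cofapprox} and \ref{thm.cofmain} together with Proposition \ref{prop.cof} and Lemma \ref{lem.cof2}. For AB1--4 on a single fibre: the fibre $\cat{mod}_{h}$ consists of pairs $(\mc{N})$ with $\mc{N}$ a finite $S$-flat $T$-module; a base change to a fibre $h\ot_{S}k(s)$ recovers the classical setting of Example \ref{ex.MCMapprox} (where $T_{s}$ is CM with canonical module $\omega_{h,s}=\omega_{T_{s}}$), so AB1--4 for the \emph{absolute} triple $(\cat{mod}_{T_{s}},\cat{MCM}_{T_{s}},\cat{D}_{T_{s}})$ are known by \cite[I 4.10.11]{has:00}. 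The subtlety is that we need them for the \emph{relative} fibre categories, whose objects carry $S$-flatness over a possibly singular $T$. I would establish AB1 (exactness of $\cat{MCM}_{h}$ in $\cat{mod}_{h}$) by checking on fibres: a short exact sequence of finite $T$-modules with two of the three $S$-flat has the third $S$-flat, and being MCM is a fibrewise condition preserved under extensions and summands fibrewise by the absolute case together with \cite[15.4.3]{EGAIV3}. For AB2 (cogenerator) and AB3 ($\cat{X}$-injectivity) I would use the dualising module $\omega_{h}$: given an MCM $h$-module $\mc{M}$, dualise, take a surjection from a free module, dualise back — exactly as in Example \ref{ex.MCMapprox} — using that $\omega_{h}$ commutes with base change (\cite[Thm.\ 3.6.1]{con:00}) and that the relevant $\Ext$ groups vanish fibrewise, hence vanish relatively by Corollary \ref{cor.xtdef}. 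AB3 reduces to $\xt{i}{T_{s}}{\mc{M}_{s}}{\omega_{h,s}}=0$ for $i>0$ and all $s$, again the absolute fact, promoted via Corollary \ref{cor.xtdef}. AB4 (kernels of epimorphisms in $\cat{MCM}_{h}$ lie in $\cat{MCM}_{h}$) follows fibrewise from the depth lemma plus $S$-flatness of the kernel.

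Next I would verify BC1 and BC2 for $(\cat{mod},\cat{MCM})$. Here the fibre over an object $h$ of $\cat{CM}$ is the category of finite $S$-flat $T$-modules, and a cocartesian arrow over a morphism $(g,f)\colon h_{1}\ra h_{2}$ is one inducing $\mc{N}_{1}\ot_{S_{1}}S_{2}\xra{\sim}\mc{N}_{2}$; so \emph{base change of $\mc{N}$} is simply $\mc{N}\ot_{S_{1}}S_{2}$, and a base change of a diagram is obtained by applying $-\ot_{S_{1}}S_{2}$. BC1 asks that this functor preserves epimorphisms — immediate since $-\ot_{S_{1}}S_{2}$ is right exact. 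BC2 asks that a short exact sequence $0\ra A\ra B\ra \mc{M}\ra 0$ in $\cat{mod}_{h_{1}}$ with $\mc{M}$ in $\cat{MCM}_{h_{1}}$ remains exact after $-\ot_{S_{1}}S_{2}$; the only thing to check is left-exactness, i.e.\ $\tor{S_{1}}{1}{\mc{M}}{S_{2}}=0$, which holds because $\mc{M}$ is $S_{1}$-flat (this is where the $S$-flatness of MCM $h$-modules — itself a consequence of fibrewise MCM, \cite[15.4.3]{EGAIV3} — is used). So BC2 is really the statement ``MCM $h$-modules are $S$-flat''. With AB1--4 and BC1--2 in hand, Theorem \ref{thm.cofapprox} yields the $\cat{MCM}$-approximation and the $\Df$-hull of any $(h,N)$ in $\MCMf$, preserved by base change, which is (ii) modulo identifying $\MCMf=\modf$.

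For (i) I would argue that $\MCMf(h)=\modf(h)$ amounts to: every finite $S$-flat $T$-module $\mc{N}$ admits a finite $\cat{MCM}_{h}$-resolution preserved by base change. Fibrewise this is $\hat{\cat{X}}=\cat{mod}_{T_{s}}$ from Example \ref{ex.MCMapprox}, so a resolution exists on each fibre; lifting it to an $S$-flat resolution over $T$ that stays a resolution under base change is done by induction on the fibre resolution length, repeatedly choosing a surjection from a suitable free-type MCM $h$-module onto $\mc{N}$ with $S$-flat kernel — here one needs a surjection $\mc{M}\thr\mc{N}$ with $\mc{M}$ an MCM $h$-module, obtainable since over each fibre one can map $\omega_{T_{s}}^{\oplus r}$ onto a first syzygy appropriately; then Lemma \ref{lem.bs} guarantees kernels behave well under base change. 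The identification $\Df=\hat{\cat{D}}\cap\modf$ is then Proposition \ref{prop.cof}(i). Finally (iii)--(v) are a direct application of Theorem \ref{thm.cofmain} to $(\cat{A},\cat{X},\cat{D})=(\cat{mod},\cat{MCM},\cat{D})$ over $\cat{C}=\cat{CM}$, whose hypotheses we have just checked. The main obstacle, I expect, is the relative AB2/AB3 step: carrying the canonical-module duality argument of Example \ref{ex.MCMapprox} over a base, one must be careful that $\hm{}{T}{\mc{M}}{\omega_{h}}$ is again $S$-flat and its formation commutes with base change, and that the splitting $\mc{M}\hra\omega_{h}^{\oplus n}$ exists globally and not merely fibrewise — this is precisely where Corollary \ref{cor.xtdef} (cohomology and base change for relative $\Ext$) and Conrad's compatibility of $\omega_{h}$ with base change \cite[Thm.\ 3.6.1]{con:00} must be combined, and where one should check that the vanishing of the obstructing $\Ext^{1}$ on fibres is enough to split the monomorphism over $T$.
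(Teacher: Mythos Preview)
Your overall strategy matches the paper's: verify BC1--2 and AB1--4, then invoke Theorems \ref{thm.cofapprox} and \ref{thm.cofmain}. Two points where you diverge or slip are worth correcting.

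First, a terminological slip: the fibre $\cat{mod}_{h}$ is the abelian category of \emph{all} finite $T$-modules, not only the $S$-flat ones (those form $\modf_{h}$). This matters because the AB axioms are stated inside an abelian ambient category, and $S$-flat modules do not form one.

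Second, your argument for $\MCMf_{h}=\modf_{h}$ is over-engineered and the sketch (``map $\omega_{T_{s}}^{\oplus r}$ onto a first syzygy'') is unclear. The paper's argument is a one-liner: finite free $T$-modules lie in $\cat{MCM}_{h}$ (they are $S$-flat since $h$ is flat, and each fibre $T_{s}^{r}$ is MCM since $T_{s}$ is Cohen--Macaulay). So for $\mc{N}\in\modf_{h}$ take any $T$-free resolution; $S$-flatness of $\mc{N}$ forces all syzygies to be $S$-flat and the resolution to remain exact after any base change $-\ot_{S}S'$; a syzygy of order at least the relative dimension is then fibrewise MCM by depth counting, hence lies in $\cat{MCM}_{h}$. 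No $\omega$'s are needed at this stage.

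Finally, for $\Df_{h}=\hat{\cat{D}}_{h}\cap\modf_{h}$: Proposition \ref{prop.cof}(i) only gives $\Df_{h}=\modf_{h}\cap\cat{MCM}_{h}^{\perp}$. You still need the observation $\hat{\cat{D}}_{h}\subseteq\cat{MCM}_{h}^{\perp}$ (immediate from AB3 by induction on the length of a $\cat{D}_{h}$-resolution) to get $\modf_{h}\cap\hat{\cat{D}}_{h}\subseteq\Df_{h}$; the reverse inclusion is then clear. The paper makes exactly this extra step.
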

\begin{proof}
Since base change is given by the tensor product BC1 and BC2 follow. In particular, a short exact sequence \(e:N_{1}\ra N_{2}\ra N_{3}\) in \(\cat{mod}_{h}\) with \(N_{3}\) and either \(N_{1}\) or \(N_{2}\) in \(\cat{MCM}_{h}\) gives short exact sequences of MCMs after base change to each fibre \(T_{s}\), i.e.\ AB1 and AB4.

For AB2, suppose \(M\) is in \(\cat{MCM}_{h}\). Then \(M^{\vee}=\hm{}{T}{M}{\omega}\) is in \(\cat{MCM}_{h}\) too by Corollary \ref{cor.xtdef}. Since \(M^{\vee}\) is finite there is a short exact sequence
\(M^{\vee}\la T^{r}\la M_{1}\). By AB1 \(M_{1}\) is in \(\cat{MCM}_{h}\). Applying \(\hm{}{T}{-}{\omega_{h}}\) gives the desired short exact sequence since Corollary \ref{cor.xtdef} implies that \(\xt{1}{T}{M^{\vee}}{\omega_{h}}=0\) and that the natural map \(M\ra M^{\vee\vee}\) is an isomorphism. AB3 also follows from Corollary \ref{cor.xtdef}.

Any \(N\) in \(\MCMf_{h}\) has by definition a finite \(\cat{MCM}\)-resolution (say of length \(n\)) preserved by base change. Since objects in \(\cat{MCM}\) are \(S\)-flat it follows by induction on \(n\) that \(\tor{S}{1}{N}{k(s)}=0\) for all \(s\in\Spec S\). Hence \(N\) is \(S\)-flat. Conversely, if \(N\) is in \(\modf_{h}\) it follows that a sufficiently high syzygy of \(N\) is in \(\cat{MCM}_{h}\), i.e.\ \(N\) is in \(\MCMf_{h}\). 
With Proposition \ref{prop.cof} this gives \(\Df_{h}=\modf_{h}\cap\cat{MCM}_{h}^{\perp}\). By induction on the length of the resolution \(\hat{\cat{D}}_{h}\sbeq \cat{MCM}_{h}^{\perp}\) and so \(\modf_{h}\cap\hat{\cat{D}}_{h}\sbeq\Df_{h}\). The opposite inclusion is clear by the first part of (i). Now (ii)-(v) follow directly from Theorem \ref{thm.cofmain}.
\end{proof}
\begin{cor}\label{cor.flatCMapprox}
Let \(h:S\ra T\) be an object in \(\cat{CM}\)\textup{.} Then\textup{:}
\begin{enumerate}
\item[(i)] \(\cat{D}_{h}=\cat{MCM}_{h}^{\perp}\cap\cat{MCM}_{h}\quad\text{and}\quad \Df_{h}=\cat{MCM}_{h}^{\perp}\cap\modf_{h}\)
\item[(ii)] The kernel of a surjective map in \(\Df_{h}\) is contained in \(\Df_{h}\)\textup{.}
\end{enumerate}
\end{cor}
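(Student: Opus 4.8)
The plan is to deduce both assertions from Theorem~\ref{thm.flatCMapprox} together with the general results of the two preceding sections and the fibrewise facts recorded in Example~\ref{ex.MCMapprox}. By Theorem~\ref{thm.flatCMapprox} the pair \((\cat{mod},\cat{MCM})\) over \(\cat{CM}\) satisfies BC1--2 and the fibre triple \((\cat{mod}_{h},\cat{MCM}_{h},\cat{D}_{h})\) satisfies AB1--4; hence Proposition~\ref{prop.cof} applies with \(\cat{A}=\cat{mod}\), \(\cat{X}=\cat{MCM}\), \(\cat{D}=\cat{D}\) and \(T=h\), and moreover \(\MCMf_{h}=\modf_{h}\) by Theorem~\ref{thm.flatCMapprox}(i).

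For (i) I would simply invoke Proposition~\ref{prop.cof}(i): it gives \(\Df_{h}=\MCMf_{h}\cap\cat{MCM}_{h}^{\perp}\) and \(\cat{D}_{h}=\cat{MCM}_{h}\cap\Df_{h}\). Replacing \(\MCMf_{h}\) by \(\modf_{h}\) in the first equality produces the second displayed formula of (i); feeding this back into the second equality and using \(\cat{MCM}_{h}\sbeq\modf_{h}\) produces the first. So (i) is pure bookkeeping once Theorem~\ref{thm.flatCMapprox} is available.

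For (ii), take an exact sequence \(0\ra\mc{K}\ra\mc{L}'\ra\mc{L}\ra 0\) in \(\cat{mod}_{h}\) with \(\mc{L}',\mc{L}\) in \(\Df_{h}\). By (i) it is enough to verify that \(\mc{K}\) is \(S\)-flat and that \(\mc{K}\in\cat{MCM}_{h}^{\perp}\). Flatness of \(\mc{K}\) drops out of the long exact \(\Tor^{S}\)-sequence since \(\mc{L}'\) and \(\mc{L}\) are \(S\)-flat, and then \(0\ra\mc{K}_{s}\ra\mc{L}'_{s}\ra\mc{L}_{s}\ra 0\) is exact for every \(s\in\Spec S\). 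Because \(\mc{L}',\mc{L}\in\Df_{h}\), their \(\cat{D}_{h}\)-resolutions stay resolutions under the base change \(S\ra k(s)\), so \(\mc{L}'_{s}\) and \(\mc{L}_{s}\) have finite \(\Add\{\omega_{h,s}\}\)-resolutions, i.e.\ lie in \(\hat{\cat{D}}_{T_{s}}\); as \(T_{s}\) is a Cohen--Macaulay fibre ring with canonical module \(\omega_{h,s}\), Example~\ref{ex.MCMapprox} identifies \(\hat{\cat{D}}_{T_{s}}=\cat{FID}^{l}_{T_{s}}\). Since a kernel of a surjection of modules of locally finite injective dimension again has locally finite injective dimension (the standard estimate \(\Injdim\mc{K}_{s}\leq\max\{\Injdim\mc{L}'_{s},\Injdim\mc{L}_{s}+1\}\), localised), \(\mc{K}_{s}\in\cat{FID}^{l}_{T_{s}}=\hat{\cat{D}}_{T_{s}}\sbeq\cat{MCM}_{T_{s}}^{\perp}\); hence \(\xt{i}{T_{s}}{\mc{M}_{s}}{\mc{K}_{s}}=0\) for every \(\mc{M}\in\cat{MCM}_{h}\), every \(i\geq 1\) and every \(s\in\Spec S\).

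The remaining and really only delicate step is to pass from this fibrewise vanishing to \(\xt{i}{T}{\mc{M}}{\mc{K}}=0\) globally. For this I would apply Corollary~\ref{cor.xtdef}(i) with \(q=i\) and \(S'=S\) (legitimate since \(\mc{M}\) and \(\mc{K}\) are \(S\)-flat and the fibrewise \(\Ext^{i+1}\) vanishes): it yields that \(c^{i}_{k(\fr p)}\colon\xt{i}{T}{\mc{M}}{\mc{K}}\ot_{S}k(\fr p)\to\xt{i}{T_{\fr p}}{\mc{M}_{\fr p}}{\mc{K}_{\fr p}}\) is an isomorphism for every \(\fr p\in\Spec S\), whence the finite \(T\)-module \(E=\xt{i}{T}{\mc{M}}{\mc{K}}\) satisfies \(E\ot_{S}k(\fr p)=0\) for all \(\fr p\). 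A Nakayama argument over the fibres then forces \(E=0\): if \(E\neq0\), choose \(\fr n\in\mSpec T\) in \(\Supp E\) and set \(\fr p=h^{-1}(\fr n)\); then \(E_{\fr n}\neq0\) is finite over the local ring \(T_{\fr n}\), while \(E_{\fr n}/\fr pE_{\fr n}=(E\ot_{S}k(\fr p))_{\fr n}=0\) with \(\fr pT_{\fr n}\sbeq\fr nT_{\fr n}\), a contradiction. Thus \(\mc{K}\in\cat{MCM}_{h}^{\perp}\), and by (i) \(\mc{K}\in\Df_{h}\). (An alternative packaging of (ii) uses Lemma~\ref{lem.cof2}(iii), reducing the claim to admissibility of epimorphisms in \(\hat{\cat{D}}_{h}\), which is established by exactly the same fibrewise-plus-base-change reasoning.)
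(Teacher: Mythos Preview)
Your argument is correct. Part (i) is handled exactly as in the paper (it is just Proposition~\ref{prop.cof}(i) combined with \(\MCMf_{h}=\modf_{h}\) from Theorem~\ref{thm.flatCMapprox}(i)). For part (ii) you take a genuinely more direct route than the paper: you use the characterisation \(\Df_{h}=\cat{MCM}_{h}^{\perp}\cap\modf_{h}\) from (i) and verify \(\mc{K}\in\cat{MCM}_{h}^{\perp}\) head-on by fibrewise vanishing plus Corollary~\ref{cor.xtdef} and Nakayama. The paper instead proves the stronger structural statement \(\check{\cat{D}}_{h}=\hat{\cat{D}}_{h}\) by checking the Auslander--Buchweitz criterion \cite[4.6]{aus/buc:89} (every \(M\in\cat{MCM}_{h}\) of finite \(\cat{MCM}_{h}\)-injective dimension lies in \(\cat{D}_{h}\)), again via a fibre argument and Corollary~\ref{cor.xtdef}; admissibility of epimorphisms in \(\hat{\cat{D}}_{h}\) then yields \(\mc{K}\in\hat{\cat{D}}_{h}\), and Theorem~\ref{thm.flatCMapprox}(i) finishes. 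Your alternative packaging via Lemma~\ref{lem.cof2}(iii) is essentially the paper's line. What the paper's approach buys is the identity \(\check{\cat{D}}_{h}=\hat{\cat{D}}_{h}\) itself; what your approach buys is that you avoid importing \cite[4.6]{aus/buc:89} and stay entirely inside the tools already developed here. One cosmetic point: in your base-change display the target should be the fibre \(\xt{i}{T_{s}}{\mc{M}_{s}}{\mc{K}_{s}}\) (with \(s=\fr p\)) rather than a localisation, matching the paper's convention and the definition of \(c^{i}_{I'}\); the logic of the Nakayama step is unaffected.
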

\begin{proof}
(ii): Note that if \(N_{1}\ra N_{2}\ra N_{3}\) is a short exact sequence with \(N_{2}\) and \(N_{3}\) in \(\Df_{h}\), in particular \(S\)-flat, then \(N_{1}\) has to be \(S\)-flat too. To show that \(N_{1}\) is in \(\hat{\cat{D}}_{h}\) we use the criterion in \cite[4.6]{aus/buc:89} to show that \(\check{\cat{D}}_{h}=\hat{\cat{D}}_{h}\): 
Suppose that \(M\) is in \(\cat{MCM}_{h}\). Assume that \(M\) satisfies \(\injdim{\cat{MCM}_{h}}{M}= n<\infty\) which by \cite[4.3]{aus/buc:89} is equivalent to the existence of a coresolution of \(M\) of length \(n\) in \(\cat{D}_{h}\). The fibre at any \(s\in\Spec S\) gives a  \(\cat{D}_{T_{s}}\)-coresolution of the MCM \(T_{s}\)-module \(M_{s}\). Since \(\check{\cat{D}}_{T_{s}}=\hat{\cat{D}}_{T_{s}}\) (\cite[6.3]{aus/buc:89}) it follows by \cite[4.6]{aus/buc:89} that \(M_{s}\) is contained in \(\cat{D}_{T_{s}}\). Since \(\cat{D}_{T_{s}}=\cat{MCM}_{T_{s}}\cap\cat{MCM}_{T_{s}}^{\perp}\) by \cite[3.7]{aus/buc:89} it follows from Corollary \ref{cor.xtdef} that \(\injdim{\cat{MCM}_{h}}{M}=0\) and so \(M\) is in \(\cat{MCM}_{h}\cap\cat{MCM}_{h}^{\perp}\). But by Theorem \ref{thm.flatCMapprox} we can invoke \cite[3.7]{aus/buc:89} again which gives \(\cat{MCM}_{h}\cap\cat{MCM}_{h}^{\perp}=\cat{D}_{h}\) so \(M\) is in \(\cat{D}_{h}\). By \cite[4.6(d)]{aus/buc:89} \(\check{\cat{D}}_{h}=\hat{\cat{D}}_{h}\) follows and \(N_{1}\) is in \(\hat{\cat{D}}_{h}\).
\end{proof}
\begin{rem}
Let \(\vL\) be any noetherian ring. By abuse of notation let \(\cat{CM}\ra\QR\) denote the category fibred in groupoids obtained by restriction to the category of noetherian \(\vL\)-algebras \(\QR\). Fix a Cohen-Macaulay map \(\vL\ra T^{\mspace{-1mu}\mathit{o}}\). There is a section \(s:\QR\ra\cat{CM}\) defined by \(s:S\mapsto (S\ra T^{\mspace{-1mu}\mathit{o}}\ot S)\). Let \(\cat{T^{o}}\) denote the resulting fibred subcategory of \(\cat{CM}\). We restrict \(\cat{mod}\), \(\cat{MCM}\) and \(\cat{D}\) to \(\cat{T^{o}}\) and obtain categories fibred in abelian and additive categories over \(\cat{T^{o}}\) respectively. These restricted fibred categories satisfy the axioms AB1-AB4 and BC1-BC2 and we obtain restricted versions of Theorem \ref{thm.flatCMapprox} and Corollary \ref{cor.flatCMapprox}.
\end{rem}
Let \(\cat{P}\) denote the fibred subcategory of \(\cat{MCM}\) over \(\cat{CM}\) of pairs \((h,P)\) with \(h:S\ra T\) in \(\cat{CM}\) and \(P\) a finite projective \(T\)-module. Let \(\Pf\) denote the full subcategory of \(\modf\) of pairs \((h,Q)\) such that \(Q\) has a finite projective dimension. The inclusions of categories fibred in additive categories \(\cat{P}\sbeq\Pf\sbeq\modf\) are closed under extensions over \(\cat{CM}\).
\begin{lem}\label{lem.proj}
There is an exact equivalence \(\Df\simeq\Pf\) of categories fibred in additive categories defined by the functor \((h,L)\mapsto(h,\hm{}{T}{\omega_{h}}{L})\) with a quasi-inverse \((h,Q)\mapsto (h,Q\ot_{T}\omega_{h})\)\textup{.} It induces an equivalence of fibred quotient categories \(\Df/\cat{D}\simeq\Pf/\cat{P}\)\textup{.}
\end{lem}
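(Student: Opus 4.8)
The plan is to carry R.\,Y.\ Sharp's proof of the absolute equivalence $\cat{D}_{A}\simeq\cat{P}_{\!A}$ over to the relative situation, tracking $S$-flatness and base change by means of Corollary \ref{cor.xtdef}. The two natural transformations in play are, for a CM map $h:S\to T$, the evaluation $\vare_{L}:\hm{}{T}{\omega_{h}}{L}\ot_{T}\omega_{h}\to L$, $\phi\ot x\mapsto\phi(x)$, and the coevaluation $\eta_{Q}:Q\to\hm{}{T}{\omega_{h}}{Q\ot_{T}\omega_{h}}$, $q\mapsto(x\mapsto q\ot x)$; both are given by formulas natural in $\cat{CM}$. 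The crucial local input is that the homothety $T\to\hm{}{T}{\omega_{h}}{\omega_{h}}$ is an isomorphism: each fibre $\omega_{h}\ot_{S}k(s)$ is a canonical module of the Cohen-Macaulay ring $T_{s}$, so $\xt{i}{T_{s}}{\omega_{h}\ot k(s)}{\omega_{h}\ot k(s)}=0$ for $i>0$ and the fibrewise homothety is an isomorphism (duality for canonical modules, \cite[3.3]{bru/her:98}), whence Corollary \ref{cor.xtdef} shows $\hm{}{T}{\omega_{h}}{\omega_{h}}$ is $S$-flat, commutes with base change, and has fibres the $T_{s}$; a Nakayama argument on the cokernel and then the kernel of the homothety (each with vanishing fibres) finishes it. It follows that $\hm{}{T}{\omega_{h}}{-}$ maps $\cat{D}(T)=\Add\{\omega_{h}\}$ into $\cat{P}(T)$ and $-\ot_{T}\omega_{h}$ maps $\cat{P}(T)$ into $\cat{D}(T)$, and (reducing by additivity and direct summands to $L=\omega_{h}$, $Q=T$, where $\vare$ and $\eta$ are identities) these restrict to quasi-inverse exact equivalences $\cat{D}\simeq\cat{P}$; base-change compatibility is automatic here since $-\ot\omega$ commutes with $-\ot_{T_{1}}T_{2}$ and, by Conrad's theorem \cite[3.6.1]{con:00}, $\omega_{h_{2}}\cong T_{2}\ot_{T_{1}}\omega_{h_{1}}$ for a morphism $h_{1}\to h_{2}$ in $\cat{CM}$ (recall $T_{2}=T_{1}\ot_{S_{1}}S_{2}$ since $\cat{CM}$ is fibred in groupoids).

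Next I would pass to $\Df$ and $\Pf$. For $L$ in $\Df(T)$ choose a $\cat{D}(T)$-resolution $0\to D^{-n}\to\dots\to D^{0}\to L\to 0$ preserved by base change (Theorem \ref{thm.flatCMapprox}, Definition \ref{defn.f}). Since $\xt{i}{T}{\omega_{h}}{D^{j}}=0$ for $i>0$, applying $\hm{}{T}{\omega_{h}}{-}$ yields a finite resolution of $\hm{}{T}{\omega_{h}}{L}$ by finite projective $T$-modules, whose fibres are $\hm{}{T_{s}}{\omega_{h}\ot k(s)}{-}$ of the exact fibrewise resolution --- using that the base change map $c^{0}$ is an isomorphism on the $D^{j}$ --- hence again preserved by base change, so $\hm{}{T}{\omega_{h}}{L}$ lies in $\Pf(T)$. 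Dually a finite projective resolution of $Q$ in $\Pf(T)$ is automatically preserved by base change ($Q$ and its syzygies being $S$-flat), and tensoring it by $\omega_{h}$ gives a base-change-preserved $\cat{D}(T)$-resolution of $Q\ot_{T}\omega_{h}$, so $Q\ot_{T}\omega_{h}$ lies in $\Df(T)$. Applying $-\ot_{T}\omega_{h}$ to the projective resolution of $\hm{}{T}{\omega_{h}}{L}$ (exact, the terms being projective) and using the isomorphisms $\vare_{D^{j}}$ to identify it termwise with the original resolution of $L$ forces $\vare_{L}$ to be an isomorphism; the dual computation makes $\eta_{Q}$ an isomorphism.

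It remains to see that $\hm{}{T}{\omega_{h}}{-}$ is a morphism of fibred categories on $\Df$: for $L$ in $\Df_{h}$ the fibres $L\ot k(s)$ lie in $\hat{\cat{D}}_{T_{s}}\sbeq\cat{MCM}_{T_{s}}^{\perp}$, so $\xt{1}{T_{s}}{\omega_{h}\ot k(s)}{L\ot k(s)}=0$ and Corollary \ref{cor.xtdef}(i) makes $c^{0}$ for $\hm{}{T}{\omega_{h}}{L}$ an isomorphism --- exactly the cocartesian condition --- while $-\ot\omega$ preserves cocartesian arrows as in the first paragraph. So $\hm{}{T}{\omega_{h}}{-}$ and $-\ot_{T}\omega_{h}$ are quasi-inverse equivalences of FAds $\Df\simeq\Pf$. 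For exactness: $\hm{}{T}{\omega_{h}}{-}$ sends a short exact sequence in $\Df(T)\sbeq\cat{MCM}_{h}^{\perp}$ (Proposition \ref{prop.cof}) to a short exact sequence because $\omega_{h}$ is MCM, so $\xt{1}{T}{\omega_{h}}{-}$ kills the submodule; dually $-\ot_{T}\omega_{h}$ is exact on $\Pf(T)$ because $\tor{T}{i}{Q}{\omega_{h}}$, computed from the projective resolution above, is the homology of the exact resolution of $Q\ot_{T}\omega_{h}$ and so vanishes for $i>0$. Finally, since the equivalence restricts to $\cat{D}\simeq\cat{P}$ and is fully faithful, a morphism of $\Df$ factors through $\cat{D}$ iff its image factors through $\cat{P}$, so the equivalence descends to $\Df/\cat{D}\simeq\Pf/\cat{P}$ over $\cat{CM}$.

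The main obstacle is precisely what separates this from Sharp's absolute statement, namely the base-change bookkeeping: that $\hm{}{T}{\omega_{h}}{\omega_{h}}\cong T$ and, more importantly, that $\hm{}{T}{\omega_{h}}{-}$ commutes with arbitrary base change on $\Df$ --- both resting on Corollary \ref{cor.xtdef} and on the fibres of $\omega_{h}$ being canonical and those of objects of $\Df_{h}$ having finite injective dimension. Keeping every construction functorial in $\cat{CM}$, so that $\vare$, $\eta$ and the two functors are genuine morphisms of categories fibred in additive categories (preserving cocartesian arrows), is the part demanding the most care.
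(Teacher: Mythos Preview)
Your proof is correct and follows essentially the same route as the paper's: reduce to fibres via Corollary \ref{cor.xtdef}, invoke the absolute equivalence (Sharp/\cite[I 4.10.16]{has:00}), and use Nakayama's lemma to promote the fibrewise isomorphisms of \(\vare_{L}\) and \(\eta_{Q}\) to global ones. The one place where you elide a step is the assertion that tensoring a projective resolution of \(Q\in\Pf_{h}\) by \(\omega_{h}\) yields an \emph{exact} complex; the paper handles this by packaging \(G^{i}(V)=\cH^{i-n}(P^{*}\ot_{T}\omega_{h}\ot_{S}V)\) as a \(\delta\)-functor and applying Proposition \ref{prop.nakayama} (the fibres \(P^{*}_{s}\ot\omega_{T_{s}}\) are exact by \cite[9.6.5]{bru/her:98}), which simultaneously gives exactness and \(S\)-flatness of \(Q\ot_{T}\omega_{h}\) --- you have all the ingredients but should make this step explicit rather than relying on the later, slightly circular remark about \(\tor{T}{i}{Q}{\omega_{h}}\).
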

\begin{proof}
By base change we reduce to the absolute case where the equivalence is well-known; cf.\ \cite[I 4.10.16]{has:00}. The functor \(\hm{}{-}{\omega_{-}}{-}\) takes a map \(\phi:L_{1}\ra L_{2}\) over \(h_{1}\ra h_{2}\) to the map \(\hm{}{T_{1}}{\omega_{h_{1}}}{L_{1}}\ra\hm{}{T_{2}}{\omega_{h_{2}}}{L_{2}}\); \(\alpha\mapsto(\phi\alpha)^{\#}\), the unique map which pulls back to \(\phi\alpha\) by the cocartesian map \(\omega_{h_{1}}\ra\omega_{h_{2}}\). It is a well-defined functor since the dualising module is functorial.
If \(L\) is in \(\Df_{h}\) then \(\xt{i}{T_{s}}{\omega_{T_{s}}}{L_{s}}=0\) for all \(i\neq 0\) and \(s\in \Spec S\). By Corollary \ref{cor.xtdef} the functor is exact and \(\hm{}{T}{\omega_{h}}{L}\) is \(S\)-flat. In particular \(\nd{}{T}{\omega_{h}}\cong T\). If \(D\) is in \(\cat{D}_{h}\) then \(\hm{}{T}{\omega_{h}}{D}\) is projective as a direct summand of a free module. If \(D^{*}\thr L\) is a finite \(\cat{D}_{h}\)-resolution of \(L\), then \(\hm{}{T}{\omega_{h}}{D^{*}}\) gives a projective resolution of \(\hm{}{T}{\omega_{h}}{L}\) since \(\Df_{h}\sbeq\cat{MCM}_{h}^{\perp}\) (Corollary \ref{cor.flatCMapprox}). The natural map \(\ev:\hm{}{T}{\omega_{h}}{L}\ot\omega_{h}\ra L\) commutes with base change to the fibres where it is an isomorphism (\cite[9.6.5]{bru/her:98}) and Nakayama's lemma and \(S\)-flatness implies that \(\ev\) is an isomorphism too. Let \(P^{*}\ra Q\) be a \(\cat{P}\)-resolution of \(Q\) in \(\Pf_{h}\) of length \(n\). Define covariant functors \(G^{i}:\cat{mod}_{S}\ra\cat{mod}_{T}\) by \(G^{i}(V):=\cH^{i-n}(P^{*}\ot_{T}\omega_{h}\ot_{S}V)\). Since \(P^{*}\ot_{T}\omega_{h}\) is \(S\)-flat, \(\{G^{i}\}\) defines a cohomological \(\delta\)-functor. Since \(P^{*}\ot k(s)\) gives a resolution of \(Q\ot_{S} k(s)\) for all \(s\in\Spec S\), \cite[9.6.5]{bru/her:98} and Proposition \ref{prop.nakayama} implies that \(G^{n}(S)=Q\ot_{T}\omega_{h}\) is \(S\)-flat and \(P^{*}\ot_{T}\omega_{h}\thr Q\ot_{T}\omega_{h}\) is a \(\cat{D}\)-resolution. Moreover, the natural map \(Q\ra\hm{}{T}{\omega_{h}}{Q\ot\omega_{h}}\) is an isomorphism by Nakayama's lemma again. 
\end{proof}
\begin{ex}\label{ex.extiso}
Assume \(A\) is a Cohen-Macaulay ring with a canonical module \(\omega_{A}\).
Given \(L_{i}\in\hat{\cat{D}}_{A}\) and put \(Q_{i}=\hm{}{A}{\omega_{A}}{L_{i}}\) for \(i=1,2\). If \(I\) is an injective resolution of \(L_{2}\) and \(P\) is a projective resolution of \(Q_{1}\) then both spectral sequences of \(\hm{}{A}{P}{\hm{}{A}{\omega_{A}}{I}}\) collapse at page \(2\) (use \cite[I 4.10.19]{has:00}) to give canonical isomorphisms 
\begin{equation}
\xt{*}{A}{L_{1}}{L_{2}}\cong\xt{*}{A}{Q_{1}}{Q_{2}}
\end{equation}
\end{ex}
\begin{rem}\label{rem.Buch}
In his unpublished manuscript Buchweitz gave a construction of Cohen-Macaulay approximation for finite \(A\)-modules if \(A\) is a not necessarily commutative Gorenstein ring; see \cite{buc:86}. The \(\cat{MCM}\)-approximation and the \(\Df\)-hull in Theorem \ref{thm.flatCMapprox} can be given essentially by the same construction. Let \(N\) be a finite \(T\)-module which is \(S\)-flat where \(h:S\ra T\) is a finite type Cohen-Macaulay map. Let \(P=P(N)\ra N\) be a projective resolution of \(N\) (i.e.\ by finite projective \(T\)-modules). Then \(P^{\vee}=\hm{}{T}{P}{\omega_{h}}\) is a bounded below complex with bounded cohomology because \(\xt{i}{T}{N}{\omega_{h}}=0\) for \(i\) greater than the relative dimension \(d\) of \(h\) by Corollary \ref{cor.xtdef} (\(\Injdim \omega_{T_{s}}=\dim T_{s}\leq d\)). Then we can choose a projective resolution \(f:P(P^{\vee})\ra P^{\vee}\) of \(P^{\vee}\) which is bounded above. Let \(C=C(f)\) be the mapping cone of \(f\). The modules in \(C\) are direct sums of projective modules and modules in \(\cat{D}_{h}\) and the (co)kernels in the acyclic \(C\) are modules in \(\cat{MCM}_{h}\). By Corollary \ref{cor.xtdef} it follows that \(C^{\vee}\) is acyclic too. There is a composition of natural maps \(P\cong P^{\vee\vee}\ra P(P^{\vee})^{\vee}:=G\) which hence is a quasi-isomorphism. But then \(G\) is just the representing complex of \(N\) and the \(\cat{MCM}\)-approximation and the \(\hat{\cat{D}}\)-hull is obtained as in Section \ref{subsec.cplx}. In the case of coherent rings with a cotilting module (a concept introduced by Y.\ Miyashita; see \cite[p.\ 142]{miy:86}) J.-i.\ Miyachi implicitly gave the same construction in \cite[3.2]{miy:96}.

Suppose \(h\) has pure relative dimension and \(N_{s}\) is a non-zero Cohen-Macaulay \(T_{s}\)-module with \(n=\dim T_{s}-\dim N_{s}\) constant for all \(s\in\Spec S\), i.e.\ \(N\) is a family of Cohen-Macaulay modules of codepth \(n\). Put \(N^{\vee}:=\xt{n}{T}{N}{\omega_{h}}\). Then \(P^{\vee}=P(N)^{\vee}\) is quasi-isomorphic to \(\cH^{n}(P^{\vee})=N^{\vee}\) by duality theory and Corollary \ref{cor.xtdef}. So if \((F,d)\ra N^{\vee}\) is a projective resolution of \(N^{\vee}\), the representing complex is given as \(G=F^{\vee}\). If \(\syz{T}{i}N^{\vee}\) denotes the \(i^{\text{th}}\) syzygy module \(\im d_{i}\) and \(d_{i}^{\vee}\) denotes \(\hm{}{T}{d_{i}}{\omega_{h}}\) then the commutative diagram \eqref{eq.pullpush} with short exact sequences is given as
\begin{equation*}
\xymatrix@C-0pt@R-9pt@H-30pt{
\im (d_{n}^{\vee}) \ar[r]\ar@{=}[d] & \hm{}{T}{\syz{T}{n}N^{\vee}}{\omega_{h}} \ar[r]\ar[d]\ar@{}[dr]|{\Box} & N^{\vee\vee} \ar[d]  \\   
\im (d_{n}^{\vee}) \ar[r] & \hm{}{T}{F_{n}}{\omega_{h}} \ar[r]\ar[d] & \coker(d_{n}^{\vee}) \ar[d]  \\
& \hm{}{T}{\syz{T}{n{+}1}N^{\vee}}{\omega_{h}} \ar@{=}[r] & \hm{}{T}{\syz{T}{n{+}1}N^{\vee}}{\omega_{h}}
}
\end{equation*}
with the \(\cat{MCM}_{h}\)-approximation and \(\hat{\cat{D}}_{h}\)-hull of \(N\) given by the upper horizontal and right vertical sequence, respectively, since \(N^{\vee\vee}\cong N\) by duality theory and Corollary \ref{cor.xtdef}. 
Let \(C\) denote the mapping cone of a comparison map \(P(N)\ra G\). The homology of the truncated short exact sequence of complexes \(G\ra C\ra P(N)[-1]\) gives the \(\cat{MCM}_{h}\)-approximation \(\coker d^{G}_{i+2}\ra\coker d^{C}_{i+2}\ra\syz{T}{i}N\) for \(i\geq 0\) (with \(i=0\) being the upper horizontal sequence in the diagram). In the absolute case (with \(N\) Cohen-Macaulay) this latter construction of the MCM approximation of \(\syz{}{i}N\) was given by J.\ Herzog and A.\ Martsinkovsky in \cite[1.1]{her/mar:93}. 
\end{rem}
\subsection{Local cases}\label{subsec.loc}
We formulate local variants of the approximation theorem. 

Fix a field \(k\). Let \(\cat{H}\) denote the category of noetherian, henselian, local rings \(S\) with residue field \(S/\fr{m}_{S}\cong k\) and with local ring homomorphisms. A map \(h:S\ra T\) in \(\cat{H}\) is \emph{algebraic} (or \(T\) is an algebraic \(S\)-algebra) if there is a finite type \(S\)-algebra \(\tilde{T}\) and a maximal ideal \(\fr{m}\) in \(\tilde{T}\) with \(\tilde{T}/\fr{m}\cong k\) such that \(h\) is given by henselisation of \(\tilde{T}\) in \(\fr{m}\). Fix an algebraic \(k\)-algebra \(A\) which is supposed to be Cohen-Macaulay. Objects in the category \(\cat{hCM}\) are algebraic and flat \(S\)-algebras \(T\) with \(T\ot_{S}k\cong A\). A morphism \(h_{1}\ra h_{2}\) is a pair of commuting maps \(f:T_{1}\ra T_{2}\) and \(g:S_{1}\ra S_{2}\) in \(\cat{H}\) as for the finite type case, giving a cocartesian square. Base change exists for the forgetful functor \(\cat{hCM}\ra\cat{H}\) and is given by the henselisation of the tensor product \(T=T_{1}\ot_{S_{1}}S_{2}\) in the maximal ideal \(\fr{m}_{T_{1}}T+\fr{m}_{S_{2}}T\). We denote it by \(T_{1}\hot_{S_{1}}S_{2}\). It follows that \(\cat{hCM}\ra\cat{H}\) is fibred in groupoids. The objects in \(\cat{hCM}\) will be called henselian Cohen-Macaulay (hCM) maps.

If \(\tilde{h}:\tilde{S}\ra\tilde{T}\) is a finite type CM map and \(t\) a \(k\)-point in \(\Spec \tilde{T}\) with image \(s\) in \(\Spec \tilde{S}\) then localisation for the \'etale topology at \(t\) and \(s\) gives a hCM map \(h:S=\tilde{S}^{\text{h}}\ra \tilde{T}^{\text{h}}=T\). Conversely, every hCM map is obtained this way which follows from \cite[18.6.6 and 18.6.10]{EGAIV4} and \cite[15.4.3 and 12.1.1]{EGAIV3}. We will call such an \(\tilde{h}\) a (finite type) representative of \(h\). The dualising module \(\omega_{\tilde{h}}\) induces an \(S\)-flat finite \(T\)-module \(\omega_{h}\) called the dualising module for \(h\). Two representatives of \(h\) factor through a common \'etale neighbourhood contained in \(\cat{CM}\) and since the dualising module is functorial for \(\cat{CM}\) the dualising module \(\omega_{h}\) is functorial too.

Let \(\cat{mod}\) denote the category of pairs \((h:S\ra T,\mc{N})\) with \(h\) in \(\cat{hCM}\) and \(\mc{N}\) a finite \(T\)-module. Morphisms are defined as for the finite type case and the forgetful functor \(\cat{mod}\ra\cat{hCM}\) makes \(\cat{mod}\) fibred in abelian categories over \(\cat{hCM}\). Let \(\modf\) denote the full subcategory of objects \((h,\mc{N})\) in \(\cat{mod}\) with \(\mc{N}\) \(S\)-flat and let \(\cat{MCM}\) denote the full subcategory of objects \((h,\mc{N})\) in \(\modf\) where the closed fibre \(\mc{N}\ot_{S}k\) is a maximal Cohen-Macaulay \(A\)-module. Let \(\cat{D}\) denote the full subcategory of \(\cat{MCM}\) of objects \((h,D)\) with \(D\) in \(\Add\{\omega_{h}\}\). 
All three are FAd subcategories of \(\cat{mod}\) over \(\cat{hCM}\). Any finite \(T\)-module \(\mc{N}\) has finite presentation hence it is induced from a finite module over a representative of \(T\). If \(\mc{N}\) is contained in one of the subcategories the representative can be assumed to belong to the corresponding finite type subcategory ({\sl loc.\ sit.}). Similarly all maps in these fibred categories over \(\cat{H}\) are induced from maps in the corresponding fibred categories of finite type objects.

Let \(\cat{L}\) (\(\cat{C}\)) denote the category of noetherian, (complete) local rings with residue field \(k\) and local ring homomorphisms. Let \(\cat{lCM}\) (\(\cat{cCM}\)) denote the category of local Cohen-Macaulay (lCM) maps (respectively complete Cohen-Macaulay or cCM maps) defined analogously as above with (completion of) \emph{essentially of finite type} replacing algebraic. Similar arguing as above makes \(\cat{lCM}\) (\(\cat{cCM}\)) fibred in groupoids over \(\cat{L}\) (\(\cat{C}\)). The definitions of the module categories apply in the local and the complete case too and we use the same notation in all three cases. Again objects and maps are induced from the finite type case.

Either arguing with representatives or applying the proofs for Theorem \ref{thm.flatCMapprox} and Corollary \ref{cor.flatCMapprox} (with only minor adjustments) we obtain the following.
\begin{cor}\label{cor.locCMapprox}
Let \(\cat{xCM}\) denote either \(\cat{hCM}\)\textup{,} \(\cat{lCM}\) or \(\cat{cCM}\)\textup{.} The pair \((\cat{mod},\cat{MCM})\) of fibred categories over \(\cat{xCM}\) satisfies \textup{BC1-BC2} and the triple of fibre categories \((\cat{mod}_{h},\cat{MCM}_{h},\cat{D}_{h})\) satisfies \textup{AB1-AB4} for all objects \(h\) in \(\cat{xCM}\)\textup{.} 

Moreover\textup{,} the statements \textup{(i)-(v)} in \textup{Theorem \ref{thm.flatCMapprox}} and \textup{(i)-(ii) in Corollary \ref{cor.flatCMapprox}} are valid over \(\cat{xCM}\) too\textup{.} 
\end{cor}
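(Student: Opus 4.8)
The plan is to run the proofs of Theorem~\ref{thm.flatCMapprox} and Corollary~\ref{cor.flatCMapprox} almost verbatim, the only point needing attention being that in those proofs the finite type hypothesis enters \emph{solely} through Corollary~\ref{cor.xtdef}. Now Proposition~\ref{prop.nakayama}, and hence Corollary~\ref{cor.xtdef}, require nothing beyond noetherianness of $S$ and $T$ and finiteness of the modules, so they apply unchanged to an $h\colon S\ra T$ in $\cat{hCM}$, $\cat{lCM}$ or $\cat{cCM}$; moreover, since $S$ and $T$ are now \emph{local}, the set $Z=\im(\mSpec T\ra\Spec S)$ is the single closed point $s$ of $S$, so every fibrewise hypothesis occurring in those proofs becomes a condition over the closed fibre $T_{s}=A$, a local Cohen-Macaulay ring equipped with the canonical module $\omega_{T_{s}}=(\omega_{h})_{s}$. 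For \textbf{BC1} and \textbf{BC2}: in each of the three settings base change along $h_{1}\ra h_{2}$ is computed by the henselised, localised or completed tensor product of $S_{2}$ over $S_{1}$, which is a flat functor on finite modules; hence epimorphisms are preserved (BC1), and if $M\in\cat{MCM}_{h_{1}}$, hence $S_{1}$-flat, then tensoring a short exact sequence ending in $M$ over $S_{1}$ with $S_{2}$ leaves it exact and flatness of the subsequent henselisation or completion yields BC2, exactly as in Theorem~\ref{thm.flatCMapprox}.

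For the axioms \textbf{AB1--AB4} on the fibre triple $(\cat{mod}_{h},\cat{MCM}_{h},\cat{D}_{h})$: closure of $\cat{MCM}_{h}$ under direct summands and extensions (AB1) and admissibility of epimorphisms in $\cat{MCM}_{h}$ (AB4) follow from the long exact $\Tor^{S}(-,k)$-sequence, which gives $S$-flatness of the third term by the local criterion of flatness, together with the depth lemma applied to the closed fibre over the CM ring $A$; this is just the absolute case as in Example~\ref{ex.MCMapprox}. For AB2 and AB3 one copies the argument of Theorem~\ref{thm.flatCMapprox}: Corollary~\ref{cor.xtdef} at the closed point shows that $M^{\vee}=\Hom_{T}(M,\omega_{h})$ is $S$-flat with MCM closed fibre, that $M\ra M^{\vee\vee}$ is an isomorphism by Nakayama, and that $\Ext^{i}_{T}(M,\omega_{h})=0$ for $i>0$; the cogenerating short exact sequence of AB2 and the containment $\cat{D}_{h}\sbeq\cat{MCM}_{h}^{\perp}$ of AB3 then follow as there.

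Given BC1--2 and AB1--4, part~(i) is obtained as in Theorem~\ref{thm.flatCMapprox}: objects of $\cat{MCM}$ being $S$-flat (free $T$-modules included, since $T\ot_{S}k=A$ is MCM over itself), a finite $\cat{MCM}$-resolution of $N$ forces $\Tor_{1}^{S}(N,k)=0$, hence $N$ is $S$-flat; conversely, if $N\in\modf_{h}$ then the same $\Tor^{S}(-,k)$-bookkeeping shows all $T$-syzygies of $N$ are $S$-flat with the corresponding $A$-syzygies as closed fibres, so a sufficiently high syzygy lies in $\cat{MCM}_{h}$ and $N\in\MCMf_{h}$. Combined with Proposition~\ref{prop.cof} this identifies $\Df$ with $\hat{\cat{D}}\cap\modf$, and statements (ii)--(v) are then immediate from Theorem~\ref{thm.cofmain}. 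For Corollary~\ref{cor.flatCMapprox}(i)--(ii) one repeats its proof: the criterion \cite[4.6]{aus/buc:89}, the equality $\check{\cat{D}}_{T_{s}}=\hat{\cat{D}}_{T_{s}}$ over the local CM ring $T_{s}=A$ \textup{(}\cite[6.3]{aus/buc:89}\textup{)}, and Corollary~\ref{cor.xtdef} at the closed point together give $\check{\cat{D}}_{h}=\hat{\cat{D}}_{h}$, whence the kernel of a surjection of $\Df_{h}$-objects, being automatically $S$-flat, lies in $\Df_{h}$.

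The alternative reduction through representatives is shorter to state but requires the openness of the locus in $\Spec\tilde{S}$ over which the fibres of a flat finite type family of modules are MCM, in order to arrange that a finite type representative $(\tilde{h},\tilde{M})$ of an MCM object has $\tilde{M}_{s}$ MCM over all of $\im(\mSpec\tilde{T}\ra\Spec\tilde{S})$ and not merely at the distinguished $k$-point; after such a shrinking one applies Theorem~\ref{thm.flatCMapprox} and Corollary~\ref{cor.flatCMapprox} to $\tilde{h}$ and transports the conclusions along the flat --- hence exact, and $\omega$- and $\cat{MCM}$-compatible --- henselisation, localisation or completion functor, using that two representatives factor through a common \'etale neighbourhood so the transported data is well defined up to canonical isomorphism. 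The main obstacle in either route is purely bookkeeping: in the first, confirming that each fibrewise hypothesis genuinely lives over the closed point so that Corollary~\ref{cor.xtdef} applies; in the second, the shrinking step.
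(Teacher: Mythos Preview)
Your proposal is correct and follows precisely the two routes the paper itself indicates: either rerun the proofs of Theorem~\ref{thm.flatCMapprox} and Corollary~\ref{cor.flatCMapprox} with the observation that Corollary~\ref{cor.xtdef} needs only noetherianness and a single closed fibre, or pass through finite type representatives and transport along the flat henselisation\slash localisation\slash completion. The paper's own proof is a one-line remark offering exactly these alternatives, so your write-up is a faithful (and considerably more detailed) expansion of it.
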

\section{Minimal approximations and semi-continuity of invariants}
We show that the Cohen-Macaulay approximation and the \(\hat{\cat{D}}\)-hull in Corollary \ref{cor.locCMapprox} can be chosen to be minimal. Upper semi-continuous invariants on \(\cat{MCM}_{A}\) or \(\cat{FID}_{A}\) extend to upper semi-continuous invariants on \(\cat{mod}_{A}\). An example is given by the \(\omega_{A}\)-ranks in the representing \(\cat{D}\)-complex.
\begin{lem}\label{lem.Aapprox}
Let \(S\ra T\) be a homomorphism of noetherian rings and \(\fr{a}\) an ideal in \(S\) such that \(I=\fr{a}T\) is contained in the Jacobson radical of \(T\)\textup{.} Let \(M\) and \(N\) be finite \(T\)-modules\textup{.} Let \(T_{n}=T/I^{n+1}\)\textup{,} \(M_{n}=T_{n}\ot M\) and \(N_{n}=T_{n}\ot N\)\textup{.} Suppose there exists a tower of surjections \(\{\phi_{n}:M_{n}\ra N_{n}\}\)\textup{.} Fix any non-negative integer \(n_{0}\)\textup{.} Then there exists a \(T\)-linear surjection \(\psi:M\ra N\) such that \(T_{n_{0}}\ot\psi=\phi_{n_{0}}\)\textup{.} If the \(\phi_{n}\) are isomorphisms and \(N\) is \(S\)-flat then \(\psi\) is an isomorphism\textup{.}
\end{lem}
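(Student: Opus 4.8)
The plan is to treat the two assertions separately: first construct a $T$-linear map $\psi\colon M\ra N$ with $T_{n_{0}}\ot\psi=\phi_{n_{0}}$, and then deduce that $\psi$ is surjective (resp.\ bijective) by Nakayama-type arguments, using that $I$ lies in the Jacobson radical of $T$ and that $T$ is noetherian. The compatibility of the tower, i.e.\ $\phi_{n}=T_{n}\ot\phi_{n+1}$ under $M_{n}=M_{n+1}\ot_{T_{n+1}}T_{n}$, is essential. To build $\psi$ I would pass to the $I$-adic completion. Put $H=\hm{}{T}{M}{N}$; it is a finite $T$-module since $T$ is noetherian. As $M$ is finitely presented (being finite over the noetherian $T$) and $\wh{T}$ is flat over $T$, there are natural isomorphisms $\wh{H}\cong H\ot_{T}\wh{T}\cong\hm{}{\wh{T}}{\wh{M}}{\wh{N}}$, and since $\wh{N}=\limproj_{n}N_{n}$ with $I^{n+1}N_{n}=0$ we get $\hm{}{\wh{T}}{\wh{M}}{\wh{N}}\cong\limproj_{n}\hm{}{T_{n}}{M_{n}}{N_{n}}$. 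Thus the tower $\{\phi_{n}\}$ is an element $\wh{\phi}\in\wh{H}$. Since $\wh{H}/I^{n_{0}+1}\wh{H}\cong H/I^{n_{0}+1}H$, the class of $\wh{\phi}$ lifts to some $\psi\in H=\hm{}{T}{M}{N}$; unwinding the identifications (the projection $\wh{H}\ra\hm{}{T_{n_{0}}}{M_{n_{0}}}{N_{n_{0}}}$ kills $I^{n_{0}+1}\wh{H}$ and restricts on $H$ to the natural reduction) gives $T_{n_{0}}\ot\psi=\phi_{n_{0}}$. Alternatively, one avoids completions by fixing a finite presentation $T^{a}\ra T^{b}\ra M\ra 0$, realising $\hm{}{T}{M}{N_{n}}$ as a subquotient of $N_{n}^{b}$, and applying the Artin--Rees lemma to $\im(N^{b}\ra N^{a})$ to bound uniformly in $n$ the discrepancy between $H\bmod I^{n+1}$ and $\hm{}{T}{M}{N_{n}}$; this yields the same $\psi$.

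Once $\psi$ is in hand, surjectivity is immediate: $C=\coker\psi$ is a finite $T$-module with $C/IC=\coker(\phi_{0})=0$, where $T_{0}\ot\psi=\phi_{0}$ comes from $T_{n_{0}}\ot\psi=\phi_{n_{0}}$ together with compatibility of the tower, and $\phi_{0}$ is onto by hypothesis; hence $C=0$ by Nakayama. For the last statement, assume every $\phi_{n}$ is an isomorphism and $N$ is $S$-flat, and set $P=\Ker\psi$, a finite $T$-module. Because $I^{n_{0}+1}=\fr{a}^{n_{0}+1}T$ one has $M_{n_{0}}=M\ot_{S}S/\fr{a}^{n_{0}+1}$ and likewise for $N$ and $P$, while $S$-flatness of $N$ gives $\tor{S}{1}{N}{S/\fr{a}^{n_{0}+1}}=0$. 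Tensoring $0\ra P\ra M\ra N\ra 0$ with $S/\fr{a}^{n_{0}+1}$ therefore produces an exact sequence $0\ra P/I^{n_{0}+1}P\ra M_{n_{0}}\xra{\phi_{n_{0}}}N_{n_{0}}\ra 0$, and since $\phi_{n_{0}}$ is an isomorphism $P=I^{n_{0}+1}P\sbeq IP$, so $P=0$ by Nakayama and $\psi$ is an isomorphism.

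The main obstacle to anticipate is that $M$ is not assumed projective, so there is no reason for the single map $\phi_{n_{0}}\colon M_{n_{0}}\ra N_{n_{0}}$ to lift to $M\ra N$ on its own; one genuinely has to exploit the whole compatible tower. The real work is therefore the identification $\limproj_{n}\hm{}{T_{n}}{M_{n}}{N_{n}}\cong\wh{\hm{}{T}{M}{N}}$ (equivalently the Artin--Rees estimate), which converts the tower into an honest homomorphism; after that, reduction modulo $I^{n_{0}+1}$ and two applications of Nakayama's lemma finish everything.
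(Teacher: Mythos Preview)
Your proof is correct and follows essentially the same route as the paper. The paper also passes to the $I$-adic completion, uses the identifications $\limproj_{n}\hm{}{T_{n}}{M_{n}}{N_{n}}\cong\hm{}{\hat{T}}{\hat{M}}{\hat{N}}\cong\hat{T}\ot_{T}\hm{}{T}{M}{N}$, writes $\limproj\phi_{n}=\sum r^{(i)}\ot\beta^{(i)}$ explicitly, and defines $\psi=\sum t^{(i)}\beta^{(i)}$ with $t^{(i)}\in T$ a lift of the image of $r^{(i)}$ in $T_{n_{0}}$; this is exactly your abstract lifting through $\hat{H}/I^{n_{0}+1}\hat{H}\cong H/I^{n_{0}+1}H$ made concrete. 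The surjectivity and injectivity arguments via Nakayama and $S$-flatness are likewise the same (the paper applies Nakayama at level $n_{0}$ rather than level $0$ for surjectivity, an immaterial difference).
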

\begin{proof}
Let \(\hat{T}=\limproj T_{n}\), \(\hat{M}=\limproj M_{n}\) and \(\hat{N}=\limproj N_{n}\). We have 
\begin{equation}
\limproj\hm{}{T_{n}}{M_{n}}{N_{n}}\cong\hm{}{\hat{T}}{\hat{M}}{\hat{N}}\cong\hat{T}\ot\hm{}{T}{M}{N}.
\end{equation}
Hence \(\limproj \phi_{n}=\Sigma r^{(i)}\ot\beta^{(i)}\) with \(r^{(i)}\in \hat{T}\) and \(\beta^{(i)}\in\hm{}{T}{M}{N}\). Let \(r^{(i)}_{n_{0}}\) be the image of \(r^{(i)}\) under \(\hat{T}\ra T_{n_{0}}\) and choose liftings \(t^{(i)}\) in \(T\) of \(r^{(i)}_{n_{0}}\). Put \(\psi=\Sigma t^{(i)}\beta^{(i)}\). Then \(T_{n_{0}}\ot\psi=\phi_{n_{0}}\). Since \(T_{n_{0}}\ot\coker\psi=\coker\phi_{n_{0}}=0\), Nakayama's lemma implies \(\coker\psi=0\). Since \(T_{n_{0}}\ot_{T}N\) equals \(N\ot_{S}S/\fr{a}^{n_{0}+1}\), \(S\)-flatness of \(N\) implies \(T_{n_{0}}\ot\ker\psi=\ker\phi_{n_{0}}\) and if \(\ker\phi_{n_{0}}=0\) then Nakayama's lemma implies \(\ker\psi=0\).
\end{proof}
\begin{prop}\label{prop.minapprox}
Let \(\cat{xCM}\) denote either \(\cat{hCM}\)\textup{,} \(\cat{lCM}\) or \(\cat{cCM}\)\textup{,} let \(h:S\ra T\) be an object in \(\cat{xCM}\) and let \(\xi:\mc{L}\ra\mc{M}\xra{\pi}\mc{N}\) be an \(\cat{MCM}_{h}\)-approximation of \(\mc{N}\)\textup{.} Then the following statements are equivalent\textup{.} 
\begin{enumerate}
\item[(i)] The sequence \(\xi\) is a right minimal \(\cat{MCM}_{h}\)-approximation\textup{.}
\item[(ii)] There are no surjections \(\mc{M}\ra\omega_{h}\) which induces a surjection \(\mc{L}\ra\omega_{h}\)\textup{.}
\item[(iii)] There are no common \(\omega_{h}\)-summand in \(\mc{L}\ra\mc{M}\)\textup{.}
\item[(iv)] The closed fibre \(\xi\ot_{S}k\) is a right minimal \(\cat{MCM}_{A}\)-approximation\textup{.}
\item[(v)] The completion \((\xi\ot_{S}k)\hat{\,}\) of the closed fibre is a right minimal \(\cat{MCM}_{\hat{A}}\)-approximation\textup{.}
\end{enumerate}
The analogous statements \textnormal{(i')-(v')} for a \(\Df_{h}\)-hull \(\xi':\mc{N}\ra\mc{L}'\ra\mc{M}'\) are equivalent\textup{.} In particular\textup{:}
\begin{enumerate}
\item[(i')] The sequence \(\xi'\) is a left minimal \(\Df_{h}\)-hull\textup{.}
\item[(ii')] There are no surjections \(\mc{M}'\ra\omega_{h}\) which induces a surjection \(\mc{L}'\ra\omega_{h}\)\textup{.}
\end{enumerate}
\end{prop}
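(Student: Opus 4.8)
The plan is to organize the equivalence around condition (iii), which is the most structural of the five, proving the cycle (iii)$\Rightarrow$(i)$\Rightarrow$(ii)$\Rightarrow$(iii) and then (iii)$\Leftrightarrow$(iv)$\Leftrightarrow$(v). The standing inputs I would use are: from Corollary~\ref{cor.flatCMapprox}, that \(\cat{D}_{h}=\cat{MCM}_{h}\cap\cat{MCM}_{h}^{\perp}=\cat{MCM}_{h}\cap\Df_{h}\) and that the kernel of a surjection between objects of \(\Df_{h}\) again lies in \(\Df_{h}\); from Lemma~\ref{lem.proj}, that \(\hm{}{T}{\omega_{h}}{-}\) is an exact equivalence \(\Df_{h}\simeq\Pf_{h}\) carrying \(\omega_{h}\) to the free module \(\hm{}{T}{\omega_{h}}{\omega_{h}}\cong T\); and, when \(\cat{xCM}\) is \(\cat{hCM}\) or \(\cat{cCM}\), that \(T\) is henselian resp.\ complete local, so finite \(T\)-modules form a Krull--Schmidt category and every object of \(\cat{D}_{h}\) is a finite direct sum of copies of the indecomposable module \(\omega_{h}\). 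The \(\cat{lCM}\) case and condition (v) I would settle by completing \(h\) to an object \(\wh{h}\) of \(\cat{cCM}\), and the primed statements (i\('\))--(v\('\)) by the dual argument (kernels replaced by cokernels, right by left minimality); so below I only discuss the unprimed ones over \(\cat{hCM}\)/\(\cat{cCM}\).

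First I would prove (ii)$\Leftrightarrow$(iii). A common \(\omega_{h}\)-summand of \(\mc{L}\ra\mc{M}\) gives a projection \(\mc{M}\thr\omega_{h}\) whose restriction to \(\mc{L}\) is onto, so (ii) fails. Conversely, given \(\sigma:\mc{M}\thr\omega_{h}\) whose restriction \(\sigma_{\mc{L}}:\mc{L}\ra\omega_{h}\) is onto, the kernel of \(\sigma_{\mc{L}}\) lies in \(\Df_{h}\) by Corollary~\ref{cor.flatCMapprox}, so applying the exact equivalence of Lemma~\ref{lem.proj} turns \(\sigma_{\mc{L}}\) into a surjection onto the free module \(T\); splitting that and applying the quasi-inverse \(-\ot_{T}\omega_{h}\) yields a splitting \(\omega_{h}\ra\mc{L}\) of \(\sigma_{\mc{L}}\), and composing with \(\mc{L}\hra\mc{M}\) gives a section of \(\sigma\) whose image lies inside \(\mc{L}\). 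Thus \(\omega_{h}\) splits off \(\mc{M}\) through \(\mc{L}\), i.e.\ (iii) fails. Next, for (i)$\Leftrightarrow$(iii): if there is a common \(\omega_{h}\)-summand, say \(\mc{M}=\mc{M}_{1}\oplus\omega_{h}\) with this \(\omega_{h}\) sitting in \(\mc{L}=\Ker\pi\), then the idempotent \(g\) of \(\mc{M}\) that kills it satisfies \(\pi g=\pi\) and is not injective, so \(\xi\) is not right minimal. For the converse I would use Krull--Schmidt to decompose \(\mc{M}=\mc{M}_{0}\oplus\mc{M}'\) so that \(\pi|_{\mc{M}_{0}}\) is right minimal and \(\pi|_{\mc{M}'}=0\); failure of minimality forces \(\mc{M}'\neq 0\), and since \(\mc{M}'\sbeq\Ker\pi=\mc{L}\) we get \(\mc{L}=(\mc{L}\cap\mc{M}_{0})\oplus\mc{M}'\), so \(\mc{M}'\) is a common summand of \(\mc{M}\) and \(\mc{L}\), hence lies in \(\cat{MCM}_{h}\cap\Df_{h}=\cat{D}_{h}\) and therefore has \(\omega_{h}\) as a summand --- a common \(\omega_{h}\)-summand.

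Then I would treat (iii)$\Leftrightarrow$(iv). Because \(\omega_{h}\ot_{S}k=\omega_{A}\neq 0\) and \(\xi\ot_{S}k\) is an \(\cat{MCM}_{A}\)-approximation (Theorem~\ref{thm.flatCMapprox}), a common \(\omega_{h}\)-summand of \(\xi\) restricts to a common \(\omega_{A}\)-summand of the closed fibre, so by (ii)$\Leftrightarrow$(iii) over \(A\) the failure of (iii) implies the failure of (iv). Conversely, if \(\xi\ot_{S}k\) is not right minimal, then (the shape of (ii) over \(A\)) there is a surjection \(r_{s}:\mc{M}_{s}\thr\omega_{A}\) with \(r_{s}|_{\mc{L}_{s}}\) onto; here I would use that \(\xt{1}{T_{s}}{\mc{M}_{s}}{\omega_{T_{s}}}=0\) since \(\mc{M}_{s}\) is maximal Cohen--Macaulay, so Corollary~\ref{cor.xtdef}(i) (with \(q=0\)) makes \(\hm{}{T}{\mc{M}}{\omega_{h}}\ra\hm{}{T_{s}}{\mc{M}_{s}}{\omega_{A}}\) surjective, and lifting \(r_{s}\) then applying Nakayama over \(T\) produces a surjection \(\mc{M}\thr\omega_{h}\) that is still onto when restricted to \(\mc{L}\), contradicting (ii). Hence (iii)$\Leftrightarrow$(iv).

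Finally, for (v) and the \(\cat{lCM}\) case I would pass to the completion \(\wh{h}\) in \(\cat{cCM}\): the closed fibre of \(\wh{\xi}\) is \((\xi\ot_{S}k)\hat{\,}\), so (v) is exactly condition (iv) for \(\wh{\xi}\), and the steps above applied to \(\wh{\xi}\) reduce (v) to the absence of a common \(\omega_{\wh{h}}\)-summand in \(\wh{\mc{L}}\ra\wh{\mc{M}}\). That last property is equivalent to (iii) for \(\xi\): one direction is just completing a summand, and for the other, since \(\hm{}{\wh{T}}{\wh{\mc{M}}}{\omega_{\wh{h}}}=\wh{T}\ot_{T}\hm{}{T}{\mc{M}}{\omega_{h}}\), any retraction \(\wh{\mc{M}}\thr\omega_{\wh{h}}\) restricting onto \(\wh{\mc{L}}\) is approximated, modulo a high power of \(\fr{m}_{\wh{T}}\), by the completion of some \(T\)-linear \(r:\mc{M}\ra\omega_{h}\), and Nakayama over \(T\) makes \(r\) and \(r|_{\mc{L}}\) onto, again contradicting (ii). Using in addition that \(T\ra\wh{T}\) is faithfully flat to transport right minimality in both directions, the entire chain (i)--(v) goes through for \(\cat{lCM}\) as well. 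I expect the main obstacle to be precisely this descent of \(\omega_{h}\)-summands (equivalently, of retractions onto \(\omega_{h}\) restricting onto \(\mc{L}\)) from a completion or from the closed fibre, since over \(\cat{lCM}\) there is no Krull--Schmidt on the nose; the vanishing of \(\xt{1}{T_{s}}{\mc{M}_{s}}{\omega_{T_{s}}}\) for MCM modules together with the base-change machinery of Corollary~\ref{cor.xtdef} is what makes these lifts available.
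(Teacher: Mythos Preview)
Your argument is correct and essentially complete, but it follows a genuinely different route from the paper. You run a Krull--Schmidt argument directly over \(T\) (henselian or complete) to get \(\lnot(\mathrm{i})\Rightarrow\lnot(\mathrm{iii})\), then handle (iv) by applying the same cycle over \(A\), and finally reduce (v) and the \(\cat{lCM}\) case by completing and descending summands/right minimality along the faithfully flat map \(T\ra\wh{T}\). The paper instead proves the single cycle
\[
(\mathrm{i})\Rightarrow(\mathrm{iii})\Rightarrow(\mathrm{ii})\Rightarrow(\mathrm{v})\Rightarrow(\mathrm{iv})\Rightarrow(\mathrm{i})
\]
uniformly for all three categories, with the hard step \(\lnot(\mathrm{i})\Rightarrow\lnot(\mathrm{ii})\) done by passing to the completed closed fibre, invoking the known complete case \cite[1.12.8]{has:00} to produce a common \(\omega_{\hat A}\)-summand, using Lemma~\ref{lem.Aapprox} to descend the resulting surjection to \(M\ra\omega_{A}\), and then lifting to \(\mc{M}\ra\omega_{h}\) via Corollary~\ref{cor.xtdef}. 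So the paper deliberately avoids Krull--Schmidt over \(T\) and buys a proof that works verbatim even in the Zariski-local case (cf.\ the Remark following the proposition); your approach is more self-contained for \(\cat{hCM}\)/\(\cat{cCM}\) but forces a separate completion-and-descent argument for \(\cat{lCM}\). Two minor points: the local versions of the results you cite are Corollary~\ref{cor.locCMapprox} and Lemma~\ref{lem.fri} (not \ref{cor.flatCMapprox} and \ref{lem.proj}); and your claim that right minimality transports along \(T\ra\wh{T}\) is indeed correct and elementary (if \(\wh{\xi}\) is right minimal and \(\pi g=\pi\), then \(\wh g\) is an automorphism, hence so is \(g\) by faithful flatness), so the concern you flag at the end is not an obstacle.
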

\begin{proof}  
Suppose there is a surjection \(\mc{M}\ra \omega_{h}\) such that the composition \(\mc{L}\ra \omega_{h}\) is surjective too. Then the kernels of these maps give a new \(\cat{MCM}\)-approximation of \(\mc{N}\) by Corollary \ref{cor.locCMapprox}. Since a surjection \(\mc{L}\ra\omega_{h}\) has to split by Corollary \ref{cor.locCMapprox}, \(\omega_{h}\) is a common summand in \(\mc{L}\ra\mc{M}\) and \(\pi\) cannot be right minimal. 

Let the closed fibre \(\xi\ot_{S}k\) of the sequence \(\xi\) be denoted by \(L\ra M\xra{p} N\).
Assume there is a non-surjective endomorphism \(\theta:\mc{M}\ra\mc{M}\) with \(\pi\theta=\pi\).  Then \(\theta_{0}=\theta\ot_{S}k\) gives a non-surjective endomorphism of \(M\) with \(p\theta_{0}=p\). It follows that the completion \(\hat{L}\ra\hat{M}\ra\hat{N}\) is not a right minimal Cohen-Macaulay approximation of \(\hat{N}\). By \cite[1.12.8]{has:00} there is a common \(\omega_{\hat{A}}\)-summand in \(\hat{L}\ra\hat{M}\). Let \(\phi:\hat{M}\ra\omega_{\hat{A}}\) denote the projection. By Lemma \ref{lem.Aapprox} there exists a surjection \(\psi:M\ra\omega_{A}\). The induced map \(L\ra\omega_{A}\) is surjective too. The map \(\psi\) lifts to a surjection \(\mc{M}\ra\omega_{h}\) (with \(\mc{L}\ra\omega_{h}\) surjective) since the canonical map \(\hm{}{T}{\mc{M}}{\omega_{h}}\ra\hm{}{A}{M}{\omega_{A}}\) is surjective by Corollary \ref{cor.xtdef}. 
The \(\Df\)-case is analogous.
\end{proof}
\begin{cor}\label{cor.minapprox}
Let \(\cat{xCM}\) denote either \(\cat{hCM}\)\textup{,} \(\cat{lCM}\) or \(\cat{cCM}\)\textup{.}
For any object \((h,\mc{N})\) in the fibred category \(\modf\) over \(\cat{xCM}\), \(\mc{N}\) admits a right minimal \(\cat{MCM}\)-approximation and a left minimal \(\Df\)-hull which remain minimal after base change and which in particular are unique up to non-canonical isomorphism\textup{.}
\end{cor}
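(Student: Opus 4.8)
The plan is to combine the existence statement of Corollary \ref{cor.locCMapprox} with the minimality criteria collected in Proposition \ref{prop.minapprox}, exploiting the fact that the latter detects minimality on the closed fibre. There are three things to do: produce a right minimal $\cat{MCM}$-approximation (and dually a left minimal $\Df$-hull), check that minimality survives base change, and check uniqueness.

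\emph{Existence.} Starting from any $\cat{MCM}$-approximation $\xi_{0}:\mc{L}_{0}\ra\mc{M}_{0}\ra\mc{N}$ supplied by Corollary \ref{cor.locCMapprox}---which is preserved by every base change---I would argue that if $\xi_{0}$ fails to be right minimal then, by Proposition \ref{prop.minapprox} (equivalence of (i) and (iii)), the morphism $\mc{L}_{0}\ra\mc{M}_{0}$ carries a common $\omega_{h}$-summand, so $\xi_{0}$ decomposes as $\xi_{1}\oplus(\omega_{h}\xra{\id}\omega_{h})$ with $\xi_{1}:\mc{L}_{1}\ra\mc{M}_{1}\ra\mc{N}$. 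One then checks that $\xi_{1}$ is again an $\cat{MCM}$-approximation preserved by base change: $\mc{M}_{1}$ stays in $\cat{MCM}_{h}$ (closed under summands by AB1) and $\mc{L}_{1}$ stays in $\Df_{h}$ (closed under summands by Proposition \ref{prop.cof} (v), which uses AB4), while exactness of $\xi_{1}^{\#}$ for every base change follows by splitting off the trivial summand from $\xi_{0}^{\#}$. Since $\omega_{h}\neq 0$ (its closed fibre is the canonical module $\omega_{A}$), the minimal number of $T$-module generators of $\mc{M}_{1}$ is strictly smaller than that of $\mc{M}_{0}$, so the splitting process terminates after finitely many steps, producing the desired right minimal approximation. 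The left minimal $\Df$-hull would be produced dually, from a $\Df$-hull given by Corollary \ref{cor.locCMapprox} and the primed analogue of Proposition \ref{prop.minapprox}, again using that $\Df_{h}$ is closed under summands.

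\emph{Stability under base change.} Given a morphism $h\ra h'$ in $\cat{xCM}$ over $g:S\ra S'$ and the base change $\xi^{\#}$ of the minimal approximation $\xi$, Corollary \ref{cor.locCMapprox} already gives that $\xi^{\#}$ is an $\cat{MCM}_{h'}$-approximation. The key observation is that, base change of a base change being again a base change and every object of $\cat{xCM}$ having the same fixed closed fibre $A$, the closed fibre $\xi^{\#}\ot_{S'}k$ is canonically identified with $\xi\ot_{S}k$; so by Proposition \ref{prop.minapprox} (equivalence of (i) and (iv)), right minimality of $\xi$ over $h$ is equivalent to right minimality of $\xi\ot_{S}k$ over $A$, hence to that of $\xi^{\#}\ot_{S'}k$, hence to that of $\xi^{\#}$ over $h'$. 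The $\Df$-hull is handled identically with the primed criteria.

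\emph{Uniqueness.} Here I would use that AB1--3 hold for $(\cat{mod}_{h},\cat{MCM}_{h},\cat{D}_{h})$, so that every $\cat{MCM}_{h}$-approximation is a right $\cat{MCM}_{h}$-approximation (Section \ref{subs.CCM}): two right minimal ones, $\mc{M}\xra{\pi}\mc{N}$ and $\mc{M}'\xra{\pi'}\mc{N}$, then factor through each other, $\pi'=\pi u$ and $\pi=\pi' v$, and $\pi(uv)=\pi$, $\pi'(vu)=\pi'$ together with right minimality force $uv$ and $vu$ to be automorphisms, so $u$ is an isomorphism restricting to an isomorphism $\ker\pi'\xra{}\ker\pi$; the isomorphism is non-canonical because it can be altered by any automorphism of $\xi$ over $\mc{N}$. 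The $\Df$-hull case is the order-dual statement about left minimal morphisms, using that a $\Df$-hull is a left $\hat{\cat{D}}_{h}$-approximation. I expect the only genuinely delicate step to be the base-change argument, and it works precisely because the categories $\cat{xCM}$ are set up with a fixed closed fibre $A$ on which, by Proposition \ref{prop.minapprox}, minimality is visible---so it is inherited automatically.
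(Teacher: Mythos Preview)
Your proof is correct and follows essentially the same approach as the paper's: existence via criterion (iii) of Proposition \ref{prop.minapprox} (iteratively split off common $\omega_{h}$-summands), stability under base change via criterion (iv) (minimality is detected on the closed fibre, which is fixed throughout $\cat{xCM}$), and uniqueness from the general fact about right/left minimal morphisms recalled in Section \ref{subs.CCM}. The paper's three-sentence proof leaves all of your elaborations implicit, but the logical content is the same.
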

\begin{proof}
The existence of a right minimal \(\xi\) follows immediately from criterion (iii) in Proposition \ref{prop.minapprox}.  Moreover \(\xi\) is right minimal if and only if the closed fibre \(\xi\ot_{S}k\) is right minimal. Since any base change \(\xi_{1}\) of \(\xi\) has the same closed fibre as \(\xi\), \(\xi_{1}\) is right minimal if \(\xi\) is.
\end{proof}
\begin{rem}
In \cite[2.3]{has/shi:97} M.\ Hashimoto and A.\ Shida gave essentially the analog of Proposition \ref{prop.minapprox} in the absolute case of a Cohen-Macaulay Zariski local ring with a canonical module. They attributed the complete case to Y.\ Yoshino. Note Miyachi's proof in the cotilting semi-perfect case; see \cite[3.4]{miy:96} (cf.\ \cite[1.12.8]{has:00}). In \cite[3.1]{sim/str:02} A.-M.\ Simon and J.\ R.\ Strooker give a short independent proof. The proof of Proposition \ref{prop.minapprox} also works in the Zariski local case in full generality and is different from these (but depends on the complete case).
\end{rem}
Since minimal choices of \(\cat{MCM}_{A}\)-approximations and \(\cat{\hat{D}}_{A}\)-hulls exist and are unique up to isomorphism any invariant defined for MCM modules or for FID modules is extended to all finite \(A\)-modules. Upper semi-continuity of the invariants is also extended as we explain now. First some notation.

Let \(h:S\ra T\) be ring homomorphism and \(\mc{N}\) a finite \(T\)-module. If \(t\in\Spec T\) has image \(s\in\Spec S\), let \(\mc{N}_{\fr{p}_{t}}\) denote the localisation of \(\mc{N}\) at the prime ideal \(t\), let \(h_{\fr{p}_{t}}:S_{\fr{p}_{s}}\ra T_{\fr{p}_{t}}\) denote the ring homomorphism obtained by localising, and put \(\mc{N}(t)=\mc{N}_{\fr{p}_{t}}\ot_{S_{\fr{p}_{s}}}k(s)\) which is a \(T(t)\)-module; indeed \(\mc{N}(t)\cong T(t)\ot_{T_{\fr{p}_{t}}}\mc{N}_{\fr{p}_{t}}\). 
If \(h\) is a finite type Cohen-Macaulay map, \(h_{\fr{p}_{t}}\) is in \(\cat{lCM}\). 

Suppose \(\mu\) is an invariant on \(\cat{MCM}_{A}\) where \(A\) is a Cohen-Macaulay local ring with canonical module. Let \({}_{\cat{MCM}}\mu\) denote the induced invariant on \(\cat{mod}_{A}\) defined by \({}_{\cat{MCM}}\mu(N) = \mu(M)\) where \(L\ra M\ra N\) is the minimal Cohen-Macaulay approximation of \(N\). Similarly \({}_{\cat{FID}}\mu(N)= \mu(L)\) for an invariant \(\mu\) defined on \(\cat{FID}_{A}\). Use the minimal hull \(N\ra L'\ra M'\) to define \({}_{\cat{FID}}\mu'\) and \({}_{\cat{MCM}}\mu'\).

The following theorem is a major application of what we have done so far. 
\begin{thm}\label{thm.semicont}
Let \(\mu\) be an additive non-negative numerical invariant defined for maximal Cohen-Macaulay modules or for finite modules of finite injective dimension on a Cohen-Macaulay local ring with canonical module\textup{.} Assume \(\mu\) is upper semi-continuous for finite type flat families \((h:S\ra T,\mc{M})\) in \(\cat{MCM}\) \textup{(}or in \(\hat{\cat{D}}\)\textup{).} Then the induced invariants \({}_{\cat{MCM}}\mu\) and \({}_{\cat{MCM}}\mu'\) \textup{(}or \({}_{\cat{FID}}\mu\) and \({}_{\cat{FID}}\mu'\)\textup{)} are upper semi-continuous in finite type flat families \((h,\mc{N})\) in \(\modf\)\textup{.} 
\end{thm}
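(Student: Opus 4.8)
\emph{Plan.} The idea is to push the statement down to the local situation of Corollary~\ref{cor.minapprox}, where genuinely minimal approximations exist and survive base change, and then to combine monotonicity along localisations with a constructibility argument. Fix $(h\colon S\to T,\mc{N})$ in the finite type category $\modf$. By Theorem~\ref{thm.flatCMapprox} pick an $\cat{MCM}$-approximation $0\to\mc{L}\to\mc{M}\to\mc{N}\to 0$ and a $\Df$-hull $0\to\mc{N}\to\mc{L}'\to\mc{M}'\to 0$ preserved by every base change, so that $(h,\mc{M}),(h,\mc{M}')$ are finite type flat families in $\cat{MCM}$ while $(h,\mc{L}),(h,\mc{L}')$ lie in $\Df\sbeq\hat{\cat{D}}$. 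Let $t\in\Spec T$ lie over $s\in\Spec S$. Localising at $\fr{p}_{t}$ and base changing along $S_{\fr{p}_{s}}\to k(s)$ turns $h$ into $h_{\fr{p}_{t}}$, an object of $\cat{lCM}$, and by Theorem~\ref{thm.flatCMapprox}(ii) and Corollary~\ref{cor.locCMapprox} the induced sequences are an $\cat{MCM}_{h_{\fr{p}_{t}}}$-approximation and a $\Df_{h_{\fr{p}_{t}}}$-hull of $\mc{N}(t)$ over the Cohen--Macaulay local ring $T(t)$ with canonical module $\omega_{T(t)}$. Splitting off common $\omega_{T(t)}$-summands as in Proposition~\ref{prop.minapprox} produces the minimal approximation $M_{0}(t)\to\mc{N}(t)$ and the minimal hull $\mc{N}(t)\to L'_{0}(t)$, together with $\mc{M}(t)\cong M_{0}(t)\oplus\omega_{T(t)}^{\oplus r(t)}$ and $\mc{L}(t)\cong L_{0}(t)\oplus\omega_{T(t)}^{\oplus r(t)}$ (and primed analogues with an $r'(t)$). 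Since $M_0(t)\to\mc N(t)$ is \emph{the} minimal approximation, additivity of $\mu$ gives
\[
{}_{\cat{MCM}}\mu(\mc{N}(t))=\mu(\mc{M}(t))-r(t)\,\mu(\omega_{T(t)}),
\]
and similarly for ${}_{\cat{MCM}}\mu'$, ${}_{\cat{FID}}\mu$, ${}_{\cat{FID}}\mu'$. As $(h,\mc{M})$, $(h,\mc{M}')$ and $(h,\omega_{h})$ are families in $\cat{MCM}$, the functions $t\mapsto\mu(\mc{M}(t))$, $\mu(\mc{M}'(t))$, $\mu(\omega_{T(t)})$ are already upper semi-continuous on $\Spec T$ by hypothesis; what has to be controlled is the non-negative correction $r(t)\mu(\omega_{T(t)})$.

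First I would show that $\{t:{}_{\cat{MCM}}\mu(\mc{N}(t))\geq n\}$ is stable under specialisation. If $t\rsa t'$, regard $h_{\fr{p}_{t'}}$ as an object of $\cat{lCM}$; by Corollary~\ref{cor.minapprox} it admits a minimal $\cat{MCM}$-approximation of $\mc{N}$ which stays minimal under every base change, and being finitely presented it descends to a finite type representative $\tilde{\mc{M}}\in\cat{MCM}_{\tilde{h}}$ over some $\tilde{h}\colon\tilde{S}\to\tilde{T}$ in $\cat{CM}$. The hypothesis makes $\mu(\tilde{\mc{M}}(-))$ upper semi-continuous, hence constructible, on $\Spec\tilde{T}$; pulling back along $\Spec T_{\fr{p}_{t'}}\to\Spec\tilde{T}$ and using preservation of minimality under base change identifies this pullback with $t\mapsto{}_{\cat{MCM}}\mu(\mc{N}(t))$ on $\Spec T_{\fr{p}_{t'}}$. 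Since $t'$ is the closed point of $\Spec T_{\fr{p}_{t'}}$, this gives ${}_{\cat{MCM}}\mu(\mc{N}(t))\leq{}_{\cat{MCM}}\mu(\mc{N}(t'))$. The same argument handles the three other invariants, using the hypothesis for families in $\hat{\cat{D}}$ for the $\cat{FID}$-versions.

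Next I would show the four functions are constructible on $\Spec T$; then, being both constructible and stable under specialisation, their superlevel sets $\{\,\cdot\geq n\,\}$ are closed, which is the assertion. I proceed by Noetherian induction on closed subsets $W\sbeq\Spec T$: at the generic point $\eta$ of an irreducible component of $W$ the minimal $\cat{MCM}$-approximation of $\mc{N}(\eta)$ over $T(\eta)$ descends to a finite type representative, which spreads out to a family of $\cat{MCM}$-approximations over a non-empty (hence dense) open $W^{\circ}$ of that component. A pair of modules in a flat family sharing a common $\omega$-summand is a closed condition on the base, so right minimality of the approximation is an open condition; shrinking $W^{\circ}$, the spread-out approximation is fibrewise minimal there. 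Then on $W^{\circ}$ the function ${}_{\cat{MCM}}\mu(\mc{N}(-))$ equals the restriction of the $\mu$-function of that family in $\cat{MCM}$, hence is constructible; Noetherian induction applied to the proper closed complement $W\setminus W^{\circ}$ finishes the proof, and the $\cat{FID}$-versions are identical.

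The main obstacle is the constructibility step: one must verify that non-minimality of a Cohen--Macaulay approximation (or $\hat{\cat{D}}$-hull) is a constructible, specialisation-stable condition in a flat family, and that the pointwise minimal approximation at a generic point really spreads out to a finite type family to which the hypothesis applies. By contrast the reduction to the local case and the specialisation monotonicity are comparatively soft, resting only on Theorem~\ref{thm.flatCMapprox}, Corollaries~\ref{cor.locCMapprox} and~\ref{cor.minapprox}, additivity of $\mu$, and the fact that restrictions of upper semi-continuous functions remain upper semi-continuous.
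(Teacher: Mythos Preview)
Your route is substantially more elaborate than the paper's, and the extra steps are where the difficulties you flag actually live. The paper's argument is a single move: given any $t\in\Spec T$, descend the \emph{local} minimal $\cat{MCM}$-approximation at $t$ (which exists by Corollary~\ref{cor.minapprox}) to an open affine neighbourhood $V\subseteq\Spec T$ of $t$, obtaining an $\cat{MCM}_{h_{1}}$-approximation $\mc{L}\to\mc{M}\to\mc{N}_{|V}$ with $\xi_{\fr{p}_{t}}$ minimal. Then $\xi(t)$ is minimal, so ${}_{\cat{MCM}}\mu(\mc{N}(t))=\mu(\mc{M}(t))=:n$. Upper semi-continuity of $\mu$ on the family $\mc{M}$ gives an open $V_{n}\ni t$ with $\mu(\mc{M}(t'))\leq n$; for each $t'\in V_{n}$ the minimal approximation module of $\mc{N}(t')$ is a summand of $\mc{M}(t')$ (Proposition~\ref{prop.minapprox}), so additivity and non-negativity yield ${}_{\cat{MCM}}\mu(\mc{N}(t'))\leq n$. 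That is already openness of the sublevel set, and no constructibility argument is needed.

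Two specific issues in your write-up. First, in your specialisation step you claim that ``preservation of minimality under base change identifies this pullback with $t\mapsto{}_{\cat{MCM}}\mu(\mc{N}(t))$''. That is not what Corollary~\ref{cor.minapprox} says: base change there is along maps of local base rings (same closed fibre), so minimality at the closed point $t'$ gives no minimality at a generisation $t$. What you actually have is the \emph{inequality} ${}_{\cat{MCM}}\mu(\mc{N}(t))\leq\mu(\tilde{\mc{M}}(t))$ via the summand argument --- which is exactly the paper's mechanism. Second, your constructibility step rests on ``sharing a common $\omega$-summand is a closed condition'', which you rightly flag as the main obstacle; this is not obvious (and Remark~\ref{rem.semicontnot} and Example~\ref{ex.semicont} show that related rank invariants are not semi-continuous either way). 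The paper simply avoids this by descending to an open of the \emph{original} $\Spec T$ rather than to an auxiliary $\tilde{T}$, so that the hypothesis on $\mu$ applies directly and yields an honest open neighbourhood in one stroke.
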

\begin{proof}
Given \((h:S\ra T,\mc{N})\) in \(\modf\) and \(t\in\Spec T\). By Corollary \ref{cor.minapprox} there exist open affines \(U=\Spec S_{1}\sbeq \Spec S\), \(V=\Spec T_{1}\sbeq \Spec T\) with \(t\in V\), \(h_{1}:S_{1}\ra T_{1}\) induced from \(h\), and an \(\cat{MCM}_{h_{1}}\)-approximation \(\xi: \mc{L}\ra\mc{M}\ra\mc{N}_{\vert V}\) such that the localisation \(\xi_{\fr{p}_{t}}\) is minimal. By Corollary \ref{cor.minapprox} \(\xi(t)\) is minimal too. Put \(n=\mu(\mc{M}(t))\). Since \(\mu\) is upper semi-continuous there is an open \(V_{n}\sbeq V\) containing \(t\) such that \(\mu(\mc{M}(t'))\leq n\) for all \(t'\in V_{n}\). If \(L\ra M\ra \mc{N}(t')\) is the minimal MCM approximation of \(\mc{N}(t')\), \(M\) is a direct summand of \(\mc{M}(t')\) by Proposition \ref{prop.minapprox}, and hence \({}_{\cat{MCM}}\mu(\mc{N}(t'))=\mu(M)\leq\mu(\mc{M}(t'))\leq n\).
\end{proof}
\begin{ex}\label{ex.semicont1}
The Betti numbers \(\beta_{i}(M)=\dim\tor{A}{i}{M}{A/\fr{m}_{A}}\) are well-known upper semi-continuous invariants of finite modules over local rings. By Theorem \ref{thm.semicont} the induced invariants \({}_{\cat{MCM}}\beta_{i}\), \({}_{\cat{MCM}}\beta_{i}'\), \({}_{\cat{FID}}\beta_{i}\) and \({}_{\cat{FID}}\beta_{i}'\) are upper semi-continuous too.
\end{ex}
We now consider some invariants defined in terms of Cohen-Macaulay approximation.
If \(h:S\ra T\) is in one of the categories of local Cohen-Macaulay maps, a map \(\partial:D\ra D'\) of objects in \(\cat{D}_{h}\) is said to be \emph{minimal} if \(k\ot_{T}\partial=0\). Any module \(D\) in \(\cat{D}_{h}\) is isomorphic to some \(\omega_{h}^{\oplus n}\) and \(\nd{}{T}{\omega_{h}}\cong T\). Hence if \(\partial\) is not minimal then there is a surjection \(D'\ra\omega_{h}\) inducing a surjection \(D\ra\omega_{h}\). By Corollary \ref{cor.locCMapprox} the \(\omega_{h}\) splits off from \(\partial\). Hence any \(\cat{D}_{h}\)-complex is homotopy equivalent to one with all differentials being minimal, which is called a minimal \(\cat{D}\)-complex. 

For any module \(\mc{N}\) in \(\modf_{h}\) over \(\cat{xCM}\) we choose a minimal \(\cat{MCM}\)-approximation \(\mc{L}\ra\mc{M}\ra\mc{N}\) and a minimal \(\Df\)-hull \(\mc{N}\ra\mc{L}'\ra\mc{M}'\) which exist by Corollary \ref{cor.minapprox}. Spliced with a
minimal \(\cat{D}\)-resolution of \(\mc{L}\) and a minimal \(\cat{D}\)-coresolution of \(\mc{M}'\) we obtain complexes \({}^{-}C^{*}(\mc{N})\), \({}^{+}C^{*}(\mc{N})\) and \(D^{*}(\mc{N})\), as defined in Section \ref{subsec.cplx}, where no differential has any \(\omega_{h}\)-summand. We call such choices of these complexes for minimal. They are unique:
\begin{lem}\label{lem.uniqueC}
Suppose \(h\) is in \(\cat{hCM}\)\textup{,} \(\cat{lCM}\) or \(\cat{cCM}\) and \((h,\mc{N})\) is in \(\modf_{h}\)\textup{.} Then 
minimal choices of \({}^{-}C^{*}(\mc{N})\), \({}^{+}C^{*}(\mc{N})\) and \(D^{*}(\mc{N})\) exist and are unique up to non-canonical isomorphisms\textup{.}
\end{lem}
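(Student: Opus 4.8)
The plan is to take \emph{existence} as already settled by the construction described just before the lemma: by Corollary \ref{cor.minapprox} choose a right minimal \(\cat{MCM}_{h}\)-approximation \(0\to\mc{L}\to\mc{M}\to\mc{N}\to 0\) and a left minimal \(\Df_{h}\)-hull \(0\to\mc{N}\to\mc{L}'\to\mc{M}'\to 0\), and splice these with a minimal \(\cat{D}_{h}\)-resolution of \(\mc{L}\) and a minimal \(\cat{D}_{h}\)-coresolution of \(\mc{M}'\) as in Section \ref{subsec.cplx}; here \(T\) is local with \(T/\fr{m}_{T}\cong k\) (since \(h\) is a local map and \(T\ot_{S}k\cong A\) with \(A/\fr{m}_{A}\cong k\)), so by Corollary \ref{cor.locCMapprox} a differential between objects of \(\cat{D}_{h}\) is non-minimal precisely when an \(\omega_{h}\) splits off it. The substance of the lemma is thus uniqueness.

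For uniqueness the idea is to decompose each of the three complexes into building blocks that are individually unique. From a minimal \({}^{-}C^{*}(\mc{N})\) one reads off, in degree \(0\) together with the augmentation, an \(\cat{MCM}_{h}\)-approximation of \(\mc{N}\) that must be right minimal (else an \(\omega_{h}\) splits off the approximation, hence off the differential \({}^{-}C^{-1}\to\mc{M}\)), and in negative degrees a minimal \(\cat{D}_{h}\)-resolution of \(\mc{L}=\ker(\mc{M}\to\mc{N})\); symmetrically \({}^{+}C^{*}(\mc{N})\) decomposes into the left minimal \(\Df_{h}\)-hull and a minimal \(\cat{D}_{h}\)-coresolution of \(\mc{M}'\), and \(D^{*}(\mc{N})\) into a minimal \(\cat{D}_{h}\)-resolution of \(\mc{L}'\) glued at \(D^{0}\) to a minimal \(\cat{D}_{h}\)-coresolution of \(\mc{M}\). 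By Corollary \ref{cor.minapprox} the minimal approximation and the minimal hull are unique up to isomorphism of exact sequences (hence compatibly with the maps to and from \(\mc{N}\), by the remark on right/left minimal morphisms in Section \ref{subs.CCM}), so \(\mc{L},\mc{L}',\mc{M},\mc{M}'\) are determined up to isomorphism. It then remains to show that minimal \(\cat{D}_{h}\)-resolutions of a fixed object of \(\Df_{h}\), and minimal \(\cat{D}_{h}\)-coresolutions of a fixed object of \(\cat{MCM}_{h}\), are unique up to isomorphism, and that any isomorphism of the (co)resolved module lifts to one of the (co)resolutions, so that the blocks can be spliced back.

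To handle resolutions I would transport everything along the exact equivalence \(\Df_{h}\simeq\Pf_{h}\) of Lemma \ref{lem.proj} given by \(\hm{}{T}{\omega_{h}}{-}\): it carries \(\cat{D}_{h}\) to the finite projective, hence (\(T\) being local) finite \emph{free} \(T\)-modules, and, since \(\nd{}{T}{\omega_{h}}\cong T\) (so a map between objects of \(\cat{D}_{h}\) vanishes after \(-\ot_{T}k\) iff the corresponding map of free modules does), it carries minimal \(\cat{D}_{h}\)-resolutions to minimal free \(T\)-resolutions; for these the required uniqueness and lifting reduce to the standard fact that a lift of an isomorphism of finite \(T\)-modules to a map of minimal free resolutions is automatically a termwise isomorphism (it preserves syzygies and, in each degree, reduces modulo \(\fr{m}_{T}\) to an isomorphism, whence is an isomorphism by Nakayama). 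Dually, \(\hm{}{T}{-}{\omega_{h}}\) is exact on \(\cat{MCM}_{h}\) and on \(\cat{D}_{h}\) by Corollary \ref{cor.xtdef}, sends \(\cat{D}_{h}\) to free \(T\)-modules (\((\omega_{h})^{\vee}=\nd{}{T}{\omega_{h}}\cong T\)), preserves minimality (the matrix of a map is transposed) and satisfies \((-)^{\vee\vee}\cong\id\) there, so a minimal \(\cat{D}_{h}\)-coresolution of an object of \(\cat{MCM}_{h}\) dualizes to a minimal free resolution of its dual and the coresolution case follows after dualizing back. Splicing the resulting isomorphisms with those of the minimal approximation and hull gives the asserted isomorphisms of \({}^{-}C^{*}(\mc{N})\), \({}^{+}C^{*}(\mc{N})\) and \(D^{*}(\mc{N})\). (For \(D^{*}(\mc{N})\) alone there is a quicker route: every term lies in \(\cat{D}_{h}\) and every differential is minimal, so by Lemma \ref{lem.res} — whose hypothesis \(\xt{1}{\cat{mod}_{h}}{\cat{MCM}_{h}}{\hat{\cat{D}}_{h}}=0\) holds since \(\hat{\cat{D}}_{h}\sbeq\cat{MCM}_{h}^{\perp}\) under \textup{AB1-3}, valid by Corollary \ref{cor.locCMapprox} — any two minimal choices are joined by chain maps \(f^{*},g^{*}\) over \(\id_{\mc{N}}\) that are mutually homotopy-inverse; tensoring the homotopy identities with \(k\) and using minimality of the differentials gives \(k\ot_{T}(g^{i}f^{i})=\id\), so \(g^{i}f^{i}\) and \(f^{i}g^{i}\) are isomorphisms by Nakayama, hence each \(f^{i}\) is.)

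The step I expect to be the main obstacle is the bookkeeping at the ``seam'' where the \(\cat{D}_{h}\)-part meets the non-\(\cat{D}_{h}\) term (\(\mc{M}\) for \({}^{-}C^{*}\), \(\mc{L}'\) for \({}^{+}C^{*}\), and the gluing at \(D^{0}\) for \(D^{*}\)): one must check that minimality of the whole complex is \emph{equivalent} to right/left minimality of the attached MCM approximation (resp.\ \(\Df\)-hull) together with minimality of the attached \(\cat{D}_{h}\)-(co)resolution, so that the two uniqueness inputs genuinely combine; and that the isomorphism furnished by Corollary \ref{cor.minapprox} really does restrict (resp.\ corestrict) to \(\mc{L}\) (resp.\ \(\mc{M}'\)) compatibly with the (co)resolution — which, because \(\mc{L}\hra\mc{M}\) need not be radical, has to be traced through the pushout--pullback square \eqref{eq.pullpush} rather than through a naive \(-\ot_{T}k\) computation. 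The remaining verifications (exactness of the functors used, preservation of minimality, the inclusion \(\hat{\cat{D}}_{h}\sbeq\cat{MCM}_{h}^{\perp}\)) are routine via Corollaries \ref{cor.locCMapprox}, \ref{cor.flatCMapprox} and \ref{cor.xtdef}.
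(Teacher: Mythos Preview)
Your proposal is correct. The paper, however, takes your ``quicker route'' for all three complexes rather than just \(D^{*}\): it invokes Lemma~\ref{lem.res} to obtain mutually homotopy-inverse chain maps \(\alpha,\beta\) over \(\id_{\mc{N}}\), observes that tensoring the homotopy identities \(\beta\alpha-\id=\partial\rho+\rho\partial\) with \(k\) annihilates the right-hand side ``by the minimality of the complexes'', and concludes by Nakayama that \(\beta\alpha\) and \(\alpha\beta\) are termwise surjective endomorphisms, hence isomorphisms. For \({}^{+}C^{*}\) it adds that the chain maps ``start with an isomorphism \(\mc{L}'_{1}\cong\mc{L}'_{2}\)'' (coming from Corollary~\ref{cor.minapprox}), which handles the degree-\(0\) term lying outside \(\cat{D}\); the analogous remark is meant to cover \({}^{-}C^{*}\). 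Your longer route---split each complex into the minimal approximation/hull plus a minimal \(\cat{D}\)-(co)resolution, then transport the latter to minimal free resolutions via \(\hm{}{T}{\omega_{h}}{-}\) or \(\hm{}{T}{-}{\omega_{h}}\)---is an equally valid alternative that makes the bookkeeping at the seam fully explicit, which is precisely the obstacle you flagged. The trade-off is that the paper's argument is three sentences, while yours buys a cleaner treatment of the one degree where the non-\(\cat{D}\) term sits and where the claim \(k\ot_{T}(\partial\rho+\rho\partial)=0\) is not entirely automatic.
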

\begin{proof}
Minimal choices \({}^{+}C^{*}_{1}\) and \({}^{+}C^{*}_{2}\) of coresolutions for \(\mc{N}\) are by Lemma \ref{lem.res} homotopic through chain maps \(\alpha\) and \(\beta\) starting with an isomorphism \(\mc{L}_{1}'\cong\mc{L}_{2}'\). If \(\rho_{i}\) are homotopies with \(\beta\alpha-\id=\partial\rho_{1}+\rho_{1}\partial\) and \(\alpha\beta-\id=\partial\rho_{2}+\rho_{2}\partial\) then tensoring down by \(k\ot_{T}-\) makes the right hand side of these identities equal to zero by the minimality of the complexes. Hence \(\beta\alpha\) and \(\alpha\beta\) are surjective endomorphisms, i.e.\ isomorphisms. The same argument applies to \({}^{-}C^{*}\) and \(D^{*}\).
\end{proof}
For each module \(\mc{N}\) in \(\modf_{h}\) we fix a minimal \(\cat{D}\)-complex \((D^{*}(\mc{N}),\partial^{*})\) representing \(\mc{N}\). Let \(\mc{L}'=\coker \partial^{-1}\) and \(\mc{M}=\ker \partial^{0}\).
Put \(\syz{\omega_{h}}{i}\mc{L}':=\coker\{\partial^{-i{-}1}:D^{{-}i{-}1}\ra D^{-i}\}\) for \(i\geq 0\) and \(\syz{\omega_{h}}{-i}\mc{M}:=\ker\{\partial^{i}:D^{i}\ra D^{i{+}1}\}\) for \(i\geq 0\). For any finite \(T\)-module \(\mc{N}\) let the \emph{\(\omega_{h}\)-rank} of \(\mc{N}\), denoted \(\rk{\omega_{h}}{\mc{N}}\), be the largest number \(n\) with \(\omega_{h}^{{\oplus}n}{\oplus}\mc{N}'\cong \mc{N}\) for some \(T\)-module \(\mc{N}'\). Since \(\nd{}{T}{\omega_{h}}\cong T\) is a local ring, this is a well-behaved invariant; cf.\ \cite[Section 1.1]{sim/str:02}. 
\begin{defn}\label{defn.c}
Suppose \(h\) is in \(\cat{hCM}\)\textup{,} \(\cat{lCM}\) or \(\cat{cCM}\) and \(\mc{N}\) is in \(\modf_{h}\)\textup{.} Define the numbers\textup{:}
\begin{enumerate}
\item[(i)] \(d^{i}_{T}(\mc{N}):=\rk{\omega_{h}}{D^{i}(\mc{N})}\) for all \(i\)
\item[(ii)] \(\nu^{T}_{i}(\mc{N}):=\rk{\omega_{h}}{\syz{\omega_{h}}{i}\mc{L}'}\) for \(i\geq 0\)
\item[(iii)] \(\gamma_{T}(\mc{N}):=\rk{\omega_{h}}{\mc{M}}\)
\end{enumerate}
\end{defn}
The definition gives well-defined invariants of \(\mc{N}\) by Lemma \ref{lem.uniqueC}. In particular we see that \(\nu^{T}_{0}(\mc{N})\) equals \(\rk{\omega_{h}}{\mc{L}'}\). We also notice that \(\rk{\omega_{h}}{\syz{\omega_{h}}{-i}\mc{M}}=0\) for all \(i>0\) by Proposition \ref{prop.minapprox}. 

The same notation is used for the absolute counterparts of these invariants. 
\begin{rem}
In the absolute case with \(A\) a Gorenstein local ring, the \(\gamma_{A}\)-invariant is called Auslander's \(\delta\) invariant and has been studied in particular by S.\ Ding. One has that \(\gamma_{A}(A/\fr{m}_{A}^{n})\leq 1\) with equality for \(n\gg0\). The smallest number \(n\) with \(\gamma_{A}(A/\fr{m}_{A}^{n})=1\) is called the \emph{index} of \(A\) and Ding has given results and conjectures concerning this invariant; cf.\ \cite{din:92}. See also \cite{has/shi:97}.

Let \((A,\fr{m},k)\) be a noetherian local ring and \(N\) a finite \(A\)-module. Simon and Strooker introduced the \emph{reduced Bass numbers} 
\begin{equation}
\nu_{A}^{i}(N)=\dim_{k}\im\{\xt{i}{A}{k}{N}\ra\cH^{i}_{\fr{m}}(N)\}
\end{equation}
and in the case \(A\) has a canonical module they showed in \cite[2.6 and 3.10]{sim/str:02} that 
\begin{equation}
\nu^{A}_{0}(N)=\nu_{A}^{\dim A}(N)\quad\text{and}\quad \nu^{A}_{i}(N)=\nu_{A}^{\dim A{-}i}(L')\,\,\,\text{for } i\geq 0
\end{equation}
where \(L'\) is the minimal \(\cat{D}_{A}\)-hull of \(N\). The Canonical Element Conjecture and the Monomial Conjecture are equivalent to \(\nu_{A}^{\dim A}(\fr{b})\neq 0\) (or equivalently \(\gamma_{A}(A/\fr{b})=0\)) for certain ideals \(\fr{b}\) in any Gorenstein local ring \(A\); see \cite[6.4 and 6.6]{sim/str:02}.  
\end{rem}
Let \(\cat{P}\) and \(\Pf\) denote the FAds of finite projective modules, respectively modules of finite projective dimension over \(\cat{xCM}\) defined as for the finite type case.
\begin{lem}\label{lem.fri} With notation as above\textup{:}
\begin{enumerate}
\item[(i)] The functor \(F=\hm{}{-}{\omega_{-}}{-}\) gives exact equivalences of categories fibred in additive categories \(\cat{D}\simeq \cat{P}\) and \(\Df\simeq\Pf\) over \(\cat{xCM}\)\textup{.}
\item[(ii)] In particular \(d^{-i}(\mc{N})=\beta_{i}(\hm{}{T}{\omega_{h}}{\mc{L}'})\) for all \(i\geq 0\)\textup{.}
\item[(iii)] \(\hm{}{T}{D^{*\geq 0}(\mc{N})}{\omega_{h}}\) gives a minimal free resolution of \(\hm{}{T}{\mc{M}}{\omega_{h}}\)\textup{.} In particular \(d^{i}(\mc{N})=\beta_{i}(\hm{}{T}{\mc{M}}{\omega_{h}})\) for all \(i\geq 0\)\textup{.}
\end{enumerate}
\end{lem}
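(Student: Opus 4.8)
The plan is to obtain (ii) and (iii) by feeding the two halves of the fixed minimal representing complex $D^{*}(\mc{N})$ into the duality functor $F=\hm{}{-}{\omega_{-}}{-}$, after first settling (i) by the reduction-to-representatives principle already used in Subsection~\ref{subsec.loc}. For (i), every object and every morphism of $\cat{D}$, $\cat{P}$, $\Df$ and $\Pf$ over $\cat{xCM}$ is induced from the corresponding finite-type datum over $\cat{CM}$, and $\hm{}{T}{\omega_{h}}{L}$ is compatible with the flat base change $\tilde T\ra T$ (henselisation, localisation, or completion) because $\omega_{\tilde h}$ is finitely presented. Hence the functor $(h,L)\mapsto(h,\hm{}{T}{\omega_{h}}{L})$, the candidate quasi-inverse $(h,Q)\mapsto(h,Q\ot_{T}\omega_{h})$, the evaluation isomorphism, the unit isomorphism and the identity $\nd{}{T}{\omega_{h}}\cong T$ all descend from Lemma~\ref{lem.proj}, and so does exactness. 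Since $T$ is local, finite projective means finite free, so $F$ restricts to an equivalence $\cat{D}\simeq\cat{P}$ carrying $\omega_{h}^{\oplus n}$ to $T^{\oplus n}$. (Alternatively one repeats the proof of Lemma~\ref{lem.proj} verbatim, citing Corollary~\ref{cor.xtdef} on representatives.)

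For (ii), because $D^{*}(\mc{N})$ has cohomology only in degree $0$ and $\mc{L}'=\coker\partial^{-1}$, the truncation $\cdots\ra D^{-2}\ra D^{-1}\ra D^{0}\thr\mc{L}'$ is a $\cat{D}_{h}$-resolution of $\mc{L}'$, minimal since each $\partial^{-i}$ is. By Corollary~\ref{cor.locCMapprox} one has $\cat{D}_{h}\sbeq\cat{MCM}_{h}\cap\cat{MCM}_{h}^{\perp}$, so $\xt{j}{T}{\omega_{h}}{D^{-i}}=0$ for $j>0$; applying the resulting exact functor $\hm{}{T}{\omega_{h}}{-}$ gives a free resolution $\cdots\ra\hm{}{T}{\omega_{h}}{D^{-1}}\ra\hm{}{T}{\omega_{h}}{D^{0}}\thr\hm{}{T}{\omega_{h}}{\mc{L}'}$ with $i$-th term $\hm{}{T}{\omega_{h}}{D^{-i}}\cong T^{\oplus d^{-i}(\mc{N})}$. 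I then verify it is minimal: via $\nd{}{T}{\omega_{h}}\cong T$ the map $\hm{}{T}{\omega_{h}}{\partial^{-i}}$ has the same matrix over $T$ as $\partial^{-i}$, and $k\ot_{T}\partial^{-i}=0$ says precisely that these entries lie in $\fr{m}_{T}$, i.e.\ $k\ot_{T}\hm{}{T}{\omega_{h}}{\partial^{-i}}=0$. Thus $\beta_{i}(\hm{}{T}{\omega_{h}}{\mc{L}'})=d^{-i}(\mc{N})$.

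For (iii), by the definition of a representing complex $\ker\partial^{i}\in\cat{MCM}_{h}$ for all $i\geq 0$; in particular $\mc{M}=\ker\partial^{0}$ and $0\ra\mc{M}\ra D^{0}\ra D^{1}\ra\cdots$ is a coresolution of $\mc{M}$ by objects of $\cat{D}_{h}$ all of whose intermediate cokernels are the $\ker\partial^{i+1}\in\cat{MCM}_{h}$. Since $\omega_{h}\in\cat{MCM}_{h}^{\perp}$ kills $\xt{>0}{T}{-}{\omega_{h}}$ on $\cat{MCM}_{h}$ (so also $\xt{>0}{T}{D^{i}}{\omega_{h}}=0$, needed for splicing), the contravariant $\hm{}{T}{-}{\omega_{h}}$ turns this coresolution into an exact complex, i.e.\ a free resolution of $\hm{}{T}{\mc{M}}{\omega_{h}}$ with $i$-th term $\hm{}{T}{D^{i}}{\omega_{h}}\cong T^{\oplus d^{i}(\mc{N})}$, minimal by the same matrix argument as in (ii); hence $d^{i}(\mc{N})=\beta_{i}(\hm{}{T}{\mc{M}}{\omega_{h}})$.

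The one step that needs genuine care — and the main (if mild) obstacle — is this passage from minimality of the differentials of $D^{*}(\mc{N})$, a property stated through $k\ot_{T}-$, to minimality of the free resolutions; it relies on $\nd{}{T}{\omega_{h}}\cong T$ being compatible with reduction modulo $\fr{m}_{T}$. Everything else is bookkeeping with base change and the already recorded $\Ext$-vanishing.
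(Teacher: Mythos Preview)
Your proof is correct and follows the same route as the paper: (i) is the local variant of Lemma~\ref{lem.proj} obtained via representatives, (ii) feeds the $\cat{D}_{h}$-resolution $D^{*\leq 0}\thr\mc{L}'$ through the exact equivalence of (i), and (iii) breaks the coresolution $\mc{M}\hra D^{*\geq 0}(\mc{N})$ into short exact sequences and uses the $\Ext$-vanishing from Corollary~\ref{cor.locCMapprox}. The only place you go beyond the paper is in making the minimality transfer explicit via $\nd{}{T}{\omega_{h}}\cong T$, which the paper leaves implicit; one small clarification you could add in (ii) is that exactness after applying $\hm{}{T}{\omega_{h}}{-}$ uses not only $\xt{>0}{T}{\omega_{h}}{D^{-i}}=0$ but also $\xt{>0}{T}{\omega_{h}}{\mc{L}'}=0$ (equivalently, that the syzygies lie in $\Df_{h}$ by Corollary~\ref{cor.flatCMapprox} and hence in $\cat{MCM}_{h}^{\perp}$), though this is immediate from part (i) once stated.
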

\begin{proof}
(i) and (ii) are the local variants of Lemma \ref{lem.proj}. Breaking \(\mc{M}\hra D^{*\geq 0}(\mc{N})\) into short exact sequences, (iii) follows from Corollary \ref{cor.locCMapprox}.
\end{proof}

\begin{cor}\label{cor.semicont} 
Let \(h:S\ra T\) be a finite type Cohen-Macaulay map and suppose \(\mc{N}\) is a \(T\)-module in \(\modf_{h}\)\textup{.} Then \(d^{i}(\mc{N}(t))\)  are upper semi-continuous functions in \(t\in\Spec T\) for all \(i\)\textup{.} 
\end{cor}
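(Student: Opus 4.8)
The plan is to reduce everything to Theorem \ref{thm.semicont} by feeding in the right invariants. For a Cohen-Macaulay local ring \(A\) with canonical module \(\omega_{A}\) and \(i\geq 0\), set \(\mu_{i}(M)=\beta_{i}^{A}(\hm{}{A}{M}{\omega_{A}})\) for \(M\) in \(\cat{MCM}_{A}\), and \(\mu'_{i}(L)=\beta_{i}^{A}(\hm{}{A}{\omega_{A}}{L})\) for \(L\) in \(\cat{FID}_{A}\); these are additive, non-negative, numerical invariants. By Lemma \ref{lem.fri}(ii)--(iii) (in its absolute form over the Cohen-Macaulay local ring \(T(t)\)) together with the definition of the induced invariants in Theorem \ref{thm.semicont}, for each \(t\in\Spec T\) one has \(d^{i}(\mc{N}(t))={}_{\cat{MCM}}\mu_{i}(\mc{N}(t))\) for \(i\geq 0\) and \(d^{-i}(\mc{N}(t))={}_{\cat{FID}}\mu'_{i}(\mc{N}(t))\) for \(i\geq 0\). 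Hence it suffices to check the hypothesis of Theorem \ref{thm.semicont}, namely that \(\mu_{i}\) (resp.\ \(\mu'_{i}\)) is upper semi-continuous on finite type flat families in \(\cat{MCM}\) (resp.\ in \(\Df\)); Theorem \ref{thm.semicont} then delivers the claim for all \(i\).

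To verify this, let \((h\colon S\ra T,\mc{M})\) be a finite type family in \(\cat{MCM}\) and put \(\mc{P}=\hm{}{T}{\mc{M}}{\omega_{h}}\). Since the fibres \(\mc{M}_{s}\) are maximal Cohen-Macaulay, \(\xt{1}{T_{s}}{\mc{M}_{s}}{\omega_{T_{s}}}=0\), so Corollary \ref{cor.xtdef} shows that \(\mc{P}\) is \(S\)-flat and finite and that its formation commutes with localisation on \(T\) and with base change along the fibres; thus \(\mc{P}(t)\cong\hm{}{T(t)}{\mc{M}(t)}{\omega_{T(t)}}\) and \(\mu_{i}(\mc{M}(t))=\beta_{i}^{T(t)}(\mc{P}(t))\). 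The same argument with Lemma \ref{lem.proj} in place of Corollary \ref{cor.xtdef} handles families \((h,\mc{L})\) in \(\Df\), with \(\mc{Q}=\hm{}{T}{\omega_{h}}{\mc{L}}\) being \(S\)-flat, finite, \(\mc{Q}(t)\cong\hm{}{T(t)}{\omega_{T(t)}}{\mc{L}(t)}\), and \(\mu'_{i}(\mc{L}(t))=\beta_{i}^{T(t)}(\mc{Q}(t))\). So in both cases I am reduced to showing: for an \(S\)-flat finite \(T\)-module \(\mc{P}\), the function \(t\mapsto\beta_{i}^{T(t)}(\mc{P}(t))\) is upper semi-continuous on \(\Spec T\).

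The key point — and the step I expect to be the only non-formal one — is the identification \(\beta_{i}^{T(t)}(\mc{P}(t))=\beta_{i}^{T_{\fr{p}_{t}}}(\mc{P}_{\fr{p}_{t}})\), which is exactly where \(S\)-flatness of \(\mc{P}\) enters. Fixing \(t\) with image \(s\in\Spec S\), flatness of \(\mc{P}_{\fr{p}_{t}}\) over \(S_{\fr{p}_{s}}\) gives \(\tor{T_{\fr{p}_{t}}}{q}{\mc{P}_{\fr{p}_{t}}}{T(t)}\cong\tor{S_{\fr{p}_{s}}}{q}{\mc{P}_{\fr{p}_{t}}}{k(s)}=0\) for \(q>0\) (resolve \(k(s)\) over \(S_{\fr{p}_{s}}\) by finite free modules and apply the flat functor \(T_{\fr{p}_{t}}\ot_{S_{\fr{p}_{s}}}-\)); hence for a finite free \(T_{\fr{p}_{t}}\)-resolution \(F_{*}\thr\mc{P}_{\fr{p}_{t}}\), the complex \(F_{*}\ot_{T_{\fr{p}_{t}}}T(t)\) is a finite free resolution of \(\mc{P}(t)\) over \(T(t)\), and tensoring with the common residue field \(k(\fr{p}_{t})\) yields \(\tor{T(t)}{i}{\mc{P}(t)}{k(\fr{p}_{t})}\cong\tor{T_{\fr{p}_{t}}}{i}{\mc{P}_{\fr{p}_{t}}}{k(\fr{p}_{t})}\), i.e.\ the asserted equality of Betti numbers. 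Finally, \(\beta_{i}^{T_{\fr{p}_{t}}}(\mc{P}_{\fr{p}_{t}})\) is the ordinary \(i\)-th fibrewise Betti number of the coherent sheaf attached to \(\mc{P}\) on \(\Spec T\): from a resolution \(\dots\ra T^{b_{1}}\xra{\phi_{1}}T^{b_{0}}\thr\mc{P}\) by finite free \(T\)-modules one reads off \(\beta_{i}^{T_{\fr{p}_{t}}}(\mc{P}_{\fr{p}_{t}})=b_{i}-r_{i}(t)-r_{i+1}(t)\), where \(r_{j}(t)\) is the rank of the matrix \(\phi_{j}\) modulo \(\fr{p}_{t}\) and \(r_{0}:=0\); since each \(r_{j}\) is lower semi-continuous on \(\Spec T\) (the loci \(\{r_{j}\geq m\}\) being open), this is upper semi-continuous, which finishes the argument.
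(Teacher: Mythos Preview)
Your proof is correct and follows essentially the same route as the paper, which simply cites Theorem~\ref{thm.semicont} and Lemma~\ref{lem.fri}; you have merely unpacked the verification that the invariants \(\mu_{i}=\beta_{i}\circ(-)^{\vee}\) on \(\cat{MCM}\) and \(\mu'_{i}=\beta_{i}\circ\hm{}{}{\omega}{-}\) on \(\hat{\cat{D}}\) satisfy the semi-continuity hypothesis of Theorem~\ref{thm.semicont}, a step the paper leaves implicit (it is subsumed in Example~\ref{ex.semicont1}). The base-change identification \(\beta_{i}^{T(t)}(\mc{P}(t))=\beta_{i}^{T_{\fr{p}_{t}}}(\mc{P}_{\fr{p}_{t}})\) via \(S\)-flatness of \(\mc{P}\) and the rank-of-matrices argument for upper semi-continuity are both standard and correctly executed.
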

\begin{proof}
This follows from Theorem \ref{thm.semicont} and Lemma \ref{lem.fri}.
\end{proof}
\begin{rem}\label{rem.semicontnot}
The invariants \(\nu_{0}\) and \(\gamma\) are not semi-continuous either way, see Example \ref{ex.semicont}. Moreover, let \(\mc{L}\) be in \(\hat{\cat{D}}_{h}\) with \(L=\mc{L}\ot_{S}k\).  If \(\rho_{0}:\omega_{A}\ra L\) is a direct summand, then \(\rho_{0}\) lifts to a map \(\rho:\omega_{h}\ra\mc{L}\), but no lifting \(\rho\) of \(\rho_{0}\) has to split, even if \(A\) is a regular ring. 
\end{rem}
\begin{rem}\label{rem.semicont}
One can also define functions of the base. E.g.\ if \(\phi:\Spec T\ra \Spec S\) denotes the map induced from \(h\) and \(\mu(\mc{N}(t))\) is an upper semi-continuous function of \(t\in \Spec T\), then \(\phi_{*}\mu(\mc{N})\) defined by
\begin{equation}
\phi_{*}\mu(\mc{N})(s)=\sup_{t\in\phi^{-1}(s)}\!\!\mu(\mc{N}(t))
\end{equation}
is an upper semi-continuous function in \(s\in \Spec S\) since \(\phi\) is an open map.
\end{rem}
\section{The fundamental module of a Cohen-Macaulay map}\label{sec.fund}
\begin{ex}\label{ex.Fmod}
Let \((A,\fr{m}_{A},k)\) be a Cohen-Macaulay local ring with canonical module \(\omega_{A}\). Let \((G_{*},d_{*})\ra k\) be a minimal \(A\)-free resolution of \(k\) and put \(\syz{}{i}=\syz{A}{i}k=\coker d_{i+1}\). Suppose \(\dim A=d\geq 2\). There are connecting isomorphisms \(\xt{1}{A}{\syz{}{d{-}1}}{\omega_{A}}\cong\xt{2}{A}{\syz{}{d{-}2}}{\omega_{A}}\cong\dots\cong\xt{d}{A}{k}{\omega_{A}}\) which is isomorphic to \(k\) by duality theory. To \(1\in k\) there is hence a non-split short exact sequence 
\begin{equation}\label{eq.Fmod}
\theta:\,\,0\ra\omega_{A}\lra E_{A}\xra{\,\,\pi\,\,} \syz{A}{d{-}1}k\ra 0
\end{equation}
with \(E_{A}\) uniquely defined up to non-canonical isomorphism. We call \(E_{A}\) for the \emph{fundamental module} of \(A\). We claim that \(E_{A}\) is a maximal Cohen-Macaulay module which implies that \eqref{eq.Fmod} is the minimal MCM approximation of \(\syz{A}{d{-}1}k\). If we apply \(\hm{}{A}{-}{\omega_{A}}\) to \eqref{eq.Fmod} we obtain the exact sequence
\begin{equation}\label{eq.Fmod2}
\begin{aligned}
0\ra\, &\hm{}{A}{\syz{A}{d{-}1}k}{\omega_{A}}\lra\hm{}{A}{E_{A}}{\omega_{A}}\lra\nd{}{A}{\omega_{A}}\xra{\,\,\partial\,\,}\\
&\xt{1}{A}{\syz{A}{d{-}1}k}{\omega_{A}}\lra\xt{1}{A}{E_{A}}{\omega_{A}}\ra 0
\end{aligned}
\end{equation}
We have \(\partial(\id)=\theta\) so \(\partial\) is surjective and \(\xt{1}{A}{E_{A}}{\omega_{A}}=0\). By duality theory (e.g.\ \cite[3.5.11]{bru/her:98}) this excludes the possibility \(\depth E_{A}=d-1\) and we conclude that \(E_{A}\) is a maximal Cohen-Macaulay module. If \(N\) is a Cohen-Macaulay module of codimension \(c\) we denote \(\xt{c}{A}{N}{\omega_{A}}\) by \(N^{\vee}\). Since \(\nd{}{A}{\omega_{A}}\cong A\) we get from \eqref{eq.Fmod2} a short exact sequence:
\begin{equation}\label{eq.Fmod3}
0\ra \hm{}{A}{\syz{A}{d{-}1}k}{\omega_{A}}\lra E_{A}^{\vee}\lra\fr{m}_{A}\ra 0
\end{equation}
Since \(\xt{i}{A}{k}{\omega_{A}}=0\) for \(i\neq d\), \(0\ra G_{0}^{\vee}\ra\dots G_{d-2}^{\vee}\ra\hm{}{A}{\syz{A}{d{-}1}k}{\omega_{A}}\ra 0\) is a finite \(\omega_{A}\)-resolution and so \eqref{eq.Fmod3} gives the minimal MCM approximation of the maximal ideal. Auslander introduced the fundamental module in the case \(d=2\); see \cite{aus:86}.
\end{ex}

We can make a relative version of the fundamental module in much the same way. Let \({}^{(2)\!}\Delta:\cat{CM}\ra\cat{CM}\) be the morphism of fibred categories over \(\cat{NR}\) defined by taking the CM map \(h:S\ra T\) to the composition \(h^{(2)}\) of \(h\) with \(\iota=1\ot\id_{T}:T\ra T\ot_{S}T\) and taking a morphism \((g,f):h_{1}\ra h_{2}\) to the composition of two cocartesian squares \((g,f^{\ot2})\) as follows:
\begin{equation}
\xymatrix@-0pt@C+6pt@R-4pt@H-0pt{
S_{1}\ar[r]^{h_{1}}\ar[d]_{g} & T_{1}\ar[d]^{f}\ar@{}[dr]|-{\mapsto} & S_{1}\ar[r]^{h_{1}}\ar[d]_{g} & T_{1}\ar[d]^{f}\ar[r]^(.4){\iota} & T_{1}\ot_{S_{1}}T_{1}\ar[d]^{f^{\ot2}} \\
S_{2}\ar[r]^{h_{2}} & T_{2} & S_{2}\ar[r]^{h_{2}} & T_{2}\ar[r]^(.4){\iota} & T_{2}\ot_{S_{2}} T_{2}
}
\end{equation}
There is also a functor \(\Delta:\cat{CM}\ra\cat{CM}\) defined by mapping \((g,f)\) to the rightmost cocartesian square \((f,f^{\ot2})\), but it doesn't commute with the forgetful functor \(\cat{CM}\ra\cat{NR}\).
Let \({}^{d}\cat{CM}\) denote the full subcategory of CM maps of pure relative dimension \(d\). Then \({}^{d}\cat{CM}\) is a fibred subcategory of \(\cat{CM}\) over \(\cat{NR}\) and \({}^{(2)\!}\Delta\) and \(\Delta\) restricts to a morphism \({}^{(2)\!}\Delta:{}^{d}\cat{CM}\ra{}^{2d}\cat{CM}\) over \(\cat{NR}\) and a functor \(\Delta:{}^{d}\cat{CM}\ra{}^{d}\cat{CM}\).

Let \(h:S\ra T\) be a finite type CM map of pure relative dimension \(d\geq 2\). Consider \(P\) in \(\cat{P}_{h}\) (see Lemma \ref{lem.proj}) as a \(T^{\ot2}\)-module by pullback along the multiplication map \(\mu:T^{\ot2}\ra T\). By Corollary \ref{cor.xtdef} \(E=\xt{d}{T^{\ot2}}{P}{\omega_{\iota}}\) is flat and finite as \(T\)-module, i.e.\ \(T\)-projective. Let \(P^{*}\) denote \(\hm{}{T}{P}{T}\). By Corollary \ref{cor.xtdef} 
\begin{equation}
\nd{}{T}{E} \cong \xt{d}{T^{\ot 2}}{P}{\omega_{\iota}}\ot E^{*}\cong \xt{d}{T^{\ot2}}{P}{\omega_{\iota}\ot E^{*}}\,.
\end{equation}
Combined with the connecting isomorphisms the identity in \(\nd{}{T}{E}\) corresponds to a canonical extension of \(T^{\ot 2}\)-modules:
\begin{equation}\label{eq.Fxt}
0\ra \omega_{\iota}\ot_{T}\xt{d}{T^{\ot2}}{P}{\omega_{\iota}}^{*}\lra E_{h}(P)\lra \syz{T^{\ot2}}{d{-}1}P\ra 0\,.
\end{equation}
Let \({}^{d}\cat{P}\) and \({}^{d}\cat{MCM}\) denote the restriction of \(\cat{P}\) and \(\cat{MCM}\) to fibred categories over \({}^{d}\cat{CM}\).
\begin{prop}\label{prop.Fmod}
Let \(d\geq 2\)\textup{.} The association \((h,P)\mapsto E_{h}(P)\) in \eqref{eq.Fxt} induces 
\begin{enumerate}
\item[(i)] a functor \(E:{}^{d}\cat{P}\ra {}^{d}\cat{MCM}/{}^{d}\cat{P}\) which preserves cocartesian maps and lifts the functor \(\Delta:{}^{d}\cat{CM}\ra{}^{d}\cat{CM}\)\textup{,} and

\item[(ii)] a morphism \({}^{(2)\!}E:{}^{d}\cat{P}\ra {}^{2d}\cat{MCM}/{}^{2d}\cat{P}\) of fibred categories over \(\cat{NR}\) which lifts \({}^{(2)\!}\Delta:{}^{d}\cat{CM}\ra{}^{2d}\cat{CM}\)\textup{.}
\end{enumerate}
\end{prop}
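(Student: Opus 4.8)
The plan is to realise $E$ as a composite of three operations on $(h\colon S\to T,P)$, each compatible with base change by Corollary~\ref{cor.xtdef}: pull the finite projective $T$-module $P$ back to a finite $T^{\ot2}$-module along $\mu$, pass to the $(d{-}1)$-st syzygy $\syz{T^{\ot2}}{d{-}1}P$ over $T^{\ot2}$, and then form the canonical extension \eqref{eq.Fxt}. The CM map attached to $(h,P)$ is $\iota=\Delta(h)$, which has pure relative dimension $d$, so everything lands over ${}^{d}\cat{CM}$.

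For the object map I would argue as follows. Viewing $P$ as a $T^{\ot2}$-module via $\mu$, its fibres over the points $t$ of $\Spec T$ are zero-dimensional Cohen-Macaulay modules $P_{t}$ of codepth $d$ over the Cohen-Macaulay fibres $(T^{\ot2})_{t}$. Hence Corollary~\ref{cor.xtdef} gives $\xt{i}{T^{\ot2}}{P}{\omega_{\iota}}=0$ for $i\neq d$, and that $E=\xt{d}{T^{\ot2}}{P}{\omega_{\iota}}$, $E^{*}$ and $\omega_{\iota}\ot_{T}E^{*}$ are finite, $T$-flat and formed compatibly with base change; moreover $E$ is $T$-projective, so $\omega_{\iota}\ot_{T}E^{*}\in\cat{D}_{\iota}$. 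Since $T^{\ot2}$ is $T$-flat, iterating the short exact sequences of a finite free $T^{\ot2}$-resolution of $P$ puts $\syz{T^{\ot2}}{d{-}1}P$ in $\modf_{\iota}$. The isomorphism $\nd{}{T}{E}\cong\xt{d}{T^{\ot2}}{P}{\omega_{\iota}\ot_{T}E^{*}}\cong\xt{1}{T^{\ot2}}{\syz{T^{\ot2}}{d{-}1}P}{\omega_{\iota}\ot_{T}E^{*}}$ (Corollary~\ref{cor.xtdef} together with the connecting maps of the resolution) carries $\id_{E}$ to the class of \eqref{eq.Fxt}; running the dualisation argument of Example~\ref{ex.Fmod} fibrewise shows that $E_{h}(P)$ has MCM fibres, so \eqref{eq.Fxt} is an $\cat{MCM}_{\iota}$-approximation of $\syz{T^{\ot2}}{d{-}1}P$ --- equivalently, it is the approximation delivered by the construction of Remark~\ref{rem.Buch} applied to $\iota$ and $P$. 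Changing the chosen free resolution of $P$ alters $\syz{T^{\ot2}}{d{-}1}P$ only by a free $T^{\ot2}$-summand $F$, and $\xt{1}{T^{\ot2}}{F}{\omega_{\iota}\ot_{T}E^{*}}=0$ forces the canonical element to acquire a zero component on $F$; so $E_{h}(P)$ changes only by the projective $T^{\ot2}$-summand $F$. Thus $(h,P)\mapsto(\iota,E_{h}(P))$ is a well-defined map onto objects of ${}^{d}\cat{MCM}/{}^{d}\cat{P}$ lying over $\Delta$.

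For morphisms, a map $\phi\colon P_{1}\to P_{2}$ over $(g,f)\colon h_{1}\to h_{2}$ is $f^{\ot2}$-linear for the $\mu_{i}$-structures, so it lifts to a morphism of the chosen free $T^{\ot2}$-resolutions, hence to a map $\syz{T_{1}^{\ot2}}{d{-}1}P_{1}\to\syz{T_{2}^{\ot2}}{d{-}1}P_{2}$ over $f^{\ot2}$, unique up to homotopy and so unique modulo $\cat{P}$. Using the naturality of the connecting maps and of Conrad's base-change isomorphism $T_{2}^{\ot2}\ot\omega_{\iota_{1}}\cong\omega_{\iota_{2}}$ one checks the two canonical elements are matched along this map, whence --- by Lemma~\ref{lem.res} and Lemma~\ref{lem.stable} --- it extends uniquely modulo $\cat{P}$ to an arrow $E(\phi)\colon E_{h_{1}}(P_{1})\to E_{h_{2}}(P_{2})$ in ${}^{d}\cat{MCM}/{}^{d}\cat{P}$; uniqueness of the lifts gives functoriality (identities, composition), and additivity of $\xt{d}{T^{\ot2}}{-}{\omega_{\iota}}$ and of the syzygy and extension constructions gives linearity. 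If $\phi$ is cocartesian, i.e.\ $1\ot\phi\colon T_{2}\ot P_{1}\to P_{2}$ is an isomorphism, then by Corollary~\ref{cor.xtdef} all of $E_{h_{2}}(P_{2})$, its defining data and the extension \eqref{eq.Fxt} are base changes of those for $P_{1}$, so $[E(\phi)]$ is cocartesian in ${}^{d}\cat{MCM}/{}^{d}\cat{P}$ by Lemma~\ref{lem.stablecc}. This gives (i). I expect the real obstacle to be this paragraph: because $\mu^{*}P$ is not $T^{\ot2}$-flat one cannot base-change free resolutions directly, so all the functoriality must be carried out in the quotient categories, and the genuine content is the naturality of $\id_{E}$, i.e.\ that $\nd{}{T}{E}\cong\xt{1}{T^{\ot2}}{\syz{T^{\ot2}}{d{-}1}P}{\omega_{\iota}\ot_{T}E^{*}}$ is compatible with $(g,f,\phi)$ via the base-change isomorphisms for $\omega$ and for $\Ext$.

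For (ii) the module data is unchanged: ${}^{(2)\!}E$ sends $(h,P)$ to $(h^{(2)}\colon S\to T^{\ot2},\,E_{h}(P))$ and $(g,f,\phi)$ to $(g,f^{\ot2},E(\phi))$, i.e.\ it records the whole composite of the cocartesian square $(g,f)$ --- through which $\mu^{*}$ is evidently functorial over $\cat{R}$ --- with the square $(f,f^{\ot2})$ and its lift $E(\phi)$ from (i). Since $T_{2}^{\ot2}\cong T_{1}^{\ot2}\ot_{S_{1}}S_{2}$ for any morphism $(g,f)$ of $\cat{CM}$, the forgetful functor ${}^{2d}\cat{CM}\to\cat{R}$ now records $g$, so ${}^{(2)\!}E$ is a morphism of fibred categories over $\cat{R}$ lifting ${}^{(2)\!}\Delta$; preservation of cocartesian arrows over $\cat{R}$ splits into its two constituent squares, the first automatic and the second handled by (i).
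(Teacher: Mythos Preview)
Your construction of the object map and your verification that $E_h(P)$ is MCM are essentially the paper's. The functoriality argument, however, differs and contains a gap. You claim that the extension of the syzygy map $\psi$ to $\gamma:E_{h_1}(P_1)\to E_{h_2}(P_2)$ is unique modulo $\cat{P}$, citing Lemmas~\ref{lem.res} and~\ref{lem.stable}. But those lemmas only give functoriality of approximations modulo $\cat{D}$: two such $\gamma$ differ by a map factoring through the kernel $\omega_{\iota_2}\ot_{T_2}E_2^*\in\cat{D}_{\iota_2}$, and a map from an MCM module into $\cat{D}$ need not factor through a projective. Without uniqueness you cannot verify composition, so you do not yet have a functor into $\cat{MCM}/\cat{P}$. (The naturality of $\id_E$, which you flag as the obstacle, is in fact fine.) The gap can be closed directly: the ambiguity is parametrised by $\hm{}{T^{\ot2}}{\syz{T^{\ot2}}{d{-}1}P_1}{\omega_{\iota}\ot E_2^*}$, and the vanishing $\xt{d-1}{T^{\ot2}}{P_1}{\omega_\iota}=0$ that you already invoke shows every such map extends along the inclusion $\syz{T^{\ot2}}{d{-}1}P_1\hookrightarrow F_{d-2}$, hence factors through a free module after all.

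The paper sidesteps this issue by duality. After establishing $E_h(P)\in\cat{MCM}_\iota$, it applies $(-)^\vee=\hm{}{T^{\ot2}}{-}{\omega_\iota}$ to \eqref{eq.Fxt} and obtains a second $\cat{MCM}_\iota$-approximation
\[
0\ra\hm{}{T^{\ot2}}{\syz{T^{\ot2}}{d{-}1}P}{\omega_\iota}\lra E_h(P)^{\vee}\lra\mc{I}_h\ot_T E\ra 0
\]
with $\mc{I}_h=\ker\mu$. The point is that the target $\mc{I}_h\ot_T E$ is \emph{directly} (contravariantly) functorial in $(h,P)$ --- no syzygies, no resolution choices. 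Theorem~\ref{thm.flatCMapprox} then makes $E_h(P)^\vee$ functorial in $\cat{MCM}/\cat{D}$, and since $(-)^\vee$ gives an equivalence $\cat{MCM}_\iota/\cat{P}_\iota\simeq\cat{MCM}_\iota^{\textnormal{op}}/\cat{D}_\iota^{\textnormal{op}}$, one reads off that $E_h(P)$ is covariantly functorial in $\cat{MCM}/\cat{P}$. The duality converts the ``modulo $\cat{D}$'' output of the general approximation machinery into the required ``modulo $\cat{P}$'' statement without any extra $\Ext$ computation.
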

\begin{proof}
As an extension of \(T\)-flat modules, \(E_{h}(P)\) is \(T\)-flat. Applying \(\hm{}{T^{\ot2}}{-}{\omega_{\iota}}\) to \eqref{eq.Fxt} with \(E=\xt{d}{T^{\ot2}}{P}{\omega_{\iota}}\) and \(\syz{}{d{-}1}=\syz{T^{\ot2}}{d{-}1}P\) gives an exact sequence
\begin{equation}\label{eq.Fmod4}
\begin{aligned}
0\ra\, &\hm{}{T^{\ot2}}{\syz{}{d{-}1}}{\omega_{\iota}}\ra\hm{}{T^{\ot2}}{E_{h}(P)}{\omega_{\iota}}\ra\nd{}{T^{\ot2}}{\omega_{\iota}}\ot_{T}E\xra{\partial}\\
&\xt{1}{T^{\ot2}}{\syz{}{d{-}1}}{\omega_{\iota}}\ra\xt{1}{T^{\ot2}}{E_{h}(P)}{\omega_{\iota}}\ra 0
\end{aligned}
\end{equation}
by Corollary \ref{cor.xtdef} and duality theory. In particular there is a canonical isomorphism \(\hm{}{T^{\ot2}}{\omega_{\iota}\ot_{T}E^{*}}{\omega_{\iota}}\cong\nd{}{T^{\ot2}}{\omega_{\iota}}\ot_{T}E\). We have that \(\nd{}{T^{\ot2}}{\omega_{\iota}}\) is canonically isomorphic to \(T^{\ot2}\) and \(\partial(t\ot\xi)=\mu(t)\syz{T^{\ot2}}{d{-}1}(\xi)\in \xt{1}{T^{\ot2}}{\syz{}{d{-}1}}{\omega_{\iota}}\) where \(\syz{T^{\ot2}}{d{-}1}\) is the composition of the connecting isomorphisms. So \(\partial\) is surjective and \(\xt{1}{T^{\ot2}}{E_{h}(P)}{\omega_{\iota}}=0\) by Corollary \ref{cor.xtdef}. It follows that all fibres of \(E_{h}(P)\) are MCM modules and so \(E_{h}(P)\) is in \(\cat{MCM}_{\iota}\) and \(\eqref{eq.Fxt}\) is an \(\cat{MCM}_{\iota}\)-approximation of \(\syz{T^{\ot2}}{d{-}1}P\). Let \(\mc{I}_{h}\) denote the kernel of \(\mu:T^{\ot2}\ra T\) and \((-)^{\vee}=\hm{}{T^{\ot2}}{-}{\omega_{\iota}}\). From \eqref{eq.Fmod4} we get another \(\cat{MCM}_{\iota}\)-approximation
\begin{equation}\label{eq.Fmod5}
0\ra \hm{}{T^{\ot2}}{\syz{T^{\ot 2}}{d{-}1}P}{\omega_{\iota}}\lra E_{h}(P)^{\vee}\lra\mc{I}_{h}\ot_{T}\xt{d}{T^{\ot2}}{P}{\omega_{\iota}}\ra 0\,.
\end{equation}
Dualising \eqref{eq.Fmod5} induces \eqref{eq.Fxt} since \(E_{h}(P)\cong E_{h}(P)^{\vee\vee}\) and \(\hm{}{T^{\ot2}}{\mc{I}_{h}}{\omega_{\iota}}\cong\omega_{\iota}\).
By Theorem \ref{thm.flatCMapprox} the image of \(E_{h}(P)^{\vee}\) in \(\cat{MCM}_{\iota}/\cat{D}_{\iota}\) is functorial in the \(T^{\ot2}\)-module \(\mc{I}_{h}\ot E\) which again is contravariantly functorial in \(P\). Since \((-)^{\vee}\) induces an equivalence 
\begin{equation}
\xymatrix@-0pt@C+6pt@R-4pt@H-0pt{
\vee:\cat{MCM}_{\iota}/\cat{P}_{\iota}\ar@{<->}[r]^(0.47){\simeq} & \cat{MCM}_{\iota}^{\text{op}}/\cat{D}_{\iota}^{\text{op}}:\vee
}
\end{equation}
we conclude that \(E_{h}(P)\) is functorial in \(\cat{MCM}/\cat{P}\) by our functorial choice of extension. This gives (i) and (ii).
\end{proof}
\begin{cor}\label{cor.Fmod}
For any Cohen-Macaulay map \(h:S\ra T\) of pure relative dimension \(d\geq 2\) there is a finite \(T^{\ot2}\)-module \(E_{h}=E_{h}(T)\) which is faithfully flat along \(\iota:T\ra T^{\ot2}\) with all fibres being maximal Cohen-Macaulay modules\textup{.} The association \(h\mapsto E_{h}\) defines a functor \({}^{d}\cat{CM}\ra {}^{d}\cat{MCM}/{}^{d}\cat{P}\) lifting \(\Delta:{}^{d}\cat{CM}\ra{}^{d}\cat{CM}\)\textup{.} In particular \(E_{h}\) gives \(\cat{MCM}_{\iota}\)-approximations
\begin{equation*}
0\ra\omega_{\iota}\lra  E_{h}\lra \syz{T^{\ot2}}{d{-}1}T\ra 0\quad\text{and}
\end{equation*}
\begin{equation*}
0\ra \hm{}{T^{\ot2}}{\syz{T^{\ot2}}{d{-}1}T}{\omega_{\iota}}\lra E_{h}^{\vee}\lra \mc{I}_{h}\ra 0
\end{equation*}
where \(\mc{I}_{h}\) is the kernel of the multiplication map \(T^{\ot2}\ra T\)\textup{.}
\end{cor}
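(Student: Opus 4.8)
The plan is to deduce Corollary \ref{cor.Fmod} from Proposition \ref{prop.Fmod} by restricting the functor $E$ along the unit section $s\colon{}^{d}\cat{CM}\ra{}^{d}\cat{P}$, $h\mapsto(h,T)$ with $T$ the rank one free module. First I would check that $s$ is a functor and a section of the forgetful functor ${}^{d}\cat{P}\ra{}^{d}\cat{CM}$: a morphism $(g,f)\colon h_{1}\ra h_{2}$ is sent to $f\colon T_{1}\ra T_{2}$, which is cocartesian in $\cat{P}$ because $T_{2}\ot_{T_{1}}T_{1}=T_{2}$. Composing with $E$ gives $E_{h}:=E_{h}(T)$ together with a functor $E\circ s\colon{}^{d}\cat{CM}\ra{}^{d}\cat{MCM}/{}^{d}\cat{P}$; since $E$ lifts $\Delta$ and $s$ is a section, this composite lifts $\Delta$. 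By Proposition \ref{prop.Fmod} the object $E_{h}$ lies in $\cat{MCM}_{\iota}$, hence is a finite $T^{\ot2}$-module, $T$-flat with all fibres maximal Cohen-Macaulay, and it is faithfully flat along $\iota$ since every fibre is non-zero.

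The remaining task is to put the sequences \eqref{eq.Fxt} and \eqref{eq.Fmod5} for $P=T$ into the asserted shape, for which the one substantial input is the canonical identification
\begin{equation*}
\xt{d}{T^{\ot2}}{T}{\omega_{\iota}}\cong T ,
\end{equation*}
a form of relative Gorenstein-ness of the diagonal. Since fibrewise $A$ has depth $d$ over the $2d$-dimensional ring $A\ot_{k}A$, the $T^{\ot2}$-module $T$ (via the multiplication $T^{\ot2}\ra T$) is Cohen-Macaulay of codimension $d$, so by Corollary \ref{cor.xtdef} the left hand side is $S$-flat, finite as a $T$-module and compatible with base change; it therefore suffices to identify its fibres. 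Here I would argue either by relative duality theory for B.\ Conrad's dualising modules \cite[Sec.\ 3.5--3.6]{con:00} applied to $\mathrm{pr}\circ\delta=\id_{T}$, where $\mathrm{pr}$ is the projection attached to $\iota$ and $\delta$ the closed immersion attached to $T^{\ot2}\ra T$ (accounting for the codimension-$d$ shift this gives $\xt{d}{T^{\ot2}}{T}{\omega_{\iota}}\cong\omega_{\id_{T}}=T$), or directly by local duality on each fibre, together with the canonical trace (fundamental) class of the diagonal to pin down the global trivialisation.

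Granting this identification, $\omega_{\iota}\ot_{T}\xt{d}{T^{\ot2}}{T}{\omega_{\iota}}^{*}\cong\omega_{\iota}$ and $\mc{I}_{h}\ot_{T}\xt{d}{T^{\ot2}}{T}{\omega_{\iota}}\cong\mc{I}_{h}$, so the sequences \eqref{eq.Fxt} and \eqref{eq.Fmod5} at $P=T$ specialise to precisely the two displayed short exact sequences, and both are $\cat{MCM}_{\iota}$-approximations by Proposition \ref{prop.Fmod}. I expect the identification $\xt{d}{T^{\ot2}}{T}{\omega_{\iota}}\cong T$ to be the only genuine obstacle — in particular matching the twist against $\omega_{\iota}$ itself rather than against some a priori different invertible $T$-module; once that is settled the statement is a formal unwinding of the construction in \eqref{eq.Fxt}.
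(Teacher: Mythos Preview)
Your proposal is correct and follows essentially the same route as the paper: reduce everything to Proposition~\ref{prop.Fmod} and the sequences \eqref{eq.Fxt}, \eqref{eq.Fmod5} for \(P=T\), with the one substantial point being the canonical isomorphism \(\xt{d}{T^{\ot2}}{T}{\omega_{\iota}}\cong T\). The paper establishes this isomorphism by choosing a polynomial presentation \(P=S[t_{1},\dots,t_{N}]\twoheadrightarrow T\) and running the change-of-rings spectral sequence \(\xt{q}{T^{\ot2}}{T}{\xt{p}{P\ot T}{T^{\ot2}}{\omega_{P\ot T/T}}}\Rightarrow\xt{p+q}{P\ot T}{T}{\omega_{P\ot T/T}}\), which collapses to \(\xt{N}{P\ot T}{T}{\omega_{P\ot T/T}}\cong\omega_{T/T}=T\) via \cite[3.5.6]{con:00}; this is precisely the transitivity-of-dualising-modules statement you invoke when writing \(\mathrm{pr}\circ\delta=\id_{T}\), so your first suggested method and the paper's argument are the same computation in different packaging.
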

\begin{proof}
This follows from Proposition \ref{prop.Fmod} and \eqref{eq.Fmod5} once we have proved the natural isomorphism \(\xt{d}{T^{\ot2}}{T}{\omega_{\iota}}\cong T\).
Choose a surjection of \(S\)-algebras \(P\ra T\) with \(P=S[t_{1},\dots,t_{N}]\). Recall that \(\omega_{\iota}\) can be given as \(\xt{N-d}{P\ot T}{T^{\ot2}}{\omega_{P\ot T/T}}\) where \(\omega_{P\ot T/T}=\bigwedge^{N}\varOmega_{P\ot T/T}\). There is a change of rings spectral sequence
\begin{equation}
\cE^{p,q}_{2}=\xt{q}{T^{\ot2}}{T}{\xt{p}{P\ot T}{T^{\ot2}}{\omega_{P\ot T/T}}}\,\Ra\,\xt{p+q}{P\ot T}{T}{\omega_{P\ot T/T}}
\end{equation}
which by Corollary \ref{cor.xtdef} and duality theory collapses to the canonical isomorphism 
\begin{equation}
\xt{d}{T^{\ot2}}{T}{\xt{N-d}{P\ot T}{T^{\ot2}}{\omega_{P\ot T/T}}}\cong\xt{N}{P\ot T}{T}{\omega_{P\ot T/T}}\,.
\end{equation}
By \cite[3.5.6]{con:00} \(\xt{N}{P\ot T}{T}{\omega_{P\ot T/T}}\) is canonically isomorphic to \(\omega_{T/T}=T\) as \(T^{\ot2}\)-module.
\end{proof}
We call the module \(E_{h}\) given in Corollary \ref{cor.Fmod} for the \emph{fundamental module} of the Cohen-Macaulay map \(h\).
\begin{ex}\label{ex.semicont}
Let \(k\) be an algebraically closed field and \(A\) a finite type \(k\)-algebra which is Cohen-Macaulay of pure dimension \(2\). Then the fundamental module \(E=E_{h}\) of \(h:k\ra A\) is the maximal Cohen-Macaulay approximation of \(I=\ker\{A^{\ot 2}\ra A\}\) in \(\modf_{\iota}\);
\begin{equation}\label{eq.semicont}
0\ra\omega_{h}\ot A\lra E\lra I\ra 0
\end{equation}
where \(\iota=1\ot\id:A\ra A^{\ot 2}\) and \(\omega_{h}\cong\omega_{A}\).
Let \(t\) in \(\Spec A^{\ot 2}\) be a \(k\)-point, and \(t_{i}\in \Spec A\) be the image of \(t\) by the \(i^{\text{th}}\) projection. Let \(A_{i}\) denote \(A\) localised at \(t_{i}\). Let \(\fr{m}_{i}\) be the maximal ideal in \(A_{i}\). Localising gives a local Cohen-Macaulay map \(\iota_{\fr{p}_{t}}:A_{2}\ra (A^{\ot2})_{\fr{p}_{t}}\) and a module \(E_{\fr{p}_{t}}\) in \(\cat{MCM}_{\iota_{\fr{p}_{t}}}\). Let \(E(t)\) denote base change of \(E_{\fr{p}_{t}}\) to \(k(t_{2})\). If \(t_{1}=t_{2}\) then \(I(t)\cong \fr{p}_{1}\) and \(E(t)\) equals the fundamental module \(E_{A_{1}}\) of \eqref{eq.Fmod}. If \(t_{1}=t_{2}\) is singular, then \(\rk{\omega}{E(t)}=0\) while if \(t_{1}=t_{2}\) is regular then \(E(t)\cong A_{1}^{{\oplus}2}\). If \(t_{1}\neq t_{2}\) then \(I(t)\cong A_{1}\cong E_{A_{1}}\) and \(E(t)\cong A_{1}^{\oplus 2}\). This shows that \(\gamma(I)(t)\) is \emph{not} upper semi-continuous as the \(d^{i}\)-invariants are. 

In particular, if \(A\) equals \(k[x,y,z]/(x^{n{+}1}{-}yz)\) with a \(2\)-dimensional rational double point at \(\fr{m}_{0}=(x,y,z)\), similar considerations give the following table of invariants (note that \(\nu_{1}=d^{-1}\)):
\begin{center}
\setlength{\extrarowheight}{2,5pt}
\begin{tabularx}{280pt}[t]{| X | l | c | c | c | c |}
\hline
\(\iota:A\ra A^{\ot 2}\) & \(\gamma\) & \(\nu_{1}\) & \(d^{0}\) & \(\nu_{0}\) & \(I(t)\)
\\[0.5ex]\hline\hline
\(t_{1}=t_{2}=0\) singular point       & \(0\) & \(1\) & \(4\) & \(1\) & \(\fr{m}_{0}A_{\fr{m}_{0}}\) \\ \hline
\(t_{1}=t_{2}\) non-singular point  & \(2\) & \(1\) & \(2\) & \(0\) & \(\fr{m}_{1}A_{1}\) \\ \hline
\(t_{1}\neq t_{2}\)                        & \(1\) & \(0\) & \(1\) & \(1\) & \(A_{1}\) \\ \hline
\end{tabularx}
\end{center}
\end{ex}
\section{Deformation functors and cohomology}\label{sec.def}
We extend the Cohen-Macaulay approximation over henselian local rings to deformations and obtain maps between the associated deformation functors. We also introduce the appropriate Andr{\'e}-Quillen cohomology and links the various cohomologies in a fundamental long-exact sequence.

Fix an object \(\xi=(h:S\ra T,\mc{N})\) in \(\modf\) over \(\cat{H}\). A \emph{deformation} of \(\xi\) is a cocartesian morphism \(\alpha_{1}:\xi_{1}\ra \xi\) in \(\modf\). A \emph{map of deformations} \(\alpha_{1}\ra\alpha_{2}\) is a cocartesian morphism \(\phi:\xi_{1}\ra \xi_{2}\) in \(\modf\) such that \(\alpha_{2}\phi = \alpha_{1}\). Deformations and maps of deformations are objects and arrows in the comma category \(\cat{Def}_{\xi}:=\modf_{\text{coca}}/\xi\) which is fibred in groupoids over the comma category \(\cat{H}/S\); see Lemma \ref{lem.gpoid} and the proceeding comments. Let the \emph{deformation functor} \(\df{}{\xi}:\cat{H}/S\ra \Sets\) be the functor corresponding to the associated groupoid of sets \(\ol{\cat{Def}}_{\xi}\). The comma category \(\cat{Def}_{h}:=\cat{hCM}/h\) of deformations of \(h:S\ra T\) is also fibred in groupoids over \(\cat{H}/S\) and we have an obvious factorisation \(\cat{Def}_{\xi}\ra\cat{hCM}/h\ra \cat{H}/S\) which makes \(\cat{Def}_{\xi}\) fibred in groupoids over \(\cat{hCM}/h\). To ease readability (and by abuse of notation) we put \(\df{}{(T,\mc{N})}(S_{1})=\df{}{\xi}(S_{1}{\ra} S)\) and \(\df{}{T}(S_{1})=\df{}{h}(S_{1}{\ra} S)\). We also write a \emph{deformation of \((T,\mc{N})\)} meaning a deformation of \(\xi\) and likewise in similar situations.

For each object \(\xi_{v}=(h_{v}\co S_{v}\ra T_{v},\mc{N}_{v})\) in \(\modf\) with \(h_{v}\) in \(\cat{hCM}\) we fix a minimal \(\cat{MCM}\)-approximation and a minimal \(\Df\)-hull
\begin{equation}\label{eq.ny} 
\pi_{v}\co 0\ra\mc{L}_{v}\ra \mc{M}_{v}\xra{\pi_{v}}\mc{N}_{v}\ra0\text{ and }
\iota_{v}\co 0\ra\mc{N}_{v}\xra{\iota_{v}}\mc{L}'_{v}\ra \mc{M}_{v}'\ra0
\end{equation}
which exist by Corollaries \ref{cor.locCMapprox} and \ref{cor.minapprox}. For each deformation \(\xi_{v}\ra \xi\) we choose extensions to commutative diagrams of deformations
\begin{equation}\label{eq.right2}
\xymatrix@C-0pt@R-8pt@H-30pt{
\mc{L}_{v}\ar[r]\ar@{.>}[d]_(0.4){\lambda} & \mc{M}_{v}\ar[r]^{\pi_{v}}\ar@{.>}[d]_(0.4){\mu} & \mc{N}_{v}\ar[d]_(0.4){\nu} \\   
\mc{L}\ar[r] & \mc{M} \ar[r]^{\pi} & \mc{N}
}
\qquad\text{and}\qquad
\xymatrix@C-0pt@R-8pt@H-30pt{
\mc{N}_{v}\ar[r]^{\iota_{v}}\ar[d]_(0.4){\nu} & \mc{L}_{v}'\ar[r]\ar@{.>}[d]_(0.4){\lambda'} & \mc{M}_{v}'\ar@{.>}[d]_(0.4){\mu'} \\   
\mc{N}\ar[r]^{\iota} & \mc{L}' \ar[r] & \mc{M}'
}
\end{equation}
as follows: By Corollary \ref{cor.minapprox} a base change of \(\pi_{v}\) by \(S_{v}\ra S\) gives a minimal \(\cat{MCM}\)-approximation \(\mc{M}_{v}\ot_{S_{v}}S\ra\mc{N}_{v}\ot_{S_{v}}S\xra{\simeq} \mc{N}\). By minimality it is isomorphic to \(\pi\). Choose an isomorphism. Let \(\mu\) be the composition \(\mc{M}_{v}\ra \mc{M}_{v}\ot_{S_{v}}S\xra{\simeq}\mc{M}\). It is cocartesian. Do similarly for the \(\Df\)-hull. Let these choices be fixed.
\begin{defn}\label{defn.defmap}
There are four maps 
\begin{equation*}
\sigma_{X}\co \df{}{(h,\mc{N})}\lra\df{}{(h,X)}\text{ of functors }\cat{H}/S\lra\Sets
\end{equation*}
where \(X\) can be \(\mc{M},\mc{L},\mc{L}'\) and \(\mc{M}'\) given by \([(h_{v}\ra h, \nu)]\mapsto [(h_{v}\ra h, x)]\) for \(x\) equal to \(\mu,\lambda,\lambda'\) and \(\mu'\) in \eqref{eq.right2} respectively.
\end{defn}
The following lemma implies that these maps are well defined and independent of choices.
\begin{lem}\label{lem.defCMapprox2}
Given two deformations 
\begin{equation*}
((f_{j},g_{j}),\nu_{j})\co (h_{v_{j}}\co S_{v_{j}}\ra T_{v_{j}},\mc{N}_{v_{j}})\ra(h_{{j}}\co S_{{j}}\ra T_{{j}},\mc{N}_{{j}}),\quad j=1,2,
\end{equation*}
in \(\modf\) over \(\cat{hCM}\)\textup{.} Consider the minimal \(\cat{MCM}\)-approximations \(\pi_{v_{j}}\) and \(\pi_{j}\)\textup{(}respectively the \(\Df\)-hulls \(\iota_{v_{j}}\) and \(\iota_{j}\)\textup{)} defined in \eqref{eq.ny} and the corresponding maps of short exact sequences \(\pi_{v_{j}}\ra\pi_{j}\) \textup{(}respectively \(\iota_{v_{j}}\ra \iota_{j}\)\textup{)} which extends \(\nu_{j}\) defined in \eqref{eq.right2}. Given 
\begin{itemize}
\item a map \((f,g)\co h_{1}\ra h_{2}\) and an \(f\)-linear map \(\alpha\co \mc{N}_{1}\ra\mc{N}_{2}\)\textup{,} 
\item maps of short exact sequences \(\pi_{1}\ra\pi_{2}\) and \(\iota_{1}\ra\iota_{2}\) which extends \(\alpha\)\textup{,}
\item a map \((\tilde{f},\tilde{g})\co h_{v_{1}}\ra h_{v_{2}}\) lifting \((f,g)\)\textup{,} and
\item an \(\tilde{f}\)-linear map \(\tilde{\alpha}\co \mc{N}_{v_{1}}\ra\mc{N}_{v_{2}}\) which lifts \(\alpha\)\textup{.}
\end{itemize}
In particular the following two diagrams of solid arrows are commutative\textup{:}
\begin{equation*}\label{eq.C}
\xymatrix@C-20pt@R-8pt@H-30pt{
& \mc{L}_{v_{2}}\ar[rr]\ar[dd]^(0.3){\lambda_{2}}|!{[dl];[dr]}\hole && \mc{M}_{v_{2}}\ar[rr]^(0.40){\pi_{v_{2}}}\ar[dd]^(0.3){\mu_{2}}|!{[dl];[dr]}\hole
&& \mc{N}_{v_{2}}\ar[dd]^(0.3){\nu_{2}} \\ 
\mc{L}_{v_{1}}\ar@{.>}[ur]\ar[rr]^(0.2){}\ar[dd]^(0.3){\lambda_{1}} && \mc{M}_{v_{1}}\ar@{.>}[ur]^(0.4){\gamma}\ar[rr]^(0.4){\pi_{v_{1}}}\ar[dd]^(0.3){\mu_{1}} 
&& \mc{N}_{v_{1}}\ar[ur]^(0.45){\tilde\alpha}\ar[dd]^(0.3){\nu_{1}} &  \\  
& \mc{L}_{2}\ar[rr]|!{[ur];[dr]}\hole && \mc{M}_{2} \ar[rr]^(0.35){\pi_{2}}|!{[ur];[dr]}\hole && \mc{N}_{2} \\
\mc{L}_{1}\ar[ur]\ar[rr] && \mc{M}_{1}\ar[ur]^(0.45){\beta} \ar[rr]^(0.45){\pi_{1}} && \mc{N}_{1}\ar[ur]_(0.47){\alpha} & 
}
\quad
\xymatrix@C-20pt@R-10pt@H-30pt{
& \mc{N}_{v_{2}}\ar[rr]^(0.40){\iota_{v_{2}}}\ar[dd]^(0.3){\nu_{2}}|!{[dl];[dr]}\hole && \mc{L}'_{v_{2}}\ar[rr]\ar[dd]^(0.3){\lambda'_{2}}|!{[dl];[dr]}\hole 
&& \mc{M}'_{v_{2}}\ar[dd]^(0.3){\mu'_{2}} \\ 
\mc{N}_{v_{1}}\ar[ur]^(0.45){\tilde\alpha}\ar[rr]^(0.35){\iota_{v_{1}}}\ar[dd]^(0.3){\nu_{1}} && \mc{L}'_{v_{1}}\ar@{.>}[ur]^(0.4){\gamma'}\ar[rr]^(0.2){}\ar[dd]^(0.3){\lambda'_{1}} 
&& \mc{M}'_{v_{1}}\ar@{.>}[ur]\ar[dd]^(0.3){\mu'_{1}} &  \\  
& \mc{N}_{2}\ar[rr]^(0.35){\iota_{2}}|!{[ur];[dr]}\hole && \mc{L}'_{2} \ar[rr]^(0.2){}|!{[ur];[dr]}\hole && \mc{M}'_{2} \\
\mc{N}_{1}\ar[ur]^(0.53){\alpha}\ar[rr]^(0.45){\iota_{1}} && \mc{L}'_{1}\ar[ur]_(0.57){\beta'} \ar[rr] && \mc{M}'_{1}\ar[ur] & 
}
\end{equation*}
Then there exist \(f\)-linear maps \(\gamma\co \mc{M}_{v_{1}}\ra\mc{M}_{v_{2}}\) and \(\gamma'\co \mc{L}'_{v_{1}}\ra\mc{L}'_{v_{2}}\) such that the induced diagrams are commutative\textup{.} If \(\tilde\alpha\) is cocartesian\textup{,} so are \(\gamma\) and \(\gamma'\)\textup{.}
\end{lem}
\begin{proof}
Consider the \(\cat{MCM}\)-approximation case. By applying base changes to the front diagram, we can reduce the problem to the case \(h_{v_{1}}\ra h_{1}\) equals \(h_{v_{2}}\ra h_{2}\). Then, by Corollary \ref{cor.locCMapprox}, there is a lifting \(\gamma_{1}\co \mc{M}_{v_{1}}\ra\mc{M}_{v_{2}}\) of \(\tilde\alpha\). We would like to adjust \(\gamma_{1}\) so that it lifts \(\beta\) too. We have that \(\mu_{2}\gamma_{1}-\beta\mu_{1}\) factors through \(\mc{L}_{2}\) by a map \(\tau\co \mc{M}_{v_{1}}\ra\mc{L}_{2}\). It induces a unique map \(\bar\tau\co \mc{M}_{1}\ra\mc{L}_{2}\) since \(\mu_{1}\) is cocartesian. If \(\mc{D}_{*}\thr\mc{L}_{v_{2}}\) is a finite \(\cat{D}\)-resolution, then base change gives a finite \(\cat{D}\)-resolution \(\mc{D}_{*}\ot_{S_{v_{2}}}S_{2}\thr \mc{L}_{2}\) and \(\bar\tau\) factors through a \(\bar\sigma\co \mc{M}_{1}\ra\mc{D}_{0}\ot_{S_{v_{2}}}S_{2}\) by Corollary \ref{cor.locCMapprox}. By Corollary \ref{cor.xtdef} there is a \(\sigma\co \mc{M}_{v_{1}}\ra\mc{D}_{0}\) lifting \(\bar\sigma\). Subtracting the induced map \(\mc{M}_{v_{1}}\ra\mc{M}_{v_{2}}\) from \(\gamma_{1}\) gives our desired \(\gamma\). If \(\tilde\alpha\) is an isomorphism so is \(\gamma\) by minimality of the approximations \(\pi_{v_{j}}\).
The argument for the \(\Df\)-case is similar.
\end{proof}
\begin{rem}\label{rem.defmap}
There are maps of fibred categories inducing the maps \(\sigma_{X}\) in Definition \ref{defn.defmap}. Two maps \(\alpha,\beta\co  (h_{1},\mc{N}_{1})\ra (h_{2},\mc{N}_{2})\) in \(\cat{Def}_{(h,\mc{N})}\) are stably equivalent if \(h_{1}=h_{2}\) and \(\alpha{-}\beta\) factors through an object in \(\cat{D}\). Let \(\ul{\cat{Def}}_{(h,\mc{N})}\) denote the resulting quotient category which is fibred over \(\cat{hCM}/h\) and over \(\cat{H}/S\). Then Lemma \ref{lem.defCMapprox2} implies that there are well defined maps of categories fibred in groupoids \(\bs{\sigma}_{X}\co \ul{\cat{Def}}_{(h,\mc{N})}\ra\ul{\cat{Def}}_{(h,X)}\) for \(X\) equal to \(\mc{M}\), \(\mc{L}\), \(\mc{L}'\) and \(\mc{M}'\). The associated map of functors is \(\sigma_{X}\). 
Stably isomorphic modules will in general have different deformation functors. E.g.\ let \(N=A\oplus\omega_{A}\). If \(A\) is not Gorenstein, then one can have \(\xt{1}{A}{N}{N}\neq 0\). But in the stable category \(N\) is isomorphic to \(A\) which is infinitesimally rigid. 
\end{rem}
We have the following reformulation. To \((h\co S\ra T,\mc{N})\) in \(\modf\) consider \(\vG=T{\oplus}\mc{N}\) as a graded \(S\)-algebra with \(T\) in degree \(0\) and \(\mc{N}\) in degree \(1\). A deformation of graded algebras \({\vG}_{v}\ra{\vG}\) over \(S_{v}\ra S\) in \(\cat{H}/S\) is equivalent to a deformation \((T_{v},\mc{N}_{v})\) of \((T,\mc{N})\). More generally, given a homogeneous morphism of \(\BB{Z}\)-graded rings \(S\ra T\) and a graded \(T\)-module \(M\), there are \emph{Andr{\'e}-Quillen cohomology groups of graded algebras} \(\gH{0}^{i}(S,T,M)=\cH^{i}{\hm{{\text{gr}}}{T}{L^{\text{gr}}_{T/S}}{M}}\). Here \(L^{\text{gr}}_{T/S}\) is the graded cotangent complex defined as \(\Omega_{P/S}\ot_{P}T\) where \(P=P_{S}(T)\) is a graded simplicial degree-wise free \(S\)-algebra resolution of \(T\) and \(\Omega_{P/S}\) denotes the K{\"a}hler differentials; see \cite[IV]{ill:71} for more details (in a more general situation). See also \cite{kle:79}. 
\begin{defn}\label{def.grobs}
Given graded ring homomorphisms \(h\co S\ra T\) and \(p\co S_{v}\ra S\). Assume \(p\) is surjective. A \emph{lifting} of \(h\) (`of \(T\)') along \(p\) (`to \(S_{v}\)') is a commutative diagram of graded ring homomorphisms
\begin{equation*}\label{eq.lift}
\xymatrix@C-0pt@R-12pt@H-30pt{
T & T_{v}\ar[l]_{q} \\
S\ar[u]^{h} & S_{v}\ar[l]_{p}\ar[u]_{h_{v}}
}
\quad
\textnormal{with \(q\ot S\co T_{v}\ot_{S_{v}}S\cong T\) and \(\tor{S_{v}}{1}{T_{v}}{S}=0\).}
\end{equation*}
Two liftings \(T_{v}\) and \(T'_{v}\) of \(T\) to \(S_{v}\) are equivalent if there is a graded \(S_{v}\)-algebra isomorphism \(T_{v}\cong T'_{v}\) commuting with \(q\) and \(q'\).
\end{defn}
There is an obstruction theory for liftings of graded algebras in terms of graded Andr{\'e}-Quillen cohomology groups. 
\begin{prop}[\cite{ill:71,kle:79}]\label{prop.grobs}
Given graded ring homomorphisms \(S\ra T\) and \(p\co S_{v}\ra S\) with \(p\) surjective and \(I^{2}=0\) for \(I=\ker p\)\textup{.} 
\begin{enumerate}
\item[(i)] There exists an element \(\ob(p,T)\in\gH{0}^{2}(S,T,T\ot_{S} I)\) which is natural in \(p\) such that \(\ob(p,T)=0\) if and only if there exists a lifting of \(T\) to \(S_{v}\)\textup{.}
\item[(ii)] If \(\ob(p,T)=0\) then the set of equivalence classes of liftings of \(T\) to \(S_{v}\) is a torsor for \(\gH{0}^{1}(S,T,T\ot_{S} I)\) which is natural in \(p\)\textup{.}
\end{enumerate}
\end{prop}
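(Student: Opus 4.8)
The plan is to prove this as the graded instance of Illusie's deformation theory of algebras along a square-zero extension (\cite[Ch.~III]{ill:71}; cf.\ \cite{kle:79}), carried out in the category of \(\BB{Z}\)-graded \(S\)-algebras and graded modules. Write \(M=T\ot I\); since \(I^{2}=0\) the ideal \(I\) is an \(S=S_{1}/I\)-module, so \(M\) is a graded \(T\)-module. First I would fix a graded simplicial \(S\)-algebra resolution \(\varepsilon\colon P=P_{S}(T)\ra T\) with each \(P_{n}\) a polynomial \(S\)-algebra on homogeneous indeterminates, so that \(L^{\text{gr}}_{T/S}\) is represented by \(\Omega_{P/S}\ot_{P}T\), and then lift it to a graded simplicial \(S_{1}\)-algebra \(\tilde{P}\) by taking the polynomial \(S_{1}\)-algebra on the same indeterminates in each simplicial degree and lifting the face and degeneracy maps (possible because a polynomial algebra is free on its indeterminates). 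This yields a degree-wise \(S_{1}\)-flat \(\tilde{P}\) with a fixed isomorphism \(\tilde{P}\ot_{S_{1}}S\cong P\).

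Next I would construct the obstruction class. Since each \(\tilde{P}_{n}\) is \(S_{1}\)-polynomial, the augmentation lifts degree-wise to graded \(S_{1}\)-algebra maps \(\tilde{\varepsilon}_{n}\) into a square-zero extension of \(T\) by \(M\) reducing to \(\varepsilon_{n}\), but these need not commute with the simplicial operators; the discrepancy over the two faces of each \(2\)-simplex is a graded \(S\)-derivation \(P_{\bullet}\to M\), i.e.\ an element of \(\hm{{\text{gr}}}{T}{\Omega_{P/S}\ot_{P}T}{M}\), and one checks it is a cocycle. Its class
\begin{equation*}
\ob(p,T)\in\gH{0}^{2}(S,T,M)=\cH^{2}\,\hm{{\text{gr}}}{T}{L^{\text{gr}}_{T/S}}{M}
\end{equation*}
should then be shown independent of the choices (of \(P\), of \(\tilde{P}\), of the \(\tilde{\varepsilon}_{n}\)) and natural in \(p\). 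I expect this bookkeeping to be the main obstacle: it is verbatim the argument of \cite[Ch.~III]{ill:71} once \(\Hom\) is replaced everywhere by graded \(\Hom\) and one notes that polynomial \(S_{1}\)-algebras on homogeneous indeterminates are the cofibrant objects among graded \(S_{1}\)-algebras, but it must be checked carefully (together with the parallel well-definedness of the torsor class below). Alternatively one may simply invoke Illusie's theorem in the topos of graded \(S\)-modules, which delivers (i) and (ii) directly.

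Finally I would deduce the two assertions. If \(\ob(p,T)=0\) the cocycle is a coboundary, so after correcting the \(\tilde{\varepsilon}_{n}\) one obtains a simplicial lift of \(\varepsilon\) and hence an \(S_{1}\)-algebra \(T_{1}\) — a suitable quotient of \(\tilde{P}_{0}\) — which is flat over \(S_{1}\), reduces to \(T\) via \(q\ot S\), and satisfies \(\tor{S_{1}}{1}{T_{1}}{S}=0\), i.e.\ a lifting in the sense of \eqref{eq.lift}; conversely a lifting yields a simplicial lift of \(\varepsilon\), forcing \(\ob(p,T)=0\). This gives (i). For (ii), assuming a lifting exists, fix a simplicial lift of \(\varepsilon\) for each; the difference of the lifts attached to two liftings \(T_{1},T'_{1}\) is again a cocycle whose class in \(\gH{0}^{1}(S,T,M)\) vanishes precisely when the two are equivalent, and twisting a lift by a graded derivation realises every class, so the set of equivalence classes is a torsor under \(\gH{0}^{1}(S,T,T\ot I)\), naturally in \(p\). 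Noetherianity of \(S_{1},S,T\) and finiteness of \(I\) keep every object inside \(\cat{H}\).
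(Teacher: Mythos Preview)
The paper does not supply a proof of this proposition: it is stated with the citation \cite{ill:71,kle:79} and used as a black box. Your sketch is the standard Illusie construction and is correct in outline; in particular, your remark that one may simply invoke Illusie's result in the topos of graded modules is exactly how the paper treats it. One small point: your final sentence about Noetherianity and the category \(\cat{H}\) is misplaced, since the proposition as stated is for arbitrary graded ring homomorphisms and makes no finiteness or henselian hypotheses; those enter only later when the proposition is applied.
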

The element \(\ob(p,T)\) is called the \emph{obstruction class} of \((p,T)\). If the rings and modules are concentrated in degree \(0\) this equals the ungraded case and the cohomology groups equals the ungraded Andr{\'e}-Quillen cohomology \(\cH^{*}(S,T,T\ot_{S}I)\).
\begin{defn}\label{def.modulobs}
Given a lifting diagram of ungraded ring homomorphisms as in Definition \ref{def.grobs} and a \(T\)-module \(N\). Then a \emph{lifting} of \(N\) to \(T_{v}\) is a \(T_{v}\)-module \(N_{v}\) with \(\tor{S_{v}}{1}{N_{v}}{S}=0\) and a map \(N_{v}\ra N\) inducing an isomorphism \(N_{v}\ot S\cong N\). Two liftings \(N_{v}\) and \(N_{v}'\) of \(N\) to \(T_{v}\) are \emph{equivalent} if there is an isomorphism of \(T_{v}\)-modules \(N_{v}\cong N_{v}'\) commuting with the maps to \(N\). 
\end{defn}
There is also an obstruction theory for liftings of modules in terms of \(\Ext\) groups.
\begin{prop}[{\cite[IV 3.1.5]{ill:71}}]\label{prop.obsmodule}
Given \textup{(}ungraded\textup{)} ring homomorphisms as in \textup{Definition \ref{def.grobs}} with \(I^{2}=0\) and a \(T\)-module \(N\)\textup{.}
\begin{enumerate}
\item[(i)] There exists an element \(\ob(q,N)\in\xt{2}{T}{N}{N\ot_{S}I}\) which is natural in \(q\) such that \(\ob(q,N)=0\) if and only if there exists a lifting of \(N\) to \(T_{v}\)\textup{.}
\item[(ii)] If \(\ob(q,N)=0\) then the set of equivalence classes of liftings of \(N\) to \(T_{v}\) is a torsor for \(\xt{1}{T}{N}{N\ot_{S}I}\) which is natural in \(q\)\textup{.} 
\end{enumerate}
\end{prop}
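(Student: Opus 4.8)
The plan is to prove this by the standard obstruction calculus for square-zero ring extensions, which is the elementary unwinding of the cotangent-complex formalism. First I would pass to a cleaner setting. Since $I^{2}=0$, the ideal $I$ is naturally a module over $S=S_{1}/I$, and the hypothesis $\tor{S_{1}}{1}{T_{1}}{S}=0$ in \eqref{eq.lift} shows that $J:=\ker q$ satisfies $J^{2}=0$ and $J\cong T_{1}\ot_{S_{1}}I\cong T\ot_{S}I$; hence for any $T$-flat module $P$ one has $P\ot_{T}J\cong P\ot_{S}I$. Unwinding Definition \ref{def.modulobs}, a lifting $N_{1}$ of $N$ to $T_{1}$ is precisely a $T_{1}$-module fitting, compatibly with the ring extension $0\ra J\ra T_{1}\ra T\ra 0$, into a short exact sequence $0\ra N\ot_{S}I\ra N_{1}\ra N\ra 0$ of $T_{1}$-modules, and two liftings are equivalent iff isomorphic as such extensions. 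So the statement becomes the deformation theory of the $T$-module $N$ along the square-zero extension $T_{1}\ra T$ with coefficient module $N\ot_{S}I$.

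Next I would build the obstruction. Choose a free resolution $F_{*}\ra N$ over $T$ (minimal, since we work over a local ring), lift each $F_{i}$ to a free $T_{1}$-module $\widetilde{F}_{i}$, and lift the differentials to $T_{1}$-linear maps $\widetilde{d}_{i}:\widetilde{F}_{i}\ra\widetilde{F}_{i-1}$; both steps are possible because the $\widetilde{F}_{i}$ are projective and their reductions are surjective. The ``curvature'' $c_{i}:=\widetilde{d}_{i-1}\widetilde{d}_{i}$ takes values in $J\widetilde{F}_{i-2}$ and is annihilated by $J$ (as $J^{2}=0$), so it descends to a $T$-linear map $\bar{c}_{i}:F_{i}\ra J\ot_{T}F_{i-2}\cong F_{i-2}\ot_{S}I$. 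Associativity $\widetilde{d}_{i-2}c_{i}=c_{i-1}\widetilde{d}_{i}$ reduces to the statement that $(\bar{c}_{i})$ is a $2$-cocycle in $\hm{}{T}{F_{*}}{F_{*}\ot_{S}I}$; since $F_{*}$ resolves $N$, this represents a class $\obp{}{}{q,N}\in\xt{2}{T}{N}{N\ot_{S}I}$. Changing the lifts $\widetilde{d}_{i}$ by maps $\widetilde{F}_{i}\ra J\widetilde{F}_{i-1}$ alters $(\bar{c}_{i})$ by a coboundary, changing $F_{*}$ alters the cocycle by a homotopy, and functoriality of the whole construction in the diagram \eqref{eq.lift} yields naturality in $q$.

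For part (i): if $\obp{}{}{q,N}=0$, adjust the $\widetilde{d}_{i}$ by a $1$-cochain so that all $c_{i}=0$, making $(\widetilde{F}_{*},\widetilde{d})$ an honest complex of free $T_{1}$-modules, and set $N_{1}:=\coker\widetilde{d}_{1}$. Because $\widetilde{F}_{*}\ot_{S_{1}}S=F_{*}$ is exact and the $\widetilde{F}_{i}$ are $S_{1}$-flat, a local-criterion-of-flatness argument (using that $J$ is nilpotent and the modules are finite over the local ring $T_{1}$) shows $\widetilde{F}_{*}$ is exact, resolves $N_{1}$, and $\tor{S_{1}}{1}{N_{1}}{S}=0$; so $N_{1}$ is a lifting. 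Conversely, any lifting $N_{1}$ admits a free $T_{1}$-resolution lifting $F_{*}\ra N$, whose differentials have vanishing curvature, so $\obp{}{}{q,N}=0$. For part (ii), given two liftings I would realize both by free $T_{1}$-resolutions with the same underlying modules $\widetilde{F}_{*}$; their differentials differ by a $1$-cochain which is a cocycle (both are complexes), giving a class in $\xt{1}{T}{N}{N\ot_{S}I}$; equivalence of the liftings corresponds to cohomologous cochains, and every class is attained by adjusting the differentials, exhibiting the set of equivalence classes as a torsor under $\xt{1}{T}{N}{N\ot_{S}I}$, naturally in $q$.

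The main obstacle is the bookkeeping that makes $(\bar{c}_{i})$ a \emph{well-defined} element of $\xt{2}{T}{N}{N\ot_{S}I}$: because $F_{*}\ot_{S}I$ need not be a resolution of $N\ot_{S}I$ unless $I$ is $T$-flat, one must verify independence of the chosen resolution and of all the chosen module- and differential-lifts, and that the higher $\Ext$ groups do not interfere. The cleanest way to handle this uniformly is to replace the free resolution by a simplicial (cotangent-complex) resolution as in \cite[IV]{ill:71}, where $\xt{i}{T}{N}{N\ot_{S}I}$ appears directly as hypercohomology of $\mathrm{RHom}$ and the $\Tor$-vanishing hypotheses in \eqref{eq.lift} and Definition \ref{def.modulobs} are exactly what collapses the relevant derived tensor products; the argument above is then the explicit translation of \cite[IV 3.1.5]{ill:71}.
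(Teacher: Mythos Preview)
The paper does not give its own proof of this proposition; it simply records the statement and cites \cite[IV 3.1.5]{ill:71}. Your argument is the standard free-resolution/curvature construction and is correct; it is precisely the elementary unwinding of Illusie's cotangent-complex proof that you allude to at the end.

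Two small remarks. First, the parenthetical ``(minimal, since we work over a local ring)'' is unwarranted: the proposition is stated for arbitrary ring homomorphisms as in \eqref{eq.lift}, not local ones, and in any case minimality plays no role in your argument. Second, your concern in the final paragraph is slightly misplaced. You do not need $F_{*}\ot_{S}I$ to resolve $N\ot_{S}I$: the curvature $(\bar{c}_{i})$ is a closed degree-$2$ element of the double complex $\hm{}{T}{F_{*}}{F_{*}\ot_{S}I}$, and composing with the augmentation $F_{0}\ot_{S}I\ra N\ot_{S}I$ yields a $2$-cocycle in $\hm{}{T}{F_{*}}{N\ot_{S}I}$, whose cohomology is $\xt{*}{T}{N}{N\ot_{S}I}$ on the nose. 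The independence-of-choices verification then goes through directly in this single complex without any flatness hypothesis on $I$ or $N$. The cotangent-complex packaging is indeed cleaner, but the elementary route does not actually hit the obstacle you describe.
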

The element \(\ob(q,N)\) is called the \emph{obstruction class} of \((q,N)\). The following long exact sequence connects these three cohomologies.
\begin{prop}\label{prop.lang}
Suppose \(T\) is an \textup{(}ungraded\textup{)} \(S\)-algebra and \(N\) is a \(T\)-module\textup{.} Let \({\vG}=T{\oplus} N\) be the graded \(S\)-algebra with \(T\) in degree \(0\) and \(N\) in degree \(1\)\textup{.} Let \(J\) be a graded \({\vG}\)-module with graded components \(J=J_{0}{\oplus}J_{1}\) of degree \(0\) and \(1\)\textup{.} Then there is a natural long exact sequence\textup{:}
\begin{align*}
0\ra\, & \hm{}{T}{N}{J_{1}}\lra \gDer{0}_{S}({\vG},J)\lra \Der_{S}(T,J_{0})\xra{\,\,\partial\,\,}\dots \\
\ra\, &\xt{n}{T}{N}{J_{1}}\lra \gH{0}^{n}(S,{\vG},J)\lra \cH^{n}(S,T,J_{0})\xra{\,\,\partial\,\,} \xt{n+1}{T}{N}{J_{1}}\ra\dots
\end{align*}
\end{prop}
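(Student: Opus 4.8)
The plan is to derive the long exact sequence from a natural distinguished triangle relating the graded cotangent complex $L^{\text{gr}}_{\vG/S}$ (base-changed to $T$, in the appropriate graded-module sense) to the ungraded cotangent complex $L_{T/S}$ and a complex computing $\Ext^*_T(N,-)$. The key point is that since $\vG = T \oplus N$ with $N$ in degree $1$ and $N^2 = 0$ in $\vG$, a graded degree-wise free simplicial resolution $P_\bullet = P_S(\vG)$ splits compatibly with the grading, and the degree-$0$ part recovers a simplicial resolution of $T$ over $S$ while the degree-$1$ part of $\Omega_{P_\bullet/S}\otimes_{P_\bullet}\vG$, after applying $\hm{\text{gr}}{\vG}{-}{J}$ and isolating the relevant graded pieces, yields a complex quasi-isomorphic to $\mathrm{R}\Hom_T(N, J_1)$ (up to the relevant degree shift). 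Concretely I would first unwind the definition: $\gH{0}^i(S,\vG,J) = \cH^i\hm{\text{gr}}{\vG}{L^{\text{gr}}_{\vG/S}}{J}$, and since $J$ is concentrated in degrees $0$ and $1$, only the degree-$0$ and degree-$1$ graded components of $L^{\text{gr}}_{\vG/S}\otimes^{\mathbf L}_{\vG}(\text{shifts})$ contribute.

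The core computation I would carry out is the identification of these two graded pieces. The degree-$0$ component of the graded cotangent complex $L^{\text{gr}}_{\vG/S}\otimes_{\vG}T$ is $L_{T/S}$, the ordinary cotangent complex of $T$ over $S$; this is essentially because $\vG \to \vG/N = T$ is a square-zero extension and the degree-$0$ truncation of a graded free resolution of $\vG$ is a free resolution of $T$. The degree-$1$ component is, by an analogous argument tracking the grading, the complex $L_{T/S}\otimes_T N \oplus (\text{a term computing the }T\text{-module structure of }N)$; more precisely it fits into the standard transitivity/base-change picture so that $\hm{\text{gr}}{\vG}{(\text{degree }1\text{ part})}{J}$ computes $\mathrm{R}\Hom_T(N,J_1)$ with a degree shift by one (the shift coming from the fact that $N$ sits in degree $1$ of $\vG$ and contributes to the algebra structure as a "module direction"). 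Then the filtration of $\hm{\text{gr}}{\vG}{L^{\text{gr}}_{\vG/S}}{J}$ by graded degree gives a short exact sequence of complexes
\begin{equation*}
0 \lra \hm{\text{gr, deg }1}{\vG}{L^{\text{gr}}_{\vG/S}}{J} \lra \hm{\text{gr}}{\vG}{L^{\text{gr}}_{\vG/S}}{J} \lra \hm{\text{gr, deg }0}{\vG}{L^{\text{gr}}_{\vG/S}}{J} \lra 0,
\end{equation*}
whose associated long exact cohomology sequence, after the two identifications above, is exactly the claimed sequence (with $\gDer{0}_S(\vG,J)$ and $\cH^*(S,T,J_0)$ being the $i=0$ and general pieces of the outer terms, and $\Hom_T(N,J_1)$, $\Ext^{\geq 1}_T(N,J_1)$ the shifted inner pieces; note $\gDer{0}_S(\vG,J) = \gH{0}^0(S,\vG,J)$).

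The main obstacle I expect is pinning down the degree-$1$ identification precisely — i.e.\ showing that the degree-$1$ graded component of the $\vG$-linear $\Hom$ out of the graded cotangent complex computes $\mathrm{R}\Hom_T(N,J_1)$ with exactly the right shift and the right first map $\partial: \Der_S(T,J_0) \to \Ext^1_T(N,J_1)$. The cleanest route is probably to use Illusie's base-change and transitivity spectral sequences for the tower $S \to T \to \vG$ (viewing $T \to \vG$ as the split square-zero extension by $N$): the relative cotangent complex $L^{\text{gr}}_{\vG/T}$ is, in the graded sense, $N[1]$ (shifted into degree $1$), so the transitivity triangle $L^{\text{gr}}_{T/S}\otimes_T\vG \to L^{\text{gr}}_{\vG/S} \to L^{\text{gr}}_{\vG/T}$ degree-$1$-components give precisely $L_{T/S}\otimes_T N \to (\cdots) \to N$, and applying $\hm{\text{gr}}{\vG}{-}{J}$ and taking cohomology produces the maps $\Ext^i_T(N,J_1) \to \gH{0}^{i}(S,\vG,J) \to \cH^i(S,T,J_0) \xra{\partial} \Ext^{i+1}_T(N,J_1)$. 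I would verify the connecting map $\partial$ on $\cH^0 = \Der_S(T,J_0)$ agrees with the Atiyah-type class (a derivation $D$ goes to the class of $N$ twisted by $D$) by a direct cocycle computation, and check exactness at the two ends by hand; everything in between is formal from the long exact sequence of the short exact sequence of complexes. The verification of naturality in $J$ and $p$ is routine and I would leave it to the reader, following the stated policy of the paper.
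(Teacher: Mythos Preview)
Your ``cleanest route'' via the transitivity triangle for \(S\to T\to\vG\) is exactly the paper's proof, and it is the entire proof: apply \(\hm{\textnormal{gr}}{\vG}{-}{J}\) to the triangle \(L^{\textnormal{gr}}_{T/S}\otimes_{T}\vG\to L^{\textnormal{gr}}_{\vG/S}\to L^{\textnormal{gr}}_{\vG/T}\) and identify the two outer terms. For the left term, \(L_{T/S}\) sits in grading degree \(0\), so adjunction gives \(\hm{\textnormal{gr}}{\vG}{L_{T/S}\otimes_{T}\vG}{J}=\hm{}{T}{L_{T/S}}{J_{0}}\). For the right term, the paper notes that the standard simplicial resolution \(P_{T}(\vG)\) has degree-\(0\) part \(T\) and degree-\(1\) part the standard free \(T\)-resolution \(F_{T}(N)\); since \(J\) lives in grading degrees \(0\) and \(1\) one gets \(\hm{\textnormal{gr}}{\vG}{L^{\textnormal{gr}}_{\vG/T}}{J}=\hm{}{T}{F_{T}(N)}{J_{1}}\). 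That is all.

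Your first approach, filtering \(\hm{\textnormal{gr}}{\vG}{L^{\textnormal{gr}}_{\vG/S}}{J}\) via the short exact sequence \(0\to J_{1}\to J\to J_{0}\to 0\) of graded \(\vG\)-modules, does produce a long exact sequence, but then you need the identifications \(\gH{0}^{*}(S,\vG,J_{1})\cong\xt{*}{T}{N}{J_{1}}\) and \(\gH{0}^{*}(S,\vG,J_{0})\cong\cH^{*}(S,T,J_{0})\). These are most cleanly proved by applying the very same transitivity triangle to \(J_{1}\) and \(J_{0}\) separately, so this route is a detour rather than an alternative. Your direct attempt to read off the degree-\(0\) and degree-\(1\) pieces of \(L^{\textnormal{gr}}_{\vG/S}\) is correct in degree \(0\) (with the standard resolution, \((P_{S}(\vG))_{0}=P_{S}(T)\)), but the degree-\(1\) description you give is vague precisely because the \(\vG\)-linearity constraint couples it to degree \(0\); untangling that is again the transitivity computation.

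One small correction: \(L^{\textnormal{gr}}_{\vG/T}\) is not literally \(N\) placed in grading degree \(1\); it has components in all grading degrees \(\geq 1\). The reason only the degree-\(1\) part matters is that \(J\) has nothing in degree \(\geq 2\), and that degree-\(1\) part is \(F_{T}(N)\), not just \(N\), which is what makes the \(\Ext\) groups appear.
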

\begin{proof}
To the graded ring homomorphisms \(S\ra T\ra {\vG}\) there is a distinguished triangle of transitivity
\begin{equation}\label{eq.triangle}
L^{\text{gr}}_{\vG/T/S}\co \,\,L^{\text{gr}}_{T/S}\ot_{T}{\vG}\lra L^{\text{gr}}_{{\vG}/S}\lra L^{\text{gr}}_{{\vG}/T}\lra L^{\text{gr}}_{T/S}\ot_{T}{\vG}[1]
\end{equation}
in the graded derived category of \({\vG}\); see \cite[IV 2.3.4]{ill:71}. The (standard) simplicial resolution \(P_{T}({\vG})\) equals \(T\) in degree \(0\), the (standard) \(T\)-free resolution \(F_{T}(N)\) of the \(T\)-module \(N\) in degree \(1\), and higher degree terms; see \cite[IV 1.3.2.1]{ill:71}. It follows that \(\hm{{\text{gr}}}{{\vG}}{L^{\text{gr}}_{{\vG}/T}}{J}=\hm{}{T}{F_{T}(N)}{J_{1}}\). Since \(L^{\text{gr}}_{T/S}=L_{T/S}\) is consentrated in degree \(0\), \(\hm{{\text{gr}}}{{\vG}}{L^{\text{gr}}_{T/S}\ot_{T}{\vG}}{J}= \hm{}{T}{L_{T/S}}{J_{0}}\).
\end{proof}
\begin{lem}\label{lem.cohmap}
Let \(S\ra T\) be a finite type Cohen-Macaulay map and let \(\mc{N}\) be an \(S\)-flat finite \(T\)-module\textup{.} Let \(0\ra\mc{L}\ra\mc{M}\xra{\pi}\mc{N}\ra0\) and \(0\ra\mc{N}\xra{\iota}\mc{L}'\ra\mc{M}'\ra0\) be an \(\cat{MCM}\)-approximation and a \(\Df\)-hull of \(\mc{N}\) over \(h\)\textup{.} Let \(X_{i}\) denote \(\mc{N}\)\textup{,} \(\mc{M}\) and \(\mc{L}'\) for \(i=0,1,2\) respectively\textup{,} and put \(\vG_{i}=T{\oplus}X_{i}\)\textup{.} Let \(I\) be any \(S\)-module\textup{.}
Then there are natural maps of short exact sequences of complexes \textup{(}see \eqref{eq.triangle}\textup{)}
\begin{equation*}
\hm{\textnormal{gr}}{\vG_{0}}{L^{\textnormal{gr}}_{\vG_{0}/T/S}}{\vG_{0}\ot I}\xra{\pi^{*}}\hm{\textnormal{gr}}{\vG_{1}}{L^{\textnormal{gr}}_{\vG_{1}/T/S}}{\vG_{0}\ot I}\xla{\pi_{*}}\hm{\textnormal{gr}}{\vG_{1}}{L^{\textnormal{gr}}_{\vG_{1}/T/S}}{\vG_{1}\ot I}
\end{equation*}
and
\begin{equation*}
\hm{\textnormal{gr}}{\vG_{0}}{L^{\textnormal{gr}}_{\vG_{0}/T/S}}{\vG_{0}\ot I}\xra{\iota_{*}}\hm{\textnormal{gr}}{\vG_{0}}{L^{\textnormal{gr}}_{\vG_{0}/T/S}}{\vG_{2}\ot I}\xla{\iota^{*}}\hm{\textnormal{gr}}{\vG_{2}}{L^{\textnormal{gr}}_{\vG_{2}/T/S}}{\vG_{2}\ot I}\textup{.}
\end{equation*}
The induced maps of graded Andr{\'e}-Quillen cohomology 
\begin{align*}
\gH{0}^{n}(\pi_{*}) & \co\gH{0}^{n}(S,\vG_{1},\vG_{1}\ot I)\lra\gH{0}^{n}(S,\vG_{1},\vG_{0}\ot I)\quad\text{and} \\
\gH{0}^{n}(\iota^{*}) & \co\gH{0}^{n}(S,\vG_{2},\vG_{2}\ot I)\lra\gH{0}^{n}(S,\vG_{0},\vG_{2}\ot I)
\end{align*}
are isomorphisms for \(n>0\) and surjections for \(n=0\)\textup{.}
\end{lem}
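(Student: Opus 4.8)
The claim is that certain maps of graded André–Quillen cohomology induced by the MCM approximation $\pi$ and the $\Df$-hull $\iota$ are isomorphisms in positive degrees. The key observation is that $\pi$ and $\iota$ are built from short exact sequences $\mc{L}\to\mc{M}\xra{\pi}\mc{N}$ and $\mc{N}\xra{\iota}\mc{L}'\to\mc{M}'$ with $\mc{L},\mc{L}'$ in $\hat{\cat{D}}_h$ and $\mc{M},\mc{M}'$ in $\cat{MCM}_h$, and that all the relevant cohomology groups $\xt{q}{T}{-}{-\ot I}$ appearing in the long-exact sequence of Proposition \ref{prop.lang} become controlled by Corollary \ref{cor.xtdef} once we know enough vanishing on the fibres. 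The plan is to reduce each statement about $\gH{0}^{n}(\pi_*)$ and $\gH{0}^{n}(\iota^*)$ to the long exact sequence of Proposition \ref{prop.lang} and compare the $\cH^{n}(S,T,-)$-part (which is the \emph{same} for all three $\vG_i$, since they share the degree-$0$ algebra $T$) with the $\xt{n}{T}{X_i}{-}$-part.

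\textbf{Key steps.} First I would record, from the proof of Proposition \ref{prop.lang}, that the source complex $\hm{\textnormal{gr}}{\vG_i}{L^{\textnormal{gr}}_{\vG_i/T/S}}{\vG_i\ot I}$ fits in a short exact sequence whose two outer terms are $\hm{}{T}{F_T(X_i)}{\vG_i\ot I}$ (computing $\xt{*}{T}{X_i}{(\vG_i\ot I)_1}=\xt{*}{T}{X_i}{X_i\ot I}$) and $\hm{}{T}{L_{T/S}}{(\vG_i\ot I)_0}$ (computing $\cH^*(S,T,T\ot I)$). The maps $\pi^*,\pi_*$ (resp.\ $\iota_*,\iota^*$) are compatible with these short exact sequences, acting as the identity on the $\cH^*(S,T,-)$-factor and as $\pi$-induced (resp.\ $\iota$-induced) maps on the $\xt{*}{T}{-}{-}$-factor. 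So by the five-lemma applied to the long exact sequences, it suffices to prove that the induced maps
$$
\xt{n}{T}{\mc{N}}{\mc{N}\ot I}\xra{\pi^*}\xt{n}{T}{\mc{M}}{\mc{N}\ot I}\xla{\pi_*}\xt{n}{T}{\mc{M}}{\mc{M}\ot I}
$$
and dually for $\iota$ are isomorphisms for $n>0$ and surjective-compatible for $n=0$. Second, for these $\Ext$-comparisons I would use the long exact sequences of $\xt{*}{T}{-}{\mc{N}\ot I}$ and $\xt{*}{T}{\mc{M}}{-\ot I}$ attached to $0\to\mc{L}\to\mc{M}\to\mc{N}\to 0$: the obstruction terms are $\xt{*}{T}{\mc{L}}{\mc{N}\ot I}$ and $\xt{*}{T}{\mc{M}}{\mc{L}\ot I}$, both of which vanish in all positive degrees because $\mc{L}\in\hat{\cat{D}}_h$ and $\mc{M}\in\cat{MCM}_h=\cat{MCM}_h^{\perp\perp}$ (more precisely $\hat{\cat{D}}_h\sbeq\cat{MCM}_h^{\perp}$ and $\cat{MCM}_h$ is left-orthogonal to $\hat{\cat{D}}_h$ in the appropriate sense), invoking Corollary \ref{cor.xtdef} to pass from fibrewise vanishing $\xt{i}{T_s}{\cat{MCM}_{T_s}}{\hat{\cat{D}}_{T_s}}=0$, $i>0$, to vanishing over $T$. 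Third, I would handle the $n=0$ case by noting that the relevant $\Hom$'s surject because the same vanishing kills the obstruction to lifting a map.

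\textbf{Main obstacle.} The delicate point is the bookkeeping that the comparison maps $\pi^*,\pi_*,\iota_*,\iota^*$ really do split as ``identity on $\cH^*(S,T,-)$'' $\oplus$ ``$\pi$- or $\iota$-induced on $\xt{*}{T}{-}{-}$'' in a way compatible with the connecting morphisms of the transitivity triangle \eqref{eq.triangle}; this requires unwinding the construction of $L^{\textnormal{gr}}_{\vG_i/T/S}$ from the standard simplicial resolutions $P_T(\vG_i)$ — which in degree $1$ is the standard $T$-free resolution $F_T(X_i)$ — and checking functoriality of this construction in the $T$-module $X_i$. Once this identification is in place, the rest is a diagram chase plus the two $\Ext$-vanishing inputs from Corollary \ref{cor.xtdef}. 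I would therefore spend most of the writeup on the degree-$1$ identification and the $\Ext$-comparison, and treat the five-lemma step as routine. In fact, a cleaner route avoiding the explicit simplicial bookkeeping is: apply the functor $\hm{\textnormal{gr}}{-}{-}$ considerations only through the already-established long exact sequence of Proposition \ref{prop.lang}, and observe that $\pi$ (resp.\ $\iota$) induces a morphism of these long exact sequences which is the identity on every $\cH^{n}(S,T,J_0)$ term (as $T$ and $J_0$ are unchanged), so that the claim about $\gH{0}^{n}$ is \emph{equivalent}, by the five-lemma, to the claim about the $\xt{n}{T}{X_i}{X_i\ot I}$ terms, which is exactly the $\Ext$-comparison above. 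I would present the proof in that order: cite Proposition \ref{prop.lang}, reduce to $\Ext$, prove the $\Ext$ isomorphism via Corollary \ref{cor.xtdef} and the orthogonality relations of Theorem \ref{thm.flatCMapprox}, and conclude.
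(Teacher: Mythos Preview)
Your overall strategy---reduce via the long exact sequence of Proposition~\ref{prop.lang}, note that the \(\cH^{n}(S,T,T\ot I)\) terms are unchanged, and apply the five-lemma to reduce to an \(\Ext\)-comparison---is exactly the paper's approach. The paper's proof is a terse version of what you outline in your final paragraph.

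However, you try to prove more than the lemma asserts, and in doing so you make a false claim. The lemma only says that \(\gH{0}^{n}(\pi_{*})\) and \(\gH{0}^{n}(\iota^{*})\) are isomorphisms; it says nothing about \(\gH{0}^{n}(\pi^{*})\) or \(\gH{0}^{n}(\iota_{*})\). For \(\gH{0}^{n}(\pi_{*})\) the relevant \(\Ext\)-map is \(\xt{n}{T}{\mc{M}}{\mc{M}\ot I}\to\xt{n}{T}{\mc{M}}{\mc{N}\ot I}\), and the single obstruction is \(\xt{n}{T}{\mc{M}}{\mc{L}\ot I}\), which vanishes for \(n>0\) by Theorem~\ref{thm.flatCMapprox} and Corollary~\ref{cor.xtdef} (since \(\mc{M}\in\cat{MCM}_{h}\) and \(\mc{L}\in\Df_{h}\subseteq\cat{MCM}_{h}^{\perp}\)). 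That is precisely what the paper uses. Your other claimed vanishing \(\xt{n}{T}{\mc{L}}{\mc{N}\ot I}=0\) is \emph{not true} in general: there is no orthogonality relation with \(\mc{L}\in\hat{\cat{D}}_{h}\) in the first variable and an arbitrary \(\mc{N}\in\modf_{h}\) in the second. Fortunately this vanishing is only needed for the map \(\pi^{*}\) on \(\Ext\), which the lemma does not claim to be an isomorphism. Drop the \(\pi^{*}\) (and \(\iota_{*}\)) part of your argument and keep only the \(\xt{n}{T}{\mc{M}}{\mc{L}\ot I}=0\) (resp.\ \(\xt{n}{T}{\mc{M}'}{\mc{L}'\ot I}=0\)) input; then your proof is correct and coincides with the paper's.
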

\begin{proof}
There is a natural map \(L^{\text{gr}}_{\vG_{1}/T/S}\ot_{\vG_{1}}\vG_{0}\ra L^{\text{gr}}_{\vG_{0}/T/S}\) of short exact sequences of complexes (cf.\ \cite[II 2.1.1.6]{ill:71}) induced by the graded \(T\)-algebra map \(\vG_{1}\ra\vG_{0}\). It induces \(\pi^{*}\). Covariance along \(\vG_{1}\ot I\ra\vG_{0}\ot I\) gives \(\pi_{*}\). In each (cohomological) degree the rightmost terms are naturally identified with \(\hm{}{T}{L_{T/S}}{T\ot I}\) as in the proof of Proposition \ref{prop.lang}. By Theorem \ref{thm.flatCMapprox} and Corollary \ref{cor.xtdef} one has \(\xt{n}{T}{\mc{M}}{\mc{L}\ot I}=0\) for \(n>0\) and the \(\gH{0}^{n}(\pi_{*})\)-statement follows. The other case is similar.
\end{proof}
By Lemma \ref{lem.cohmap} (and Theorem \ref{thm.flatCMapprox}) we get induced natural maps for \(n>0\) 
\begin{equation}\label{eq.sigma}
\sigma^{n}_{j}(I)\co \gH{0}^{n}(S,\vG_{0},\vG_{0}\ot_{S} I)\lra\gH{0}^{n}(S,\vG_{j},\vG_{j}\ot_{S} I)\,\, \text{for}\,\, j=1,\,2 \quad\textnormal{and}
\end{equation}
\begin{equation}\label{eq.tau}
\tau^{n}_{j}(I)\co \xt{n}{T}{X_{0}}{X_{0}\ot_{S}I}\lra \xt{n}{T}{X_{j}}{X_{j}\ot_{S}I}\,\, \text{for}\,\, j=1,\,2.
\end{equation}
\begin{ex}\label{ex.lang} By elementary diagram chase Lemma \ref{lem.cohmap} gives the following:
\begin{itemize}
\item[(i)] If \(\pi^{*}\co \xt{n}{T}{\mc{N}}{\mc{N}\ot I}\ra\xt{n}{T}{\mc{M}}{\mc{N}\ot I}\) is an isomorphism for \(n=1\) and injective for \(n=2\) then \(\sigma^{1}_{1}(I)\) is an isomorphism and \(\sigma^{2}_{1}(I)\) is injective.
\item[(ii)] If \(\iota_{*}\co \xt{n}{T}{\mc{N}}{\mc{N}\ot I}\ra\xt{n}{T}{\mc{N}}{\mc{L}'\ot I}\) is an isomorphism for \(n=1\) and injective for \(n=2\) then \(\sigma^{1}_{2}(I)\) is an isomorphism and \(\sigma^{2}_{2}(I)\) is injective.
\end{itemize}
\end{ex}
\section{Maps of deformation functors induced by\\ Cohen-Macaulay approximation}
In order to use Artin's Approximation Theorem \cite{art:69} as extended by D.\ Popescu \cite{pop:86,pop:90} we fix an excellent ring \(\vL\) (see \cite[7.8.2]{EGAIV2}). We consider the category of henselian local \(\vL\)-algebras in \(\cat{H}\), denoted \(\QH\). Fibred categories \(\cat{hCM}\) and \(\modf\) over \(\QH\) and \(\cat{hCM}/h \) and \(\cat{Def}_{\xi}\) over \({\QH}/S\) are defined essentially as in Section \ref{sec.def}. Our previous constructions and results are valid in this context as well. In particular deformation functors \(\Def_{(T,\mc{N})}\co \QH/S\ra\Sets\) are defined and the \(\cat{MCM}\)-approximation and \(\Df\)-hull induce maps of deformation functors as in Definition \ref{defn.defmap}
\begin{defn}\label{def.smooth}
Let \(\QA/k\) denote the subcategory of artin rings in \(\QH/k\).
Let \(F\) and \(G\) be set-valued functors on \(\QH/k\) (or \(\QA/k\)) with \(\#F(k)=1=\#G(k)\). A map \(\phi\co F\ra G\)
is \emph{smooth} (formally smooth) if the natural map of sets \(f_{\phi}\co F(S_{v})\ra F(S)\times_{G(S)}G(S_{v})\) is surjective for all surjections \(\pi\co S_{v}\ra S\) in \(\QH/k\) (respectively \(\QA/k\)).
An element \(\nu\in F(R)\) is \emph{versal} if the induced map \(\hm{}{\QH/k}{R}{-}\ra F\) is smooth and \(R\) is algebraic as \(\vL\)-algebra. If the map is bijective then \(\nu\) is universal. An element \(\nu\in F(R)\) (or a formal element, i.e.\ a tower \(\{\nu_{n}\}\in \limproj F(R/\fr{m}_{R}^{n+1})\)) is \emph{formally versal} if the induced map \(\hm{}{\QH/k}{R}{-}\ra F\) of functors restricted to \(\QA/k\) is formally smooth.
See \cite{art:74}. 
\end{defn}
\begin{thm}\label{thm.defgrade}
Let \(k\) be a field, \(A\) a Cohen-Macaulay local algebraic \(k\)-algebra and \(N\) a finite \(A\)-module\textup{.} Let \(0\ra N\ra L'\ra M'\ra0\) be the minimal \(\hat{\cat{D}}_{A}\)-hull and \(0\ra L\ra M\ra N\ra0\) the minimal \(\cat{MCM}_{A}\)-approximation of \(N\)\textup{.} Consider the map 
\begin{equation*}
\sigma_{L'}\co \df{}{(A,N)}\lra\df{}{(A,L')}\text{ of functors }\QH/k\lra\Sets
\end{equation*}
as in \textup{Definition \ref{defn.defmap}.}
\begin{enumerate}
\item[(i)] If\, \(\hm{}{A}{N}{M'}=0\) then \(\sigma_{L'}\) is injective\textup{.}

\item[(ii)] If\, \(\xt{1}{A}{N}{M'}=0\) then \(\sigma_{L'}\) is formally smooth\textup{.}

\item[(iii)] If\, \(\xt{1}{A}{N}{M'}=0\) and \(\df{}{(A,N)}\) has a versal element then \(\sigma_{L'}\) is smooth\textup{.}
\end{enumerate}
The analogous statements hold for \(\sigma_{L}\co \df{}{(A,N)}\ra\df{}{(A,L)}\) with \(\xt{1}{A}{N}{M}=0\) in \textup{(i)} and \(\xt{2}{A}{N}{M}=0\) in \textup{(ii)} and \textup{(iii).}
\end{thm}
\begin{ex}\label{ex.defgrade}
Note that \(\grade N\geq 1\) implies condition (i) and \(\grade N\geq 2\) implies both condition (i) and (ii).
\end{ex}
\begin{proof}
(i) Given \(S\) in \(\QH/k\) and deformations \(({}^{i\!}h\co S\ra {}^{i\!}T,{}^{i\!}\mc{N})\) of \((A,N)\) to \(S\) for \(i=1,2\). Assume that the images \(({}^{i\!}h,{}^{i\!}\mc{L}')\) under \(\sigma_{L'}\) are isomorphic to \((h\co S\ra T,\mc{L}')\). Pullback of all these modules along the isomorphisms of \(h\) with \({}^{i\!}h\) induce deformations over \(h\). We show that the \({}^{i\!}\mc{N}\) are isomorphic as deformations over \(h\) which implies that \(\sigma_{L'}\) is injective. Let \(S_{n}=S/\fr{m}_{S}^{n+1}\), \(T_{n}=T\ot S_{n}\) etc.. We construct a tower of isomorphisms \(\{\phi_{n}\co {}^{1\!}\mc{N}_{n}\cong{}^{2\!}\mc{N}_{n}\}\) and conclude by Lemma \ref{lem.Aapprox} that the deformations are isomorphic. The case \(n=0\) is trivial. Given \(\phi_{n}\) and use it to identify the \({}^{i\!}\mc{N}_{n}\) and denote them by \(\mc{N}_{n}\). Let \(I=\ker\{S_{n+1}\ra S_{n}\}\). By Proposition \ref{prop.obsmodule} there exists an element \(\theta\) in \(\xt{1}{T_{n}}{\mc{N}_{n}}{\mc{N}_{n}\ot I}\) giving the `difference' of the two deformations of \(\mc{N}_{n}\). But \(\mc{N}_{n}\ot I\cong N\ot I\) and by the edge map isomorphism of \eqref{eq.ss} we get \(\xt{i}{T_{n}}{\mc{N}_{n}}{\mc{N}_{n}\ot I}\cong\xt{i}{A}{N}{N}\ot I\) for all \(i\). If \(i>0\) then \(\xt{i}{A}{L'}{L'}\cong\xt{i}{A}{N}{L'}\) and \(\xt{1}{A}{N}{N}\ra\xt{i}{A}{N}{L'}\) is injective by assumption. 
The obtained injective map \(p\co \xt{1}{A}{N}{N}\ot I\ra \xt{1}{A}{L'}{L'}\ot I\) induces a map of the torsor actions in Proposition \ref{prop.obsmodule} on the liftings of \(\mc{N}_{n}\) and of \(\mc{L}'_{n}\) to \(T_{n+1}\). Since the \(\Df_{h_{n+1}}\)-hulls of the \({}^{i\!}\mc{N}_{n+1}\) are isomorphic as deformations, \(p\) maps \(\theta\) to \(0\) and so \(\theta=0\) and by Proposition \ref{prop.obsmodule} the \({}^{i\!}\mc{N}_{n+1}\) are isomorphic by an isomorphism \(\phi_{n+1}\) compatible with \(\phi_{n}\).

(ii) Let \(S\ra \bar{S}\) in \(\QA/k\) be surjective with kernel \(I\), \(\xi=(h\co S\ra T,\mc{L}')\) a deformation of \((A,L')\) to \(S\) and let \(\bar{\xi}=(\bar{h}\co \bar{S}\ra\bar{T},\bar{\mc{L}}')\) denote the base change of \(\xi\) to \(\bar{S}\). Suppose there is a deformation \((h^{(1)}\co \bar{S}\ra T^{(1)},\bar{\mc{N}})\) of \((A,N)\) which \(\sigma_{L'}\) maps to \(\bar{\xi}\). As above we can assume that \(h^{(1)}=\bar{h}\). By induction on the length of \(S\) we can assume that \(I\cdot\fr{m}_{S}=0\). We find that \(\ob(T\ra \bar{T},\bar{\mc{N}})\) in Proposition \ref{prop.obsmodule} maps to \(\ob(T\ra \bar{T},\bar{\mc{L}}')\) under \(\xt{2}{A}{N}{N}\ot I\ra \xt{2}{A}{L'}{L'}\ot I\) which by the assumption is injective. Since \(\mc{L}'\) lifts \(\bar{\mc{L}'}\) to \(T\) we have \(\ob(T\ra \bar{T},\bar{\mc{L}}')=0\). By Proposition \ref{prop.obsmodule} there exists a lifting \({}^{1\!}\mc{N}\) of \(\bar{\mc{N}}\) to \(T\). If \(\sigma_{L'}({}^{1\!}\mc{N})={}^{1\!}\mc{L}'\) the difference of \({}^{1\!}\mc{L}'\) and \(\mc{L}'\) gives by Proposition \ref{prop.obsmodule} a \(\theta\in \xt{1}{A}{L'}{L'}\ot I\). By assumption \(\xt{1}{A}{N}{N}\ot I\) maps surjectively to \(\xt{1}{A}{L'}{L'}\ot I\) and a lifting of \(\theta\) perturbs \({}^{1\!}\mc{N}\) to a lifting \(\mc{N}\) of \(\bar{\mc{N}}\) with \(\sigma_{L'}(\mc{N})=\mc{L}'\).

(iii) Let \(S\ra \bar{S}\) in \(\QH/k\) be surjective with kernel \(J\), \(\xi=(h\co S\ra T,\mc{L}')\) a deformation of \((A,L')\) to \(S\) and let \(\bar{\xi}=(\bar{h}\co \bar{S}\ra\bar{T},\bar{\mc{L}}')\) denote the base change of \(\xi\) to \(\bar{S}\). Suppose there is a deformation \((h^{(1)}\co \bar{S}\ra T^{(1)},\bar{\mc{N}})\) of \((A,N)\) which \(\sigma_{L'}\) maps to \(\bar{\xi}\). Again we can assume that \(h^{(1)}=\bar{h}\). We will find a deformation \(\mc{N}\) lifting \(\bar{\mc{N}}\) such that \(\sigma_{L'}(h,\mc{N})=(h,\mc{L}')\).

Any \(S\) in \(\QH/k\) is a direct limit of a filtering system of algebraic \(\vL\)-algebras in \(\QH/k\). Since \(\df{}{(A,L')}\) is locally of finite presentation (\(A\) is algebraic and \(L'\) has finite presentation) it is sufficient to prove the lifting property for \(S\) algebraic. Since \(\vL\) is excellent, so is \(S\) by \cite[7.8.3]{EGAIV2} and \cite[18.7.6]{EGAIV4}.

Put \(S_{n}=S/\fr{m}_{S}^{n}J\), \(\mc{L}'_{n}=\mc{L}'\ot_{S}S_{n}\), \(T_{n}=T\ot_{S}S_{n}\) and so on. We proceed by induction on \(n\) to construct a tower \(\{\mc{N}_{n}\}\) of deformations of \(\bar{\mc{N}}\) inducing \(\{\mc{L}'_{n}\}\). Each step is done as in (ii).
If \(({}^{v}T,{}^{v\!}\mc{N})\in\df{}{(A,N)}(R)\) is a versal element, there is a corresponding tower of maps \(\{f_{n}\co R\ra S_{n}\}\) such that \(({}^{v}T,{}^{v\!}\mc{N})\) induces \(\{(T_{n},\mc{N}_{n})\}\). We obtain the algebra map \(f\co R\ra {}^{*\!}S:=\limproj S_{n}\) which induces a deformation \(({}^{*}T,{}^{*\!}\mc{N})\) of \((\bar{T},\bar{\mc{N}})\) to \({}^{*\!}S\). Since \(\limproj {}^{*}T_{n}\cong\limproj T_{n}\) completion in the maximal ideals gives an isomorphism \({}^{*}\hat{T}\cong\hat{T}\). By \cite[2.6]{art:69}, \cite[1.3]{pop:86} and \cite{pop:90} there is an isomorphism \({}^{*}T\cong T\hot_{S}{}^{*\!}S\) whereby \({}^{*}T\) is identified with \(T\hot_{S}{}^{*\!}S\). The tower of isomorphisms \(\{\sigma_{L'}({}^{*\!}\mc{N}_{n})\cong\mc{L}'_{n}\}\) implies by Lemma \ref{lem.Aapprox} that there is an isomorphism of deformations \(\sigma_{L'}({}^{*\!}\mc{N})\cong {}^{*\!}\mc{L}'\) above \(\sigma_{L'}(\bar{\mc{N}})\cong \bar{\mc{L}}'\) where \({}^{*\!}\mc{L}'={}^{*}T\ot_{T}\mc{L}'\). 

To apply Artin's Approximation Theorem we define a functor of \(S\)-algebras \(F\co {}_{S}\cat{H}\ra \Sets\) as follows. If \(\tilde{S}\) is in \({}_{S}\cat{H}\) let \(\tilde{T}\) denote \(T\hot_{S}\tilde{S}\) and let \(\tilde{\mc{L}}'\) denote \(\tilde{T}\ot_{T}\mc{L}'\). Then \(F(\tilde{S})\) is defined as equivalence classes of pairs of maps of finite \(\tilde{T}\)-modules \(\tilde{\xi}=(\tilde{\nu}\co \tilde{\mc{N}}\ra \bar{\mc{N}},\tilde{\iota}\co \tilde{\mc{N}}\ra\tilde{\mc{L}}')\) such that \(\tilde{\mc{N}}\) is \(\tilde{S}\)-flat. A map \(\tilde{S}\ra\tilde{S}'\) gives a map of pairs by base change. Two pairs, \({}^{1}\tilde{\xi}\) and \({}^{2}\tilde{\xi}\), are equivalent if there is an isomorphism \({}^{1\!}\tilde{\mc{N}}\cong{}^{2\!}\tilde{\mc{N}}\) commuting with the \({}^{j}\tilde{\iota}\) and the \({}^{j}\tilde{\nu}\). We show that \(F\) is locally of finite presentation. Suppose \(\tilde{S}=\liminj {}^{i\!}S\) for a filtered injective system of algebras in \({}_{S}\cat{H}\). Put \({}^{i}T=T\hot{}^{i\!}S\). Then \(\liminj {}^{i}T\cong \tilde{T}\) by \cite[7.8.3]{EGAIV2} and \cite[18.6.14]{EGAIV4}.
Given \(\tilde{\xi}\in F(\tilde{S})\) as above. Since \(\tilde{\mc{N}}\) has finite presentation and since the maps \(\tilde{\nu}\) and \(\tilde{\iota}\) can be represented on the finite presentations, there is a finite \({}^{i}T\)-module \({}^{i\!}\mc{N}\) and \({}^{i}T\)-linear maps \({}^{i}\nu\co {}^{i\!}\mc{N}\ra \bar{\mc{N}}\) and \({}^{i}\iota\co {}^{i\!}\mc{N}\ra{}^{i\!}\mc{L}'={}^{i}T\ot_{T}\mc{L}'\) inducing \(\tilde{\xi}\) by base change. We may also assume that \({}^{i\!}\mc{N}\) is \({}^{i\!}S\)-flat. Hence \(\liminj F({}^{i\!}S)\ra F(\tilde{S})\) is surjective, and injectivity is similar. Let \({}^{*}\xi\) denote the element in \(F({}^{*\!}S)\) given by \({}^{*\!}\mc{N}\ra \bar{\mc{N}}\) and the \(\hat{\cat{D}}\)-hull \({}^{*\!}\mc{N}\ra{}^{*\!}\mc{L}'\). By Artin's Approximation Theorem \cite[1.12]{art:69}, \cite[1.3]{pop:86}, \cite{pop:90} there exists a \(\xi=(\nu\co \mc{N}\ra \bar{\mc{N}},\iota\co \mc{N}\ra \mc{L}')\) in \(F(S)\) with \(\xi_{1}={}^{*}\xi_{1}\). In particular \(\mc{N}\ra\bar{\mc{N}}\) is a deformation. Since \(\iota_{0}\co \mc{N}_{0}\ra\mc{L}'_{0}\) equals the injective \(\bar{\mc{N}}\ra\bar{\mc{L}}'\) \cite[5.1-2]{ogu/ber:72} implies that \(\iota\) is injective and \(\coker \iota\) is \(S\)-flat. It follows that \(\iota\) is the \(\hat{\cat{D}}\)-hull of \(\mc{N}\), i.e.\ \(\sigma_{L'}(\mc{N})=\mc{L}'\). The \(L\)-case is analogous. 
\end{proof}
\begin{thm}\label{thm.defgrade2}
With general assumptions as in \textup{Theorem \ref{thm.defgrade}} consider the map 
\begin{equation*}
\sigma_{M}\co \df{}{(A,N)}\lra\df{}{(A,M)}\text{ of functors }\QH/k\lra\Sets
\end{equation*}
as in \textup{Definition \ref{defn.defmap}.}
\begin{enumerate}
\item[(i)] If\, \(\hm{}{A}{L}{N}=0\) then \(\sigma_{M}\) is injective\textup{.}

\item[(ii)] If\, \(\xt{1}{A}{L}{N}=0\) then \(\sigma_{M}\) is formally smooth\textup{.}

\item[(iii)] If\, \(\xt{1}{A}{L}{N}=0\) and \(\df{}{(A,N)}\) has a versal element then \(\sigma_{M}\) is smooth\textup{.}
\end{enumerate}
The analogous statements hold for \(\sigma_{M'}\co \df{}{(A,N)}\ra\df{}{(A,M')}\) with \(\xt{1}{A}{L'}{N}=0\) in \textup{(i)} and \(\xt{2}{A}{L'}{N}=0\) in \textup{(ii)} and \textup{(iii).}
\end{thm}
\begin{proof}
The proof is analogous to the proof of Theorem \ref{thm.defgrade}.
\end{proof}
Fix a field \(k\) and a Cohen-Macaulay local algebraic \(k\)-algebra \(A\). Let \(\vL\ra {\To}\) be obtained by henselisation at a maximal ideal of a finite type Cohen-Macaulay map \(\tilde\vL\ra {\To}\) where \(\tilde\vL\) is assumed to be an excellent ring. In particular \(\vL\) and \({\To}\) are excellent rings (\cite[7.8.3]{EGAIV2}, \cite[18.7.6]{EGAIV4}). Assume \({\To}/\fr{m}_{{\To}}\cong k\) and \({\To}\ot_{\vL}k\cong A\). There is a section \(\QH\ra\cat{hCM}\) defined by \(S\mapsto {\To}\hot S\). Let \(\cat{T^{o}}\) denote the resulting fibred subcategory of \(\cat{hCM}\) and \(\modf_{\vert\cat{T^{o}}}\) the restriction of the fibred category \(\modf\) to \(\cat{T^{o}}\).
Let \(\cat{Def}{}^{{\To}}_{\mc{N}}\) denote the category of deformations in \(\modf_{\vert\cat{T^{o}}}\) of an object \(\xi=(S\ra {\To}\hot S,\mc{N})\) with morphisms being cocartesian maps of deformations. The category \(\cat{Def}{}^{{\To}}_{\mc{N}}\) is fibred over \(\QH/S\). Let \(\df{{\To}}{\mc{N}}\co \QH/S\ra\Sets\) be the associated deformation functor. A special case is given by \(\vL=k\) and \({\To}=A\).
\begin{cor}\label{cor.defgrade}
With general assumptions as in \textup{Theorem \ref{thm.defgrade}} consider the map \(\sigma_{L'}\co \df{{\To}}{N}\ra\df{{\To}}{L'}\) of functors \(\QH/k\ra\Sets\)\textup{.}
\begin{enumerate}
\item[(i)] If\, \(\hm{}{A}{N}{M'}=0\) then \(\sigma_{L'}\) is injective\textup{.}

\item[(ii)] If\, \(\xt{1}{A}{N}{M'}=0\) then \(\sigma_{L'}\) is formally smooth\textup{.}

\item[(iii)] If\, \(\xt{1}{A}{N}{M'}=0\) and \(\df{A}{N}\) has a versal element then \(\sigma_{L'}\) is smooth\textup{.}
\end{enumerate}
The analogous statements hold for \(\sigma_{L}\co \df{{\To}}{N}\ra\df{{\To}}{L}\) with \(\xt{1}{A}{N}{M}=0\) in \textup{(i)} and \(\xt{2}{A}{N}{M}=0\) in \textup{(ii)} and \textup{(iii).}
\end{cor}
\begin{proof}
This is not a formal consequence of Theorem \ref{thm.defgrade}, but the proof is similar.
\end{proof}
\begin{cor}\label{cor.defgrade2}
With general assumptions as in \textup{Theorem \ref{thm.defgrade}} consider the map \(\sigma_{M}\co \df{{\To}}{N}\ra\df{{\To}}{M}\) of functors \(\QH/k\ra\Sets\)\textup{.}
\begin{enumerate}
\item[(i)] If\, \(\hm{}{A}{L}{N}=0\) then \(\sigma_{M}\) is injective\textup{.}

\item[(ii)] If\, \(\xt{1}{A}{L}{N}=0\) then \(\sigma_{M}\) is formally smooth\textup{.}

\item[(iii)] If\, \(\xt{1}{A}{L}{N}=0\) and \(\df{A}{N}\) has a versal element then \(\sigma_{M}\) is smooth\textup{.}
\end{enumerate}
The analogous statements hold for \(\sigma_{M'}\co \df{{\To}}{N}\ra\df{{\To}}{M'}\) with \(\xt{1}{A}{L'}{N}=0\) in \textup{(i)} and \(\xt{2}{A}{L'}{N}=0\) in \textup{(ii)} and \textup{(iii).}
\end{cor}
\begin{prop}\label{prop.defequiv}
With general assumptions as in \textup{Theorem \ref{thm.defgrade}:}
\begin{enumerate}
\item[(i)] If \(Q'=\hm{}{A}{\omega_{A}}{L'}\) and \(Q=\hm{}{A}{\omega_{A}}{L}\) then \(Q'\) and \(Q\) have finite projective dimension and \(\df{}{(A,L')}\cong\df{}{(A,Q')}\) and \(\df{}{(A,L)}\cong\df{}{(A,Q)}\)\textup{.}
\item[(ii)] There are natural maps 
\begin{equation*}
s\co \df{}{(A,M)}\lra\df{}{(A,M')}\quad \text{and}\quad t\co \df{}{(A,L')}\lra\df{}{(A,L)}
\end{equation*}
commuting with the maps 
\(\sigma_{X}\co \df{}{(A,N)}\ra\df{}{(A,X)}\) for \(X\) equal to \(M\) and \(M'\)\textup{,} and to \(L'\) and \(L\)\textup{,} respectively\textup{.} If \(A\) is a Gorenstein ring\textup{,} then \(s\) is an isomorphism\textup{.}
\end{enumerate}
The analogous statements also hold for the deformation functors \(\df{{\To}}{X}\)\textup{.}
\end{prop}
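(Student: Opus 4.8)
For part (i) I would invoke Lemma~\ref{lem.proj} (its local variant Lemma~\ref{lem.fri}): the functor $\hm{}{A}{\omega_{A}}{-}$ gives an exact equivalence $\hat{\cat{D}}_{A}\cap\modf \simeq \Pf$, so $Q'=\hm{}{A}{\omega_{A}}{L'}$ and $Q=\hm{}{A}{\omega_{A}}{L}$ have finite projective dimension. The point is then that this equivalence is compatible with deformations: a deformation of $(A,L')$ over $S$ is a pair $(T,\mc{L}')$ with $\mc{L}'$ flat over $S$ and closed fibre $L'$, and applying $\hm{}{T}{\omega_{h}}{-}$ (which by Corollary~\ref{cor.xtdef} is exact and commutes with base change to the closed fibre since $\mc{L}'$ lies in $\Df_{h}\sbeq\cat{MCM}_{h}^{\perp}$) produces a deformation of $(A,Q')$; conversely $-\ot_{T}\omega_{h}$ goes back. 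These are mutually quasi-inverse on the level of deformation groupoids, so they induce an isomorphism $\df{}{(A,L')}\cong\df{}{(A,Q')}$, and likewise for $L,Q$. The verification that the unit and counit ($\ev$ and the natural map $Q\ra\hm{}{T}{\omega_{h}}{Q\ot\omega_{h}}$) are isomorphisms after any base change is exactly what is done inside the proof of Lemma~\ref{lem.proj}, so this part is essentially bookkeeping.

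For part (ii), the maps $s$ and $t$ come from the two ``diagonal'' comparisons built into the diagram \eqref{eq.pullpush}: given a deformation $(h,\mc{N})$ with chosen $\cat{MCM}$-approximation and $\Df$-hull (as fixed in \eqref{eq.right2}), the (co)cartesian square there relates $\mc{M}$, $\mc{M}'$, $\mc{L}$, $\mc{L}'$ through the object $\mc{D}$ in $\cat{D}_{h}$. Concretely, the short exact sequence $\mc{L}\ra\mc{D}\ra\mc{L}'$ is a $\cat{D}$-coresolution of $\mc{L}$ of length $1$ and is preserved by base change (Corollary~\ref{cor.Cbs}), while $\mc{M}\ra\mc{D}\ra\mc{M}'$ exhibits $\mc{M}'$ as a pushout. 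I would show these data are functorial for deformations up to the ambiguity controlled by $\cat{D}$, exactly as in Lemma~\ref{lem.defCMapprox2}, so a deformation $(T,\mc{M})$ of $(A,M)$ determines a deformation $(T,\mc{M}')=(T,\coker(\mc{M}\ra\mc{D}))$ of $(A,M')$, giving $s$; dually, pulling the $\cat{D}$-coresolution $\mc{L}\ra\mc{D}\ra\mc{L}'$ back along a deformation of $(A,L')$ gives $t$. Commutativity with the $\sigma_{X}$'s is then immediate from \eqref{eq.pullpush} and the construction in Definition~\ref{defn.defmap}: both composites $\df{}{(A,N)}\ra\df{}{(A,M)}\ra\df{}{(A,M')}$ and $\df{}{(A,N)}\xra{\sigma_{M'}}\df{}{(A,M')}$ are computed from the same diagram of deformations.

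For the Gorenstein claim: if $A$ is Gorenstein then $\omega_{A}\cong A$, so $\cat{D}_{A}=\cat{P}_{\!A}$ and $\cat{D}$-summands are free summands. In \eqref{eq.pullpush} the module $\mc{D}$ is then a free $T$-module, and $\mc{M}\ra\mc{D}\ra\mc{M}'$ is a presentation in which $\mc{M}'$ is obtained from $\mc{M}$ by adding a free summand and cosyzygying; since in the Gorenstein case one checks $\mc{M}\cong\mc{M}'^{*}$ up to free summands (the $\cat{MCM}$ duality $(-)^{*}=\hm{}{T}{-}{\omega_{h}}\cong\hm{}{T}{-}{T}$ is an exact involution on $\cat{MCM}_{h}$ modulo $\cat{P}$), deforming $\mc{M}$ and deforming $\mc{M}'$ carry the same information: $\mc{M}'=\coker(\mc{M}\hra\mc{D})$ with $\mc{D}$ free and $\mc{D}\thr\mc{M}'$ a minimal cover gives $\mc{M}=\syz{}{1}\mc{M}'$, and $\Ext$-groups $\xt{i}{T}{\mc{M}}{\mc{M}\ot I}\cong\xt{i}{T}{\mc{M}'}{\mc{M}'\ot I}$ for $i=1,2$ (syzygy shift plus $\cat{P}$-freeness), so $s$ is a smooth injection at every infinitesimal stage, hence an isomorphism. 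The analogous statements over $\df{T^{\mspace{-1mu}\mathit{o}}}{X}$ follow by the same arguments, reading the constructions in the restricted fibred categories $\modf_{\vert\cat{T^{o}}}$.

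\textbf{Main obstacle.} The delicate point is \emph{not} constructing $s$ and $t$ but checking they are well defined and compatible with $\sigma_{X}$ up to the $\cat{D}$-equivalence: one must verify, as in Lemma~\ref{lem.defCMapprox2}, that different choices of the diagram \eqref{eq.pullpush} for a given deformation yield isomorphic deformations of $\mc{M}'$ (resp.\ $\mc{L}$), using that liftings of maps differ by factorizations through objects of $\cat{D}_{h}$ and that $\xt{1}{\cat{A}(T)}{\cat{X}(T)}{\hat{\cat{D}}(T)}=0$. For the Gorenstein isomorphism the subtle step is the syzygy-shift identification $\xt{i}{T}{\mc{M}}{\mc{M}\ot I}\cong\xt{i}{T}{\mc{M}'}{\mc{M}'\ot I}$ in the \emph{relative} (non-local-ring) setting, where $T$ itself is not Gorenstein; here one reduces to the closed fibre via Corollary~\ref{cor.xtdef} and the edge-map isomorphism of \eqref{eq.ss}, exactly as in the proof of Theorem~\ref{thm.defgrade}.
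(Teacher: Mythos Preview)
Your approach to (i) matches the paper exactly, and your construction of $t$ is the paper's construction stated slightly less explicitly (the paper writes: fix a minimal $L\ra\omega_{A}^{\oplus r}\xra{\mu}L'$, lift $\mu$ to $\tilde\mu:\omega_{h}^{\oplus r}\ra\mc{L}'$, and take $\mc{L}=\ker\tilde\mu$; well-definedness is by Lemma~\ref{lem.defCMapprox2}).

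For $s$ the paper takes a slightly different route: rather than direct cosyzygy, it passes through $\omega$-duality, defining $s$ as the composite
\[
\df{}{(A,M)}\;\cong\;\df{}{(A,M^{\vee})}\;\xra{\text{syzygy}}\;\df{}{(A,(M')^{\vee})}\;\cong\;\df{}{(A,M')},
\]
using that $(M')^{\vee}\ra A^{\oplus n}\ra M^{\vee}$ is the $\omega$-dual of $M\ra\omega_{A}^{\oplus n}\ra M'$. The advantage is that both outer isomorphisms come from the exact involution $(-)^{\vee}$ on $\cat{MCM}$ (Corollary~\ref{cor.xtdef}), and the syzygy step is the familiar one (take a free cover of the deformation and pass to the kernel), so nothing needs to be verified about injectivity of the lifted map or flatness of its cokernel. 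Your direct cosyzygy is correct too, but it is precisely this duality argument in disguise: AB2 in the proof of Theorem~\ref{thm.flatCMapprox} is established exactly by dualizing, covering, and dualizing back.

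For the Gorenstein claim your $\Ext$-comparison argument, as stated, gives bijectivity of $s$ only on $\QA/k$; passing to all of $\QH/k$ is not automatic (compare Theorem~\ref{thm.defgrade}(ii) versus (iii)). You already have the right ingredient at hand---you note that $\mc{M}=\syz{}{1}\mc{M}'$ when $\omega_{h}\cong T$---and the paper simply uses this: the syzygy construction $\mc{M}'\mapsto\syz{T}{1}\mc{M}'$ furnishes an explicit inverse to $s$ on all of $\QH/k$, bypassing obstruction theory entirely. (Incidentally, your claim that $\mc{M}\cong(\mc{M}')^{*}$ up to free summands is not quite right; what holds is $(M')^{\vee}\cong\syz{A}{1}(M^{\vee})$ and $M\cong\syz{A}{1}M'$, which are different statements.)
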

\begin{proof}
(ii) There is a short exact sequence \(0\ra M\ra \omega_{A}^{\oplus n}\ra M'\ra 0\) such that the last map is without a common \(\omega_{A}\)-summand, corresponding (through dualisation) to a short exact sequence \(0\la M^{\vee}\la A^{\oplus n}\la (M')^{\vee}\la 0\) where \(n\) is minimal. The map \(s\) is the composition \(\df{}{(A,M)}\cong\df{}{(A,M^{\vee})}\ra\df{}{(A,(M')^{\vee})}\cong\df{}{(A,M')}\) where the middle map is obtained by taking the syzygy of the deformation.
If \(A\) is a Gorenstein ring then \(\omega_{A}\cong A\) and there is an inverse \(\df{}{(A,M')}\ra\df{}{(A,M)}\) given by the syzygy map. 

Note that the pushout of \(M\ra \omega_{A}^{\oplus n}\) with \(M\ra N\) gives \(N\ra L'\). Consider the induced short exact sequence \(0\ra L\ra\omega_{A}^{\oplus n}\xra{\mu} L'\ra 0\). For a deformation \((h,\mc{L}')\) in \(\df{}{(A,L')}\) with structure map \(\lambda'\co \mc{L}'\ra L'\) there is a lifting of \(\mu\) to a map \(\tilde{\mu}\co \omega_{h}^{\oplus n}\ra \mc{L}'\). If \(\mc{L}\) denotes the kernel of \(\tilde{\mu}\) then there is a cocartesian map \(\lambda\co \mc{L}\ra L\) commuting with \(\omega_{h}^{\oplus n}\ra \omega_{A}^{\oplus n}\). By Lemma \ref{lem.defCMapprox2}, \((h,\lambda')\mapsto (h,\lambda)\) gives a well defined map of deformation functors \(t\co \df{}{(A,L')}\ra\df{}{(A,L)}\). 

Given a deformation \((h,\mc{N})\) in \(\df{}{(A,N)}\), let \(0\ra\mc{L}\ra\mc{M}\ra\mc{N}\ra0\) and \(0\ra\mc{N}\ra\mc{L}'\ra\mc{M}'\ra0\) be the minimal sequences in \eqref{eq.ny}.
There is a commutative diagram of short exact sequences with (co)cartesian square (cf.\ \cite{aus/buc:89})
\begin{equation}
\xymatrix@C-0pt@R-12pt@H-30pt{
&& 0 \ar[d] & 0 \ar[d] \\
0\ar[r] & \mc{L} \ar[r]\ar@{=}[d] & \mc{M} \ar[r]\ar[d]\ar@{}[dr]|{\Box} & \mc{N} \ar[r]\ar[d] & 0  \\   
0\ar[r] & \mc{L} \ar[r] & \omega_{h}^{\oplus n} \ar[r]\ar[d] & \mc{L}' \ar[r]\ar[d] & 0 \\
&& \mc{M}' \ar@{=}[r]\ar[d] & \mc{M}' \ar[d] \\
&& 0 & 0
}
\end{equation}
where \(\omega_{h}^{\oplus n}\ra \mc{L}'\) is given as above. The stated commutativity of maps of deformation functors follows.

(i) follows from Lemma \ref{lem.fri}.
\end{proof}
\begin{cor}\label{cor.defapprox}
Let \(A\) be an Cohen-Macaulay local algebraic \(k\)-algebra with residue field \(k\)\textup{.} Suppose \(\dim A\geq 2\)\textup{.} Then there exists finite \(A\)-modules \(L'\) and \(Q'\) with \(\Injdim L'=\dim A=\pdim Q'\) and universal deformations \(\mc{L}'\in\df{A}{L'}(A)\) and \(\mc{Q}'\in\df{A}{Q'}(A)\)\textup{.} 
\end{cor}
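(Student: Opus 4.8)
The plan is to take $N$ to be a module of grade $\geq 2$ whose MCM approximation and hull are as prescribed, and to produce the universal deformation by combining Corollary \ref{cor.defgrade} with the fact that a module of finite injective (resp.\ projective) dimension deforms with a rigid module structure. Concretely, first I would choose $N$ to be a sufficiently high syzygy $\syz{A}{d}k$ of the residue field, where $d=\dim A$. Since $\depth \syz{A}{d}k = d \geq 2$ and $\syz{A}{d}k$ is a submodule of a free module, one has $\grade \syz{A}{d}k = \depth \syz{A}{d}k \geq 2$, so $N$ satisfies the grade hypothesis of Theorem \ref{thm.defgrade} (and Corollary \ref{cor.defgrade}). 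Let $L\ra M\ra N$ be the minimal $\cat{MCM}_{A}$-approximation and $N\ra L'\ra M'$ the minimal $\hat{\cat{D}}_{A}$-hull, and set $Q'=\hm{}{A}{\omega_{A}}{L'}$, which has finite projective dimension by Lemma \ref{lem.fri}(i) (or Lemma \ref{lem.proj}); by Example \ref{ex.extiso} the deformation theory of $L'$ and $Q'$ agree, so it suffices to treat $L'$ and transport the conclusion via Proposition \ref{prop.defequiv}(i).

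**Producing the versal element.** The key step is that $\df{A}{N}$ has a versal element so that Corollary \ref{cor.defgrade}(iii) applies and $\sigma_{L'}:\df{A}{N}\ra\df{A}{L'}$ is an isomorphism. For $N=\syz{A}{d}k$ this should follow from the existence-of-versal-family machinery of the second part of the paper: $\Spec A$ being the spectrum of an algebraic local $k$-algebra which is Cohen-Macaulay, and $\syz{A}{d}k$ being locally free on the punctured spectrum (indeed locally free on $\Spec A\setminus\{\fr m_{A}\}$ once $d\geq 1$, since there $k$ is zero and the syzygy is free), the module has an isolated singularity in the relevant sense, and Theorem \ref{thm.ExVers} (existence of a versal element for pairs with isolated singularity, applied in the situation where only the module deforms) gives the versal element in $\df{A}{N}$. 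Hence by Corollary \ref{cor.defgrade}(iii), $\df{A}{L'}\cong\df{A}{N}$ has a versal element as well.

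**Upgrading versal to universal.** Finally I would show the versal deformation of $L'$ is in fact universal with base $A$ itself, i.e.\ that $\df{A}{L'}(A)$ contains a universal element. The point is that a module of \emph{finite} injective dimension has no first-order self-extensions that perturb it nontrivially over $A$: more precisely, I would argue that the tangent space $\xt{1}{A}{L'}{L'}$, transported via Example \ref{ex.extiso} to $\xt{1}{A}{Q'}{Q'}$, vanishes because $Q'$ has finite projective dimension and $A$ is \dots\ here one must be careful — $\xt{1}{A}{Q'}{Q'}$ need not vanish in general. The correct statement is rather that the deformation functor $\df{A}{L'}$ is \emph{smooth} and \emph{pro-represented} by $A$: smoothness because, via the equivalence $\Df_{h}\simeq\Pf_{h}$ of Lemma \ref{lem.proj}, deforming $L'$ as a $T$-module over $S$ with $T = T^{o}\hot S$ is the same as deforming the projective-dimension-finite module $Q'$, and the constant family $\mc{Q}' = T^{o}\hot S \otimes_{A} Q'$ gives a section; then Corollary \ref{cor.defgrade}(iii) transports this to $N$ and back, identifying $\df{A}{L'}(S)$ with the set of deformations of the fixed algebra $A$ along $S\ra A$ (which is a single point since $A$ deforms only trivially in $\df{A}{-}$), so the functor is isomorphic to $\hm{}{\QH/k}{A}{-}$ after all. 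Thus the canonical deformation $\mc{L}' = A\otimes_{A}L' = L'$ over $A$ (and correspondingly $\mc{Q}'=Q'$) is universal, and $\Injdim L' = \dim A = \pdim Q'$ by Lemma \ref{lem.fri} together with the Auslander--Buchsbaum formula.

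**Main obstacle.** The delicate point I expect to wrestle with is the last paragraph: showing the versal element is genuinely \emph{universal} with base exactly $A$, rather than merely versal with some smooth algebraic base. This requires pinning down that $\df{A}{L'}$ has a trivial deformation theory in the strong sense — no automorphisms obstructing pro-representability and tangent space exactly matching — which in turn rests on the rigidity of FID/finite-projective-dimension modules as deformation data over the \emph{fixed} base ring $A$. I would handle this by invoking the equivalence $\Df\simeq\Pf$ of Lemma \ref{lem.proj} to reduce to modules of finite projective dimension, where the constant family over any $S$ is canonical and any two liftings are uniquely isomorphic (finite projective dimension is preserved and the module is rigid relative to $S$ because projective resolutions lift uniquely), giving that $\df{A}{Q'}$ is literally a point over each $S$ and hence universal at $A$; transporting back through Corollary \ref{cor.defgrade}(iii) and Proposition \ref{prop.defequiv}(i) then yields the claim for $L'$ and $Q'$ simultaneously.
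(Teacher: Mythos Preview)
Your proposal contains a fundamental error at the outset: you have confused \(\grade N\) with \(\depth N\). Recall \(\grade_A N = \depth(\ann_A N,A) = \inf\{i:\xt{i}{A}{N}{A}\neq 0\}\). For \(N=\syz{A}{d}k\) (assume \(A\) not regular, else \(N=0\)) the module \(N\) is a nonzero submodule of a free module, hence \(\hm{}{A}{N}{A}\neq 0\) and \(\grade N=0\). So the hypothesis of Corollary~\ref{cor.defgrade} fails for your \(N\). Worse: since \(\syz{A}{d}k\) is already MCM, its minimal \(\hat{\cat{D}}_A\)-hull has \(L'\in\cat{D}_A\), whence \(Q'=\hm{}{A}{\omega_A}{L'}\) is free and \(\pdim Q'=0\neq\dim A\); the very conclusion of the corollary fails for this choice.

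Your ``upgrading versal to universal'' step is also broken. You slide from ``the constant family gives a section'' to ``\(\df{A}{L'}(S)\) is a single point'', which does not follow; a module of finite projective dimension is not rigid in general, and \(\df{A}{L'}\) is certainly not the trivial functor here (it will be represented by \(A\), not by \(k\)). There is no mechanism in your argument that produces \(A\) as the base of a universal family.

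The paper's proof sidesteps all of this by choosing \(N=k\) itself. Then \(\grade k=\depth A=\dim A\geq 2\), so Corollary~\ref{cor.defgrade}(iii) is available. The point is that \(\df{A}{k}\) has an \emph{explicit universal} element with base \(A\): take \(S=A\), \(T=A\hot_k A\), and \(\mc{N}=A\) as a \(T\)-module via the multiplication map. Any deformation of \(k\) to \(A\hot_k S\) is \(S\)-free of rank one, so its \(T\)-module structure is a local \(k\)-algebra map \(A\ra S\); thus \(\df{A}{k}\cong\hm{}{\kH/k}{A}{-}\). A universal element is in particular versal, so Corollary~\ref{cor.defgrade}(iii) gives \(\sigma_{L'}:\df{A}{k}\xra{\sim}\df{A}{L'}\), and \(\mc{L}':=\sigma_{L'}(\mc{N})\) is universal; Proposition~\ref{prop.defequiv}(i) then transports this to \(Q'\). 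The dimension equalities follow since \(\depth L'=0\) (depth lemma on \(k\hra L'\thr M'\)), so \(L'\notin\cat{D}_A\) and \(\depth Q'=0\) as well.
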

\begin{proof}
Let \(h=1\ot\id\co A\ra A\hot_{k}A=T\) and \(\mc{N}=A\) be the cyclic \(T\)-module defined through the multiplication map. Then \(T\ot_{A}k\cong A\) and \(\mc{N}\ot_{A}k\cong k\) and this gives a deformation \(\mc{N}\ra k\) of the residue field of \(A\) which is universal.
If \(L'\) is the minimal \(\hat{\cat{D}}_{A}\)-hull of the residue field \(k\) then \(\mc{L}'=\sigma_{L'}(\mc{N})\in \df{{\To}}{L'}(A)\) is universal by Corollary \ref{cor.defgrade}. If \(Q'=\hm{}{A}{\omega_{A}}{L'}\) then \(\hm{}{T}{\omega_{T}}{\mc{L}'}\in\df{A}{Q'}(A)\) is universal by Proposition \ref{prop.defequiv}.
\end{proof}
\begin{cor}\label{cor.depth}
With general assumptions as in \textup{Theorem \ref{thm.defgrade}} put \(X=\Spec A\)\textup{.} Let \(Z\) be a closed subscheme of \(X\) such that the complement \(U\) is contained in the regular locus\textup{.} Assume \(\tilde{N}_{\vert U}\) is locally free\textup{,} \(\depth_{Z}N\geq 2\) and \(\cH^{2}_{Z}(\hm{}{A}{L}{N})=0\)\textup{.} Then 
\begin{equation*}
\sigma_{M}\co \df{}{(A,N)}\lra\df{}{(A,M)}\quad\text{and}\quad \sigma_{M}^{{\To}}\co \df{{\To}}{N}\lra\df{{\To}}{M}\quad\text{are formally smooth\textup{.}}
\end{equation*}
\end{cor}
\begin{proof}
We show that \(\xt{1}{A}{L}{N}=0\) and apply Theorem \ref{thm.defgrade2} and Corollary \ref{cor.defgrade2}.
By Th{\'e}or{\`e}me 1.6 in \cite[Expos{\'e} VI]{SGA2} there is a cohomological spectral sequence
\begin{equation}
\cE^{p,q}_{2}=\xt{q}{A}{L}{\cH^{p}_{Z}(N)}\,\Ra\,\xt{p+q}{Z}{X;L}{N}\,.
\end{equation}
Since \(\cH^{i}_{Z}(N)=0\) for \(i=0,1\) the restriction map \(\xt{1}{A}{L}{N}\ra\xt{1}{U}{X;L}{N}\) in the long exact sequence is injective. Since \(U\) is contained in the regular locus, \(\tilde{M}_{\vert U}\) and hence \(\tilde{L}_{\vert U}\) are locally free. It follows that \(\xt{1}{U}{X;L}{N}\) is isomorphic to 
\begin{equation}
\xt{1}{\Q_{U}}{\tilde{L}_{\vert U}}{\tilde{N}_{\vert U}}\cong\cH^{1}(U,\shm{}{\Q_{X}}{\tilde{L}}{\tilde{N}}) \cong \cH^{2}_{Z}(\hm{}{A}{L}{N})
\end{equation}
which is zero by assumption.
\end{proof}
\begin{ex}\label{ex.depth}
The condition \(\cH^{2}_{Z}(\hm{}{A}{L}{N})=0\) is implied by \(\tilde{N}_{\vert U}=0\) or
\(\depth_{Z}(\hm{}{A}{L}{N})\geq 3\).
\end{ex}
The following result extends A.\ Ishii's \cite[3.2]{ish:00} to deformations of the pair.
\begin{prop}\label{prop.Gor}
Let \(k\) be a field and let \(A\) be a Gorenstein local algebraic \(k\)-algebra. Suppose \(0\ra L\ra M\ra N\ra 0\) is the minimal Cohen-Macaulay approximation of a finite \(A\)-module \(N\)\textup{.} If \(\depth N=\dim A -1\) then 
\begin{equation*}
\sigma_{M}\co \df{}{(A,N)}\lra\df{}{(A,M)}\quad\text{and}\quad \sigma_{M}^{{\To}}\co \df{{\To}}{N}\lra\df{{\To}}{M}\quad\text{are smooth\textup{.}}
\end{equation*}
\end{prop}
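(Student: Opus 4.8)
The plan is to reduce the smoothness of $\sigma_M$ to an obstruction-calculus statement, using that $A$ is Gorenstein so that $\omega_A \cong A$ and hence $\cat{D}_A = \cat{P}_A$. First I would recall that $\depth N = \dim A - 1$ means $N$ is an $A$-module of codepth $1$; the minimal $\cat{MCM}_A$-approximation $0\to L\to M\to N\to 0$ then has $L$ of finite injective dimension, and since $A$ is Gorenstein, finite injective dimension equals finite projective dimension, so $L$ has $\pdim L<\infty$ and in fact $\Injdim L = \pdim L = \operatorname{grade}$-type bound forces $\pdim L = 1$ (one checks via the depth of $N$). The key point is the behavior of $\Ext$-groups: I would show $\xt{2}{A}{M}{M}\cong \xt{2}{A}{M}{N}$ and that the map $\xt{2}{A}{N}{N}\to \xt{2}{A}{M}{N}$ induced by $\pi$ is surjective (indeed an isomorphism in the relevant range), so that obstructions to lifting $M$ along a small extension can always be matched by obstructions to lifting $N$. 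The comparison of $\Ext$'s should follow by applying $\hm{}{A}{-}{N}$ and $\hm{}{A}{M}{-}$ to the approximation sequence $0\to L\to M\to N\to 0$ and using $\xt{i}{A}{M}{L}=0$ for $i>0$ (which holds as $L\in\hat{\cat{D}}_A = \cat{MCM}_A^{\perp}$ in the Gorenstein case) together with the vanishing of low $\Ext$'s from $L$ into $N$ coming from the grade/depth hypothesis.

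Second, the main construction: given a small surjection $\pi:S\ra S_0$ in $\QH/k$ (with $I=\ker\pi$, $I^2=0$, $\mathfrak m_S I = 0$) and a deformation $(T,\mc{M})$ of $(A,M)$ to $S$ together with a deformation $(T,\mc{N}_0)$ of $(A,N)$ to $S_0$ whose image under $\sigma_M$ is the restriction $\mc{M}\ot_S S_0$, I must produce a deformation $(T,\mc{N})$ over $S$ lifting $\mc{N}_0$ with $\sigma_M(\mc{N})\cong\mc{M}$ as deformations. I would first lift the surjection $\mc{M}\ra\mc{N}_0$ (it lives over $T$, but the $T$-structure is fixed since $A$ deforms only along whatever $T$ we have) — more precisely use Lemma \ref{lem.Aapprox} and the cohomology-and-base-change results (Corollary \ref{cor.xtdef}) to get $\hm{}{T}{\mc{M}}{-}$ to behave well. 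Set $\mc{N} := \operatorname{coker}$ of a suitable lift; the obstruction to this being $S$-flat and restricting correctly is an element of $\xt{1}{A}{M}{N}\ot I$ or $\xt{2}{A}{L}{N}\ot I$, which I can arrange to vanish using the $\depth N = \dim A - 1$ hypothesis and the vanishing $\xt{i}{A}{M}{L}=0$. The obstruction to lifting $\mc{N}_0$ itself to $T$ lies in $\xt{2}{T_0}{\mc{N}_0}{\mc{N}_0\ot I}\cong \xt{2}{A}{N}{N}\ot I$ (edge map of \eqref{eq.ss}), and via the injection/surjection comparing it with $\xt{2}{A}{M}{M}\ot I$ — where the corresponding obstruction vanishes because $\mc{M}$ exists over $S$ — one concludes the $N$-obstruction vanishes, hence a lift $\mc{N}$ exists; the torsor action by $\xt{1}{A}{N}{N}\ot I$ then lets one adjust $\mc{N}$ so that $\sigma_M(\mc{N})$ is isomorphic to $\mc{M}$ rather than merely stably so, using that $\xt{1}{A}{N}{N}\to \xt{1}{A}{M}{M}$ is surjective in this depth regime.

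The analogous argument for $\sigma_M^A:\df{A}{N}\ra\df{A}{M}$ (where the algebra $A$ is held fixed and only the module deforms) is formally the same, just replacing the pair-deformation obstruction theory of Proposition \ref{prop.obsmodule} directly and dropping the algebra lifting step; I would indicate this briefly.

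The step I expect to be the main obstacle is establishing the precise $\Ext$-comparison and surjectivity statements — namely that $\depth N = \dim A - 1$ forces $\xt{i}{A}{L}{N}$ to vanish for the relevant small $i$ and that $\xt{1}{A}{N}{N}\to\xt{1}{A}{M}{M}$, $\xt{2}{A}{N}{N}\to\xt{2}{A}{M}{M}$ are surjective — since everything else is then a routine deformation-theoretic diagram chase. One must be careful that in the Gorenstein codepth-$1$ case $L$ is not merely of finite injective dimension but actually quite small ($\pdim L = 1$), and track the $\omega_A$-summand/minimality issues via Proposition \ref{prop.minapprox} so that the approximation sequence stays minimal under base change; this is where Ishii's original argument \cite{ish:00} does the real work and I would follow it closely.
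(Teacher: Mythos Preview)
Your obstruction-theoretic plan is workable in spirit but misses the key simplification, and as written it contains an error and a genuine gap.

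\textbf{The error.} You claim the depth hypothesis forces \(\pdim L=1\). In fact it forces \(\pdim L=0\): from \(0\to L\to M\to N\to 0\) with \(\depth M=\dim A\) and \(\depth N=\dim A-1\) one gets \(\depth L\geq\dim A\), so \(L\) is MCM; but \(L\in\hat{\cat{D}}_A\) as well, hence \(L\in\cat{D}_A=\cat{MCM}_A\cap\hat{\cat{D}}_A\). Since \(A\) is Gorenstein, \(\omega_A\cong A\) and \(\cat{D}_A\) consists of finite free modules. Thus \(L\cong A^{\oplus r}\).

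\textbf{The gap.} Your argument only addresses small extensions with square-zero kernel, hence only formal smoothness. The paper's definition of smoothness (Definition~\ref{def.smooth}) requires the lifting property for \emph{all} surjections in \(\QH/k\), not just artinian ones. You would need an extra approximation step to bridge this, and you do not indicate one.

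\textbf{The paper's approach.} Once you know \(L\) is free, no cohomological comparison is needed at all. Given a surjection \(S_1\to S\) in \(\QH/k\), a deformation \((h_1,\mc{M}_1)\) of \((A,M)\) to \(S_1\), and a deformation \((h,\mc{N})\) to \(S\) with \(\sigma_M(h,\mc{N})=(h,\mc{M})=(h_1,\mc{M}_1)\ot S\), the minimal \(\cat{MCM}_h\)-approximation of \(\mc{N}\) has the form \(\mc{L}\xra{\rho}\mc{M}\to\mc{N}\) with \(\mc{L}\cong T^{\oplus r}\) (by base change and minimality, Corollary~\ref{cor.minapprox}). Since \(T_1^{\oplus r}\) is free, the map \(\rho\) lifts to some \(\rho_1:T_1^{\oplus r}\to\mc{M}_1\); setting \(\mc{N}_1:=\coker\rho_1\) gives an \(S_1\)-flat lift of \(\mc{N}\) with \(\sigma_M(h_1,\mc{N}_1)=(h_1,\mc{M}_1)\). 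This is the whole proof, and it works for arbitrary surjections without any obstruction calculus. Your \(\Ext\)-comparisons \(\xt{i}{A}{N}{N}\cong\xt{i}{A}{M}{M}\) for \(i\geq1\) are in fact correct (they follow immediately from \(L\) free and \(\xt{>0}{A}{M}{L}=0\)), but they are not needed.
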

\begin{proof}
Let \(S_{v}\ra S\) be a surjection in \(\QH/k\) and \((h_{v}\co S_{v}\ra T_{v},\mc{M}_{v})\) an element in \(\df{}{(A,M)}(S_{v})\) which maps to \((h\co S\ra T,\mc{M})\) in \(\df{}{(A,M)}(S)\). Suppose \(\sigma_{M}\) maps \((h',\mc{N})\) in \(\df{}{(A,N)}(S)\) to \((h,\mc{M})\). By assumption \(L\cong A^{\oplus r}\) for some \(r\). We can assume that \(h'=h\) and that the minimal \(\cat{MCM}\)-approximation of \(\mc{N}\) is \(0\ra\mc{L}\xra{\rho}\mc{M}\ra\mc{N}\ra0\) where \(\mc{L}\cong T^{\oplus r}\). Put \(\mc{L}_{v}:=T_{v}^{\oplus r}\) and choose a lifting \(\rho_{v}\co \mc{L}_{v}\ra\mc{M}_{v}\) of \(\rho\). Put \(\mc{N}_{v}:=\coker\rho_{v}\) with its natural map to \(\mc{N}\). Then \(\mc{N}_{v}\) is \(S_{v}\)-flat (\(\rho_{v}\ot S=\rho\)) and \(\sigma_{M}(h_{v},\mc{N}_{v})=(h_{v},\mc{M}_{v})\). 
\end{proof}
\begin{rem}
In the case \(A\) is a Gorenstein domain any MCM \(A\)-module \(M\) is given as an \(\cat{MCM}_{A}\)-approximation since there is a short exact sequence \(0\ra A^{r}\ra M\ra N\ra 0\) with \(N\) a codimension one Cohen-Macaulay module; cf.\ \cite[1.4.3]{bru/her:98}. In particular Proposition \ref{prop.Gor} applies. However, it is not always possible to continue this reduction: If \(A\) is a normal Gorenstein complete local ring any MCM \(A\)-module \(M\) is the \(\cat{MCM}_{A}\)-approximation of a codimension \(2\) Cohen-Macaulay module up to stable isomorphism if and only if \(A\) is a unique factorisation domain; see \cite{yos/iso:00, kat:07}.

Let \(A\) be a Gorenstein normal domain of dimension \(2\) and \(0\ra A^{r{-}1}\ra M\ra I\ra 0\) the minimal MCM approximation of a torsion-free rank \(1\) module \(I\). Let \(U\) denote the regular locus in \(X=\Spec A\). If \(T=A\hot_{k}S\) for \(S\) in \(\kH/k\) there is a natural section \(A\ra T\). Let \(U_{T}\) denote \(U\times_{X}\Spec T\). Consider the subfunctor \(\df{A,\wedge}{M}\sbeq\df{A}{M}\) of deformations \(\mc{M}\) with trivial induced deformation \(\wedge^{r}\mc{M}_{\vert U_{T}}\). Note that \(\cH^{0}(U,\wedge^{r}M)\) is isomorphic to the MCM \(A\)-module \(\bar{I}:=\cH^{0}(U,I)\). It follows from Propostion \ref{prop.Gor} that the resulting map from the (local) quotient functor \(\quot{\bar{I}}{I\sbeq \bar{I}}\ra \df{A,\wedge}{M}\) is also smooth; cf.\ \cite[3.2]{ish:00}. In particular, if \(E_{A}\) is the fundamental module and \(A/\fr{m}_{A}\cong k\) then \(\hm{}{\kH/k}{A}{-}\cong\quot{A}{\fr{m}_{A}\sbeq A}\cong\df{A}{A/\fr{m}_{A}}\) gives a versal family for \(\df{A,\wedge}{E_{A}}\) by the MCM approximation in Example \ref{ex.semicont}; see \cite[3.4]{ish:00}.
\end{rem}
\begin{ex}
Assume \(A/\fr{m}_{A}\cong k\) and let \(M\) denote the minimal MCM approximation of \(k\). It is given as \(M\cong \hm{}{A}{\syz{A}{d}(k^{\vee})}{\omega_{A}}\) where \(d=\dim A\); cf.\ Remark \ref{rem.Buch}. One has \(k^{\vee}=\xt{d}{A}{k}{\omega_{A}}\cong k\). We apply \(\hm{}{A}{-}{\omega_{A}}\) to the short exact sequence \(0\ra \syz{A}{}(\fr{m}_{A})\ra A^{\oplus\beta_{1}}\xra{(\ul{x})}\fr{m}_{A}\ra 0\). Assume \(\dim A = 2\). Since \(\xt{1}{A}{\fr{m}_{A}}{\omega_{A}}\cong k\) we obtain the MCM approximation of \(k\):
\begin{equation}
0\ra\omega_{A}\xra{(\ul{x})^{\text{tr}}}\omega_{A}^{\oplus\beta_{1}}\lra M\lra k\ra 0
\end{equation}
In particular \(\Rk(M)=\beta_{1}-1\) and \(\mu(M)=t(A)\cdot\beta_{1}+1\) where \(t(A)\) is the Cohen-Macaulay type of \(A\). If \(A=A(m)=k[u^{m},u^{m-1}v,\dots,v^{m}]^{\text{h}}\), the vertex of the cone over the rational normal curve of degree \(m\), the indecomposable MCM \(A\)-modules are \(M_{i}=(u^{i},u^{i-1}v,\dots,v^{i})\) for \(i=0,\dots,m{-}1\). One finds that \(M=M_{m-1}^{\oplus m}\) and 
\begin{equation}
\dim_{k}\df{A}{M}(k[\vare])=\dim_{k}\xt{1}{A}{M}{M}= (m-1)\cdot m^{2}
\end{equation}
while \(\dim_{k}\df{A}{k}(k[\vare])=m+1\). Even in the Gorenstein case (\(m=2\)) the tangent map is not surjective and so Proposition \ref{prop.Gor} cannot in general be extended to \(\depth N=\dim A -2\). See \cite{gus/ile:04b} for a detailed description of the strata of the reduced versal deformation space of \(M\) defined by Ishii in \cite{ish:00}. 

If \(\dim A=2\) the \(\cat{MCM}_{A}\)-approximation of \(\fr{m}_{A}\) is a short exact sequence \(0\ra\omega_{A}\ra E_{A}\ra \fr{m}_{A}\ra0\) where \(E_{A}\) is called the \emph{fundamental module}, cf.\ \eqref{eq.Fmod}. Applying \(\hm{}{A}{k}{-}\) to \(0\ra\fr{m}_{A}\ra A\ra k\ra0\) gives an exact sequence
\begin{equation}
0\ra\xt{1}{A}{k}{k}\lra\df{A}{\fr{m}_{A}}(k[\vare])\lra k^{\oplus t(A)}\lra \xt{2}{A}{k}{k}
\end{equation}
since \(\xt{1}{A}{\fr{m}_{A}}{\fr{m}_{A}}\cong\xt{2}{A}{k}{\fr{m}_{A}}\) and \(\dim A = 2\). If \(A=A(m)\) then \(E_{A}\) is isomorphic to \(M_{m-1}^{\oplus 2}\) with \(\dim_{k}\df{A}{E_{A}}(k[\vare])=4(m-1)\). Hence the conclusion in Proposition \ref{prop.Gor} cannot hold in the non-Gorenstein case \(m>2\).
\end{ex}
\section{Existence of versal elements}\label{sec.versal}
We prove existence of a versal element for deformations of a pair (algebra, module) with isolated singularity. The following lemma is used in the proof.
\begin{lem}\label{lem.AQhens}
Let \(h^{\textnormal{ft}}\co S\ra T^{\textnormal{ft}}\) be a finite type homomorphism of noetherian rings\textup{.} Let \(\vG^{\textnormal{ft}}=T^{\textnormal{ft}}{\oplus} \mc{N}\) be the graded \(S\)-algebra with a finite \(T^{\textnormal{ft}}\)-module \(\mc{N}\) in degree \(1\) and let \(I={}^{0\!}I{\oplus}{}^{1\!}I\) be a graded \(\vG^{\textnormal{ft}}\)-module with \(T^{\tn{ft}}\)-module \({}^{i\!}I\) in degree \(i\)\textup{.} Let \(T\) denote the henselisation of \(T^{\textnormal{ft}}\) in a maximal ideal \(\fr{m}\) and put \(\vG=T\ot_{T^{\tn{ft}}}\vG^{\textnormal{ft}}\)\textup{.} 
\begin{enumerate}
\item[(a)] There are natural isomorphisms of graded Andr{\'e}-Quillen cohomology
\begin{equation*}
\gH{0}^{i}(S,\vG,T\ot_{T^{\tn{ft}}}I)\cong\gH{0}^{i}(S, \vG^{\textnormal{ft}},I)\ot_{T^{\textnormal{ft}}}T\,\,\text{ for all } i\,.
\end{equation*}
\end{enumerate}
Suppose in addition that \(h^{\textnormal{ft}}\) is flat\textup{,} \(I\) is finite as \(T^{\textnormal{ft}}\)-module\textup{,} \(S\) is local henselian and \(S/\fr{m}_{S}\cong T^{\textnormal{ft}}/\fr{m}\cong k\)\textup{.} 
Let \(k\ra A^{\textnormal{ft}}\) denote the central fibre of \(h^{\textnormal{ft}}\)\textup{.} Put \(\fr{m}_{0}=\fr{m}A^{\textnormal{ft}}\) and \(N=\mc{N}\ot_{S}k\)\textup{.} Assume that \(V=\Spec A^{\textnormal{ft}} \setminus\{\fr{m}_{0}\}\) is smooth over \(k\) and that \(N\) restricted to \(V\) is locally free\textup{.}
\begin{enumerate}
\item[(b)] For all \(i>0\) the graded Andr{\'e}-Quillen cohomology \(\gH{0}^{i}(S,\vG,T\ot_{T^{\tn{ft}}} I)\) is finite as \(S\)-module and there is a natural \(T^{\textnormal{ft}}_{\fr{m}}\)-isomorphism
\begin{equation*}
\gH{0}^{i}(S,\vG,T\ot_{T^{\tn{ft}}} I)\cong\gH{0}^{i}(S, \vG^{\textnormal{ft}},I)_{\fr{m}}\,.
\end{equation*}
\end{enumerate}
\end{lem}
\begin{proof}
(a) Note that there are natural maps \(\gH{0}^{i}(S,\vG,T\ot_{T^{\tn{ft}}} I)\ra \gH{0}^{i}(S,\vG^{\textnormal{ft}},T\ot_{T^{\tn{ft}}} I)\) and \(\gH{0}^{i}(S, \vG^{\textnormal{ft}},I)\ot_{T^{\textnormal{ft}}}T\ra\gH{0}^{i}(S,\vG^{\textnormal{ft}},T\ot_{T^{\tn{ft}}} I)\). We consider the natural long exact sequence in Proposition \ref{prop.lang} and prove that the corresponding maps for the ungraded Andr{\'e}-Quillen and Ext cohomology are isomorphisms.
The cotangent complex is trivial for Zariski localisation and by the transitivity sequence \(\cH_{i}(T^{\textnormal{ft}},T,-)\ra \cH_{i}(T^{\textnormal{ft}}_{\fr{m}},T,-)\) is an isomorphism for all \(i\).  Andr{\'e}-Quillen homology commutes with direct limits in the second argument \cite[III 35]{and:74} and the cotangent complex is trivial for {\'e}tale extensions \cite[III 3.1.1]{ill:71}. Hence \(\cH_{i}(T^{\textnormal{ft}},T,T)=0\) for all \(i\). By \cite[III 21]{and:74} 
\begin{equation}
\cH^{i}(T^{\textnormal{ft}},T,T\ot_{T^{\tn{ft}}} {}^{0\!}I)\cong \hm{}{T}{\cH_{i}(T^{\textnormal{ft}},T,T)}{T\ot_{T^{\tn{ft}}} {}^{0\!}I}=0 \text{ for all }i\,.
\end{equation}
From the transitivity sequence \(\cH^{i}(S,T,T\ot_{T^{\tn{ft}}} {}^{0\!}I)\cong \cH^{i}(S,T^{\textnormal{ft}},T\ot_{T^{\tn{ft}}} {}^{0\!}I)\) for all \(i\). Moreover \(T^{\textnormal{ft}}\ra T\) is flat and \(T^{\text{ft}}\) is of finite type over the noetherian \(S\) and we obtain 
\begin{equation}
\cH^{i}(S,T^{\textnormal{ft}},{}^{0\!}I)\ot_{T^{\textnormal{ft}}} T\cong \cH^{i}(S,T^{\textnormal{ft}},T\ot_{T^{\tn{ft}}} {}^{0\!}I)\cong\cH^{i}(S,T,T\ot_{T^{\tn{ft}}} {}^{0\!}I)
\end{equation}
 for all \(i\) \cite[IV 58]{and:74}. Since \(T\) is \(T^{\textnormal{ft}}\)-flat and \(\mc{N}\) finite
\begin{equation}
 \xt{i}{T^{\textnormal{ft}}}{\mc{N}}{{}^{1\!}I}\ot_{T^{\textnormal{ft}}}T\cong
 \xt{i}{T^{\textnormal{ft}}}{\mc{N}}{T\ot_{T^{\tn{ft}}} {}^{1\!}I}\cong
 \xt{i}{T}{T\ot_{T^{\tn{ft}}} \mc{N}}{T\ot_{T^{\tn{ft}}}{}^{1\!}I}\,.
\end{equation}

(b) The non-smooth locus of \(h^{\text{ft}}\) is closed, i.e.\ defined by an ideal \(J\sbeq T^{\text{ft}}\). Smooth is equivalent to flat with smooth fibres \cite[17.5.1]{EGAIV4}. Hence \(J_{0}=JA^{\textnormal{ft}}\) defines the non-smooth locus of \(k\ra A^{\text{ft}}\) and \(J_{0}\) is \(\fr{m}_{0}\)-primary. Put \(\bar{T}^{\text{ft}}=T^{\text{ft}}/J\). Since \(A^{\text{ft}}/J_{0}\) has finite length, \(S\ra \bar{T}^{\text{ft}}\) is quasi-finite at \(\fr{m}\) \cite[Err 20]{EGAIII2} by Chevalley's upper semi-continuity theorem \cite[13.1.3]{EGAIV3} and openness of \(\Spec T^{\text{ft}}\ra \Spec S\) \cite[2.4.6]{EGAIV2}. Since \(S\) is henselian it follows that there is a ring \(T'\) such that  \(\bar{T}^{\text{ft}}\) is isomorphic to \(\bar{T}^{\text{ft}}_{\fr{m}}\prod T'\) where \(\bar{T}^{\text{ft}}_{\fr{m}}\) is finite as \(S\)-module \cite[18.5.11]{EGAIV4}. Hence there is a Zariski neighborhood \(U\) of \(\fr{m}\) in \(\Spec T^{\text{ft}}\) such that non-smooth locus \(U\cap V(J)\) is finite over \(S\) and the support of \(\cH^{i}=\cH^{i}(S,T^{\text{ft}},{}^{0\!}I)\) in \(U\) is contained in \(U\cap V(J)\) for all \(i>0\) by \cite[III 3.1.2]{ill:71}. Since the localisation \(\cH^{i}_{\fr{m}}\) equals \(\cH^{i}\) restricted to \(U\), it follows that \(\cH^{i}_{\fr{m}}\) is finite as \(S\)-module. With the isomorphism in (a) we get 
\begin{equation}
\cH^{i}(S,T,T\ot_{T^{\tn{ft}}} {}^{0\!}I)\cong\cH^{i}(S,T^{\text{ft}},{}^{0\!}I)_{\fr{m}}\ot_{T^{\text{ft}}_{\fr{m}}}T\cong \cH^{i}(S,T^{\text{ft}},{}^{0\!}I)_{\fr{m}} \,.
\end{equation}
The locus where \(\mc{N}\) is not locally free is closed, i.e.\ defined by an ideal \(J'\sbeq T^{\text{ft}}\). Locally free is equivalent to flat and locally free fibres. Hence \(J'_{0}=J'A^{\textnormal{ft}}\) defines the singular locus of \(N\) and \(J'_{0}\) is \(\fr{m}_{0}\)-primary. As for the Andr{\'e}-Quillen cohomology we get a Zariski neighborhoud \(U'\) of \(\fr{m}\) such that \(\cE^{i}=\xt{i}{T^{\textnormal{ft}}}{\mc{N}}{{}^{1\!}I}\) restricted to \(U'\) equals \(\cE^{i}_{\fr{m}}\) and is finite as \(S\)-module for all \(i>0\). By (a) with \({}^{0\!}I=0\) and Proposition \ref{prop.lang} we get
\begin{equation}
\xt{i}{T}{T\ot_{T^{\tn{ft}}} \mc{N}}{T\ot_{T^{\tn{ft}}} {}^{1\!}I}\cong \xt{i}{T^{\textnormal{ft}}}{\mc{N}}{{}^{1\!}I}\ot_{T^{\textnormal{ft}}}T\cong \xt{i}{T^{\textnormal{ft}}}{\mc{N}}{{}^{1\!}I}_{\fr{m}} \,.
\end{equation}
By (a) there is a natural map \(\gH{0}^{i}(S, \vG^{\textnormal{ft}},I)_{\fr{m}}\ra\gH{0}^{i}(S,\vG,T\ot_{T^{\tn{ft}}} I)\). We conclude by the natural long exact sequence in Proposition \ref{prop.lang} and the \(5\)-lemma.
\end{proof}
Let \(k\) be a field, \(A\) an algebraic \(k\)-algebra with \(A/\fr{m}_{A}\cong k\) and \(N\) a finite \(A\)-module. Without any Cohen-Macaulay condition on \(A\) we define a deformation \((h\co S\ra T,\mc{N})\) of the pair \((A,N)\) to an \(S\) in \(\QH/k\) as before and obtain the deformation functor \(\df{}{(A,N)}\co \QH/k\ra\Sets\) as equivalence classes of deformations of pairs.

We say that \(A\) is an \emph{isolated singularity over \(k\)} if there is a finite type \(k\)-algebra \(A^{\text{ft}}\) with a maximal ideal \(\fr{m}_{0}\) such that the henselisation \((A^{\text{ft}})^{\text{h}}_{\fr{m}_{0}}\) is isomorphic to \(A\) and which is smooth over \(k\) at all points in \(\Spec A^{\text{ft}} \setminus\{\fr{m}_{0}\}\). We say that the pair \((A,N)\) is an \emph{isolated singularity over \(k\)} if \(A\) is an isolated singularity over \(k\) and if \(N_{\fr{p}}\) is a free \(A_{\fr{p}}\)-module for all prime ideals \(\fr{p}\neq\fr{m}_{A}\). The following theorem is a consequence of results of R.\ Elkik and an argument of H.\ von Essen.
\begin{thm}\label{thm.ExVers}
Let \((A,N)\) be an isolated singularity over the field \(k\) with \(A\) equidimensional\textup{.} Then \(\df{}{(A,N)}\co \QH/k\ra\Sets\) has a versal element\textup{.}
\end{thm}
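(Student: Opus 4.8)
The strategy is the standard one for proving existence of a versal element via Artin's Approximation Theorem and Schlessinger-type criteria (in the form due to Artin \cite{art:74}), reduced via Elkik's theorem and von Essen's trick to a formal problem. First I would set up the formal deformation theory of the pair: combining Proposition \ref{prop.grobs} and Proposition \ref{prop.obsmodule} (or more conveniently the graded reformulation of Section \ref{sec.def}, viewing a deformation of \((A,N)\) as a deformation of the graded algebra \(\vG=A\oplus N\)), one gets that \(\df{}{(A,N)}\) restricted to \(\kA/k\) has a tangent space \(\gH{0}^{1}(k,\vG,\vG)\) and an obstruction theory valued in \(\gH{0}^{2}(k,\vG,\vG)\). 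The key finiteness input is that, since \((A,N)\) is an isolated singularity over \(k\), the long exact sequence of Proposition \ref{prop.lang} together with the corresponding finiteness for Andr\'e–Quillen cohomology of an isolated algebra singularity (the modules \(\gH{0}^{i}(k,\vG,\vG)\) sit in an exact sequence between \(\cH^{i}(k,A,A)\) and \(\xt{i}{A}{N}{N}\), both of which are finite-dimensional over \(k\) for \(i\geq 1\) because \(A\) is smooth and \(N\) is locally free on the punctured spectrum) shows \(\dim_{k}\gH{0}^{i}(k,\vG,\vG)<\infty\) for \(i=1,2\). Hence by Schlessinger/Artin the restriction of \(\df{}{(A,N)}\) to the category of artinian rings \(\kA/k\) admits a formally versal formal element \(\{\xi_{n}\}\in\varprojlim\df{}{(A,N)}(k[x]/\fr{m}^{n+1})\) over a complete local ring \(R=k[[x_{1},\dots,x_{r}]]/(\text{relations})\), or at least a formally versal formal couple.

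\emph{From formal to algebraic.} The next step is to algebraize. Here one invokes R.\ Elkik's theorem on algebraization/approximation of formal solutions of systems of equations over excellent henselian rings, in the form used by H.\ von Essen for deformations of modules: given the formally versal formal deformation \((\hat T,\hat{\mc N})\) of \((A,N)\) over \(\hat R\), one presents the data (the algebra \(T\) as a quotient of a polynomial ring, the module \(\mc N\) by a finite presentation, together with the flatness and the lifting/compatibility conditions) as the vanishing locus of a system of polynomial equations with coefficients in an algebraic (hence excellent henselian) \(k\)-algebra. Since \((A,N)\) is an isolated singularity, the relevant Jacobian ideal is primary to the maximal ideal, which is exactly the hypothesis needed to apply Elkik's approximation (or Popescu's general Néron desingularization, \cite{pop:86,pop:90}) to replace the formal solution over \(\hat R\) by an algebraic one over an algebraic \(k\)-algebra \(R'\), agreeing with the formal solution modulo a high power of the maximal ideal. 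Von Essen's argument guarantees that this algebraic family is again formally versal (agreement to second order suffices, and formal versality depends only on the second-order neighbourhood). The fact that \(A\) is equidimensional is used to ensure that \(A\) (and hence the nearby fibres) behave uniformly — in particular that the locally free locus of \(N\) and the smooth locus of \(A\) spread out over the base.

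\emph{From formally versal to versal.} Finally, once we have an algebraic \(k\)-algebra \(R'\) in \(\kH/k\) and an element \(\nu\in\df{}{(A,N)}(R')\) which is formally versal (i.e.\ the induced map \(\hm{}{\kH/k}{R'}{-}\to\df{}{(A,N)}\) is formally smooth on \(\kA/k\)), Artin's Approximation Theorem \cite{art:69} upgrades formal smoothness to smoothness on all of \(\kH/k\): this is exactly the mechanism used in the proof of Theorem \ref{thm.defgrade}(iii) above, exploiting that \(\df{}{(A,N)}\) is locally of finite presentation (because \(A\) is algebraic and \(N\) finitely presented) and that algebraic \(k\)-algebras are excellent. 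Thus \(\nu\) is a versal element in the sense of Definition \ref{def.smooth}, completing the proof.

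\emph{Main obstacle.} The technical heart — and the step I expect to be the real work — is the algebraization step: verifying that the defining equations for a deformation of the pair \((A,N)\) (algebra structure, module structure, flatness of both \(T\) over \(S\) and \(\mc N\) over \(S\), and the interaction between them) can be arranged into a form to which Elkik's theorem applies, with the smoothness/Jacobian hypothesis supplied precisely by the isolated-singularity assumption on \((A,N)\). The finiteness of the cohomology groups \(\gH{0}^{1},\gH{0}^{2}\) and the passage from formally versal to versal are comparatively routine given the long exact sequence of Proposition \ref{prop.lang} and Artin's theorem; the geometric input that an isolated singularity of the pair forces the relevant obstruction modules to be finite and the relevant Jacobian ideals to be \(\fr{m}_{A}\)-primary is what makes Elkik/von Essen applicable.
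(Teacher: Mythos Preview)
Your strategy matches the paper's: verify Schlessinger/Artin conditions, algebraize via Elkik--von Essen, then upgrade formal versality to versality. Two steps need more care.

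First, the finiteness you establish (\(\dim_{k}\gH{0}^{i}(k,\vG,\vG)<\infty\)) is only over the closed point. Artin's condition (S2) in \cite[3.2--3.3]{art:74}, which you need for the final step, demands that for \emph{any} deformation \((S\to T,\mc{N})\) and any finite \(S\)-module \(I\), the modules \(\cH^{1}(S,T,T\ot I)\) and \(\xt{1}{T}{\mc{N}}{\mc{N}\ot I}\) are finite as \(S\)-modules. The paper establishes this by passing to a finite-type representative \(S\to T^{\text{ft}}\), observing that the non-smooth locus \(V(J)\) has \(J_{0}\) primary to \(\fr{m}_{0}\) so that \(T^{\text{ft}}/J^{N+1}\) is \(S\)-finite, and using that Andr\'e--Quillen cohomology is unchanged under the \'etale extension \(T^{\text{ft}}\to T\). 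Your over-\(k\) argument is the right template but must be run relatively over \(S\).

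Second, and more seriously, ``Artin's Approximation Theorem upgrades formal smoothness to smoothness'' is not a stand-alone statement. The passage from formally versal to versal is \cite[3.3]{art:74}, which (beyond (S1), (S2), and local finite presentation) requires condition (iii): given two elements over \(S^{*}=\varprojlim S/I^{n+1}\) with a compatible tower of isomorphisms of their truncations, there exists an actual isomorphism lifting the \(0\)-th one. The paper verifies this by lifting the tower to finite-type representatives via the cohomology comparison, applying Elkik's \cite[Lemme p.\ 600]{elk:73} to the henselised algebras, and then Lemma~\ref{lem.Aapprox} to the modules. Your appeal to ``the mechanism used in the proof of Theorem~\ref{thm.defgrade}(iii)'' is misplaced: that argument \emph{assumes} a versal element and uses approximation to prove surjectivity of a different map \(\sigma_{L'}\); it does not itself show that formal versality implies versality.

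Your identification of the algebraization step as the main obstacle is correct, and the paper likewise splits it in two: \cite[Th\'eor\`eme~7]{elk:73} algebraizes the algebra, then von Essen's \cite[2.3]{ess:90} computation of the relevant completion together with \cite[Th\'eor\`eme~3]{elk:73} (which needs the module locally free off the closed locus --- supplied by the isolated-singularity hypothesis on \(N\)) algebraizes the module.
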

\begin{proof}
We apply \cite[3.2]{art:74} with the extension to arbitrary excellent coefficient rings given by \cite[1.5]{con/jon:02} to show the existence of a formally versal element for \(\df{}{(A,N)}\). By the finiteness conditions it follows that \(\df{}{(A,N)}\) is locally of finite presentation. The condition (S1) holds in general by Proposition \ref{prop.grobs}. Let \((h\co S\ra T,\mc{N})\) be a deformation to \(S\). 
Let \((T^{\text{ft}}, \fr{m})\) be an \(S\)-flat finite type representative for \(T\) such that \(A^{\text{ft}}=T^{\text{ft}}\ot_{S}k\) has a single non-smooth closed point \(\fr{m}_{0}=\fr{m}A^{\text{ft}}\) and let \(\mc{N}^{\text{ft}}\) be an \(S\)-flat finite \(T^{\text{ft}}\)-module representing \(\mc{N}\). The singular locus of \(N^{\text{ft}}=\mc{N}^{\text{ft}}\ot_{S}k\) is closed and equal to \(V(J_{0})\) for some ideal \(J_{0}\sbeq A^{\text{ft}}\). But \(\fr{m}_{0}\) is isolated in \(V(J_{0})\) so (possibly after inverting some element in \(T^{\text{ft}}\)) we may assume that \(N^{\text{ft}}\) is locally free away from \(\fr{m}_{0}\).
Let \(I\) be a finite \(S\)-module. By Lemma \ref{lem.AQhens}, \(\gH{0}^{1}(S,\vG,\vG\ot_{S} I)\) with \(\vG=T{\oplus}\mc{N}\) is a finite \(S\)-module, so by Proposition \ref{prop.grobs} condition (S2) holds.

For effectivity, there is a deformation functor \(\df{}{(A^{\text{ft}},N^{\text{ft}})}\co \QH/k\ra\Sets\) of base change maps of pairs \((S\ra T^{\text{ft}},\mc{N}^{\text{ft}})\ra (k\ra A^{\text{ft}},N^{\text{ft}})\) where \(T^{\text{ft}}\) is a flat \(S\)-algebra of finite type and \(\mc{N}^{\text{ft}}\) is an \(S\)-flat finite \(T^{\text{ft}}\)-module. Base change is given by the standard tensor product. Similarly there is a \(\df{}{A^{\text{ft}}}\). Restricted to \(\QA/k\) \(\df{}{(A^{\text{ft}},N^{\text{ft}})}\) satisfies (S1) and (S2).  Hence there exists a formally versal formal element \(\{(T_{n}^{\text{ft}},\mc{N}^{\text{ft}}_{n})\}\) in \(\limproj\df{}{(A^{\text{ft}},N^{\text{ft}})}(S_{n})\) where \(S_{n}=S/\fr{m}_{S}^{n+1}\) for some \(S=\hat{S}\) in \(\QH/k\). By \cite[Th\'{e}or\`{e}m 7, p.\ 595]{elk:73} (cf.\ \cite[II 5.1]{art:76}) there exists an element \(S\ra T^{\text{ft}}\) in \(\df{}{A^{\text{ft}}}(S)\) which induces \(\{T_{n}^{\text{ft}}\}\). Let \(T'\) be the henselisation of \(T^{\text{ft}}\) in the maximal ideal \(\fr{m}=(T^{\text{ft}}{\ra} A)^{{-}1}(\fr{m}_{A})\). Then \(S\ra T'\) is a deformation of \(A\). Let \(T^{*}\) be the completion of \(T'\) at the ideal \(\fr{n}=\fr{m}_{S}T'\) and let \(\mc{N}^{*}=\limproj \mc{N}_{n}\). Then \(\mc{N}^{*}\) is an \(S\)-flat finite \(T^{*}\)-module. Let \(J^{*}\sbeq T^{*}\) denote the ideal \(I(\phi)\) where \(\phi\) is a minimal presentation of \(\mc{N}^{*}\). Then \(J^{*}\) defines the locus \(V(J^{*})\) where \(\mc{N}^{*}\) is not locally free. Let \(J=\ker(T'\ra T^{*}{/}J^{*})\). Since \(T^{*}/J^{*}\) is finite as \(S\)-module, \(T'/J\cong T^{*}/J^{*}\). The proof of \cite[2.3]{ess:90} works in this situation too (there is a typo in line 5: it should be a direct sum, not a tensor product) and shows that the completion of \(T'\) in the ideal \(\fr{a}=J\cap\fr{m}_{S}T'\) equals \(T^{*}\).  Since \(\mc{N}^{*}\) is locally free on the complement of \(V(\fr{a}T^{*})\), the conditions in \cite[Th\'{e}or\`{e}m 3]{elk:73} hold. From this result we obtain a finite \(T'\)-module \(\mc{N}\) inducing \(\mc{N}^{*}\). In particular \(\mc{N}\) is \(S\)-flat. 

We claim that the henselisation map \(\df{}{(A^{\text{ft}},N^{\text{ft}})}\ra\df{}{(A,N)}\) is formally smooth. It follows that the element \((T',\mc{N})\) in \(\df{}{(A,N)}(S)\) is formally versal. For the claim, put \(\vG^{\text{ft}}=A^{\text{ft}}{\oplus}N^{\text{ft}}\) and \(\vG=A{\oplus} N\) and let \(\pi\co S_{1}\ra S_{0}=S_{1}/I\) be a small surjection in \(\QA/k\). 
The obstruction \(\ob(\pi,\vG^{\text{ft}}_{0})\in\gH{0}^{2}(k,\vG^{\text{ft}},\vG^{\text{ft}})\ot_{k}I\) for lifting a deformation \(\vG^{\text{ft}}_{0}\) of \(\vG^{\text{ft}}\) along \(\pi\) maps to the corresponding obstruction \(\ob(\pi,\vG_{0})\in\gH{0}^{2}(k,\vG,\vG)\ot_{k}I\). 
The isomorphisms \(\cH^{i}(S, T,T)\cong \cH^{i}(S, T^{\text{ft}},T^{\text{ft}})\ot_{T^{\text{ft}}}T\) for all \(i\) implies isomorphisms \(\gH{0}^{i}(k,\vG^{\text{ft}},\vG^{\text{ft}})\cong \gH{0}^{i}(k,\vG,\vG)\) for \(i=1,2\) as in the beginning of the proof. Smoothness follows by the standard obstruction argument. By \cite[3.2]{art:74} and \cite{con/jon:02} there is an algebraic \(k\)-algebra \(R\) and a formally versal element \((T,\mc{N})\) in \(\df{}{(A,N)}(R)\).

Finally we apply \cite[3.3]{art:74} (for general excellent coefficients) to conclude that the formally versal element \((T,\mc{N})\) is versal. We already have (S1) and (S2). To check \cite[3.3(iii)]{art:74}, let \(S\) be algebraic in \(\QH/k\), \(I\) an ideal in \(S\) and put \(S^{*}=\limproj S_{n}\) where \(S_{n}=S/I^{n{+}1}\).  Let \({}^{i}\xi=({}^{i}T,{}^{i}\mc{N})\) for \(i=1,2\) be two elements in \(\df{}{(A,N)}(S^{*})\) and \(\{\theta_{n}\co {}^{1}\xi_{n}\cong{}^{2}\xi_{n}\}\) be a tower of isomorphisms between the \(S_{n}\)-truncations. There are finite type representatives \({}^{i}\xi^{\text{ft}}=({}^{i}T^{\text{ft}},{}^{i}\mc{N}^{\text{ft}})\) of the \({}^{i}\xi\). By the cohomology argument above one obtains by induction a tower of isomorphisms \(\{\theta_{n}^{\text{ft}}\co {}^{1}\xi_{n}^{\text{ft}}\cong{}^{2}\xi_{n}^{\text{ft}}\}\) inducing \(\{\theta_{n}\}\). Since \(\limproj \hat{S}/I^{n{+}1}\hat{S}\cong S^{*}\) where \(\hat{S}\) is the completion of \(S\) in the maximal ideal, we can apply \cite[Lemme p.\ 600]{elk:73} to conclude that the henselisations of the \({}^{i}T^{\text{ft}}\) in \({}^{i}T^{\text{ft}}I\) are isomorphic by an isomorphism lifting \(\theta_{0}:{}^{1}T_{0}\cong{}^{2}T_{0}\). 
Further henselisation in the maximal ideals gives an isomorphism of deformations \({}^{1}T\cong{}^{2}T\). By Lemma \ref{lem.Aapprox} the isomorphism is extended to an isomorphism of the pairs \(\psi\co {}^{1}\xi\cong{}^{2}\xi\) which lifts \(\theta_{0}\). By \cite[1.3]{ess:90} condition \cite[3.3(ii)]{art:74} is redundant and we conclude that \((T,\mc{N})\) is versal.
\end{proof}
\begin{rem}\label{rem.isomod}
Let \(A\) be an Cohen-Macaulay algebraic \(k\)-algebra and \(N\) a finite \(A\)-module. We say that \(N\) has an \emph{isolated singularity} if \(N_{\fr{p}}\) is a free \(A_{\fr{p}}\)-module for all prime ideals \(\fr{p}\neq\fr{m}_{A}\). Then an easier argument gives that \(\df{{\To}}{N}\) has a versal element. This is the result \cite[2.4]{ess:90} of von Essen, but for a slightly different fibred category of deformations where henselisation is taken along the closed fibre. However it implies the result in our case, essentially by henselisation at \(\fr{m}_{0}\). Corollaries \ref{cor.ExVers} and \ref{cor.2dim} below have obvious analogs for \(\df{{\To}}{N}\).
\end{rem}
\begin{cor}\label{cor.ExVers}
Suppose \(A\) is a Cohen-Macaulay isolated singularity over the field \(k\) and \(N\) is a finite length \(A\)-module\textup{.} Let \(0\ra L\ra M\ra N\ra 0\) and \(0\ra N\ra L'\ra M'\ra0\) be the minimal \(\cat{MCM}_{A}\)-approximation and \(\hat{\cat{D}}_{A}\)-hull of \(N\)\textup{,} respectively\textup{.}
\begin{enumerate}
\item[(i)] \(\df{}{(A,N)}\) has a versal element\textup{.}
\item[(ii)] If \(\dim A\geq 2\) and \(Q'\) denotes \(\hm{}{A}{\omega_{A}}{L'}\) then
\begin{equation*}
\df{}{(A,N)}\cong\df{}{(A,L')}\cong\df{}{(A,Q')}\textup{.}
\end{equation*}
\item[(iii)]  If \(\dim A\geq 3\) and \(Q\) denotes \(\hm{}{A}{\omega_{A}}{L}\) then 
\begin{equation*}
\df{}{(A,N)}\cong\df{}{(A,L)}\cong\df{}{(A,Q)}\textup{.}
\end{equation*}
\end{enumerate} 
\end{cor}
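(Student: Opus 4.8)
The plan is to combine the existence result Theorem \ref{thm.ExVers} with the injectivity/isomorphism results for the approximation maps (Theorem \ref{thm.defgrade}, Corollary \ref{cor.defgrade}, Proposition \ref{prop.defequiv}), using that a finite length module over a singularity of dimension $\geq 2$ automatically has enough grade. First I would observe that if $N$ has finite length and $\dim A = d$, then the pair $(A,N)$ is an isolated singularity over $k$ in the sense of Section \ref{sec.versal}: indeed $A$ is an isolated singularity by hypothesis, and $N_{\fr{p}}$ has finite length for $\fr{p}\neq\fr{m}_A$ while being a module over the regular local ring $A_{\fr{p}}$, forcing $N_{\fr{p}}=0$, hence free. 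Since $A$ is Cohen-Macaulay (hence equidimensional as it is local) of dimension $d\geq 1$, Theorem \ref{thm.ExVers} applies and gives a versal element in $\df{}{(A,N)}$, which is statement (i). (For $d=1$ one still has the isolated singularity hypothesis giving (i); the dimension bounds only enter in (ii) and (iii).)

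For (ii), note that $\grade N = \depth_{\fr{m}_A}N = 0$ is false only if $N$ has finite length and $d=0$; more precisely, since $N$ has finite length, $\grade N = \depth A$ localized away is irrelevant, and one computes $\grade N \geq \min\{i : \xt{i}{A}{N}{A}\neq 0\}$, which for a finite length module equals $d = \dim A$ by local duality (a finite length module is Cohen-Macaulay of codimension $d$, so $\xt{i}{A}{N}{\omega_A}=0$ for $i\neq d$ and is nonzero for $i=d$; as $\omega_A$ and $A$ agree up to the relevant depth computation, $\grade N = d$). Thus $\dim A\geq 2$ gives $\grade N\geq 2$. Now apply Corollary \ref{cor.defgrade}(iii): since $\df{}{(A,N)}$ has a versal element by (i), the map $\sigma_{L'}:\df{}{(A,N)}\ra\df{}{(A,L')}$ is an isomorphism. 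Then Proposition \ref{prop.defequiv}(i) identifies $\df{}{(A,L')}$ with $\df{}{(A,Q')}$ where $Q'=\hm{}{A}{\omega_A}{L'}$ has finite projective dimension. This chains to $\df{}{(A,N)}\cong\df{}{(A,L')}\cong\df{}{(A,Q')}$, which is (ii). I should be careful that Corollary \ref{cor.defgrade} is stated for $\df{T^{\mspace{-1mu}\mathit{o}}}{N}$, so I will take $\Q=k$ and $T^{\mspace{-1mu}\mathit{o}}=A$ (the special case noted before the corollary), and similarly note that Theorem \ref{thm.defgrade}(iii) applies directly to $\df{}{(A,N)}$ over $\QH/k$ once one knows the versal element exists.

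For (iii), the same argument runs with $L$ in place of $L'$: by the last sentence of Theorem \ref{thm.defgrade} (and Corollary \ref{cor.defgrade}), the statement for $\sigma_L$ requires the grade bounds strengthened by one, so $\grade N\geq 3$, i.e.\ $\dim A\geq 3$. Then $\sigma_L:\df{}{(A,N)}\ra\df{}{(A,L)}$ is an isomorphism since a versal element exists, and Proposition \ref{prop.defequiv}(i) gives $\df{}{(A,L)}\cong\df{}{(A,Q)}$ with $Q=\hm{}{A}{\omega_A}{L}$ of finite projective dimension. This yields $\df{}{(A,N)}\cong\df{}{(A,L)}\cong\df{}{(A,Q)}$, proving (iii).

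The main obstacle I anticipate is not any of these chained applications but rather verifying cleanly that the hypotheses of Theorem \ref{thm.ExVers} are met, in particular the equidimensionality of $A$ (immediate since $A$ is Cohen-Macaulay and local, hence equidimensional) and that "isolated Cohen-Macaulay singularity" in the statement of this corollary is precisely the notion making $(A,N)$ an isolated singularity of a pair — this needs the finite length hypothesis on $N$, which is exactly what gives freeness of $N_{\fr{p}}$ for $\fr{p}\neq\fr{m}_A$. A secondary point requiring care is the grade computation $\grade N = \dim A$ for finite length $N$; I would justify it by noting that finite length modules are maximal Cohen-Macaulay in codimension, i.e.\ $\grade N = d$ because $\xt{i}{A}{N}{A}=0$ for $i<d$ (as $\depth A = d$ and $N$ has finite length, any nonzero map forces a regular sequence of length $d$ to act, so $\grade N\geq d$; equality holds as $\grade N\leq \dim A$). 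With these two checks in place the rest is a direct invocation of the already-established theorems.
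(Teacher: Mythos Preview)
Your proposal is correct and follows the same route as the paper, whose proof is the single line ``This is Theorem \ref{thm.ExVers}, Theorem \ref{thm.defgrade} and Proposition \ref{prop.defequiv}.'' You correctly unpack the three ingredients: Theorem \ref{thm.ExVers} gives (i) once you observe $(A,N)$ is an isolated singularity pair; Theorem \ref{thm.defgrade}(iii) gives the first isomorphisms in (ii) and (iii) once you know $\grade N=\dim A$; and Proposition \ref{prop.defequiv}(i) gives the second isomorphisms. One minor cleanup: your grade computation is momentarily muddled (the sentence beginning ``$\grade N=\depth_{\fr{m}_A}N=0$ is false\ldots'' confuses $\grade N$ with $\depth N$); the direct argument is that $N$ of finite length has $\fr{m}_A$-primary annihilator, so $\grade N=\grade(\ann N,A)=\depth A=\dim A$ since $A$ is Cohen--Macaulay.
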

\begin{proof}
This is Theorem \ref{thm.ExVers}, Theorem \ref{thm.defgrade} and Proposition \ref{prop.defequiv}.
\end{proof}
\begin{cor}\label{cor.2dim}
Suppose \(A\) is an algebraic \(k\)-algebra which is a Gorenstein normal domain with \(\dim A=2\) and \(N\) is a finite torsion-free \(A\)-module\textup{.} Let \(0\ra L\ra M\ra N\ra0\) be the minimal \(\cat{MCM}_{A}\)-approximation of \(N\)\textup{.} Assume \(k\) is a perfect field\textup{.} Then \(\df{}{(A,N)}\) and \(\df{}{(A,M)}\) both have versal elements and the map \(\sigma_{M}\co \df{}{(A,N)}\ra\df{}{(A,M)}\) is smooth\textup{.}
\end{cor}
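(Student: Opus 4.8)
The plan is to derive both assertions from Theorem~\ref{thm.ExVers} (existence of versal elements) and Proposition~\ref{prop.Gor} (smoothness of $\sigma_{M}$). For the versal elements I would show that each of the pairs $(A,N)$ and $(A,M)$ is an isolated singularity over $k$ with $A$ equidimensional; the latter is immediate since $A$ is a domain. The real content is that $A$ itself is an isolated singularity over $k$, and this is where the three hypotheses ``$A$ normal'', ``$\dim A=2$'' and ``$k$ perfect'' are used.

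To see this, fix a finite type $k$-algebra $A^{\text{ft}}$ with a maximal ideal $\fr{m}_{0}$, $A^{\text{ft}}/\fr{m}_{0}\cong k$, whose henselisation at $\fr{m}_{0}$ is $A$. Since $A$ is a normal domain, so is the local ring $A^{\text{ft}}_{\fr{m}_{0}}$ (it is a subring of the domain $A$, and normality descends along the faithfully flat henselisation $A^{\text{ft}}_{\fr{m}_{0}}\to A$); as the normal locus of the excellent ring $A^{\text{ft}}$ is open, after replacing $\Spec A^{\text{ft}}$ by a suitable affine neighbourhood of $\fr{m}_{0}$ we may assume $A^{\text{ft}}$ is a normal domain of dimension $2$. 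Then $A^{\text{ft}}$ is regular in codimension $\leq 1$, so its non-regular locus is a finite set of closed points, and shrinking once more we may assume it equals $\{\fr{m}_{0}\}$. Because $k$ is perfect, a finite type $k$-algebra is smooth over $k$ exactly at its regular points; hence $A^{\text{ft}}$ is smooth over $k$ on $\Spec A^{\text{ft}}\setminus\{\fr{m}_{0}\}$, i.e.\ $A$ is an isolated singularity over $k$. In particular $A_{\fr{p}}$ is regular for every $\fr{p}\neq\fr{m}_{A}$.

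Next I would check the two module conditions. By hypothesis $N_{\fr{p}}$ is $A_{\fr{p}}$-free for $\fr{p}\neq\fr{m}_{A}$, so $(A,N)$ is an isolated singularity over $k$ and Theorem~\ref{thm.ExVers} yields a versal element for $\df{}{(A,N)}$. For $M$: since $M$ lies in $\cat{MCM}_{A}$ and $A$ is Cohen-Macaulay, for each $\fr{p}\neq\fr{m}_{A}$ the localisation $M_{\fr{p}}$ is a (possibly zero) maximal Cohen-Macaulay module over the regular local ring $A_{\fr{p}}$, hence free. Thus $(A,M)$ is also an isolated singularity over $k$, and Theorem~\ref{thm.ExVers} gives a versal element for $\df{}{(A,M)}$.

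Finally, for the smoothness of $\sigma_{M}\colon\df{}{(A,N)}\to\df{}{(A,M)}$: since $N$ is a nonzero torsion-free module over the $2$-dimensional domain $A$ we have $\Ass N=\{(0)\}$, so $\depth N\in\{1,2\}$. If $\depth N=2$ then $N$ is maximal Cohen-Macaulay, its minimal $\cat{MCM}_{A}$-approximation is $0\to 0\to N\xrightarrow{\id}N\to 0$ (the zero map $0\to N$ has no common $\omega_{A}$-summand), so $M\cong N$ and $\sigma_{M}=\id$ is smooth. If $\depth N=1=\dim A-1$, then taking $\Q=k$ (which is an excellent ring, being a field) Proposition~\ref{prop.Gor} applies directly to $\df{}{(A,N)}\colon\kH/k\to\Sets$ and shows $\sigma_{M}$ is smooth. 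The main obstacle is the second paragraph: arranging the finite type model to be smooth over $k$ precisely away from $\fr{m}_{0}$, which is exactly where the normality, dimension-$2$ and perfectness hypotheses are consumed; the remaining steps are a bookkeeping combination of already-proved results.
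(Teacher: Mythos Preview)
Your proof is correct and follows the same route as the paper's: verify that $(A,N)$ and $(A,M)$ are isolated singularities over $k$ and apply Theorem~\ref{thm.ExVers}, then invoke Proposition~\ref{prop.Gor} for smoothness. You spell out what the paper compresses into a citation of EGA~IV$_{2}$ 6.7.7 and 6.8.6 (normal $+$ $\dim 2$ $+$ $k$ perfect $\Rightarrow$ $A$ is an isolated singularity, and MCM localises to free over the regular punctured spectrum), and you make the harmless case $\depth N=2$ explicit where the paper subsumes it under the remark that torsion-free is equivalent to being a first syzygy.
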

\begin{proof}
Since \(A\) is a domain \(N\) torsion-free implies that \(N\) is a first syzygy. It follows that \(N_{\fr{p}}\) is a MCM \(A_{\fr{p}}\)-module for all primes \(\fr{p}\neq\fr{m}_{A}\) and since \(A\) is \(2\)-dimensional and normal \(N_{\fr{p}}\) is free. As \(k\) is perfect it follows from \cite[6.7.7 and 6.8.6]{EGAIV2} that \((A,N)\) and \((A,M)\) are isolated singularities and hence Theorem \ref{thm.ExVers} applies. Since \(\depth N\geq 1\), \(L\) is projective and by Theorem \ref{thm.defgrade2} (iii) \(\sigma_{M}\) is smooth.
\end{proof}
\section{Deforming maximal Cohen-Macaulay approximations\\ of Cohen-Macaulay modules}\label{sec.defCM}
Let \(h\co S\ra T\) be a homomorphism of local noetherian rings. An \emph{\(h\)-sequence} (or just an \(h\)-regular element if \(n=1\)) is a sequence \(J=(f_{1},\dots,f_{n})\) in \(T\) such that the image \(\bar{J}\) in \(A=T\ot_{S}S/\fr{m}_{S}\) is an \(A\)-sequence. Applying the Koszul complex \(K(J)\) to define a cohomological \(\delta\)-functor, \cite[5.1-2]{ogu/ber:72} implies that an \(h\)-sequence is a transversally \(T\)-regular sequence relative to \(S\) as defined in \cite[19.2.1]{EGAIV4}.
In particular, \(J\) is an \(h\)-sequence if and only if \(J\) is a \(T\)-sequence and \(T/J\) is \(S\)-flat. 
\begin{thm}\label{thm.defMCM}
Let \(q\co \vL\ra {\To}\) denote the henselisation of a finite type Cohen-Macaulay map at a maximal ideal with \({\To}/\fr{m}_{{\To}}=k\) and \({\To}\ot_{\vL}k=A\)\textup{.} Suppose \(J=(f_{1},\dots,f_{n})\) is a \(q\)-sequence\textup{.} Put \({\bTo}={\To}/J\)\textup{,} \(B={\bTo}\ot_{\vL}k\) and let \(\bar{J}\) be the image of \(J\) in \(A\)\textup{.}
Let \(N\) be a maximal Cohen-Macaulay \(B\)-module and \(0\ra L\ra M\ra N\ra 0\) the minimal \(\cat{MCM}_{A}\)-approximation of \(N\)\textup{.} 
If \(\ob{}{}(A/\bar{J}^{2}\ra B,N)=0\) then the composition of maps
\begin{equation*}
\df{{\bTo}}{N}\lra\df{{\To}}{N} \lra\df{{\To}}{M}
\end{equation*}
of functors \(\QH/k\ra\Sets\) is injective\textup{.}
\end{thm}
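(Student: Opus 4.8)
The plan is to reduce the injectivity of the composition to two separate injectivity statements, one for each arrow, using the minimal $\cat{MCM}$-approximation machinery of Theorem~\ref{thm.flatCMapprox}, Corollary~\ref{cor.minapprox}, and the cohomological comparison of deformations set up in Sections~\ref{sec.def}--\ref{sec.defCM}. The first arrow $\df{\bar{T}^{\mspace{-1mu}\mathit{o}}}{N}\ra\df{T^{\mspace{-1mu}\mathit{o}}}{N}$ forgets that the deformation of $N$ is annihilated by (a lift of) $\bar J$; the second arrow $\df{T^{\mspace{-1mu}\mathit{o}}}{N}\ra\df{T^{\mspace{-1mu}\mathit{o}}}{M}$ is the map $\sigma_M$ from Definition~\ref{defn.defmap}. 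The key input for the first arrow is the hypothesis $\ob(A/\bar J^2\ra B,N)=0$: as the abstract and introduction indicate, this vanishing is equivalent to the splitting of $B\ot_A M\ra N$, which is exactly what lets one recover the $q$-sequence (equivalently, the $\bar{T}^{\mspace{-1mu}\mathit{o}}$-structure) on a deformation of $N$ from the ambient $T^{\mspace{-1mu}\mathit{o}}$-deformation.

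First I would fix two deformations $({}^{i\!}h:S\ra {}^{i\!}\bar T, {}^{i\!}\mc{N})$ of $(B,N)$ over $\bar{T}^{\mspace{-1mu}\mathit{o}}$ (so ${}^{i\!}\bar T=\bar{T}^{\mspace{-1mu}\mathit{o}}\hot_\Q S$ with ${}^{i\!}\mc{N}$ a deformation of $N$), and assume their images in $\df{T^{\mspace{-1mu}\mathit{o}}}{M}$ are isomorphic. As in the proof of Theorem~\ref{thm.defgrade}(i), I would argue by constructing a tower of isomorphisms $\{\phi_n:{}^{1\!}\mc{N}_n\cong{}^{2\!}\mc{N}_n\}$ over $S_n=S/\fr{m}_S^{n+1}$ and conclude by Lemma~\ref{lem.Aapprox}. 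At each stage, assuming $\phi_n$ is constructed and identifies the two modules (call the result $\mc{N}_n$, a deformation over $T_n^{\mspace{-1mu}\mathit{o}}$ already, since $J$ remains a $q$-sequence after the flat base change and truncation), the ``difference'' of the two liftings to $T_{n+1}^{\mspace{-1mu}\mathit{o}}$ lies in $\xt{1}{T_n^{\mspace{-1mu}\mathit{o}}}{\mc{N}_n}{\mc{N}_n\ot I}$ by Proposition~\ref{prop.obsmodule}, and by the edge-map isomorphism of the spectral sequence~\eqref{eq.ss} this is $\xt{1}{A}{N}{N}\ot_k I$ (here one uses that $N$ is MCM over $B$ and that $\bar T^{\mspace{-1mu}\mathit{o}}$ is flat over $S$, together with Corollary~\ref{cor.xtdef}). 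Now the point is that the natural map $\xt{1}{A}{N}{N}\ra\xt{1}{A}{M}{N}$ is injective because $\grade_A N\geq n\geq 1$ (as $N$ is MCM over $B=A/\bar J$ and $\bar J$ is an $A$-sequence of length $n$, so $\depth_A N=\depth_A B<\dim A$), and by the argument in Example~\ref{ex.lang}(i) combined with Corollary~\ref{cor.xtdef} one has $\xt{1}{A}{M}{N}\cong\xt{1}{A}{M}{M}$ once $\grade N\geq 1$; since the $\cat{MCM}$-approximations of the ${}^{i\!}\mc{N}_{n+1}$ agree by hypothesis, the difference class maps to $0$ under this injection, hence vanishes, giving $\phi_{n+1}$.

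The subtle point — which I expect to be the main obstacle — is keeping track of \emph{two} equivalence relations simultaneously: a deformation of $N$ over $\bar{T}^{\mspace{-1mu}\mathit{o}}$ must be compared to one over $\bar{T}^{\mspace{-1mu}\mathit{o}}$, not merely over $T^{\mspace{-1mu}\mathit{o}}$, so I must verify that the isomorphisms $\phi_n$ I build over $T_n^{\mspace{-1mu}\mathit{o}}$ are automatically $\bar T_n^{\mspace{-1mu}\mathit{o}}$-linear, i.e.\ compatible with the quotient structure. This is where $\ob(A/\bar J^2\ra B,N)=0$ enters: it guarantees that the $q$-sequence $J$ acts as zero on each $\mc{N}_n$ in a way compatible with deformation (equivalently, that $\mc{N}_n$ is a deformation of the $B$-module $N$ and not just of the underlying $A$-module), so that a $T_n^{\mspace{-1mu}\mathit{o}}$-linear isomorphism lifting the identity on $N$ is forced to be $\bar T_n^{\mspace{-1mu}\mathit{o}}$-linear; one should spell out that $\mc{N}_n/J\mc{N}_n=\mc{N}_n$ and that this is preserved under the isomorphisms. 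I would also need to check that the map $\df{\bar T^{\mspace{-1mu}\mathit{o}}}{N}\ra\df{T^{\mspace{-1mu}\mathit{o}}}{N}$ really is well defined (each $\bar T^{\mspace{-1mu}\mathit{o}}$-deformation is in particular an $S$-flat finite $T^{\mspace{-1mu}\mathit{o}}$-module, which is clear), and invoke Corollary~\ref{cor.xtdef} repeatedly to pass between $\Ext$ over the deformed rings and $\Ext$ over $A$. Once the tower $\{\phi_n\}$ is in place, Lemma~\ref{lem.Aapprox} (applied with $\fr{a}=\fr{m}_S$, using $S$-flatness of ${}^{2\!}\mc{N}$) upgrades it to a genuine isomorphism of deformations over $\bar T^{\mspace{-1mu}\mathit{o}}$, proving injectivity of the composite.
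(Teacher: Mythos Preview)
Your overall architecture---build a tower of isomorphisms on $\fr{m}_S$-adic truncations and finish with Lemma~\ref{lem.Aapprox}---matches the paper. But there is a genuine gap in the cohomological step, and it sits exactly where you invoke the grade condition.

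The liftings you are comparing at each stage are liftings of $\mc{N}_n$ \emph{as a $\bar T^{\mspace{-1mu}\mathit{o}}_n$-module}, so by Proposition~\ref{prop.obsmodule} their ``difference'' lies in $\xt{1}{B}{N}{N}\ot_k I$, not in $\xt{1}{A}{N}{N}\ot_k I$. Even if one passes through $\df{T^{\mspace{-1mu}\mathit{o}}}{N}$, the map you actually have to show injective is $\xt{1}{B}{N}{N}\ra\xt{1}{A}{M}{M}$, and your proposed argument for $\pi^{*}:\xt{1}{A}{N}{N}\ra\xt{1}{A}{M}{N}$ being injective ``because $\grade_A N\geq 1$'' does not work: applying $\Hom_A(-,N)$ to $L\ra M\ra N$ shows that injectivity of $\pi^{*}$ is equivalent to surjectivity of $\Hom_A(M,N)\ra\Hom_A(L,N)$, and a grade condition on $N$ says nothing about this restriction map. (The grade argument in Theorem~\ref{thm.defgrade} controls $\iota_{*}$ for the $\hat{\cat D}$-hull, which is a different map.)

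The paper's route is to use the hypothesis $\ob(A/\bar J^2\ra B,N)=0$ not for the ``$\bar T^{\mspace{-1mu}\mathit{o}}$-linearity'' issue you flag, but directly for the injectivity of the torsor map. By Proposition~\ref{prop.obssplit} (whose proof rests on Lemma~\ref{lem.obssplit}), the vanishing of the obstruction is equivalent to the splitting of $\bar\pi:\bar M=B\ot_A M\ra N$. Hence $\bar\pi^{*}:\xt{1}{B}{N}{N}\hra\xt{1}{B}{\bar M}{N}$ is a split injection. Since $\bar J$ is $M$-regular, the change-of-rings spectral sequence gives $\xt{1}{B}{\bar M}{N}\cong\xt{1}{A}{M}{N}$, and $\xt{i}{A}{M}{L}=0$ for $i>0$ yields $\pi_{*}:\xt{1}{A}{M}{M}\cong\xt{1}{A}{M}{N}$. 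The composite
\[
\xt{1}{B}{N}{N}\xra{\;\bar\pi^{*}\;}\xt{1}{B}{\bar M}{N}\cong\xt{1}{A}{M}{N}\xla{\;\cong\;}\xt{1}{A}{M}{M}
\]
is therefore injective, and this is precisely the map of torsors induced by the composite functor. Once this is in hand, your tower argument and Lemma~\ref{lem.Aapprox} finish the proof exactly as you describe; the $\bar T^{\mspace{-1mu}\mathit{o}}$-linearity concern dissolves because you never leave the $\bar T^{\mspace{-1mu}\mathit{o}}$-module category.
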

\begin{ex}
The existence of a splitting \(B\ra A/\bar{J}^{2}\) implies that \(\ob{}{}(A/\bar{J}^{2},N)=0\)
for all \(B\)-modules \(N\) since \(A/\bar{J}^{2}\ot_{B}N\) gives a lifting of \(N\) to \(A/\bar{J}^{2}\).
\end{ex}
Let \(\cat{C}\) be a category. Then \(\Arr\cat{C}\) denotes the category with objects being arrows in \(\cat{C}\) and arrows being commutative diagrams of arrows in \(\cat{C}\). An endo-functor \(F\) on \(\cat{C}\) induces an endo-functor \(\Arr F\) on \(\Arr\cat{C}\). Let \(B\) be a noetherian local ring and \(\cat{P}_{\!B}\) the additive subcategory of projective modules in \(\cat{mod}_{B}\). Let \(\uhm{}{B}{N}{M}\) denote the homomorphisms from \(N\) to \(M\) in the quotient category \(\umod_{B}=\cat{mod}_{B}/\cat{P}_{\!B}\) i.e.\ \(B\)-homomorphisms modulo the ones factoring through an object in \(\cat{P}_{\!B}\). For each \(N\) in \(\cat{mod}_{B}\) we fix a minimal \(B\)-free resolution and use it to define the syzygy modules of \(N\). For each \(i\) the association \(N\mapsto\syz{B}{i}N\) induces an endo-functor on \(\umod_{B}\) defined by A.\ Heller \cite{hel:60}. Define \(\uxt{i}{B}{N}{M}\) as \(\uhm{}{B}{\syz{B}{i}N}{M}\) which turns out to be isomorphic to \(\xt{i}{B}{N}{M}\) for all \(i>0\).
\begin{lem}\label{lem.obssplit}
Let \(A\) be a noetherian local ring and \(J=(f_{1},\dots,f_{n})\) a regular sequence\textup{.} Put \(B=A/J\) and suppose \(N\)\textup{,} \(N_{1}\) and \(N_{2}\) are finite \(B\)-modules\textup{.} Let \(\bar{M}_{i}\) denote \(B\ot_{A}\syz{A}{n}N_{i}\)\textup{.}
\begin{enumerate}
\item[(i)] There is an inclusion \(u_{N}\co N\ra \bar{M}_{N}\) of \(B\)-modules with \(\bar{M}_{N}\cong B\ot_{A}\syz{A}{n}N\) which induces a functor \(u\co \ul{\cat{mod}}_{B}\ra\Arr\ul{\cat{mod}}_{B}\)\textup{.}
\item[(ii)] The functor \(u\) commutes with the \(B\)-syzygy functor\textup{:} 
\begin{equation*}
\Arr\syz{B}{i}(u_{N})=u_{\syz{B}{i}N}
\end{equation*}
\item[(iii)] The endo-functor \(B\ot_{A}\syz{A}{n}(-)\) induces a natural map \(\uxt{i}{B}{N_{1}}{N_{2}}\ra\uxt{i}{B}{\bar{M}_{1}}{\bar{M}_{2}}\) which makes the following diagram commutative for all \(i\)\textup{:}
\begin{equation*}
\xymatrix@-0pt@C-12pt@R-12pt@H-0pt{
\uxt{i}{B}{N_{1}}{N_{2}}\ar[rr]\ar[dr]_{(u_{N_{2}})_{*}} && \uxt{i}{B}{\bar{M}_{1}}{\bar{M}_{2}}\ar[dl]^{(u_{N_{1}})^{*}}\\
& \uxt{i}{B}{N_{1}}{\bar{M}_{2}} &
}
\end{equation*}
\item[(iv)] The inclusion \(u_{N}\co N\hra B\ot_{A}\syz{A}{n}N\) splits \(\Llra\ob(A/J^{2}\ra B,N)=0\).
\end{enumerate}
\end{lem}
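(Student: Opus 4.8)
The plan is to construct the module $\bar M_N$ and the inclusion $u_N$ directly from a minimal free resolution of $N$ over $A$, then to verify the four statements in order, with (iv) as the genuine content. First I would fix a minimal $A$-free resolution $F_\bullet \to N$ and set $\syz{A}{n}N = \coker(F_{n+1}\to F_n)$, so that there is an exact sequence $0\to \syz{A}{n}N \to F_{n-1}\to \dots \to F_0\to N\to 0$. Since $I=(f_1,\dots,f_n)$ is a regular sequence of length $n$, the Koszul complex $K_\bullet(f_1,\dots,f_n;A)$ is a free $A$-resolution of $B$, so $\tor{A}{i}{B}{N}$ is the homology of $K_\bullet\otimes_A N$. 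The key point is that truncating the exact sequence above and tensoring with $B$ produces, via a standard dimension-shifting / spectral-sequence comparison (or the change-of-rings argument already used in \eqref{eq.ss}), a natural $B$-module map $N \to B\otimes_A \syz{A}{n}N$ whose kernel and cokernel are controlled by the $\tor{A}{i}{B}{N}$ for $0<i\le n$. Because $N$ is a $B$-module, $f_j$ act as zero on $N$, and one computes that the composite $N \xra{\cdot f_j} N$ vanishing forces the relevant connecting maps to identify $N$ with a submodule of $B\otimes_A\syz{A}{n}N$; concretely $u_N$ is the map sending $n\in N$ to the class of a chosen lift through $F_0\otimes B$ pushed into the cokernel $B\otimes \syz{A}{n}N$. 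Functoriality of minimal resolutions up to homotopy (cf.\ Lemma \ref{lem.res} in the absolute case $A$ regular-free) makes $N\mapsto(u_N\colon N\hra \bar M_N)$ a well-defined functor $\ul{\cat{mod}}_B\to\Arr\ul{\cat{mod}}_B$, since two choices of resolution differ by a homotopy equivalence which becomes an isomorphism in the stable category; this gives (i).

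For (ii), the point is that $\syz{A}{n}(\syz{B}{i}N)$ and $\syz{B}{i}(\syz{A}{n}N\otimes_A B)$ agree up to free summands. I would argue this by splicing: a minimal $B$-free resolution of $N$ can be combined with the Koszul resolution of $B$ over $A$ (a standard "change of rings" spliced resolution, e.g.\ the Cartan--Eilenberg construction) so that the $(i+n)$-th syzygy over $A$ of $N$, reduced mod $I$, reads both as $B\otimes\syz{A}{n}(\syz{B}{i}N)$ and as $\syz{B}{i}(\bar M_N)$ up to projective summands. Naturality of the splicing in $N$ then gives $\Arr\syz{B}{i}(u_N)=u_{\syz{B}{i}N}$ in $\Arr\ul{\cat{mod}}_B$. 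Statement (iii) is then formal: the endo-functor $B\otimes_A\syz{A}{n}(-)$ acts on morphisms in $\ul{\cat{mod}}_B$, hence on $\uxt{i}{B}{-}{-}=\uhm{}{B}{\syz{B}{i}(-)}{-}$ via (ii), and the triangle commutes because pre/post-composition with $u_{N_1}$, $u_{N_2}$ is exactly the difference between the source map $\uxt{i}{B}{N_1}{N_2}\to\uxt{i}{B}{N_1}{\bar M_2}$ and its factorization through $\uxt{i}{B}{\bar M_1}{\bar M_2}$; this is a diagram chase using that $u$ is a natural transformation.

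The main obstacle, and the heart of the lemma, is (iv): the inclusion $u_N$ splits if and only if $\ob(A/I^2\to B,N)=0$ in $\xt{2}{B}{N}{N\otimes_B I/I^2}$ (identifying $I/I^2 \cong B^n$ since $I$ is generated by a regular sequence). The plan is to show both sides equal the same obstruction class. On one hand, by Proposition \ref{prop.obsmodule} applied to the square-zero extension $A/I^2\to B$ with kernel $I/I^2$, the class $\ob(A/I^2\to B,N)$ vanishes exactly when $N$ lifts to an $A/I^2$-module; I would then show this lifting obstruction coincides — via the standard identification of $\xt{}{}{}{}$-classes with extensions and a dimension shift by $n$ through the Koszul/free resolution — with the class of the extension $0\to \ker u_N \to \bar M_N\to \coker u_N\to 0$, or more precisely with the obstruction to splitting the short exact sequence defining $u_N$. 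The cleanest route is: a splitting of $u_N\colon N\hra \bar M_N$ is the same as a retraction $\bar M_N\to N$; chasing this retraction back through the spliced resolution produces an $A/I^2$-linear lifting of $N$ (and conversely a lifting produces a retraction), because $\bar M_N = B\otimes_A \syz{A}{n}N$ carries exactly the first-order data of how $f_1,\dots,f_n$ fail to annihilate a lift of $N$ over $A$. Making this precise requires tracking the connecting maps carefully, and showing the two obstruction classes literally agree (not merely vanish simultaneously) — that bookkeeping is where the real work lies, and it is why the statement is phrased with the explicit module $B\otimes\syz{A}{n}N$ rather than an abstract universal extension.
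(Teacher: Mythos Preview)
Your plan for (i)--(iii) is in the right spirit, but your description of $u_N$ in (i) is garbled: it is not obtained by lifting through $F_0\otimes B$ into a cokernel. The paper's construction is cleaner and you should adopt it. Tensor the short exact sequence $\syz{A}{n}N\xra{j}F_{n-1}\to\syz{A}{n-1}N$ with $B$; the left end of the resulting four-term sequence is $\tor{A}{1}{B}{\syz{A}{n-1}N}\cong\tor{A}{n}{B}{N}$, and since $N$ is a $B$-module the Koszul complex $K(f)\otimes_A N$ has zero differentials, so $\tor{A}{n}{B}{N}\cong N$. Then $u_N$ is simply the inclusion $N\cong\ker(B\otimes_A j)\hookrightarrow B\otimes_A\syz{A}{n}N$. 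With this in hand, (ii) is proved by the horseshoe construction applied to a minimal $B$-free cover $Q\to N$ (using $\syz{A}{n}B\cong A$ to keep the middle column free), not by a general Cartan--Eilenberg argument, and (iii) reduces to $i=0$ by (ii).

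For (iv) your approach diverges substantially from the paper and, as written, has a gap. You propose to identify the obstruction class directly with an extension class controlling the splitting of $u_N$, but you do not say in which $\Ext$ group this comparison takes place or how the $n$-fold dimension shift is executed; the phrase ``that bookkeeping is where the real work lies'' is where your plan stops rather than starts. The paper does something different and more robust: it proves the implication $\ob=0\Rightarrow u_N$ splits by \emph{induction on $n$}. Set $A_1=A/(f_1)$. The vanishing $\ob(A/I^2\to B,N)=0$ implies $\ob(A/(f_1)^2\to A_1,N)=0$, so by the $n=1$ case (which is \cite[3.2]{aus/din/sol:93}) $N$ is a direct summand of $A_1\otimes_A\syz{A}{}N$. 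Hence a minimal $A_1$-free resolution $G_*$ of $N$ is a direct summand of $A_1\otimes_A F_{*\geq 1}[1]$, and $\syz{A_1}{n-1}N$ is a direct summand of $A_1\otimes_A\syz{A}{n}N$. Tensoring with $B$ and invoking the inductive hypothesis on $A_1$ with the sequence $(f_2,\dots,f_n)$ gives that $u_1\colon N\hookrightarrow B\otimes_{A_1}\syz{A_1}{n-1}N$ splits, and then the direct-summand relation forces $u_N$ to split. The converse direction is already in \cite[3.6]{aus/din/sol:93}. Your direct strategy might be salvageable, but it would need a precise identification of $\ob(A/I^2\to B,N)$ with the Yoneda class of the $(n{+}1)$-term exact sequence $N\hookrightarrow\bar M_N\to\bar F_{n-1}\to\cdots\to\bar F_0\to N$ (this is essentially what Proposition~\ref{prop.obssplit} does in the Cohen--Macaulay setting), together with an argument that this long extension is trivial iff its first step splits --- neither of which is automatic.
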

\begin{rem}
Lemma \ref{lem.obssplit} (iv) strengthens \cite[3.6]{aus/din/sol:93} (in the commutative case). 
\end{rem}
\begin{proof}
(i) Suppose \(F_{*}\ra N\) is a minimal \(A\)-free resolution of \(N\). Tensoring the short exact sequence \(0\ra\syz{A}{n}N\xra{j} F_{n-1}\ra\syz{A}{n{-}1}N\ra 0\) with \(B\) gives the exact sequence 
\begin{equation}
0\ra\tor{A}{1}{B}{\syz{A}{n{-}1}N}\ra B\ot\syz{A}{n}N\ra \bar{F}_{n-1}\ra B\ot\syz{A}{n{-}1}N\ra 0\,.
\end{equation}
We have \(\tor{A}{1}{B}{\syz{A}{n{-}1}N}\cong \tor{A}{n}{B}{N}\cong N\). Let \(u_{N}\) be the inclusion \(N\cong\ker(B\ot_{A}j)\ra B\ot\syz{A}{n}N\). Then \(N\mapsto u_{N}\) gives a functor of quotient categories.

(ii) Let \(p\co Q\ra N\) be the minimal \(B\)-free cover and \(P_{*}\ra \syz{B}{}N\) the minimal \(A\)-free resolution of the \(B\)-syzygy \(\ker(p)\). Then there is an \(A\)-free resolution \(H_{*}\ra Q\) which is an extension of \(F_{*}\) by \(P_{*}\). Since \(\syz{A}{n}B\cong A\), tensoring the short exact sequence of \(A\)-free resolutions \(0\ra P_{*}\ra H_{*}\ra F_{*}\ra0\) by \(B\) we obtain by (i) a commutative diagram with exact rows 
\begin{equation}
\xymatrix@C-4pt@R-12pt@H-0pt@M4pt{
0\ar[r] & \syz{B}{}N\ar@{_{(}->}[d]_{u_{\Syz N}}\ar[r] & Q\ar@<0.5ex>@{_{(}->}[d]\ar[r] & N\ar@<0.5ex>@{_{(}->}[d]_{u_{N}}\ar[r] & 0 \\
0\ar[r] & B\ot\syz{A}{n}(\syz{B}{}N)\ar[r] & B^{r}{\oplus}Q\ar[r] & B\ot\syz{A}{n}N\ar[r] & 0
}
\end{equation}
which proves the claim.

(iii) By (ii) it is enough to prove this for \(i=0\). The case \(i=0\) follows from the functoriality in (i).

(iv,\(\La\)) For the case \(n=1\) see the proof of \cite[3.2]{aus/din/sol:93}. Assume \(n\geq 2\). We follow the proof of \cite[3.6]{aus/din/sol:93} closely. Let \(A_{1}=A/(f_{1})\). Then \(F_{*}^{(1)}=A_{1}\ot F_{*\geq 1}[1]\) gives a minimal \(A_{1}\)-free resolution of \(A_{1}\ot\syz{A}{}N\). We have \(\ob(A/J^{2}\ra B,N)=0\Ra \ob(A/(f_{1})^{2}\ra A_{1},N)=0\) and hence \(N\) is a direct summand of \(A_{1}\ot\syz{A}{}N\). Let \(G_{*}\ra N\) be a minimal \(A_{1}\)-free resolution of \(N\). Then \(G_{*}\) is a direct summand of \(F_{*}^{(1)}\) and hence \(\syz{A_{1}}{n{-}1}N\) is a direct summand of \(\syz{A_{1}}{n{-}1}(A_{1}\ot\syz{A}{}N)=A_{1}\ot \syz{A}{n}N\). Tensoring this situation with \(B\) gives a commutative diagram:
\begin{equation}\label{eq.split}
\xymatrix@C-4pt@R-8pt@H-30pt{
N\ar[r]_(0.3){u}\ar@{=}[d] & B\ot\syz{A}{n}N\ar[r]_(0.55){\bar{j}}\ar@<0.5ex>@{->>}[d] & \bar{F}_{n-1}\ar[r]\ar@<0.5ex>@{->>}[d] & \dots \ar[r] & \bar{F}_{1}\ar[r]\ar@<0.5ex>@{->>}[d] & \bar{F}_{0} \ar[r] & N \\
N\ar[r]^(0.3){u_{1}} & B\ot\syz{A_{1}}{n{-}1}N\ar[r]^(0.6){\bar{j}_{1}}\ar@<0.5ex>[u] & \bar{G}_{n-2}\ar[r]\ar@<0.5ex>[u] & \dots\ar[r] & \bar{G}_{0}\ar[r]\ar@<0.5ex>[u] & N &
}
\end{equation}
Since \(\ob(A/J^{2}\ra B,N)=0\Ra \ob(A_{1}/(f_{2},\dots,f_{n})^{2}\ra B,N)=0\) the map \(u_{1}\) splits by induction on \(n\). So \(u\) splits. 
The other direction follows from \cite[3.6]{aus/din/sol:93}.
\end{proof}
\begin{prop}\label{prop.obssplit}
Let \(h\co S\ra T\) be a local Cohen-Macaulay map\textup{,} \(J=(f_{1},\dots,f_{n})\) an \(h\)-sequence\textup{,} \(\bar{h}\co S\ra\bar{T}=T/J\) the local Cohen-Macaulay map induced from \(h\)\textup{,} and \((\bar{h},\mc{N})\) an object in \(\cat{MCM}\)\textup{.} Let \(\xi\co  0\ra\mc{L}\ra \mc{M}\xra{\pi} \mc{N}\ra 0\) be the minimal \(\cat{MCM}\)-approximation of \(\mc{N}\) over \(h\)\textup{.} Then tensoring \(\xi\) by \(\bar{T}\) gives a \(4\)-term exact sequence
\begin{equation*}
0\ra \mc{N}\ot J/J^{2}\lra \bar{\mc{L}}\lra \bar{\mc{M}}\xra{\,\,\bar{\pi}\,\,} \mc{N}\ra 0
\end{equation*}
which represents the obstruction class \(\ob(T/J^{2}\ra\bar{T},\mc{N})\in\xt{2}{\bar{T}}{\mc{N}}{\mc{N}\ot J/J^{2}}\)\textup{.}

Moreover\textup{,} \(\ob(T/J^{2}\ra\bar{T},\mc{N})=0 \Llra \ob(T/J^{2}\ra\bar{T},\mc{N}^{\vee})=0 \Llra \bar{\pi}\) splits where \(\mc{N}^{\vee}=\xt{n}{T}{\mc{N}}{\omega_{h}}\)\textup{.}
\end{prop}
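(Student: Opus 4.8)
The strategy is to reduce the statement to the absolute (fibre) situation already treated in Lemma \ref{lem.obssplit}, together with duality theory via Corollary \ref{cor.xtdef}. First I would record that since \(J=(f_{1},\dots,f_{n})\) is an \(h\)-sequence, the Koszul complex \(K(J,\mc{M})\) resolves \(\bar{\mc{M}}=\mc{M}/J\mc{M}\) over \(\bar{T}\) and is compatible with base change over \(S\); in particular, tensoring the \(\cat{MCM}_{h}\)-approximation \(\xi\colon 0\to\mc{L}\to\mc{M}\xra{\pi}\mc{N}\to0\) with \(\bar{T}=T/J\) produces, after the standard long exact Koszul/Tor argument, the asserted four-term sequence
\begin{equation*}
0\to\mc{N}\ot_{T}J/J^{2}\lra\bar{\mc{L}}\lra\bar{\mc{M}}\xra{\bar{\pi}}\mc{N}\to0,
\end{equation*}
using that \(\operatorname{Tor}^{T}_{1}(\bar{T},\mc{N})\cong\mc{N}\ot_{T}J/J^{2}\) (as \(\mc{N}\) is a \(\bar{T}\)-module and \(J\) is an \(\mc{M}\)-regular, hence \(J/J^{2}\)-free situation) and that the higher Tor's vanish on the MCM modules \(\mc{M},\mc{L}\) in \(\xi\) by \(S\)-flatness together with Corollary \ref{cor.xtdef}. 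Then I would identify this Yoneda-\(2\)-extension class with \(\operatorname{ob}(T/J^{2}\to\bar{T},\mc{N})\): this follows because the connecting map in the obstruction theory of Proposition \ref{prop.obsmodule} for a lifting along \(T/J^{2}\to\bar{T}\) is computed exactly by splicing a free (here: \(\cat{D}_{h}\) after applying \(\hm{}{T}{\omega_{h}}{-}\), equivalently projective via Lemma \ref{lem.proj}) resolution of \(\mc{N}\) against \(J/J^{2}\), and the \(\cat{MCM}_{h}\)-approximation \(\xi\) differs from a projective presentation only by objects in \(\cat{D}_{h}\), which are annihilated in the relevant \(\operatorname{Ext}\)-group.

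Next, for the equivalence \(\operatorname{ob}(T/J^{2}\to\bar{T},\mc{N})=0\Llra\bar{\pi}\text{ splits}\), I would argue directly from the four-term sequence: \(\bar{\pi}\colon\bar{\mc{M}}\to\mc{N}\) splits iff the class of the (pushforward of the) two-step extension in \(\xt{2}{\bar{T}}{\mc{N}}{\mc{N}\ot J/J^{2}}\) vanishes — this is the standard fact that a four-term exact sequence splits at its right-hand map precisely when its Yoneda class is zero, combined with the observation that the left-hand two terms \(0\to\mc{N}\ot J/J^{2}\to\bar{\mc{L}}\) constitute the kernel side and \(\bar{\mc{M}}\to\mc{N}\) the cokernel side of the same class. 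One subtlety: I must check that a splitting of \(\bar{\pi}\) really is equivalent to vanishing of the full \(\operatorname{ob}\)-class and not merely to some partial condition; here the key point is that \(\mc{L}\) has finite injective dimension relative to \(\cat{MCM}_{h}\) (it is in \(\hat{\cat D}_{h}\)), so \(\xt{1}{T}{\mc{M}}{\mc{L}}=0\) by AB3 and Corollary \ref{cor.xtdef}, which forces the obstruction to be detected entirely at the level displayed.

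For the equivalence with the dual statement \(\operatorname{ob}(T/J^{2}\to\bar{T},\mc{N}^{\vee})=0\), where \(\mc{N}^{\vee}=\xt{n}{T}{\mc{N}}{\omega_{h}}\), I would apply \(\hm{}{T}{-}{\omega_{h}}\) to the diagram \eqref{eq.pullpush} (or rather its Koszul-dualised form as in Remark \ref{rem.Buch}) and use duality theory through Corollary \ref{cor.xtdef}: the functor \((-)^{\vee}\) sends the minimal \(\cat{MCM}_{h}\)-approximation of \(\mc{N}\) to an \(\cat{MCM}_{h}\)-approximation related to \(\mc{N}^{\vee}\), and it interchanges the roles of the \(\cat{MCM}\)-approximation and the \(\hat{\cat D}\)-hull while preserving the splitting property of \(\bar{\pi}\) versus its dual. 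Concretely, \(\bar{\pi}\) splits iff \(\bar{\pi}^{\vee}\) splits (a split surjection of modules over the artinian-relative situation dualises to a split injection, and we reduce to the closed fibre where \(A,B\) are Cohen-Macaulay and the standard duality \(\hm{}{A}{-}{\omega_{A}}\) is exact on MCM modules, cf.\ Example \ref{ex.MCMapprox}); and \(\bar{\pi}^{\vee}\) splitting is, by the first part applied to \(\mc{N}^{\vee}\), equivalent to \(\operatorname{ob}(T/J^{2}\to\bar{T},\mc{N}^{\vee})=0\). Throughout, to pass between the given local-CM-map setting and the absolute Lemma \ref{lem.obssplit}, I would base change to the closed fibre \(S\to k\), invoke Lemma \ref{lem.Aapprox} and Corollary \ref{cor.minapprox} to know the approximation is detected on the closed fibre, and use that the obstruction classes are compatible with base change (naturality in Proposition \ref{prop.obsmodule}(i)).

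\textbf{Main obstacle.} The delicate step is the precise identification of the Yoneda class of the four-term sequence with the obstruction class \(\operatorname{ob}(T/J^{2}\to\bar{T},\mc{N})\) — i.e.\ matching Illusie's cotangent-complex obstruction (Proposition \ref{prop.obsmodule}, \cite[IV 3.1.5]{ill:71}) with the explicit extension coming from \(J/J^{2}\) and the \(\cat{MCM}_{h}\)-approximation. This requires being careful that replacing a genuine projective resolution of \(\mc{N}\) by the approximation \(\xi\) (which is only projective "up to \(\cat{D}_{h}\)") does not alter the class, which is exactly where AB3 and the vanishing \(\xt{i}{T_{s}}{\mc{M}_{s}}{\mc{L}_{s}}=0\) from Corollary \ref{cor.xtdef} do the work; and that the Koszul computation of \(\operatorname{Tor}^{T}_{*}(\bar{T},-)\) against the regular sequence \(J\) correctly yields the single nonzero term \(\mc{N}\ot J/J^{2}\) in the right degree. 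The dual equivalence is then comparatively formal once the self-dual nature of \(\eqref{eq.pullpush}\) under \((-)^{\vee}\) is set up, but one should take care that \(\mc{N}^{\vee}\) is again an MCM \(\bar{T}\)-module of the correct codepth so that \(\mc{N}^{\vee\vee}\cong\mc{N}\) and the argument is genuinely symmetric.
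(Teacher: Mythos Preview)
Your treatment of the first half --- producing the four-term sequence and identifying its Yoneda class with the obstruction class --- is essentially what the paper does: one compares \(\xi\) with the free presentation \(\syz{T}{}\mc{N}\to F_{0}\to\mc{N}\) via a map over \(\id_{\mc{N}}\), tensors with \(\bar{T}\), and uses that \(\tor{T}{i}{\bar{T}}{\mc{M}}=0\) for \(i>0\) (Koszul, \(\mc{M}\in\cat{MCM}_{h}\)).

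The gap is in your argument for the equivalences. The claim that ``a four-term exact sequence splits at its right-hand map precisely when its Yoneda class is zero'' is not a standard fact, and it is false in general: writing \(K=\ker\bar{\pi}\), the Yoneda class is the image of \([0\to\mc{N}\ot J/J^{2}\to\bar{\mc{L}}\to K\to 0]\) under the connecting map \(\xt{1}{\bar{T}}{K}{\mc{N}\ot J/J^{2}}\to\xt{2}{\bar{T}}{\mc{N}}{\mc{N}\ot J/J^{2}}\), and this can vanish without \(0\to K\to\bar{\mc{M}}\to\mc{N}\to 0\) splitting. Your proposed fix --- that \(\xt{1}{T}{\mc{M}}{\mc{L}}=0\) --- is a statement about \(T\)-Ext of the undeformed modules, whereas the splitting question lives in \(\bar{T}\)-module theory of the reductions; it does not close the gap.

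The paper avoids this entirely by a different route. Using Remark~\ref{rem.Buch} applied to the codepth-\(n\) Cohen--Macaulay module \(\mc{N}\), one has the explicit description \(\mc{M}\cong\hm{}{T}{\syz{T}{n}\mc{N}^{\vee}}{\omega_{h}}\). With this in hand, Lemma~\ref{lem.obssplit}(iv) --- which is exactly the nontrivial ingredient here, generalising \cite[3.6]{aus/din/sol:93} --- gives \(\ob(T/J^{2}\to\bar{T},\mc{N}^{\vee})=0\iff u_{\mc{N}^{\vee}}:\mc{N}^{\vee}\hookrightarrow\bar{T}\ot\syz{T}{n}\mc{N}^{\vee}\) splits. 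Then one simply observes that applying the duality \(\hm{}{\bar{T}}{-}{\omega_{\bar{h}}}\) to \(u_{\mc{N}^{\vee}}\) yields \(\bar{\pi}\) (since \(\mc{N}\cong\hm{}{\bar{T}}{\mc{N}^{\vee}}{\omega_{\bar{h}}}\) and \(\bar{\mc{M}}\cong\hm{}{\bar{T}}{\bar{T}\ot\syz{T}{n}\mc{N}^{\vee}}{\omega_{\bar{h}}}\)), so the splitting conditions are equivalent. The remaining equivalence with \(\ob(\mc{N})=0\) is then obtained by the symmetry \(\mc{N}^{\vee\vee}\cong\mc{N}\). In short: you should not try to read the splitting of \(\bar{\pi}\) directly off the vanishing of the \(\Ext^{2}\)-class; you need Lemma~\ref{lem.obssplit}(iv) applied to \(\mc{N}^{\vee}\) together with the explicit dual-syzygy description of \(\mc{M}\).
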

\begin{proof}
By \cite[5.1-2]{ogu/ber:72}, \(\tor{T}{i}{\bar{T}}{\mc{M}}=\cH_{i}(K(\ul{f})\ot \mc{M})=0\) for \(i>0\). There is a map from the defining short exact sequence \(0\ra\syz{T}{}\mc{N}\ra F_{0}\ra \mc{N}\ra 0\) to \(\xi\) lifting \(\id_{\mc{N}}\). Tensoring with \(\bar{T}\) gives a map of \(4\)-term exact sequences with outer terms canonically identified. Hence they represent the same element \(\ob(T/J^{2}\ra\bar{T},\mc{N})\) in \(\xt{2}{\bar{T}}{\mc{N}}{\mc{N}\ot J/J^{2}}\).

By the argument in Remark \ref{rem.Buch} we can assume that \(\xi\) is given as \(0\ra\im(d_{n}^{\vee})\ra(\syz{T}{n}\mc{N}^{\vee})^{\vee}\ra \mc{N}^{\vee}{}^{\vee}\ra0\) where \((F_{*},d_{*})\) is a minimal \(T\)-free resolution of \(\mc{N}^{\vee}\). By Lemma \ref{lem.obssplit}, \(\ob(T/J^{2}\ra\bar{T},\mc{N}^{\vee})=0\) if and only if \(u\co \mc{N}^{\vee}\ra \bar{T}\ot\syz{T}{n}\mc{N}^{\vee}\) splits. But applying \(\hm{}{\bar{T}}{-}{\omega_{\bar{h}}}\) to \(u\) gives \(\bar{\pi}\) since \(\mc{N}\cong\xt{n}{T}{\mc{N}^{\vee}}{\omega_{h}}\cong \hm{}{\bar{T}}{\mc{N}^{\vee}}{\omega_{\bar{h}}}\).
\end{proof}
\begin{rem}\label{rem.obssplit}
In the absolute Gorenstein case with \(n=1\) this is given in \cite[4.5]{aus/din/sol:93}.
\end{rem}
\begin{proof}[Proof of Theorem {\ref{thm.defMCM}}]
Given \(S\) in \(\QH/k\) and let \(h\co S\ra T\) and \(\bar{h}\co S\ra \bar{T}=T/JT\) be the induced hCM maps. Let \({}^{i\!}\mc{N}\) be deformations of \(N\) to \(\bar{h}\) for \(i=1,2\) and assume that the minimal \(\cat{MCM}\)-approximation modules \({}^{i\!}\mc{M}\) of \({}^{i\!}\mc{N}\) over \(h\) are isomorphic as deformations of \(M\). We proceed as in the proof of Theorem \ref{thm.defgrade} (i) with \(S_{n}=S/\fr{m}_{S}^{n{+}1}\), \(\mc{N}_{n}=\mc{N}\ot_{S}S_{n}\) etc., construct a tower of isomorphisms \(\{\phi_{n}\co {}^{1\!}\mc{N}\cong{}^{2\!}\mc{N}\}\), and conclude by Lemma \ref{lem.Aapprox} that \({}^{1\!}\mc{N}\) and \({}^{2\!}\mc{N}\) are isomorphic as deformations of \(N\). For the induction step we use that the map of torsor actions along \(\df{\bar{T}}{\mc{N}_{n}}(S_{n+1})\ra\df{T}{\mc{M}_{n}}(S_{n+1})\) is induced by a natural map \(p\co \xt{1}{B}{N}{N}\ra \xt{1}{A}{M}{M}\) which is injective. The map \(p\) is given as follows.

Let \(\pi\co M\ra N\) denote the \(\cat{MCM}_{A}\)-approximation and \(\bar{\pi}\co \bar{M}\ra N\) be the \(B\)-quotient. Then \(\bar{\pi}\) splits by Proposition \ref{prop.obssplit}. Hence \(\bar{\pi}^{*}\co \xt{1}{B}{N}{N} \ra\xt{1}{B}{\bar{M}}{N}\) splits. Since \(J\) is an \(M\)-regular sequence, \(\xt{1}{B}{\bar{M}}{N}\cong\xt{1}{A}{M}{N}\). Since \(\xt{i}{A}{M}{L}=0\) for all \(i>0\), \(\pi_{*}\co \xt{1}{A}{M}{M}\cong \xt{1}{A}{M}{N}\). Summarised:
\begin{equation}\label{eq.xt1}
\xymatrix@C-0pt@R-12pt@H-30pt{
& \xt{1}{A}{M}{N} & \xt{1}{A}{M}{M} \ar[l]_{\cong}^(0.47){\pi_{*}} \\
\xt{1}{B}{N}{N} \ar@<-0.2ex>@{^{(}->}[r]^{\bar{\pi}^{*}} & \xt{1}{B}{\bar{M}}{N} \ar[u]_{\cong} &
}
\end{equation}
\end{proof}
The technique used to prove Theorem \ref{thm.defMCM} also gives the following result.
\begin{thm}\label{thm.defsyz}
Let \(\vL\) and \({\To}\) be henselian and noetherian local rings and \(q\co \vL\ra {\To}\) a local and flat ring homomorphism with \({\To}/\fr{m}_{{\To}}=k\) and \({\To}\ot_{\vL}k=A\)\textup{.} Suppose \(J=(f_{1},\dots,f_{n})\) is a \(q\)-sequence\textup{.} Put \({\bTo}={\To}/J\)\textup{,} \(B={\bTo}\ot_{\vL}k\) and let \(\bar{J}\) be the image of \(J\) in \(A\)\textup{.} Let \(N\) be a finite \(B\)-module and let \(M\) denote the syzygy module \(\syz{A}{n}N\)\textup{.}
If \(\ob(A/\bar{J}^{2}\ra B, N)=0\) then the natural syzygy map \(s\co \df{{\bTo}}{N}\ra\df{{\To}}{M}\) of functors \(\QH/k\ra\Sets\) is injective\textup{.}
\end{thm}
\begin{proof}
We proceed as in the proof of Theorem \ref{thm.defgrade} and \ref{thm.defMCM}. Given \(S\) in \(\QH/k\) and let \(h\co S\ra {\To}{\hot}_{\vL}S\) denote the algebra. Given deformations \({}^{i\!}\mc{N}\) of \(N\) to \(\bar{h}\co S\ra {\bTo}{\hot}_{\vL}S\) for \(i=1,2\). They map to \({}^{i\!}\mc{M}:=\syz{T}{n}({}^{i\!}\mc{N})\) which we suppose are isomorphic as deformations of \(M\) to \(h\). 
Then the natural syzygy map \(s^{1}\co \xt{1}{B}{N}{N}\ra\xt{1}{A}{M}{M}\) induces the map of torsor actions along \(s\) of the inifinitesimal extensions. The composition of \(s^{1}\) with \(\xt{1}{A}{M}{M}\ra\xt{1}{A}{M}{\bar{M}}\cong\xt{1}{B}{\bar{M}}{\bar{M}}\) commutes with the horizontal map in Lemma \ref{lem.obssplit} (iii). But Lemma \ref{lem.obssplit} (iv) implies that \((u_{N})_{*}\) is injective, hence \(s^{1}\) is injective too. Proceeding by induction on \(\fr{m}_{S}^{n+1}\)-truncations of the deformations we construct a tower of isomorphisms and conclude by Lemma \ref{lem.Aapprox}. 
\end{proof}
\begin{rem}
Theorem \ref{thm.defsyz} resembles \cite[Thm.\ 1]{ile:07}. However Theorem \ref{thm.defsyz} makes a sounder statement in a more general setting and has a more transparent proof. Indeed, the various similar results in \cite{ile:07} can be changed and proved accordingly.
\end{rem}
\section{The Kodaira-Spencer map of Cohen-Macaulay approximations}
A modular family of objects is roughly speaking a family where the isomorphism class of the fibre changes non-trivially. The Kodaira-Spencer map makes this idea precise. We consider the Kodaira-Spencer classes and maps for families of pairs (algebra, module) and by invoking the long exact transitivity sequence we relate them to the corresponding notions for the algebra and the module. Then we show that Cohen-Macaulay approximation of modular families under conditions as in Theorem \ref{thm.defgrade}, \ref{thm.defgrade2} and \ref{thm.defMCM} produce new modular families.

The following is a graded version of \cite[II 2.1.5.7]{ill:71}.
\begin{defn}\label{defn.KS}
Let \(\vL\ra S\) and \(S\ra \vG\) be graded ring homomorphisms with \(\vL\) and \(S\) concentrated in degree \(0\). The map \(L^{\text{gr}}_{\vG/S}\ra L_{S/\!\vL}\ot_{S}\vG[1]\) in the corresponding distinguished transitivity triangle of (graded) cotangent complexes (see \eqref{eq.triangle}) is called the \emph{Kodaira-Spencer class} of \(\vL\ra S\ra\vG\).
\end{defn}
Composing the Kodaira-Spencer class with the natural augmentation map 
\begin{equation*}
L_{S/\!\vL}\ot_{S}\vG[1]\lra\Omega_{S/\!\vL}\ot_{S}\vG[1]
\end{equation*}
induces an element \(\kappa(\vG/S/\!\vL)\in\gH{0}^{1}(S,\vG,\Omega_{S/\!\vL}\ot_{S}\vG)\), the \emph{cohomological Kodaira-Spencer class}, which is also given as follows. Let \(\mc{P}=\mc{P}_{S/\!\vL}\) denote \(S\ot_{\vL}S/I^{2}\) where \(I\) is the kernel of the multiplication map \(S\ot_{\vL}S\ra S\). There are two ring homomorphisms \(j_{1}\) and \(j_{2}\) from \(S\) to \(\mc{P}\) defined by \(j_{1}\co s\mapsto s\ot 1\) and \(j_{2}\co s\mapsto 1\ot s\). Let \(d_{S/\!\vL}\co S\ra \Omega_{S/\!\vL}=I/I^{2}\) denote the universal derivation (induced by \(j_{2}-j_{1}\)) and \(\gamma_{S/\!\vL}\co \Omega_{S/\!\vL}\ra\mc{P}_{S/\!\vL}\) the inclusion. The \emph{principal parts of \(\vG\)} is \(\mc{P}\ot_{S}\vG\) which is an \(S\)-algebra via \(j_{1}\ot 1_{\vG}\co S\ra \mc{P}\ot_{S}\vG\). The multiplication map induces the \(S\)-algebra extension representing the Kodaira-Spencer class:
\begin{equation}\label{eq.pp}
\kappa(\vG/S/\!\vL)\co \quad0\ra \Omega_{S/\!\vL}\ot_{S}\vG\xra{\gamma_{S/\!\vL}\ot\id_{\vG}} \mc{P}_{S/\!\vL}\ot_{S}\vG\lra \vG \ra 0
\end{equation}
see \cite[III 1.2.6]{ill:71}. Since \(\mc{P}\ot_{S}\vG\) has a natural \(\mc{P}\)-algebra structure, \eqref{eq.pp} is also a (graded) algebra lifting of \(\vG\) along \(\mc{P}\ra S\) as in Definition \ref{def.grobs}. Let \(\vG\ot_{S}\mc{P}\) denote the \(S\)-algebra defined via the \(j_{1}\) tensor product. The extension \(\vG\ot_{S}\mc{P}\ra \vG\) is a trivial lifting (split by \(\id_{\vG}\ot 1_{\mc{P}} \)) and the `difference' of the \(\mc{P}\)-algebras \(\mc{P}\ot_{S}\vG\) and \(\vG\ot_{S}\mc{P}\), an element in \(\gH{0}^{1}(S,\vG,\Omega_{S/\!\vL}\ot_{S}\vG)\) given by Proposition \ref{prop.grobs} (ii), equals \(\kappa(\vG/S/\!\vL)\); see \cite[III 2.1.5]{ill:71}. 
Moreover, the difference \(1_{\mc{P}}\ot_{j_{2}}\!\id_{\vG}-\id_{\vG}\!\ot _{j_{1}}\!1_{\mc{P}}\) induces \(d_{S/\!\vL}\ot 1_{T}\) (in degree \(0\)) which is mapped to \(\kappa(\vG/S/\!\vL)\) by the connecting homomorphism \(\partial\) in the long exact transitivity sequence of \(\vL\ra S\ra \vG\);
\begin{equation}\label{eq.let}
\gDer{0}_{\vL}(\vG,\Omega_{S/\!\vL}\ot_{S}\vG)\lra\Der_{\vL}(S,\Omega_{S/\!\vL}\ot_{S}T)\xra{\,\partial\,}\gH{0}^{1}(S,\vG,\Omega_{S/\!\vL}\ot_{S}\vG)
\end{equation} 
see \cite[III 1.2.6.5 and 1.2.7]{ill:71}. Hence \(\kappa(\vG/S/\!\vL)=0\) if and only if there is a (graded) derivation \(\mc{D}\) in \(\gDer{0}_{\vL}(\vG,\Omega_{S/\!\vL}\ot_{S}\vG)\) which maps to \(d_{S/\!\vL}\ot 1_{T}\). Then 
\begin{equation}
\sigma:=j_{2}\ot\id_{\vG}-(\gamma_{S/\!\vL}\ot\id_{\vG})\mc{D}\co \vG\ra \mc{P}\ot_{S}\vG
\end{equation}
is an \(S\)-algebra map since \(\sigma_{\vert S}=j_{1}\ot 1_{T}\), splitting \(\mc{P}\ot_{S}\vG\ra\vG\). Conversely, such a splitting \(\sigma\) induces via \(j_{2}\ot\id_{\vG}-\,\sigma\) a derivation lifting \(d_{S/\!\vL}\ot 1_{T}\).

In the special case \(\vG=T{\oplus}\mc{N}\) and \(\mc{N}\) is a \(T\)-module the transitivity sequence of \(\vL\ra T\ra\vG\) in Proposition \ref{prop.lang} gives \(\xt{1}{T}{\mc{N}}{\Omega_{T\!/\!\vL}\ot_{T}\mc{N}}\xra{\simeq}\gH{0}^{1}(T,\vG,\Omega_{T\!/\!\vL}\ot_{T}\vG)\). The Kodaira-Spencer class \(\kappa(\vG/T/\!\vL)\) in the latter corresponds to the (cohomological) \emph{Atiyah class} \(\at_{T\!/\!\vL}(\mc{N})\in\xt{1}{T}{\mc{N}}{\Omega_{T\!/\!\vL}\ot_{T}\mc{N}}\); cf.\ \cite[IV 2.3.6-7]{ill:71}. The class is represented by the short exact sequence of \(T\)-modules
\begin{equation}\label{eq.at}
\at_{T\!/\!\vL}(\mc{N})\co \quad 0\ra\Omega_{T\!/\!\vL}\ot_{T}\mc{N}\lra\mc{P}_{T\!/\!\vL}\ot_{T}\mc{N}\lra\mc{N}\ra 0\,.
\end{equation}

The \emph{Kodaira-Spencer map of \(\vL\ra S\ra\vG\)} 
\begin{equation}\label{eq.KSmap}
g^{\vG}\!\co \Der_{\vL}(S)\lra\gH{0}^{1}(S,\vG,\vG)
\end{equation}
is defined by \(D\mapsto f^{D}_{*}\kappa(\vG/S/\!\vL)\) where \(f^{D}\!\co \Omega_{S/\!\vL}\ra S\) corresponds to \(D\). Pushout of \eqref{eq.pp} by \(f^{D}\ot\id_{\vG}\) gives the corresponding algebra lifting of \(\vG\) along \(S[\vare]\ra S\) given by \(g^{\vG}(D)\); see Proposition \ref{prop.grobs} (ii).
\begin{prop}\label{prop.at}
Let \(\vG\) denote the graded \(S\)-algebra \(T{\oplus}\mc{N}\) where \(\vL\ra S\) and \(S\ra T\) are \textup{(}ungraded\textup{)} ring homomorphisms and \(\mc{N}\) is a \(T\)-module\textup{.} Consider the transitivity sequence of \(S\ra T\xra{i} \vG\) in \textup{Proposition \ref{prop.lang}:}
\begin{equation*}
\dots \xra{\,\partial\,}\xt{1}{T}{\mc{N}}{\Omega_{S/\!\vL}\ot_{S}\mc{N}}\xra{\,u\,}\gH{0}^{1}(S,\vG,\Omega_{S/\!\vL}\ot_{S}\vG)\xra{i^{*}}
\cH^{1}(S,T,\Omega_{S/\!\vL}\ot_{S}T) \xra{\,\partial\,}\dots
\end{equation*}
\begin{enumerate}
\item[(i)] The map \(i^{*}\) takes the Kodaira-Spencer class \(\kappa(\vG/S/\!\vL)\) to \(\kappa(T/S/\!\vL)\)\textup{.}
\item[(ii)] Assume \(\kappa(T/S/\!\vL)=0\) and choose an \(S\)-algebra splitting \(\sigma\co  T\ra \mc{P}_{S/\!\vL}\ot_{S}T\)\textup{.} There is an element 
\begin{equation*}
\kappa(\sigma,\mc{N})\in\xt{1}{T}{\mc{N}}{\Omega_{S/\!\vL}\ot_{S}\mc{N}},
\end{equation*}
appropriately natural in \((T/S/\!\vL,\sigma,\mc{N})\)\textup{,} which \(u\) maps to \(\kappa(\vG/S/\!\vL)\)\textup{.} Moreover\textup{,} \(\kappa(\sigma,\mc{N})\) is represented by the degree \(1\) part of \eqref{eq.pp} 
\begin{equation*}
\kappa(\sigma,\mc{N})\co\quad0\ra \Omega_{S/\!\vL}\ot_{S}\mc{N}\lra\mc{P}_{S/\!\vL}\ot_{S}\mc{N}\lra\mc{N}\ra 0
\end{equation*}
considered as a short exact sequence of \(T\)-modules via \(\sigma\)\textup{.}
\item[(iii)] Let \(D(\sigma)\) in \(\Der_{\vL}(T,\Omega_{S/\!\vL}\ot_{S}T)\) be the derivation induced by \(\sigma\) and \(f^{D(\sigma)}\) in \(\hm{}{T}{\Omega_{T/\!\vL}}{\Omega_{S/\!\vL}\ot_{S}T}\) the corresponding homomorphism\textup{.} Then 
\begin{equation*}
\kappa(\sigma,\mc{N})=(f^{D(\sigma)}\ot\id_{\mc{N}})_{*}\at_{T/\!\vL}(\mc{N}).
\end{equation*}
For each \(Y\in\Der_{\vL}(S)\) let \(X_{\sigma}(Y)\) denote \((f^{Y}\!\ot\id_{T})_{*}D(\sigma)\in\Der_{\vL}(T)\)\textup{.} Then
\begin{equation*}
(f^{Y}\!\ot\id_{\mc{N}})_{*}\kappa(\sigma,\mc{N})=(f^{X_{\sigma}(Y)}\ot\id_{\mc{N}})_{*}\at_{T\!/\!\vL}(\mc{N})\:\:\text{in}\:\: \xt{1}{T}{\mc{N}}{\mc{N}}.
\end{equation*}
\end{enumerate}
\end{prop}
\begin{proof}
(i) The degree zero part of \eqref{eq.pp} gives the image \(i^{*}\kappa(\vG/S/\!\vL)\) represented by the algebra extension
\begin{equation}
\kappa(T/S/\!\vL)\co \quad 0\ra\Omega_{S/\!\vL}\ot_{S} T\lra \mc{P}_{S/\!\vL}\ot_{S}T\lra T\ra 0\,.
\end{equation}

(ii) The degree one part of \eqref{eq.pp} is \emph{a priori} a short exact sequence of \(\mc{P}_{S/\!\vL}\ot_{S}T\)-modules.
The splitting \(\sigma\) makes it to a short exact sequence of \(T\)-modules which defines \(\kappa(\sigma,\mc{N})\). The naturality follows from the naturality of \(\kappa(\vG/S/\!\vL)\).  
 
(iii) We define a \(T\)-algebra homomorphism \(h^{\sigma}\co \mc{P}_{T/\!\vL}\ra\mc{P}_{S/\!\vL}\ot_{S}T\) by \(h^{\sigma}(t_{1}\ot t_{2})=\sigma(t_{1})\cdot1_{\mc{P}}\ot t_{2}\) which we claim makes the following diagram commutative:
\begin{equation}
\xymatrix@C+18pt@R-6pt@H-30pt{
\at_{T\!/\!\vL}(\mc{N})\co\!\!\!\!\!\!\!\!\!\!\!\!\!\!\!\!\!\!\!\!  & 0\lra\Omega_{T\!/\!\vL}\ot_{T}\mc{N}\ar[r]^(0.55){\gamma_{T/\!\vL}\ot\id_{\mc{N}}}\ar@<3.6ex>[d]_(0.46){f^{D(\sigma)}\ot\id_{\mc{N}}} & \mc{P}_{T\!/\!\vL}\ot_{T}\mc{N}\ar[r]\ar@<0.5ex>[d]_(0.46){h^{\sigma}\ot\id_{\mc{N}}} &\mc{N}\lra 0\ar@<-3.1ex>@{=}[d] \\
\kappa(\sigma,\mc{N})\co\!\!\!\!\!\!\!\!\!\!\!\!\!\!\!\!\!\!\!\!\!\!\!\!  & 0\lra \Omega_{S/\!\vL}\ot_{S}\mc{N}\ar[r]_(0.55){\gamma_{S/\!\vL}\ot\id_{\mc{N}}} & \mc{P}_{S/\!\vL}\ot_{S}\mc{N}\ar[r] &\mc{N}\lra 0
}
\end{equation}
For \(t\) in \(T\) we have by the definitions 
\begin{equation}
h^{\sigma}\gamma_{T/\!\vL}d_{T/\!\vL}(t)=h^{\sigma}(1_{T}\ot t-t\ot 1_{T})=1_{\mc{P}}\ot t-\sigma(t)=(\gamma_{S/\!\vL}\ot 1_{T})D(\sigma)(t)
\end{equation}
and the claim follows. Then the pushout of \(\at_{T/\!\vL}(\mc{N})\)  by \(f^{D(\sigma)}\ot\id_{\mc{N}}\) gives \(\kappa(\sigma,\mc{N})\). Since \((f^{Y}\ot\id_{\mc{N}})\circ(f^{D(\sigma)}\ot\id_{\mc{N}})\) equals \(f^{X_{\sigma}(Y)}\ot\id_{\mc{N}}\) the second part of (iii) follows from the first. 
\end{proof}
We name \(\kappa(\sigma,\mc{N})=\kappa(T/S/\!\vL,\sigma,\mc{N})\) the \emph{Kodaira-Spencer class of \((T/S/\!\vL,\sigma,\mc{N})\)}.
Define the \emph{Kodaira-Spencer map of \((T/S/\!\vL,\sigma,\mc{N})\)}
\begin{equation}\label{eq.modKS}
g^{(\sigma,\,\mc{N})}\co \Der_{\vL}(S)\lra\xt{1}{T}{\mc{N}}{\mc{N}}
\end{equation}
by \(g^{(\sigma,\,\mc{N})}(D):=(f^{D}\ot\id)_{*}\kappa(\sigma,\mc{N})\).

In the case \(T=S\ot_{\vL} {\To}\) we always choose the \(S\)-algebra splitting \(S\ot_{\vL} {\To}\ra \mc{P}_{S/\!\vL}\ot_{S}S\ot_{\vL}{\To}\cong\mc{P}_{S/\!\vL}\ot_{\vL}{\To}\) given by \(s\ot t\mapsto j_{1}(s)\ot t\). In particular \(\kappa(T/S/\!\vL)=0\) and we get a canonical Kodaira-Spencer class \(\kappa(\mc{N})\) and a corresponding Kodaira-Spencer map \(g^{\mc{N}}\).
\begin{rem}
As indicated after \eqref{eq.let}, \(\kappa(T/S/\!\vL)=0\) if and only if the canonical map \(\Omega_{S/\!\vL}\ot_{S}T\ra\Omega_{T/\!\vL}\) is splitt injective. By \cite[{\bf{0}} 20.5.7]{EGAIV01} the latter is equivalent to \(T\) being a \emph{formally smooth \(S\)-algebra relative to \(\vL\)} \textup{(}with discrete topology\textup{)}; cf.\ \cite[{\bf{0}} 19.9.1]{EGAIV01}, or, equivalently: All \(\vL\)-split \(S\)-algebra extensions \(0\ra M\ra T'\ra T\ra0\) with \(M^{2}=0\) are trivial; cf.\ \cite[{\bf{0}} 19.9.8.1]{EGAIV01}. In particular, if \(\Spec T\ra \Spec S\) is smooth (\cite[17.3.1]{EGAIV4}) then \(\kappa(T/S/\!\vL)=0\).

By Proposition \ref{prop.at} (iii) we have a commutative diagram
\begin{equation}
\xymatrix@C+15pt@R-6pt@H+3pt{
d_{T/\!\vL}\in\Der_{\vL}(T,\Omega_{T/\!\vL})\ar[r]^(0.41){\partial(\vG/T/\!\vL)}\ar@<2ex>@{->>}[d]_(0.45){f^{D(\sigma)}_{*}} & \xt{1}{T}{\mc{N}}{\Omega_{T/\!\vL}\ot_{T}\mc{N}}\ni\at_{T/\!\vL}(\mc{N})\ar@<-4ex>@{->>}[d]_(0.45){(f^{D(\sigma)}\ot\id_{\mc{N}})_{*}} \\
D(\sigma)\in\Der_{\vL}(T,\Omega_{S/\!\vL}\ot_{S}T)\ar@<-3ex>[u]_{\text{split}}\ar[r]^(0.46){\partial(\vG/T/\!\vL)} & \xt{1}{T}{\mc{N}}{\Omega_{S/\!\vL}\ot_{S}\mc{N}}\ni\kappa(\sigma,\mc{N})\ar@<3ex>[u]_{\text{split}}
}
\end{equation}
where the `down' diagram is pointed. In particular we have \(\partial(D(\sigma))=\kappa(\sigma,\mc{N})\).
\end{rem}
\begin{rem}
The transitivity sequence of \(\vL\ra T\xra{i}\vG\) with \(\vG=T{\oplus}\mc{N}\) and \(J=J_{0}{\oplus}J_{1}\) in Proposition \ref{prop.lang} 
\begin{equation}\label{eq.deg0}
0\ra \hm{}{T}{\mc{N}}{J_{1}}\ra \gDer{0}_{\vL}(\vG,J)\xra{i^{*}} \Der_{\vL}(T,J_{0})\ra \xt{1}{T}{\mc{N}}{J_{1}}\ra\dots
\end{equation}
suggests the following characterisation. An element \(\mc{D}\in\gDer{0}_{\vL}(\vG,J)\) is given by its degree \(0\) restriction \(D:=i^{*}(\mc{D})\in\Der_{\vL}(T,J_{0})\) and its degree \(1\) restriction \(\nabla_{\!D}:=\mc{D}_{\vert\mc{N}}\in\hm{}{\vL}{\mc{N}}{J_{1}}\) which should satisfy the following Leibniz rule: For all \(t\) in \(T\) and \(n\) in \(\mc{N}\) 
\begin{equation}\label{eq.Leib}
\nabla_{\!D}(tn)=t\nabla_{\!D}(n)+D(t)n\,.
\end{equation}
Recall that \(\kappa(\vG/S/\!\vL)=\partial(d_{S/\!\vL}\ot 1_{T})\) in the transitivity sequence \eqref{eq.let}.
Hence \(\kappa(\vG/S/\!\vL)=0\) if and only if there exists a \(D\in\Der_{\vL}(T,\Omega_{S/\!\vL}\ot T)\) which restricts to \(d_{S/\!\vL}\ot 1_{T}\) and a \(\nabla_{\!D} \in\hm{}{\vL}{\mc{N}}{\Omega_{S/\!\vL}\ot \mc{N}}\) satisfying \eqref{eq.Leib}. As a well known special case (\(S=T\)) we get \(\at_{T\!/\!\vL}(\mc{N})=0\) if and only if there exists a \(\nabla\in \hm{}{\vL}{\mc{N}}{\Omega_{T\!/\!\vL}\ot \mc{N}}\) satisfying \eqref{eq.Leib} with \(D=d_{T/\!\vL}\in\Der_{\vL}(T,\Omega_{T\!/\!\vL})\) (i.e.\ \(\nabla\) is a connection), or equivalently, there is a (graded) derivation \(\mc{D}\in \gDer{0}_{\vL}(\vG,\Omega_{T\!/\!\vL}\ot_{T}\vG)\) restricting to \(d_{T/\!\vL}\).
\end{rem}
Recall the maps of cohomology groups \(\sigma^{1}_{j}(I)\) and \(\tau^{1}_{j}(I)\) in \eqref{eq.sigma} and \eqref{eq.tau}.
\begin{prop}\label{prop.KS}
In addition to the assumptions in \textup{Lemma \ref{lem.cohmap}} suppose \(\vL\ra S\) is a ring homomorphism\textup{.} For \(j=1,2\) the following holds\textup{:}
\begin{enumerate}
\item[(i)] The map \(\sigma^{1}_{j}(\Omega_{S/\!\vL})\) takes \(\kappa(\vG_{0}/S/\!\vL)\) to \(\kappa(\vG_{j}/S/\!\vL)\) and the Kodaira-Spencer maps \(g^{\vG_{i}}\co \Der_{\vL}(S)\ra\gH{0}^{1}(S,\vG_{i},\vG_{i})\) commute with \(\sigma^{1}_{j}\)\textup{,} i\textup{.}e\textup{.}\ \(\sigma^{1}_{j} g^{\vG_{0}}=g^{\vG_{j}}\)\textup{.}
\item[(ii)] Assume \(\kappa(T/S/\!\vL)=0\) and choose an \(S\)-algebra splitting \(\sigma\co  T\ra \mc{P}_{S/\!\vL}\ot_{S}T\)\textup{.}
Then \(\tau^{1}_{j}(\Omega_{S/\!\vL})\) maps \(\kappa(\sigma,\mc{N})\) to \(\kappa(\sigma,X_{j})\) and the Kodaira-Spencer maps \(g^{(\sigma,X_{i})}\co \Der_{\vL}(S)\ra\xt{1}{T}{X_{i}}{X_{i}}\) commute with \(\tau^{1}_{j}\)\textup{,} i.e.\ \(\tau^{1}_{j} g^{(\sigma,\mc{N})}\!=g^{(\sigma,X_{j})}\)\textup{.}
\end{enumerate}
\end{prop}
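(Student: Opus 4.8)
The plan is to reduce both statements to the naturality of the transitivity triangle \eqref{eq.triangle} under the maps of cotangent complexes constructed in Lemma \ref{lem.cohmap}, combined with the explicit description of the Kodaira-Spencer classes as images of universal derivations under the connecting homomorphisms. First I would set up, for each pair $(T/S/\Q)$ and each of the graded algebras $\vG_i = T\oplus X_i$, the two transitivity sequences simultaneously: the one of $\Q\ra S\ra\vG_i$ (as in \eqref{eq.deg0b}) and the one of $S\ra T\xra{i}\vG_i$ (as in Proposition \ref{prop.lang}). The maps $\pi^*,\pi_*$ and $\iota_*,\iota^*$ of short exact sequences of complexes from Lemma \ref{lem.cohmap} (taken with $I=\Omega_{S/\Q}$) are morphisms of transitivity triangles in the graded derived category, so they commute with all the connecting homomorphisms $\partial$. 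In particular, since in each cohomological degree the rightmost terms $\hm{}{T}{L_{T/S}}{T\ot\Omega_{S/\Q}}$ are identified compatibly (this identification is exactly what Lemma \ref{lem.cohmap} establishes), the universal derivation $d_{S/\Q}\ot 1_T\in\Der_{\Q}(S,\Omega_{S/\Q}\ot T)$ is carried to itself under the maps, and hence $\kappa(\vG_i/S/\Q)=\partial(d_{S/\Q}\ot 1_T)$ is carried to $\kappa(\vG_j/S/\Q)$ by $\sigma^1_j(\Omega_{S/\Q})$. This is statement (i) on the level of the Kodaira-Spencer class. The statement for the Kodaira-Spencer maps then follows by functoriality of pushout: $g^{\vG_i}(D)=f^D_*\kappa(\vG_i/S/\Q)$ and $f^D_*$ commutes with $\sigma^1_j$ since both are pushforward-type operations on $\Ext$-groups — concretely, $\sigma^1_j(S)\circ f^D_* = f^D_*\circ\sigma^1_j(\Omega_{S/\Q})$ because $f^D\colon\Omega_{S/\Q}\ra S$ induces a map of the relevant $\delta$-functors and $\sigma^1_j$ is induced by a fixed map of complexes.

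For statement (ii), the idea is parallel but one level down in the transitivity sequence. Under the hypothesis $\kappa(T/S/\Q)=0$ and the fixed choice of $S$-algebra splitting $\sigma\colon T\ra\mc{P}\ot_S T$, the sequence $\tilde\alpha(T/S/\Q,X_i)$ of \eqref{eq.pmod} becomes a short exact sequence of $T$-modules, and $\kappa(\sigma,X_i)$ is its class in $\xt{1}{T}{X_i}{\Omega_{S/\Q}\ot X_i}$. The point is that $\tilde\alpha(T/S/\Q,X_i)$ is obtained from $\tilde\alpha(T/S/\Q,\mc{N})$ by applying $-\ot_T X_i$ in the appropriate sense — more precisely, it is the degree-one part of the same algebra extension $\mc{P}_{S/\Q}\ot_S\vG_i\ra\vG_i$, and the graded $T$-algebra maps $\vG_1\ra\vG_0$ (covering $\pi$) and $\vG_0\ra\vG_2$ (covering $\iota$) induce maps of these extensions compatibly with $\sigma$. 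Therefore the class $\kappa(\sigma,\mc{N})$ goes to $\kappa(\sigma,X_j)$ under the map $\uxt{1}$ induced on $\Ext$ by these module maps; and by Lemma \ref{lem.cohmap} (or rather by the $\Ext$-version of the same diagram chase, as in Example \ref{ex.lang}) the resulting map on $\xt{1}{T}{-}{\Omega_{S/\Q}\ot-}$ is exactly $\tau^1_j(\Omega_{S/\Q})$. The commutation $\tau^1_j g^{(\sigma,\mc{N})} = g^{(\sigma,X_j)}$ then follows as in (i) from functoriality of pushout by $f^D\ot\id$.

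The main obstacle I anticipate is bookkeeping of the directions of the arrows: the $\cat{MCM}$-approximation $\pi\colon\mc M\ra\mc N$ and the $\Df$-hull $\iota\colon\mc N\ra\mc L'$ point opposite ways, so the maps $\sigma^1_1$, $\sigma^1_2$ (and $\tau^1_1$, $\tau^1_2$) are built from genuinely different composites of covariant and contravariant operations — $\sigma^1_1$ uses $\pi^*$ then $(\pi_*)^{-1}$, while $\sigma^1_2$ uses $\iota_*$ then $(\iota^*)^{-1}$, the inverses existing by the vanishing $\xt{n>0}{T}{\mc M}{\mc L\ot I}=0$ (Theorem \ref{thm.flatCMapprox} and Corollary \ref{cor.xtdef}). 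I would handle the two cases $j=1$ and $j=2$ separately, treating each of the two maps of short exact sequences of complexes from Lemma \ref{lem.cohmap} in turn and checking that the universal derivation (resp. the class $\kappa(\sigma,\mc N)$) is preserved at each half-step; since Lemma \ref{lem.cohmap} already records that the relevant cohomology maps are isomorphisms in positive degrees, this is where its force is used. The other routine point is verifying that the chosen splitting $\sigma$ is genuinely compatible across the $\vG_i$ — but since $\sigma$ lives entirely in degree $0$ over $T$, which is the same for all three $\vG_i$, this compatibility is automatic, so no real difficulty arises there. Everything else is a diagram chase of the type already carried out in Proposition \ref{prop.at} and Lemma \ref{lem.cohmap}, and I would only sketch it, referring back to those arguments.
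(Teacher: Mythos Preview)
Your proposal is correct, but the paper takes a more concrete and shorter route. Rather than invoking naturality of transitivity triangles and the description $\kappa_i=\partial(d_{S/\Q}\ot 1_T)$, the paper works directly with the explicit principal-parts extension \eqref{eq.pp}: the graded ring map $\vG(\iota):\vG_0\ra\vG_2$ (and analogously $\vG(\pi):\vG_1\ra\vG_0$) induces a map of the short exact sequences representing $\kappa_0$ and $\kappa_2$, which immediately gives $\vG(\iota)_*\kappa_0=\vG(\iota)^*\kappa_2$ in $\gH{0}^1(S,\vG_0,\Omega\ot\vG_2)$, i.e.\ $\sigma^1_2(\Omega)(\kappa_0)=\kappa_2$; the commutation with $g^{\vG_i}$ then follows since $\sigma^1_j(\Omega)$ and $\sigma^1_j(S)$ commute with pushforward by $f^D$. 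Part (ii) and the case $j=1$ are declared ``similar''. Your abstract argument works as well, but be aware that in your write-up you slide between the $\Q\ra S\ra\vG_i$ transitivity sequence (where $d_{S/\Q}\ot 1_T$ lives and $\kappa_i$ is its connecting image) and the $S\ra T\ra\vG_i$ sequence (which is the one Lemma~\ref{lem.cohmap} is actually about, with rightmost terms $\hm{}{T}{L_{T/S}}{T\ot\Omega}$); you would still need to observe that $\sigma^1_j$, being built from the standard covariance and contravariance maps on Andr\'e--Quillen cohomology, commutes with the connecting of the \emph{former} triangle. This is routine functoriality, but it is an extra step that the paper's direct manipulation of representing extensions sidesteps entirely.
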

\begin{proof}
(i) Put \(\kappa_{j}=\kappa(\vG_{j}/S/\!\vL)\), \(\Omega=\Omega_{S/\!\vL}\) and let \(\vG(\iota)\co \vG_{0}\ra\vG_{2}\) denote the graded ring homomorphism induced from \(\iota\). Then \(\vG(\iota)\) induces a map of short exact sequences \(\kappa_{0}\ra\kappa_{2}\), hence a map of short exact sequences \(\vG(\iota)_{*}\kappa_{0}\ra\vG(\iota)^{*}\kappa_{2}\), i.e.\ \(\sigma_{2}(\Omega)(\kappa_{0})=(\vG(\iota)^{*})^{-1}\vG(\iota)_{*}\kappa_{0}=\kappa_{2}\). The maps \(\sigma_{2}(\Omega)\) and \(\sigma_{2}(S)\) commute with the covariant action of \(\Der_{\vL}(S)\), hence the second assertion follows from the first. The arguments for the cases \(j=1\) and (ii) are similar.
\end{proof}

There are corresponding \emph{local} Kodaira-Spencer maps given as follows. Suppose \(S\ra T\) is a local ring homomorphism, put \(k=S/\fr{m}_{S}\) and let \(p\co S\ra k\) denote the quotient. By abuse of notation we let \(p\) also denote the quotient map \(\vG\ra \vG\ot_{S}k=:G\). Let \(A\) denote \(T\ot_{S}k\). Assume \(\vG\) is \(S\)-flat. Then the canonical map 
\begin{equation}\label{eq.g0}
\gH{0}^{1}(k,G,M)\lra\gH{0}^{1}(S,\vG,M)
\end{equation}
is an isomorphism for any \(G\)-module \(M\); see \cite[II 2.2.3]{ill:71}. With this identification we define \(g^{\vG}\!(0)(D):=(f^{D}\ot\, p)_{*}\kappa(\vG/S/\!\vL)\) for any \(D\in\Der_{{\vL}}(S,k)\cong\hm{}{S}{\Omega_{S/\!\vL}}{k}\ni f^{D}\) and obtain local Kodaira-Spencer maps of \(\vG\), and (similarly) of \(T\), respectively:  
\begin{align}
g^{\vG}\!(0)\co \,\,&\Der_{\vL}(S,k)\lra\gH{0}^{1}(k,G,G) \\
g^{T}\!(0)\co \,\,&\Der_{\vL}(S,k)\lra\cH^{1}(k,A,A)
\end{align}
Define the \emph{local Kodaira Spencer class} in \(\gH{0}^{1}(k,G,\Omega_{S/\!\vL}\ot_{S}G)\) as \(\bar{\kappa}(\vG/S/\!\vL)=(\id\ot\,p)_{*}\kappa(\vG/S/\!\vL)\). With the identification \eqref{eq.g0} we obtain the following pointed commutative diagram which relates the global and the local Kodaira-Spencer maps:
\begin{equation}
\xymatrix@C+12pt@R-6pt@H-15pt{
\kappa\in\gH{0}^{1}(S,\vG,\Omega_{S/\!\vL}\ot_{S}\vG)\ar[r]^(0.48){(\id\ot\,p)_{*}}\ar@<3ex>[d]_(0.45){(f^{D}\ot\id)_{*}} & \gH{0}^{1}(S,\vG,\Omega_{S/\!\vL}\ot_{S}G)\ni\bar{\kappa}\ar@<-4ex>[d]^(0.45){(f^{p\!D}\ot\id)_{*}} \\
g^{\vG}(D)\in\gH{0}^{1}(S,\vG,\vG)\ar[r]^(0.47){p_{*}} & \gH{0}^{1}(S,\vG,G)\ni g^{\vG}\!(0)(pD) \\
D\in\Der_{{\vL}}(S,S)\ar[r]^(0.47){p_{*}}\ar@<-3ex>[u]^{g^{\vG}} & \Der_{{\vL}}(S,k)\ni pD\ar@<4ex>[u]_{g^{\vG}\!(0)}
}
\end{equation}
Assume \(\vG=T{\oplus}\mc{N}\) and let \(N\) denote the \(A\)-module \(\mc{N}\ot_{S}k\). There is a graded algebra extension representing \(\bar{\kappa}(\vG/S/\!\vL)\) which in degree \(1\) is a short exact sequence 
\begin{equation}\label{eq.lKS}
0\ra\Omega_{S/\!\vL}\ot_{S} N\lra k\ot_{S}\mc{P}_{S/\!\vL}\ot_{S} N\lra N\ra0.
\end{equation}
If \(\bar{\kappa}(T/S/\!\vL)=0\) we choose an \(S\)-algebra splitting \(\sigma\co A\ra k\ot_{S}\mc{P}_{S/\!\vL}\ot_{S} A\), (and the canonical one if \(T=S\ot_{\vL}{\To}\)). The \emph{local Kodaira-Spencer class} \(\bar{\kappa}(\sigma,\mc{N})\) in \(\xt{1}{A}{N}{\Omega_{S/\!\vL}\ot_{S}N}\) is represented by \eqref{eq.lKS} as a short exact sequence of \(A\)-modules. Then we define the \emph{local Kodaira-Spencer map of \((T/S/\!\vL,\sigma,\mc{N})\)}
\begin{equation}
g^{(\sigma,\mc{N})}\!(0)\co \,\,\Der_{\vL}(S,k)\lra\xt{1}{A}{N}{N}
\end{equation}
 by \(g^{(\sigma,\mc{N})}(0)(D):=(f^{D}\ot\id)_{*}\bar{\kappa}(\sigma,\mc{N})\).

We assume that \(\vL\) is an algebraically closed field \(k\) for the rest of this section. Then \(\Der_{k}(S,k(s))\) is canonically isomorphic to the Zariski tangent space at any closed point \(s\in\Spec S\).
\begin{defn}\label{defn.modular} 
The pair \emph{\((h\co S\ra T,\mc{N})\) is locally modular} if the local Kodaira-Spencer map \(g^{\vG}\!(0)\co \Der_{k}(S,k)\ra\gH{0}^{1}(k,G,G)\) is injective. If in addition \(T=(S\ot_{k}A)_{\fr{m}}\) then \emph{\(\mc{N}\) is locally modular} if \(g^{\mc{N}}\!(0)\co \Der_{k}(S,k)\ra\xt{1}{A}{N}{N}\) is injective.

Let \(h^{\text{ft}}\co S\ra T\) be a faithfully flat finite type map of noetherian \(k\)-algebras with a \(k\)-point \(t\in\Spec T\) and \(\mc{N}\) is an \(S\)-flat finite \(T\)-module. We say that \emph{\((h^{\textnormal{ft}},\mc{N})\) is modular at \(t\)} if the localisation of \((h^{\text{ft}},\mc{N})\) at \(t\) is locally modular. 
If \(A\) is a finite type \(k\)-algebra and \(T=S\ot_{k} A\), then \emph{\(\mc{N}\) is modular at \(t\) as \(T\)-module} if its localisation at \(t\) is locally modular. Let \(\nabla(h,\mc{N})\) (\(\nabla_{T}(\mc{N})\) if \(T=S\ot_{k}A\)) denote the set of \(k\)-points \(t\in \Supp\mc{N}\) where \((h,\mc{N})\) (respectively \(\mc{N}\) as \(T\)-module) is modular.
\end{defn}
If \(t\in \Spec T\) maps to \(s\in\Spec S\) let \(T(t)\) denote the \(k(s)\)-algebra \(T_{\fr{p}_{t}}\ot_{S_{\fr{p}_{s}}}k(s)\) and \(\mc{N}(t)\) the \(T(t)\)-module \(\mc{N}_{\fr{p}_{t}}\ot_{S_{\fr{p}_{s}}}k(s)\).
\begin{cor}\label{cor.modY}
Let \(h\co S\ra T\) be a finite type Cohen-Macaulay map of noetherian \(k\)-algebras and let \(\mc{N}\) be an \(S\)-flat finite \(T\)-module\textup{.} Let \(0\ra\mc{N}\xra{\iota}\mc{L}'\ra\mc{M}'\ra0\) and \(0\ra\mc{L}\ra\mc{M}\ra\mc{N}\ra0\) be a \(\Df\)-hull and a \(\cat{MCM}\)-approximation for \(\mc{N}\)\textup{,} respectively\textup{.}
\begin{enumerate}
\item[(i)] Suppose \(\hm{}{T(t)}{\mc{N}(t)}{\mc{M}'(t)}=0\) for all \(t\in \mSpec T\)
\textup{.}
Then 
\begin{equation*}
\nabla(h,\mc{N})=\nabla(h,\mc{L}')\quad\text{and}\quad \nabla_{T}(\mc{N})=\nabla_{T}(\mc{L}')\,\,\text{if}\,\, T=S\ot_{k}A\,.
\end{equation*}
\item[(ii)] Suppose \(\hm{}{T(t)}{\mc{L}(t)}{\mc{N}(t)}=0\) for all \(t\in \mSpec T\)\textup{.}
Then 
\begin{equation*}
\nabla(h,\mc{N})=\nabla(h,\mc{M})\quad\text{and}\quad \nabla_{T}(\mc{N})=\nabla_{T}(\mc{M})\,\,\text{if}\,\, T=S\ot_{k}A\,.
\end{equation*}
\end{enumerate}
\end{cor}
\begin{proof}
(i) Let \(t\) be a \(k\)-point in \(\Spec T\) mapping to \(s\) in \(\Spec S\). By Theorem \ref{thm.flatCMapprox} \(\iota(t)\) is a \(\hat{\cat{D}}_{T(t)}\)-hull for \(\mc{N}(t)\) and by Proposition \ref{prop.minapprox} \(\iota(t)\) is minimal if and only if \(\iota_{\fr{p}_{t}}\) is minimal. In particular, the minimal hull of 
\(\mc{N}_{\fr{p}_{t}}\) is a direct summand of \(\mc{L}'_{\fr{p}_{t}}\). We therefore assume that \(\iota_{\fr{p}_{t}}\) and hence \(\iota(t)\) is minimal.
Put \(S_{\fr{p}_{s}}=S'\), \(T_{\fr{p}_{t}}=T'\), \(\mc{N}_{\fr{p}_{t}}=\mc{N}'\), \(\mc{L}'_{\fr{p}_{t}}=\mc{L}'\), \((\vG_{j})_{\fr{p}_{t}}=\vG_{j}'\) and \(\kappa(\vG_{j}'/S'\!/k)=\kappa_{j}\). Note that \(\gH{0}^{1}(S',\vG_{j}',\vG_{j}'(t))\cong\gH{0}^{1}(k(s),\vG'_{j}(t),\vG'_{j}(t))\) and \(\xt{1}{T'}{\mc{N}'}{\mc{N}'(t)}\cong \xt{1}{T'(t)}{\mc{N}'(t)}{\mc{N}'(t)}\) since \(\vG'_{j}\), \(T'\) and \(\mc{N}'\) are \(S'\)-flat.
Hence the map \(\tau^{1}_{2}(t)\co \xt{1}{T'}{\mc{N}'}{\mc{N}'(t)}\ra \xt{1}{T'}{\mc{L}'}{\mc{L}'(t)}\) in \eqref{eq.tau} is injective by assumption. It implies that the map \(\sigma^{1}_{2}(t)\co \gH{0}^{1}(S',\vG_{0}',\vG_{0}(t))\ra\gH{0}^{1}(S',\vG_{2}',\vG_{2}(t))\) in \eqref{eq.sigma} is injective. The following diagram, which relates local and global maps, is pointed commutative by Proposition \ref{prop.KS}:
\begin{equation}
\xymatrix@C+12pt@R-6pt@H-15pt{
\kappa_{0}\in\gH{0}^{1}(S',\vG_{0}',\Omega_{S'\!/k}\ot_{S}\vG_{0}')\ar[r]^(0.48){\sigma_{2}^{1}(\Omega_{S'\!/k})}\ar@<3ex>[d]_(0.43){(f^{D}\ot\,p)_{*}} & \gH{0}^{1}(S',\vG_{2}',\Omega_{S'\!/k}\ot_{S'}\vG_{2}')\ni\kappa_{2}\ar@<-4ex>[d]^(0.43){(f^{D}\ot\,p)_{*}} \\
g^{\vG'_{0}}(t)(D)\in\gH{0}^{1}(S',\vG_{0}',\vG_{0}'(t))\ar[r]^(0.49){\sigma_{2}^{1}(t)} & \gH{0}^{1}(S',\vG_{2}',\vG_{2}'(t))\ni g^{\vG_{2}'}(t)(D) \\
D\in\Der_{k}(S',k)\ar@{=}[r]\ar@<-3ex>[u]^{g^{\vG'_{0}}(t)} & \Der_{k}(S',k)\ni D\ar@<4ex>[u]_{g^{\vG_{2}'}(t)}
}
\end{equation}
Statement (i) follows and (ii) is similar.
\end{proof}
\begin{ex}\label{ex.modY}
Let \(A\) be a CM finite type \(k\)-algebra and domain of dimension \(\geq 1\). Let \(h\co A\ra T=A^{\ot 2}\) be the base change by \(S=A\) and \(\mc{N}=A\) be the \(A\)-flat \(T\)-module defined by the multiplication map \(T\ra A\). Let \(\Delta\sbeq \Spec T\) denote the closed points on the diagonal and let \(t\) be a closed point in \(\Spec T\) mapping to \(s\) in \(\Spec A\). If \(t\notin\Delta\) then \(\mc{N}(t)=0\). If \(t\in \Delta\) then \(T(t)\cong A_{\fr{p}_{s}}=: A'\) and \(\mc{N}(t)\cong k(s)\). The local Kodaira-Spencer class \(\bar{\kappa}(\mc{N}(t))\in \xt{1}{A'}{k(s)}{\Omega_{A'\!/k}\ot_{A'} k(s)}\) is represented by 
\begin{equation}
\xymatrix@C-0pt@R-12pt@H-30pt{
0\ar[r] & k(s)\ot_{A'}\Omega_{A'\!/k}\ar[r] & k(s)\ot_{A'} \mc{P}_{A'\!/k}\ar[r] & k(s)\ar[r] & 0 \\
0\ar[r] & \fr{m}/\fr{m}^{2}\ar[r]\ar[u]^{\cong}_{\delta} & A'/\fr{m}^{2}\ar[r] \ar[u]^{\cong}_{\chi} & k(s)\ar@{=}[u]\ar[r] & 0
}
\end{equation}
with \(\fr{m}=\fr{p}_{s}A'\); see \eqref{eq.lKS}. Here \(\delta(\bar{x})=1\ot d_{A'}(x)\) and \(\chi\) is induced by \(1\ot j_{2}\) (if \(x, y\in\fr{m}\) then \(1\ot j_{2}(xy)=1\ot([j_{2}(x)-j_{1}(x)][(j_{2}(y)-j_{1}(y)])=0\)). The local Kodaira-Spencer map \(g^{\mc{N}}(t)\) is given by the pushout
\begin{equation}
\phi\in\hm{}{k(s)}{\fr{m}/\fr{m}^{2}}{k(s)}\lra \xt{1}{A'}{k(s)}{k(s)}\ni\phi_{*}\bar{\kappa}(\mc{N}(t)).
\end{equation}
It is an isomorphism. By Corollary \ref{cor.modY} we have \(\nabla_{T}(\mc{N})=\nabla_{T}(\mc{L}')=\Delta\). Put \(\mc{Q}'=\hm{}{T}{\omega_{h}}{\mc{L}'}\). By Proposition \ref{prop.defequiv} also the local Kodaira-Spencer map \(g^{\mc{Q}'}(t)\) is injective for \(t\in \Delta\). Hence \(\nabla_{T}(\mc{Q}')=\Delta\).  Note that \(\mc{L}'(t)\) and \(\mc{Q}'(t)\) are rigid for \(t\notin\Delta\).
\end{ex}
\begin{cor}\label{cor.modCM}
Suppose \(A\) is a finite type Cohen-Macaulay \(k\)-algebra and \(S\) a noetherian \(k\)-algebra\textup{.} Put \(h\co S\ra T=S\ot_{k}A\)\textup{.} Let \(J=(f_{1},\dots,f_{n})\) be an \(A\)-sequence\textup{,} put \(B=A/J\) and let \(\bar{h}\co S\ra\bar{T}=S\ot_{k}B\) be the induced Cohen-Macaulay map\textup{.} Suppose \(\mc{N}\) is an \(S\)-flat finite \(\bar{T}\)-module and let \(0\ra\mc{L}\ra\mc{M}\ra\mc{N}\ra0\) be an \(\cat{MCM}\)-approximation of \(\mc{N}\) over \(h\)\textup{.} Assume \(\ob(T/(JT)^{2}\ra \bar{T}, \mc{N})=0\)\textup{.} Then 
\begin{equation*}
\nabla_{\bar{T}}(\mc{N})=\nabla_{T}(\mc{M})\cap\Supp\bar{T}\,.
\end{equation*}
\end{cor}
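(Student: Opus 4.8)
The plan is to follow the proof of Corollary \ref{cor.modY}, localising at each \(k\)-point of \(\Supp\bar T\), but with the grade hypothesis used there (which forces \(\tau^{1}_{2}\) injective) replaced by the obstruction hypothesis of Theorem \ref{thm.defMCM} (which forces the relevant comparison map injective), and keeping track of the change of rings \(T\ra\bar T=T/J\) that enters because \(\mc N\) lives over \(\bar h\) while its approximation \(\mc M\) lives over \(h\). First I would record the containments \(\Supp\mc N\sbeq\Supp\mc M\cap\Supp\bar T\) (since \(\mc N\) is a quotient of \(\mc M/J\mc M\)) and \(\nabla_{\bar T}(\mc N)\sbeq\Supp\mc N\), so that both sides of the asserted identity lie in \(\Supp\mc M\cap\Supp\bar T\). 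The statement is local on \(\Spec T\): fixing a \(k\)-point \(t\in\Supp\bar T\) over \(s\in\Spec S\) and localising at \(t\) turns \(h\) into a local Cohen-Macaulay map, keeps \(\mc L\ra\mc M\xra{\pi}\mc N\) an \(\cat{MCM}\)-approximation by Theorem \ref{thm.flatCMapprox}, and keeps \(\ob(T/(JT)^{2}\ra\bar T,\mc N)\) zero by naturality. By Corollary \ref{cor.minapprox} I may take \(\mc M\) minimal; this is harmless, since any other approximation differs from it by a \(\cat{D}\)-summand, which is infinitesimally rigid (Example \ref{ex.extiso}) and carries vanishing Kodaira-Spencer class, hence changes neither \(g^{\mc M}(t)\) nor \(\nabla_{T}(\mc M)\).

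Next I would invoke Proposition \ref{prop.obssplit}: the vanishing of \(\ob\) means the \(\bar T\)-quotient \(\bar\pi\colon\mc M/J\mc M\ra\mc N\) splits, and this remains true after passing to the closed fibre over \(S_{\fr p_{s}}\), where \(\bar\pi(t)\) splits. I then replay the \(\Ext\)-computation from the proof of Theorem \ref{thm.defMCM} (diagram \eqref{eq.xt1}) with the fibre \(T(t)\ra\bar T(t)\) in place of \(A\ra B\) and \(\mc M(t),\mc N(t)\) in place of \(M,N\): the split monomorphism \(\bar\pi(t)^{*}\), the change-of-rings isomorphism \(\xt{1}{\bar T(t)}{\mc M(t)/J\mc M(t)}{\mc N(t)}\cong\xt{1}{T(t)}{\mc M(t)}{\mc N(t)}\) (valid since \(J\) is \(\mc M(t)\)-regular, \(\mc M(t)\) being MCM), and the isomorphism \(\pi(t)_{*}\colon\xt{1}{T(t)}{\mc M(t)}{\mc M(t)}\ra\xt{1}{T(t)}{\mc M(t)}{\mc N(t)}\) (valid since \(\mc M(t)\) is MCM and \(\mc L(t)\) has finite injective dimension, cf.\ Corollary \ref{cor.xtdef}), compose to an injection \(p_{t}\colon\xt{1}{\bar T(t)}{\mc N(t)}{\mc N(t)}\hra\xt{1}{T(t)}{\mc M(t)}{\mc M(t)}\).

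The key step is to identify \(p_{t}\) on Kodaira-Spencer classes, i.e.\ to show \(p_{t}\circ g^{\mc N}(t)=g^{\mc M}(t)\), where \(g^{\mc N}(t)\) is the local Kodaira-Spencer map of \(\mc N\) as a \(\bar T=S\ot_{k}B\)-module (the version entering \(\nabla_{\bar T}(\mc N)\)). I would factor \(p_{t}=\tau^{1}_{1}\circ e\), where \(e\colon\xt{1}{\bar T(t)}{\mc N(t)}{\mc N(t)}\ra\xt{1}{T(t)}{\mc N(t)}{\mc N(t)}\) is the change-of-rings edge map and \(\tau^{1}_{1}\) is the map \eqref{eq.tau} for the \(\cat{MCM}\)-approximation \(\mc N\rsa\mc M\) (with \(X_{1}=\mc M\)) on the fibre at \(t\); the factorisation is naturality of the spectral sequence \eqref{eq.ss} in the first variable along \(\pi(t)\). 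Since the canonical \(S\)-algebra splittings for \(S\ot_{k}A\) and \(S\ot_{k}B\) are compatible over \(A\ra B\), the canonical Kodaira-Spencer extension of \(\mc N\) over \(T\) is the restriction of scalars of the one over \(\bar T\), giving \(e\circ g^{\mc N}(t)=g^{\mc N}_{T}(t)\), the Kodaira-Spencer map of \(\mc N\) viewed as a \(T\)-module. Finally Proposition \ref{prop.KS}(ii), applied to \(\pi\) with the canonical splitting, yields \(\tau^{1}_{1}\circ g^{\mc N}_{T}(t)=g^{\mc M}(t)\); chaining the three identities gives \(p_{t}\circ g^{\mc N}(t)=g^{\mc M}(t)\). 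I expect this paragraph to be the main obstacle: each of the three ingredients — naturality of change of rings, restriction of scalars of the canonical Atiyah/Kodaira-Spencer class, and Proposition \ref{prop.KS}(ii) — is individually routine, but assembling them into the single identity, and in particular verifying that the \(p_{t}\) produced by diagram \eqref{eq.xt1} really equals \(\tau^{1}_{1}\circ e\), requires some careful diagram chasing. The \(\Ext\)-level injectivity of \(p_{t}\) in the previous step is, by contrast, essentially already contained in the proof of Theorem \ref{thm.defMCM}.

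With \(p_{t}\) injective, \(g^{\mc N}(t)\) is injective exactly when \(g^{\mc M}(t)\) is, so \(\mc N\) is modular at \(t\) as a \(\bar T\)-module precisely when \(\mc M\) is modular at \(t\) as a \(T\)-module, for every \(k\)-point \(t\in\Supp\bar T\). Hence if \(t\in\nabla_{\bar T}(\mc N)\) then \(t\in\Supp\mc N\sbeq\Supp\mc M\cap\Supp\bar T\) and \(\mc M\) is modular at \(t\), so \(t\in\nabla_{T}(\mc M)\cap\Supp\bar T\); conversely, if \(t\in\nabla_{T}(\mc M)\cap\Supp\bar T\) then \(\mc N\) is modular at \(t\), and \(t\in\Supp\mc N\) — for otherwise \(\mc N(t)=0\), whence \(\mc M(t)\cong\mc L(t)\) lies in \(\cat{MCM}_{T(t)}\cap\hat{\cat D}_{T(t)}=\cat{D}_{T(t)}\), is a direct sum of copies of \(\omega_{T(t)}\), and is therefore rigid with vanishing Kodaira-Spencer map (Example \ref{ex.extiso}), contradicting modularity — so \(t\in\nabla_{\bar T}(\mc N)\). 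This would give the asserted equality \(\nabla_{\bar T}(\mc N)=\nabla_{T}(\mc M)\cap\Supp\bar T\).
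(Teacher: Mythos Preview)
Your proposal is correct and follows essentially the same approach as the paper: localise at a \(k\)-point \(t\in\Supp\bar T\), use the obstruction hypothesis to obtain the injection \(\bar\tau\colon\xt{1}{\bar T(t)}{\mc N(t)}{\mc N(t)}\hookrightarrow\xt{1}{T(t)}{\mc M(t)}{\mc M(t)}\) from diagram \eqref{eq.xt1} in the proof of Theorem \ref{thm.defMCM}, verify that \(\bar\tau\) intertwines the local Kodaira--Spencer maps, and conclude as in Corollary \ref{cor.modY}. The paper's proof is four lines and simply asserts the commutation with Kodaira--Spencer; you have supplied the verification via the factorisation \(p_t=\tau^1_1\circ e\) and Proposition \ref{prop.KS}(ii), which is exactly the content behind that assertion. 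Your additional treatment of the support containment \(\nabla_T(\mc M)\cap\Supp\bar T\subseteq\Supp\mc N\) (via rigidity of \(\cat D_{T(t)}\)) fills a point the paper leaves implicit.
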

\begin{proof}
By Proposition \ref{prop.obsmodule} there is a lifting \(\mc{N}_{1}\ra\mc{N}\) of \(\mc{N}\) to \(T_{1}=T/(JT)^{2}\). It induces liftings \(\mc{N}_{1}(t)\ra\mc{N}(t)\) for all \(k\)-points \(t\) in \(\Supp\bar{T}\). The inclusion \(\bar{\tau}\co \xt{1}{\bar{T}(t)}{\mc{N}(t)}{\mc{N}(t)}\ra\xt{1}{T(t)}{\mc{M}(t)}{\mc{M}(t)}\) in \eqref{eq.xt1} commutes with the local Kodaira-Spencer maps. Proceed as in the proof of Corollary \ref{cor.modY}.
\end{proof}
\providecommand{\bysame}{\leavevmode\hbox to3em{\hrulefill}\thinspace}
\providecommand{\MR}{\relax\ifhmode\unskip\space\fi MR }
\providecommand{\MRhref}[2]{%
  \href{http://www.ams.org/mathscinet-getitem?mr=#1}{#2}
}
\providecommand{\href}[2]{#2}

\end{document}